%        File: geometric-cohen-lenstra.tex
%     Created: Mon Feb 17 03:00 PM 2020 P
% Last Change: Mon Feb 17 03:00 PM 2020 P
%
%%%%%%%%%%%%%%%%%%%%%
%   AMS packages    %
%%%%%%%%%%%%%%%%%%%%%
\documentclass[12 pt]{amsart}

\usepackage{hyperref}
\usepackage{adjustbox}
\usepackage{etex}
\usepackage[shortlabels]{enumitem}
\usepackage{amsmath}
\usepackage{amsxtra}
\usepackage{amscd}
\usepackage{amsthm}
\usepackage{amsfonts}
\usepackage{amssymb}
\usepackage{eucal}
\usepackage[all]{xy}
\usepackage{graphicx}
\usepackage{tikz-cd}
\usepackage{mathrsfs}
\usepackage{subfiles}
\usepackage{mathpazo}
\usepackage[colorinlistoftodos, textsize=tiny]{todonotes}
\usepackage{morefloats}
\usepackage{pdfpages}
\usepackage{thm-restate}
\usepackage[utf8]{inputenc}
\usepackage{epigraph}
\usepackage{csquotes}
\usepackage{stmaryrd}
\usepackage{subfiles}
\usepackage[margin=1in]{geometry}
\setlength{\marginparwidth}{2cm}

\graphicspath{ {images/} }

\RequirePackage{color}
\definecolor{myred}{rgb}{0.75,0,0}
\definecolor{mygreen}{rgb}{0,0.5,0}
\definecolor{myblue}{rgb}{0,0,0.65}

\hypersetup{citecolor=blue}
\usepackage{tikz}
\usetikzlibrary{matrix,arrows,decorations.pathmorphing}

\theoremstyle{plain}
\newtheorem{theorem}[subsection]{Theorem}
\newtheorem{proposition}[subsection]{Proposition}
\newtheorem{lemma}[subsection]{Lemma}
\newtheorem{corollary}[subsection]{Corollary}
\theoremstyle{definition}
\newtheorem{definition}[subsection]{Definition}
\newtheorem{remark}[subsection]{Remark}
\newtheorem{example}[subsection]{Example}

\theoremstyle{remark}
\newtheorem{notation}[subsection]{Notation}
\newtheorem{construction}[subsection]{Construction}
\numberwithin{equation}{section}
\newcommand\nc{\newcommand}
\nc\on{\operatorname}
\nc\renc{\renewcommand}

\newcommand\ba{\mathbb A}

\newcommand\bg{\mathbb G}

\newcommand\bp{\mathbb P}
\newcommand\bq{\mathbb Q}

\newcommand\bz{\mathbb Z}

\newcommand\scc{\mathscr C}

\newcommand\sce{\mathscr E}
\newcommand\scf{\mathscr F}
\newcommand\scg{\mathscr G}

\newcommand\sci{\mathscr I}

\newcommand\scl{\mathscr L}
\newcommand\scm{\mathscr M}
\newcommand\scn{\mathscr N}
\newcommand\sco{\mathscr O}

\DeclareMathOperator\sing{sing}
\DeclareMathOperator\cusp{cusp}
\DeclareMathOperator\node{node}

\newcommand \xra{\xrightarrow}

\DeclareMathOperator\spec{\on{Spec}}
\DeclareMathOperator\proj{\on{Proj}}

\newcommand*{\shom}{\mathscr{H}\kern -.5pt om}
\newcommand*{\stor}{\mathscr{T}\kern -.5pt or}
\newcommand*{\sext}{\mathscr{E}\kern -.5pt xt}
\newcommand*{\Ext}{Ext}

\makeatletter
\newcommand{\customlabel}[2]{\protected@write \@auxout {}{\string \newlabel {#1}{{#2}{\thepage}{#2}{#1}{}} }\hypertarget{#1}{#2}}

\newcommand\nstack[1]{\mathscr{S}^{(#1)}} %stack of genus 1 curves with a degree n section
\newcommand\cuspstack[1]{\mathscr{M}^{(#1)}_{1,\cusp}} %stack of cuspidal Weierstrass curves with a degree n section
 %stack of everywhere inflection curves with a degree n section and a section in the singular locus
\newcommand\nodestack[1]{\mathscr{M}^{(#1)}_{1,\node}} %stack of nodal Weierstrass curves with a degree n section
\newcommand\singsstack[1]{\widetilde{\mathscr{M}}^{\hspace{1mm}(#1)}_{1,\sing}} %stack of genus 1 curves with a degree n section and a section in the singular locus
\newcommand\cuspsstack[1]{\widetilde{\mathscr{M}}^{\hspace{1mm}(#1)}_{1,\cusp}} %stack of cuspidal Weierstrass curves with a degree n section and a section in the singular locus
\newcommand\nodesstack[1]{\widetilde{\mathscr{M}}^{\hspace{1mm}(#1)}_{1,\node}} %stack of nodal Weierstrass curves with a degree n section and a section in the singular locus
\newcommand\onestack[1]{\mathscr{M}_1^{(#1)}} %stack of genus 1 curves with a degree n section

\newcommand\nhilb[1]{\mathscr{H}^{(#1)}} % hilbert scheme of geometrically integral degree n genus 1 curves in P^{n-1}
\newcommand\univfn[1]{\mathcal{U}^{\on{smile},(#1)}} 
\renewcommand\smile[1]{\mathcal{V}^{\on{smile}, (#1)}} % open subscheme of the linear system e + nf on the hirzebruch surface F_{n-2} parameterizing ``smiles''
\newcommand\smilestack[1]{\mathscr{V}^{\on{smile}, (#1)}} % stack of the linear system e + nf on the hirzebruch surface F_{n-2} parameterizing ``smiles''
\newcommand\singshilb[1]{\widetilde{\mathscr{H}}^{\hspace{.5mm}(#1)}_{\sing}} % singular locus of nhilb

\newcommand\weierstrass{\mathscr{W}} %stack of Weierstrass curves
\newcommand\cuspw{\mathscr{W}_{\cusp}} %stack of cuspidal Weierstrass curves
\newcommand\nodew{\mathscr{W}_{\node}} %stack of nodal Weierstrass curves
\newcommand\singw{{\mathscr{W}}_{\sing}} %stack of singular Weierstrass curves
\newcommand\singsw{\widetilde{\mathscr{W}}_{\sing}} %stack of singular Weierstrass curves
\newcommand\cuspsw{\widetilde{\mathscr{W}}_{\cusp}} %stack of cuspidal Weierstrass curves
\newcommand\nodesw{\widetilde{\mathscr{W}}_{\node}} %stack of nodal Weierstrass curves
\newcommand\codirectrix{\xi} %notation for the codirectrix equation over \bz
 %equation defining the inflection curve associated to q,\codirectrix

\newcommand\flagcubic{\mathcal{H}^{1,3t} } %hilbert scheme of plane cubics with a point
\newcommand\hilbw{\mathcal{H}^{\circ,1,3t}} %hilbert scheme of geometrically integral plane cubics with a point in the smooth locus
\newcommand\twoflagcubic{\mathcal{H}^{1,1,3t}} %hilbert scheme of plane cubics with two points
\newcommand\hilbsw{\mathcal{H}^{\circ,1,3t}_{\sing}} %hilbert scheme of geometrically integral plane cubics with a point in the singular locus and a point in the smooth locus

\newcommand\universalW{\overline{\mathscr{E}}} %universal curve over Weierstrass curve stack
\newcommand\smoothUW{\mathscr{E}} %smooth locus of universal weierstrass curve
\newcommand\affineAut[1]{G_{#1}} %group of affine automorphisms for n torsion
\newcommand\unipotentRadical[1]{U_{#1}} %unipotent radical of group of affine automorphisms for n torsion
%\newcommand\liftAffineAut[1]{\widetilde{G}_{#1}} %SL2 pullback of automorphisms
%\newcommand\liftReductive[1]{\widetilde{G}_{\mathrm{reductive},#1}} % reductive quotient of SL2 pullback of automorphisms
 % reductive quotient of group of automorphisms
\newcommand\affineAutSub[1]{G'_{#1}} %subgroup of affine automorphisms for n torsion fixing the base \mathbb P^1
\newcommand\projAut[1]{A_{#1}} %subgroup of affine automorphisms for n torsion fixing the base \mathbb P^1
\newcommand\projAutSub[1]{A'_{#1}} %subgroup of affine automorphisms for n torsion fixing the base \mathbb P^1
\newcommand\affineSections[1]{V_{#1}} %affine space of sections for n torsion
\newcommand\unitResultant[1]{V^{\res \in \bg_m}_{#1}} %open in affine space of sections having resultant unit for n torsion
 %open in affine space of sections for which V(q) is generically etale

 %inflection subscheme, arguments: inflection integer, curve, map to projective space

\DeclareMathOperator\id{id}
\DeclareMathOperator\cl{Cl}

\DeclareMathOperator\coker{coker}

\DeclareMathOperator\pic{Pic}

\DeclareMathOperator\im{im}

\DeclareMathOperator\sym{Sym}
\DeclareMathOperator\res{Res}
\DeclareMathOperator\pgl{PGL}

\DeclareMathOperator\blow{Bl}

\newcommand\ul{\underline}
\newcommand\ol{\overline}

\DeclareMathOperator\isom{isom}

\DeclareMathOperator\aut{Aut}
\DeclareMathOperator\gl{GL}
\DeclareMathOperator\norm{Nm}

\DeclareMathOperator\sel{Sel}

\DeclareMathOperator\sm{sm}

\DeclareMathOperator\disc{disc}

\DeclareFontFamily{U}{wncy}{}
\DeclareFontShape{U}{wncy}{m}{n}{<->wncyr10}{}
\DeclareSymbolFont{mcy}{U}{wncy}{m}{n}
\DeclareMathSymbol{\Sha}{\mathord}{mcy}{"58}

\setcounter{MaxMatrixCols}{20}

\def\listtodoname{List of Todos}
\def\listoftodos{\@starttoc{tdo}\listtodoname}

\title{A geometric approach to the Cohen-Lenstra heuristics}
\author{Aaron Landesman}

\usepackage{microtype}
\begin{document}

\maketitle

\begin{abstract}
	We give a new 
	geometric description of when an element
	of the class group of a quadratic field, thought of as a quadratic
	form $q$, is $n$-torsion.
	We show that $q$ corresponds to an $n$-torsion element 
	if and only if there exists a degree $n$ polynomial
	whose resultant with $q$ is $\pm 1$.
	This is motivated by a more precise 
	geometric parameterization 
	which directly connects torsion in class groups of quadratic fields to Selmer groups
	of singular genus $1$ curves.
\end{abstract}

\section{Introduction}
\label{section:introduction}

The goal of this paper is to construct 
geometric spaces which parameterize
$n$-torsion elements in 
class groups of quadratic fields.
More generally, we prove a structure theorem in algebraic geometry
which describes a simple quotient presentation for a
stack approximately parameterizing
$\mu_n$ torsors on degree $2$ covers.
More precisely, this stack has $B$ points parameterizing $n$-coverings of generically singular
relative genus $1$ curves over $B$, as is discussed further in
\autoref{remark:selmer-relation}.

To begin, we give an easy to state consequence of our main results.
For $K$ a number field, we use $\cl(K)$ to denote the class group of $K$
and
$\cl(K)[n]$ to denote its $n$-torsion.
For the statement of the following theorem, it will be helpful to
recall the classical correspondence between 
elements of 
the class group of a quadratic field of discriminant $d$
and
primitive quadratic forms of discriminant $d$ with integer coefficients,
as is well exposited in
\cite[Theorem 1.5]{wood:compositionBase}.
\begin{theorem}
	\label{theorem:resultant-intro}
	Let $n \geq 1$ be an integer, and let $K$ be a quadratic number field of
	discriminant $d$.
	A primitive quadratic form $q := ax^2 + bxy + cy^2 \in \bz[x,y]$ of discriminant $d$
	corresponds to an element in the subgroup $\cl(K)[n]\subset \cl(K)$ 
	if and only if
	there exists a degree $n$ polynomial $\codirectrix := \sum_{i=0}^n t_i x^i y^{n-i}
	\in \bz[x,y]$
	such that the resultant of $q$ and $\codirectrix$ is either $1$ or $-1$.
\end{theorem}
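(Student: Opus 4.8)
The plan is to translate both sides of the claimed equivalence into statements about the invertible ideal attached to $q$, and then match them through a single resultant--norm identity. First I would recall, from the correspondence cited as \cite[Theorem 1.5]{wood:compositionBase}, that the primitive form $q = ax^2 + bxy + cy^2$ of discriminant $d$ corresponds to the class of the proper $\mathcal{O}_K$-ideal $\mathfrak{a} = (a, \omega)$, where $\omega := \frac{-b + \sqrt d}{2} \in \mathcal{O}_K$ is an algebraic integer (it has trace $-b$ and norm $ac$), and that $\norm_{K/\bq}(\mathfrak{a}) = |a|$. Under this correspondence $q$ gives an element of $\cl(K)[n]$ precisely when $\mathfrak{a}^n$ is a principal ideal. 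So the whole theorem reduces to showing that $\mathfrak{a}^n$ is principal if and only if there is an integer form $\codirectrix = \sum_{i=0}^n t_i x^i y^{n-i}$ of degree $n$ whose resultant with $q$ is $\pm 1$.

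The bridge between the two sides is the identity
\[
  a^n \cdot \res(q, \codirectrix) = \norm_{K/\bq}\!\big(\codirectrix(\omega, a)\big), \qquad \codirectrix(\omega, a) := \sum_{i=0}^n t_i\, \omega^i a^{n-i}.
\]
I would prove this from the product formula for the resultant of two binary forms: writing $q = a(x - \theta_1 y)(x - \theta_2 y)$ with $\theta_1,\theta_2$ the conjugate roots, one has $\res(q, \codirectrix) = a^n\, \codirectrix(\theta_1, 1)\codirectrix(\theta_2, 1)$, and since $\theta_j = \omega_j/a$ with $\omega_1 = \omega$ and $\omega_2$ its Galois conjugate, each factor satisfies $a^n \codirectrix(\theta_j,1) = \codirectrix(\omega_j, a)$, whose product over $j$ is the norm. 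The second, purely ideal-theoretic, observation is that as $\codirectrix$ ranges over all integer forms of degree $n$, the element $\codirectrix(\omega, a)$ ranges over exactly $\mathfrak{a}^n$: indeed $\mathfrak{a} = \bz a + \bz \omega$ as a lattice, so $\mathfrak{a}^n$ is the $\bz$-span of the monomials $\{a^{n-i}\omega^i\}_{i=0}^n$, and $\codirectrix(\omega, a)$ is precisely the general integer combination of these monomials.

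With these in hand both implications are short. If $\mathfrak{a}^n = (\beta)$ is principal, I write $\beta = \codirectrix(\omega, a)$ for a suitable integer form $\codirectrix$ via the membership observation; then $|\norm_{K/\bq}(\beta)| = \norm_{K/\bq}(\mathfrak{a}^n) = |a|^n$, so the identity gives $\res(q, \codirectrix) = a^{-n}\norm_{K/\bq}(\beta)$, which is $\pm 1$. Conversely, given $\codirectrix$ with $\res(q, \codirectrix) = \pm 1$, set $\beta = \codirectrix(\omega, a) \in \mathfrak{a}^n$; the identity forces $|\norm_{K/\bq}(\beta)| = |a|^n = \norm_{K/\bq}(\mathfrak{a}^n)$, and since $(\beta) \subseteq \mathfrak{a}^n$ is a containment of ideals of equal norm it must be an equality, so $\mathfrak{a}^n$ is principal. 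The main obstacle is pinning down the resultant--norm identity exactly: I must fix the homogeneous (Sylvester) convention so that it holds as a polynomial identity in the $t_i$ even when the leading coefficient $t_n$ vanishes, so that forms of degree $\le n$ are genuinely permitted. I should also double-check the orientation and sign conventions hidden in the cited correspondence; in particular, that allowing resultant $-1$ as well as $+1$ is exactly what matches the wide (rather than narrow) class group $\cl(K)$, where principality of $\mathfrak{a}^n$ only requires a generator of norm $\pm|a|^n$.
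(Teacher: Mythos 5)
Your proof is correct and is essentially the paper's algebraic proof (\autoref{subsection:idea-resultant-algebraic}): the same resultant--norm identity $\res(\norm(-\beta x+\alpha y),\codirectrix)=\norm(\codirectrix(\alpha,\beta))$ combined with the observation that $\codirectrix(\alpha,\beta)$ ranges over the $\bz$-span of the monomial generators of $I_q^n$, only written with the explicit generators $(a,\omega)$ instead of abstract $\alpha,\beta$. (The paper also gives a second, geometric proof via surjectivity of $H^0(\bp^1_\bz,\sco(n))\to H^0(X,\scl_q^{\otimes n})$, but your route is the algebraic one.)
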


We prove a stronger form of \autoref{theorem:resultant-intro} in
\autoref{theorem:resultant},
which also applies when $K$
is replaced with an order in a quadratic field.
We give a geometric proof in \autoref{subsection:idea-resultant}
and an algebraic proof in \autoref{subsection:idea-resultant-algebraic}.

However, \autoref{theorem:resultant-intro} is not completely satisfactory for
enumerating
$n$-torsion elements in class groups of quadratic fields.
In order to enumerate such elements, given a quadratic form $q$, we 
will also need a good understanding of the set of possible elements
$\codirectrix$
such that the resultant $\res(q,\codirectrix) = \pm 1$.

We now introduce notation to give the description of
these possible elements $\codirectrix$ in terms of orbits of a certain group
action over $\spec \bz$.
Let 
$\affineSections n$ denote the $3 + (n+1)$ dimensional affine space
defined in 
\autoref{definition:affine-sections}
parameterizing coefficients of
pairs of polynomials $(q,\codirectrix)$.
Let $\unitResultant n \subset \affineSections n$ denote the open subscheme
defined in \autoref{definition:unit-resultant}
parameterizing pairs $(q,\codirectrix)$ whose resultant is a unit.
Let $\affineAut n$ denote the algebraic group defined in
\autoref{definition:automorphism-groups},
which is generated by $\gl_2$ acting on the $x$ and $y$ coordinates,
$\bg_m$ diagonally scaling $q$ and $\codirectrix,$ and $\bg_a^{n-1}$ adding multiples of
$q$ to $\codirectrix$.
Also recall the definition of the $n$-Selmer group of a number field $K$, 
$\sel_n(K)$,
as defined in \autoref{remark:n-selmer-group-of-number-field}.
\begin{theorem}
	\label{theorem:z-orbit-parameterization}
	Let $K$ be a quadratic number field of discriminant $d$.
	There is a bijection from orbits $(q,\codirectrix)$ in $\unitResultant n(\bz)/
	\affineAut n(\bz)$ satisfying $\disc(q) = d$ 
	to the set quotient of $\coker \left( \sel_n(\bq) \to \sel_n(K) \right)$ by
	the action of inversion coming from the nontrivial automorphism of $K$
	over $\bq$.
\end{theorem}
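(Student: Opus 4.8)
The plan is to build the bijection by passing through the classical dictionary between binary quadratic forms and ideal classes, and then to reinterpret the auxiliary polynomial $\codirectrix$ as a choice of generator exhibiting $n$-torsion, i.e.\ as a lift of a class in $\cl(K)[n]$ to the Selmer group. Fix a pair $(q,\codirectrix)$ with $\res(q,\codirectrix)\in\{\pm 1\}$ and $\disc(q)=d$, let $\theta$ be a root of $q(x,1)$, and set $\beta := \codirectrix(\theta,1)\in K^{*}$, where $K=\bq(\theta)$. The resultant formula gives $\res(q,\codirectrix)=a^{n}\,\norm_{K/\bq}(\beta)$ up to sign, so $\res(q,\codirectrix)=\pm 1$ forces $\norm_{K/\bq}(\beta)=\pm a^{-n}$. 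First I would record the two elementary invariances that let $\beta$ descend to the orbit: adding a multiple of $q$ to $\codirectrix$ (the $\bg_a^{n-1}$-action) does not change $\beta$, since $q(\theta,1)=0$, and the $\bg_m$-action rescales $\beta$ by $\pm 1$ over $\bz$. Thus the only genuinely new datum carried by $\codirectrix$, beyond the form $q$ itself, is the class of $\beta$ in $K^{*}/(K^{*})^{n}$ modulo $\pm 1$.

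Second, I would set up the form side. By the cited correspondence of Wood, $\sl_2(\bz)$-equivalence classes of primitive forms of discriminant $d$ biject with $\cl(K)$ (or with the relevant order), sending $q$ to $\fa := \langle a,\, a\theta\rangle$ with $\norm(\fa)=a$. Enlarging the structure group from $\sl_2$ to $\gl_2$ identifies each class with its inverse, because a determinant $-1$ change of variables carries $\fa$ to its conjugate $\overline{\fa}$, and $\fa\,\overline{\fa}=(a)$ is principal; this is precisely the ``inversion coming from the nontrivial automorphism of $K$ over $\bq$'' appearing on the target side. Hence the $\gl_2(\bz)$-orbit of $q$ records exactly a class $[\fa]\in\cl(K)$ taken modulo inversion.

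Third, I would fuse the two halves. The computation of $\norm_{K/\bq}(\beta)$ together with $\fa\,\overline{\fa}=(a)$ shows, after the integrality bookkeeping, that $\res(q,\codirectrix)$ is a unit if and only if $\fa^{n}$ is principal with a generator determined by $\beta$; in particular $[\fa]\in\cl(K)[n]$, recovering \autoref{theorem:resultant-intro}. More precisely, the map $[\beta]\mapsto[\fa]$ realizes $[\beta]$ as an element of $\sel_n(K)$ whose image under the boundary map $\sel_n(K)\to\cl(K)[n]$ is $[\fa]$ up to inversion, and two choices of $\codirectrix$ for the same $q$ differ by an element of $\sco_K^{*}/(\sco_K^{*})^{n}$, which is exactly the fibre of this boundary map. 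Quotienting additionally by the $\pm 1$ from $\bg_m$ and from the sign of the resultant identifies $\beta$ with $-\beta$, i.e.\ kills the image of $\sel_n(\bq)=\langle -1\rangle$; assembling over all classes produces a well-defined map from $\unitResultant{n}(\bz)/\affineAut{n}(\bz)$ to $\coker(\sel_n(\bq)\to\sel_n(K))$ modulo inversion.

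Finally, I would prove this map is a bijection by constructing an inverse: given $[\beta]\in\sel_n(K)$ with $(\beta)=\fb^{n}$, choose a form $q$ representing $[\fb]$ and solve for an integral polynomial $\codirectrix$ of degree $n$ with $\codirectrix(\theta,1)=\beta$ and $\res(q,\codirectrix)$ a unit, using that $\beta$ generates $\fb^{n}$. I expect the main obstacle to be exactly this reconstruction: one must show that such a $\codirectrix$ can always be produced from an arbitrary Selmer generator (surjectivity), and that the unit and sign ambiguities in $\beta$ match $\sco_K^{*}/(\sco_K^{*})^{n}$ and $\sel_n(\bq)$ \emph{on the nose} — neither more nor less — so that the $\affineAut{n}(\bz)$-orbits inject into the cokernel. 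Reconciling the geometric inversion (the $\gl_2$ determinant $-1$ coset) with the Galois inversion on $\sel_n(K)$, and treating the real and imaginary cases uniformly where $\sco_K^{*}$ is infinite versus finite, is where the bookkeeping will be most delicate.
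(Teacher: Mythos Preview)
Your outline is sound and would, with the bookkeeping filled in, yield a correct proof; but it is \emph{not} how the paper proceeds. The paper deduces \autoref{theorem:z-orbit-parameterization} in one line from the general stack-theoretic result \autoref{theorem:intro-hypercohomology-to-quotient-stack-bijection}, which identifies $\unitResultant n(\bz)/\affineAut n(\bz)$ over a fixed cover $X\to\spec\bz$ with the quotient of the hypercohomology group $H^1(\spec\bz,\, g_*\bg_m/\bg_m\xrightarrow{\times n} g_*\bg_m/\bg_m)$ by $\aut_{X/\spec\bz}$, together with the cohomological \autoref{lemma:comparison-selmer}, which identifies that hypercohomology with $\coker(\sel_n(\bq)\to\sel_n(K))$. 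All of the orbit and stabilizer matching that you propose to carry out by hand is absorbed into the equivalences $\bigl[\unitResultant n/\affineAut n\bigr]\simeq\smilestack n\simeq\singsstack n$ of \autoref{lemma:unit-resultant-equivalence} and \autoref{theorem:equivalence}, and the surjectivity you flag as the main obstacle is handled uniformly by the vanishing $H^1(\spec\bz,\pgl_2)=H^1(\spec\bz,\bg_m)=H^1(\spec\bz,\bg_a)=0$ in \autoref{lemma:hirzebruch-quotient-commutes-with-z-points}.

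What your approach buys is transparency: it is a direct extension of the algebraic proof of \autoref{theorem:resultant} in \autoref{subsection:idea-resultant-algebraic}, and your observation that $\beta'=(c\theta'+d)^n\beta$ under an $\sl_2(\bz)$-change of variables makes the well-definedness in $K^\times/(K^\times)^n$ immediate without any geometry. What the paper's approach buys is uniformity and generality: the same argument works over any normal integral base $B$ (\autoref{theorem:intro-hypercohomology-to-quotient-stack-bijection}), the identification of isotropy groups comes for free (\autoref{theorem:hypercohomology-to-quotient-stack-bijection}), and the delicate points you correctly anticipate---matching $\sco_K^\times/(\sco_K^\times)^n$ exactly with the fibre, reconciling the two inversions, handling real versus imaginary $K$---never have to be confronted case by case because they are subsumed in the exact sequences of \autoref{example:cohomology-from-triangles} and \autoref{lemma:comparison-selmer}.
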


\autoref{theorem:z-orbit-parameterization} follows fairly immediately from
\autoref{theorem:intro-hypercohomology-to-quotient-stack-bijection}
by combining it
with the more or less self-contained \autoref{lemma:comparison-selmer}.
In fact, the significantly more general 
parameterization 
\autoref{theorem:intro-hypercohomology-to-quotient-stack-bijection}
works over an arbitrary normal integral base scheme.
To state this more general result,
let $\Pi_n$ denote the natural map from 
$\left[ \unitResultant n/\affineAut n \right]$ to the stack of
degree $2$ finite locally free covers.
Locally, $\Pi_n$ is given by sending
$(q,\codirectrix)$ to the vanishing locus $V(q)$, viewed as a degree $2$ finite
locally free cover.
Here and throughout, we use cohomology with abelian sheaf coefficients to mean
flat cohomology.

\begin{theorem}
	\label{theorem:intro-hypercohomology-to-quotient-stack-bijection}
	Let $B$ be a
	normal integral scheme and $n \geq 3$ an integer.
	Fix a degree $2$ locally free generically \'etale cover $g: X \to B$.
	There is an injection from orbits $(q,\codirectrix) \in 
	\unitResultant n(B)/\affineAut n(B)$ such that
	$V(q) \simeq X$
	to
	$\Pi_n^{-1}([X]) \subset [\unitResultant n/\affineAut n](B)$.
	In turn, $\Pi_n^{-1}([X])$
	is identified bijectively with
	elements of $H^1(B, g_* \bg_m/\bg_m \xra{\times n} g_*
	\bg_m/\bg_m)/\aut_{X/B}(B)$.
	The above injection is a bijection if $H^1(B, \pgl_2) = H^1(B, \bg_m) =
	H^1(B, \bg_a) =0$.
\end{theorem}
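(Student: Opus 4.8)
The plan is to exploit the standard dictionary between $B$-points of a quotient stack and torsor data. A $B$-point of $\left[\unitResultant n/\affineAut n\right]$ consists of an $\affineAut n$-torsor $P \to B$ together with an $\affineAut n$-equivariant map $P \to \unitResultant n$, and two such data are isomorphic exactly when related by an isomorphism of torsors. The objects whose underlying torsor is trivial are precisely the image of the set quotient $\unitResultant n(B)/\affineAut n(B)$, and this map is injective: an isomorphism of the trivial torsor $B \times \affineAut n$ commuting with the maps to $\unitResultant n$ is multiplication by an element $g \in \affineAut n(B)$, which is exactly the relation defining the orbit set. Since $\Pi_n$ sends $(q,\codirectrix)$ to $V(q)$, the orbits with $V(q) \simeq X$ land in $\Pi_n^{-1}([X])$, giving the asserted injection for free.

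The heart of the argument is the identification of $\Pi_n^{-1}([X])$ with the hypercohomology group, which I expect to hold with no hypothesis on $B$ beyond normality and integrality. First I would realize the fiber over the fixed cover $X = V(q)$ as a quotient stack for the stabilizer of the $q$-data, reducing the problem to describing the possible $\codirectrix$ modulo the action of the unipotent radical $\unipotentRadical n \cong \bg_a^{n-1}$, the diagonal scaling, and $\aut_{X/B}$. The key observation is that adding multiples of $q$ to $\codirectrix$ is precisely reduction modulo the ideal of $X$, so that the $\unipotentRadical n$-quotient records only the restriction $\codirectrix|_X$, a section of the line bundle $L := \mathcal{O}(n)|_X$ on $X$. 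Under this translation the resultant becomes the norm $\norm_{X/B}(\codirectrix|_X)$, so the condition $\res(q,\codirectrix) \in \bg_m$ says exactly that $\codirectrix|_X$ is a norm-unit trivialization of $L$.

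Next I would match this data with $H^1$ of the two-term complex. Because $\codirectrix$ has degree $n$, the class of $L = \mathcal{O}(n)|_X$ is canonically an $n$-th multiple of the class of $\mathcal{O}(1)|_X$ in $g_* \bg_m/\bg_m$, and $\codirectrix|_X$ supplies the accompanying trivialization; this is precisely the input classified by Kummer theory for the sheaf $g_* \bg_m/\bg_m$. Concretely, the long exact sequence attached to $[g_* \bg_m/\bg_m \xra{\times n} g_* \bg_m/\bg_m]$ presents $H^1$ as an extension of $H^1(B, g_* \bg_m/\bg_m)[n]$ by the cokernel of multiplication by $n$ on $H^0$, and I would check that the $n$-torsion class recovers the line bundle with its scaling ambiguity while the $H^0$-term records the residual choice of trivialization; dividing by $\aut_{X/B}(B)$ accounts for automorphisms of the cover. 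I expect this matching---verifying that the geometric data $(L,\codirectrix|_X)$ corresponds term-by-term to the hypercohomology presentation, with the $\times n$ genuinely produced by the degree of $\codirectrix$---to be the main obstacle, as it is where the hypothesis $n \geq 3$ and the precise bookkeeping of scalings must be pinned down.

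Finally, for the bijectivity clause I would show that the three vanishing hypotheses force every $\affineAut n$-torsor on $B$ to be trivial, so that every object of $\Pi_n^{-1}([X])$ has trivial underlying torsor and hence comes from an orbit. Using the decomposition of $\affineAut n$ into its reductive part $\reductive n$ and the unipotent radical $\unipotentRadical n$, it suffices to trivialize torsors for each graded piece along the filtration. The unipotent contribution is governed by $H^1(B,\bg_a)^{n-1}$, which vanishes; the reductive contribution is governed by rank-$2$ bundle data together with the scaling $\bg_m$, and the sequence $1 \to \bg_m \to \gl_2 \to \pgl_2 \to 1$ shows that $H^1(B,\pgl_2) = H^1(B,\bg_m) = 0$ forces every such bundle to be trivial. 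Assembling these gives triviality of the full torsor, and hence surjectivity of the injection constructed above.
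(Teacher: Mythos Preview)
Your framing of the injection via the standard orbits-to-stack-points map, and of the bijectivity clause via triviality of $\affineAut n$-torsors under the three vanishing hypotheses, is correct and matches the paper exactly (this is \autoref{lemma:hirzebruch-quotient-commutes-with-z-points} together with the filtration of $\affineAut n$ in \autoref{lemma:affine-aut-sequences}).

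Where you diverge substantially is in the identification of $\Pi_n^{-1}([X])$ with the hypercohomology group. You propose to do this directly: restrict $\codirectrix$ to $X$, observe that the unit-resultant condition becomes a norm-unit condition, and match the resulting data $(\mathcal O(1)|_X,\ \text{trivialization of }\mathcal O(n)|_X)$ term-by-term with a hypercohomology class. The paper takes a completely different and much longer route: it first proves an equivalence of stacks $[\unitResultant n/\affineAut n]\simeq\smilestack n\simeq\singsstack n$ (the latter being the heart of the paper, \autoref{theorem:equivalence}, requiring all of \autoref{section:hirzebruch-bijection}), then identifies $g_*\bg_m/\bg_m$ with the smooth locus $E_g^{\sm}$ of an auxiliary singular genus~$1$ curve (\autoref{lemma:smooth-locus-of-curve-from-cover}), and finally invokes the theory of $n$-coverings of Weierstrass curves (\autoref{proposition:equivalent-fiber-of-stack}) to pass from $\singsstack n$-points to hypercohomology. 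Your direct approach is essentially the heuristic of \autoref{subsection:idea-resultant} and \autoref{section:geometric-bijection}, which the paper presents as motivation but explicitly does not use for the proof.

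Your route may well be completable, and if so it would be considerably shorter; but as written it has a real gap. The statement asserts that the \emph{full} fiber $\Pi_n^{-1}([X])$ --- including stack points whose underlying $\affineAut n$-torsor is nontrivial --- is in bijection with the hypercohomology quotient, with no $H^1$-vanishing hypothesis. Your restriction argument $\codirectrix\mapsto\codirectrix|_X$ only makes literal sense when there is an actual pair $(q,\codirectrix)$, i.e., for orbit representatives. You gesture at ``realizing the fiber as a quotient stack for the stabilizer of the $q$-data,'' but this presupposes a global embedding $X\hookrightarrow\bp^1_B$ (equivalently, freeness of $g_*\sco_X$), which need not exist; and even granting that, the identification of $B$-points of this stabilizer-quotient with hypercohomology classes is exactly the step you flag as the ``main obstacle'' without resolving. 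The paper's geometric route sidesteps this entirely: because it establishes equivalences at the level of stacks (in particular allowing Hirzebruch \emph{twists} and Brauer--Severi schemes rather than $\mathbb F_{n-2}$ and $\bp^{n-1}$), the twisted objects are carried along automatically, and the fiber description comes out of the $n$-covering formalism with no further bookkeeping.
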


We prove \autoref{theorem:intro-hypercohomology-to-quotient-stack-bijection}
and a further statement identifying corresponding stabilizers
in \autoref{theorem:hypercohomology-to-quotient-stack-bijection}.
See \autoref{section:geometric-bijection} for the geometric construction which
gives the idea behind the proof of
\autoref{theorem:intro-hypercohomology-to-quotient-stack-bijection}.
We now describe a number of salient features of our approach.
\begin{remark}
	\label{remark:selmer-relation}
	There have been many conjectures regarding Selmer groups of elliptic
	curves which are eerily similar to those governing class groups.
	We provide a seemingly new link between these two sets of
	heuristics by observing that $n$-torsion class groups can understood in
	terms of the $n$-covering group of a certain associated singular genus
	$1$ curve, constructed in
	\autoref{notation:singular-genus-1-construction}.
	The $n$-covering group of the smooth locus of this genus $1$ curve over
	$B$ is isomorphic to 
	$H^1(B, g_* \bg_m/\bg_m \xrightarrow{\times n} g_*
	\bg_m/\bg_m)$. In the case $B = \spec \bz$, this hypercohomology group is isomorphic to $\coker(\sel_n(\bq) \to
	\sel_n(K))$.
	On the other hand, using cohomological exact sequences,
	the $n$-covering group is closely related to the
	$n$-Selmer group of the $1$-dimensional algebraic group $g_* \bg_m/\bg_m$.
	This suggests a way to realize conjectures 
	regarding torsion of class groups 
	(such as those in \cite{cohen1984heuristicsShort,cohen1984heuristics})
	and $n$-Selmer groups of abelian
	varieties
	(such as those in
	\cite{poonenR:random-maximal-isotropic-subspaces-and-selmer-groups,bhargavaKLPR:modeling-the-distribution-of-ranks-selmer-groups})
	both as special cases of conjectures regarding $n$-covering groups
	of (not necessarily proper) algebraic groups.
	In particular, we believe conjectures on $n$-torsion in class groups of quadratic
	fields and $n$-Selmer groups of
	elliptic curves should be special cases of
	conjectures on $n$-covering groups of
	$1$-dimensional algebraic groups.
\end{remark}
\begin{remark}
	\label{remark:}
	\autoref{theorem:intro-hypercohomology-to-quotient-stack-bijection}
	is proven via a geometric perspective which works over an arbitrary
	base.
	Some previous results in arithmetic statistics adapting a geometric perspective include
	\cite[Theorem 2.1]{wood:compositionBase},
	\cite[Theorem 1.1 and 2.1]{wood:geometric-bh-III-quartic},
	and
	\cite[Theorem 1.4]{wood:composition-bharg-II-III-paramaterization}.
	We find this perspective helps clarify the assumptions on the base
	scheme needed to obtain the desired orbit parameterization.
	This perspective gives a natural motivation, described in
	\autoref{section:geometric-bijection},	
	for how one might come up with
	the description of the relevant moduli stack as a global quotient.
	Specifically, the affine space can be understood as that associated to a certain linear 
	system on a Hirzebruch surface, and the group we quotient by is the automorphism
	group of that Hirzebruch surface.
\end{remark}

\begin{remark}
	\label{remark:}
	The construction appearing in \autoref{section:geometric-bijection}
	generalizes to
	give a new way to understand $n$-torsion line bundles on covers of arbitrary degree.
	It seems that in several cases, structure theorems similar to
	\autoref{theorem:intro-hypercohomology-to-quotient-stack-bijection}
	should exist. For example, it seems an interesting and tractable problem
	to work out an analog of 
	\autoref{theorem:intro-hypercohomology-to-quotient-stack-bijection}
	for $n$-torsion line bundles on degree $3$ covers (in place of degree
	$2$ covers).
\end{remark}

We next discuss some connections to parameter spaces for the class groups in
	quadratic fields previously appearing in the literature.

\begin{remark}
	\label{remark:}
	Although the orbit parameterization of \autoref{theorem:intro-hypercohomology-to-quotient-stack-bijection}
	is new,
	a different parameterization of nearly the same stack in the case $n =
	3$ was given in
	\cite[Theorem 13]{bhargava:compositionLawsI}.
	See also
	\cite[Corollary 11]{bhargavaV:the-mean-number-of-3-torsion-elements}.
	There is a map from our parameterization to that in 	
	\cite[Theorem 13]{bhargava:compositionLawsI}
	given by ``taking the inflection subscheme,''
	a construction is discussed in
	\cite[Chapter 4]{landesman:thesis}.

	It appears to us that this construction has some semblance of
	the construction of ``bigger spaces which separate invariants'' in the
	literature.
	For example, there is a space $W$ appearing in
	\cite[\S2]{bhargavaSW:squarefree-values-of-polynomial-discriminants-i}
	used in the proof of \cite[Theorem
	1.5]{bhargavaSW:squarefree-values-of-polynomial-discriminants-i}.
	The relation between this ``bigger space'' $W$ and the space of squarefree
	polynomials seems similar to the relation between $\unitResultant n$ and the
	space parameterizing these inflection subschemes.
	This space parameterizing inflection subschemes turns out to be an open
	whose complement
	has codimension $2$ in $\ba^4$ when $n = 3$ but for higher $n$ is a
	variety $X_n$ of
	dimension $4$ embedded in $\ba^{n+1}$. 
	More precisely, $X_n$ is the affine cone over the complement of the
	rational normal curve in its secant variety.
	The fact that $X_n$ is not a dense open in $\ba^{n+1}$ explains why the same counting
	procedure that works when $n = 3$ does not immediately apply for higher
	$n$.

	When working over function fields and taking the Weil restriction along
	$\ba^1_{\mathbb F_q} \to \spec \mathbb F_q$, related Hurwitz stacks have
	appeared in \cite{EllenbergVW:cohenLenstra}.
	However, the Hurwitz stacks there parameterize $\bz/n\bz$ torsors over
	degree $2$ covers and so correspond to order $n$ {\em quotients} of the class
	group.
	On the other hand, the stacks appearing in this paper parameterize
	$\mu_n$ torsors over degree $2$ covers, and so yield order $n$
	{\em subgroups} of the class group (together with data relating to
	the units).
	Of course, the number of order $n$ subgroups and quotients of a finite
	abelian group have the same cardinality, but the relevant moduli stacks
	are different.
\end{remark}
We now comment on the difficulty in using this result to determine the average
size of $n$-torsion in class groups of quadratic fields.
\begin{remark}
	\label{remark:big-open}
	For this remark, we refer to an open subscheme of affine space whose complement has
	codimension $1$ as a {\em small open} and an open subscheme of affine
	space whose complement has codimension at least $2$ as a {\em big open}.
	A seemingly new aspect of the orbit parameterization given in
	\autoref{theorem:intro-hypercohomology-to-quotient-stack-bijection}
	is that it can be understood as the points of the quotient
	stack $[\unitResultant n / \affineAut n]$ where $\unitResultant n$ is a
	small open subscheme of affine space.

%	As far as we are aware, in previous work used for counting objects in
%	arithmetic statistics, the relevant open in affine space has 
%	always been a big open.
%	For example, when counting cubic fields in
%	\cite{bhargava:davenport-second-order},
%	the authors only aim to count cubic fields such that the associated ring of integers
%	is regular. This corresponds to integral points of a certain big open
%	lying in the relevant $\mathbb A^4$ parameterizing binary cubic
%	forms whose vanishing locus is regular.
%	The sieving procedure in 
%	\cite[\S5.5]{bhargava:davenport-second-order}, relying on density
%	computations of 
%	\cite[\S5.5]{bhargava:davenport-second-order}, is used to count points
%	in this big open.
%	To give a highly incomplete sampling of further examples, this is the case for the relevant stacks implicit in
%	\cite{bhargava:density-quartic},
%	\cite{bhargava:enumeration-quintic}
%	\cite{bhargava-shankar:binary-quartic-forms-having-bounded-invariants},
%	and
%	\cite{altugSVW:the-number-of-quartic-d4-fields-ordered-by-conductor}.
%
	To our knowledge, these quotient stacks of small opens
	have not previously appeared in
	arithmetic statistics research. 
	Given this, one might surmise such quotient stacks
	of small opens are a rare
	phenomenon.
	Surprisingly, we believe this phenomenon is quite ubiquitous, and have
	found similar such stacks appearing in preliminary investigations of many
	other problems.
	It seems likely that similar parameterizations naturally appear when investigating
	$n$-torsion in cubic fields, higher moments of $n$-torsion in quadratic
	fields, and $n$-torsion in fields associated to binary forms.
	However, we have not carefully verified the existence of these
	parameterizations, and believe it
	would be quite interesting and nontrivial to construct such
	parameterizations.

	We conclude this remark by pointing out a general feature which explains
	why counting points on quotients stacks of small opens by group actions
	is typically much more difficult than of big opens.
	If our small open is of the form $\ba^m - H$ for $H \subset \ba^m$ a
	hypersurface, nearly all maps $\spec \bz \to \ba^m$ will
	meet $H$ nontrivially.
	However, in a big open, $\spec \bz$ points will rarely meet the
	codimension $2$ complement, and so may typically be counted by a sieving procedure.
	In fact, in the stacks associated to small opens that come up, it is
	typically the case that $\spec \bz$ points lying in this small open
	agree with $\spec \bz$ points of a certain related hypersurface.
	This stems from the fact that $\bz$ has finitely many units.
	For example, in this paper, we investigate the small open 
	$\unitResultant n$ where the
	resultant of two polynomials is a unit. Because the only units in $\bz$
	are $\pm 1$, the $\spec \bz$ points of this open agrees with the $\spec
	\bz$ points of the hypersurface where the resultant is
	$\pm 1$.
	If one could asymptotically count points on this hypersurface, one
	could make great progress toward proving many of the conjectures of
	Cohen-Lenstra
	as in \cite{cohen1984heuristicsShort,cohen1984heuristics}.
\end{remark}

\subsection{Overview}
\label{subsection:overview}

We now give an overview of the remaining sections of the paper.
In \autoref{section:direct-proofs}, we give two direct and simple proofs of
\autoref{theorem:resultant-intro}.
In our opinion, the most important section for understanding the proof
of \autoref{theorem:hypercohomology-to-quotient-stack-bijection}
is \autoref{section:geometric-bijection},
which describes the main geometric idea for parameterizing
$n$-torsion in class groups.
Strictly speaking, the rest of the paper is independent of
\autoref{section:geometric-bijection}, but
this seems to be a more intuitive way to understand
what is going on than the actual proof.
In \autoref{section:hirzebruch-autormorphisms},
we introduce the relevant group $\affineAut n$ of automorphisms we will be quotienting by and
describe its relation to Hirzebruch surfaces.
Following this, we collect definitions of stacks
used throughout the paper in \autoref{section:stacks-definitions}
and prove basic facts about them.
Next, we give numerous equivalent characterizations
of the stack parameterizing $n$-coverings of genus $1$ curves
in \autoref{section:genus-1-curves}.
The heart of the algebro-geometric argument occurs in
\autoref{section:hirzebruch-bijection},
where we relate singular genus $1$ curves to divisors on
Hirzebruch surfaces.
We then explain the connection between singular genus $1$
curves and degree $2$ covers in \autoref{section:degree-two},
which will enable us to connect the preceding discussion 
regarding genus $1$ curves
to quadratic field extensions.
Using the analysis thus far, we deduce
\autoref{theorem:hypercohomology-to-quotient-stack-bijection},
and hence \autoref{theorem:intro-hypercohomology-to-quotient-stack-bijection},
in
\autoref{section:unit-resultant}.
In \autoref{section:cohomology}, when the base $B = \spec \bz$,
we connect the relevant $n$-covering groups to $n$-Selmer groups of number
fields to finish the proof of \autoref{theorem:z-orbit-parameterization}.
We conclude \autoref{section:cohomology} with several examples, starting in \autoref{subsection:examples}.

See \autoref{figure:proof-schematic} for a schematic depiction of how the proof
\autoref{theorem:hypercohomology-to-quotient-stack-bijection} fits together.

At various points the reader may find it useful to consult
\cite{landesman:thesis}, which spells
out some of the arguments of this paper in more detail.

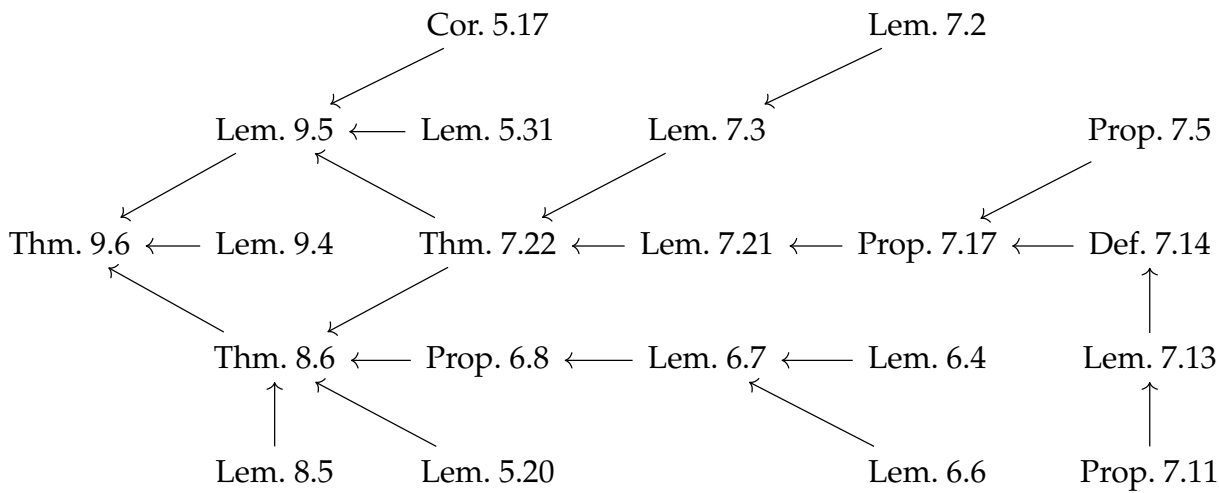
\begin{figure}
	\centering
\begin{equation}
  \nonumber
   \begin{tikzcd}[column sep = 1.7em]
	\qquad &&
  \text{Cor.}~\ref{corollary:singsstack-representable} \ar{dl}&
  &
  \text{Lem.}~\ref{lemma:locally-free-e-n-2-f} \ar{dl} & \\
%5
  \qquad &
  \text{Lem.}~\ref{lemma:unit-resultant-equivalence} \ar{dl} &
  \text{Lem.}~\ref{lemma:smile-stack-representable} \ar{l}&
  \text{Lem.}~\ref{lemma:forward-equivalence} \ar{dl}&&
  \text{Prop.}~\ref{proposition:genus-0-blow-up}\ar{dl}\\
%6
  \text{Thm.}~\ref{theorem:hypercohomology-to-quotient-stack-bijection}&
  \text{Lem.}~\ref{lemma:hirzebruch-quotient-commutes-with-z-points} \ar{l}&
  \text{Thm.}~\ref{theorem:equivalence} \ar{dl} \ar{ul}&
  \text{Lem.}~\ref{lemma:reverse-equivalence} \ar{l}&
  \text{Prop.}~\ref{proposition:f-is-hirzebruch-twist} \ar{l}&
  \text{Def.}~\ref{notation:f-surface} \ar{l}\\
%7
  \qquad & \text{Thm.}~\ref{theorem:fiber-bijection} \ar{ul}&
  \text{Prop.}~\ref{proposition:equivalent-fiber-of-stack} \ar{l}&
   \text{Lem.}~\ref{lemma:fiber-of-stack-pre-quotient} \ar{l}  &
  \text{Lem.}~\ref{lemma:sections-of-quotient-and-torsors} \ar{l}&
  \text{Lem.}~\ref{lemma:map-to-grassmannian} \ar{u}\\
%8
  \qquad & \text{Lem.}~\ref{lemma:smooth-locus-of-curve-from-cover} \ar{u}&
  \text{Lem.}~\ref{lemma:lift} \ar{ul}& &
  \text{Lem.}~\ref{lemma:compactification-scheme} \ar{ul} &
  \text{Prop.}~\ref{proposition:w-flat} \ar{u}
	  \end{tikzcd}
\end{equation}
\caption{
A schematic diagram depicting the structure of the proof of
\autoref{theorem:hypercohomology-to-quotient-stack-bijection}, a slightly
stronger form of
\autoref{theorem:intro-hypercohomology-to-quotient-stack-bijection}.}
\label{figure:proof-schematic}
\end{figure}

\subsection{Notation}
\label{section:notation}
We collect some notation used throughout the paper.

\subsubsection{Notation for Hirzebruch surfaces}
	\label{notation:hirzebruch}
For $n \geq 3$, let $\mathbb F_{n-2}$ denote the Hirzebruch surface $\bp_{\bp^1_\bz} \left( \sco_{\bp^1_\bz} \oplus \sco_{\bp^1_\bz}(n-2) \right)$ over $\spec \bz$.
By construction, we have a factorization $\mathbb F_{n-2} \xra{g} \bp^1_\bz \xra{h} \spec \bz$.
More generally, for $B$ a base scheme, let $(\mathbb F_{n-2})_B$ denote 
$\proj_{\bp^1_B}(\sco_{\bp^1_B} \oplus \sco_{\bp^1_B}(n-2))$.
When the base $B$ is clear from context,
we denote $(\mathbb F_{n-2})_B$ simply by $\mathbb F_{n-2}$ and
denote the projections
$(\mathbb F_{n-2})_B \xra{g} \bp^1_B \xra{h} B$
by $g$ and $h$.
%and let $e$ denote the class of the {\em relative directrix} on $\mathbb F_{n-2}$. 
Consider the surjection $\sco_{\bp^1_B} \oplus \sco_{\bp^1_B}(n-2) \to \sco_{\bp^1_B}$ of sheaves on $\bp^1_B$. This surjection is unique up to scaling
on $B$, and so defines a distinguished divisor $E \subset (\mathbb F_{n-2})_B$, whose class we denote by $e$.
We call this divisor the {\em directrix}.
We also refer to a section $\bp^1 \to (\mathbb F_{n-2})_B$ not meeting $E$
as a {\em codirectrix}.
If $\sigma: B \to \bp^1_B$ is a section of $h$, we use $f$ denote the class of
the ``fiber'' divisor $g^{-1}(\sigma)$.
See \cite[Proposition IV.1]{beauville:complex-algebraic-surfaces}
for general background on the Picard group of Hirzebruch surfaces.

\subsubsection{Table of notation}
For the reader's convenience, in \autoref{table:notation}
we collect notation introduced throughout the paper,
roughly in order of appearance.
The descriptions are intended to be terse and not precise.

\begin{figure}
	\centering
\scalebox{0.7}{
	\begin{tabular}{|l|l|l|}
		\hline
	Notation & Description & Location defined \\\hline
$\weierstrass$ & Stack of Weierstrass curves & \autoref{definition:weierstrass}
\\ \hline
$\universalW$ & Universal curve over $\weierstrass$ &
\autoref{definition:smooth-universal-genus-1} 
\\ \hline
$\smoothUW$ & Smooth locus of $\universalW$ over $\weierstrass$ &
\autoref{definition:smooth-universal-genus-1}
\\ \hline
$\cuspw$ & Substack of cuspidal curves in $\weierstrass$ &
\autoref{definition:node-and-cusp-loci}
\\ \hline
$\nodew$ & Substack of nodal curves in $\weierstrass$ & 
\autoref{definition:node-and-cusp-loci}
\\ \hline
$\singw$ & Substack of singular curves in $\weierstrass$ &
\autoref{definition:singsw}
\\ \hline
$\singsw$ & Stack of Weierstrass curves with a marked section in the singular
locus & \autoref{definition:singsw}
\\ \hline
$\cuspsw$ & Substack of $\singsw$ parameterizing cuspidal curves &
\autoref{definition:cuspsw}
\\ \hline
$\nodesw$ & Substack of $\singsw$ parameterizing nodal curves &
\autoref{definition:cuspsw}
\\ \hline
$\nhilb n$ & Hilbert scheme of geometrically
integral degree $n$ genus $1$ curves in $\bp^{n-1}$ & \autoref{definition:nhilb}
\\ \hline
$\singshilb n$ & Curves in $\nhilb n$ with a marked
singular point & \autoref{definition:singshilb-and-singsstack}
\\ \hline
$\nstack n$ & Stack $n$-coverings for Weierstrass curves &
\autoref{definition:nstack}
\\ \hline
$\onestack n$ & Stack of genus 1 degree $n$ curves &
\autoref{definition:nhilb}
\\ \hline
$\nodestack n$ & Substack of $\onestack n$ parameterizing nodal curves
& \autoref{definition:node-and-cusp-loci}
\\ \hline
$\cuspstack n$ & Substack of $\onestack n$ parameterizing cuspidal 
curves & \autoref{definition:node-and-cusp-loci}
\\ \hline
%$\inflectionstack n$ & Substack of stack of $\cuspstack n$ where every point
%is an inflection point &
%\autoref{definition:inflection-stack}
%\\ \hline
$\singsstack n$ & Stack of genus $1$ degree $n$ curves with a 
section in the singular locus & \autoref{definition:singsstack} 
\\ \hline
$\nodesstack n$ & Substack of $\singsstack n$ parameterizing nodal curves &
\autoref{definition:nodesstack-and-cuspsstack}
\\ \hline
$\cuspsstack n$ & Substack of $\singsstack n$ parameterizing cuspidal curves &
\autoref{definition:nodesstack-and-cuspsstack}
\\ \hline
$\smile n$ & Scheme of smooth curves in the linear system $e + nf$ on
$\mathbb F_{n-2}$ & \autoref{definition:smile-scheme}
\\ \hline
$\smilestack n$ & Stack of smooth curves 
in the linear system $e + nf$ on $(n-2)$-Hirzebruch twists &
\autoref{definition:smile-stack}
\\ \hline
$\affineSections n$ & The affine space associated to $H^0(\bp^1, \sco(2) \oplus \sco(n))$ &
\autoref{definition:affine-sections}
\\ \hline
$E_g$ & The singular genus $1$ curve associated to a degree $2$ finite locally
free map $g$
&\autoref{notation:singular-genus-1-construction}
\\ \hline
$\unitResultant n$ & Open locus of $\affineSections n$ for which the resultant
is a unit
& \autoref{definition:unit-resultant}
\\ \hline
$\affineAut n$ & Group of automorphisms acting on $\affineSections n$ &
\autoref{definition:automorphism-groups}
\\ \hline
$\affineAutSub n$ & Subgroup of $\affineAut n$ ``fixing the
base $\mathbb P^1$''
&\autoref{definition:automorphism-groups}
\\ \hline
$\unipotentRadical n$ & Unipotent radical of $\affineAut n$
&\autoref{definition:automorphism-groups}
\\ \hline
%$\reductive n$ & The maximal reductive quotient of $\affineAut n$
%& \autoref{definition:reductive-quotient}
%\\ \hline
$\projAut n$ & Projective quotient of $\affineAut n$
&\autoref{definition:automorphism-groups}
\\ \hline
$\projAutSub n$ & Projective quotient of $\affineAutSub n$
&\autoref{definition:automorphism-groups}
\\ \hline
%$\infeq q \codirectrix$ & Choice of equation defining the inflection scheme
%associated to $(q, \codirectrix) \in \affineSections n$
%& \autoref{definition:associated-inflection}
%\\ \hline
%$\inflection n C \iota$ & Inflection subscheme of $C$ mapped to $\bp^{n-1}$ via
%$\iota$ 
%& \autoref{definition:inflection} 
%\\ \hline
	\end{tabular}
}
	\vspace{.5cm}
	\caption{Notation introduced in the paper}
	\label{table:notation}
\end{figure}

\subsection{Acknowledgements}
I would like to thank Anand Patel for meeting weekly for many months
to discuss this project.
He suggested numerous ideas appearing here.
I thank
Tony Feng and Eric Rains for stimulating
discussions leading to much of the material in
\autoref{section:genus-1-curves}.
I thank an anonymous referee for helpful suggestions.
I thank 
Levent Alpoge, 
Brian Conrad, 
Sean Cotner,
Jordan Ellenberg, 
Jean Kieffer,
Nikolas Kuhn, 
Jef Laga,
Arpon Raksit,
Arul Shankar, 
Alex Smith,
Ashvin Swaminathan,
Ravi Vakil,
Martin Widmer,
Melanie Wood,
Bogdan Zavyalov, 
and 
David Zureick-Brown for further helpful conversations and comments.
This material is based upon work supported by the National Science Foundation
Graduate Research Fellowship Program under Grant No. DGE-1656518.

\section{Direct Proofs of \autoref{theorem:resultant-intro}}
\label{section:direct-proofs}

We now state the generalization of \autoref{theorem:resultant-intro} to orders
and give two simple proofs.
One may give a third proof using \autoref{theorem:hypercohomology-to-quotient-stack-bijection}, as we do
in \cite[\S3.3.14]{landesman:thesis}.

\begin{theorem}
	\label{theorem:resultant}
	Let $n \geq 1$ be an integer and fix an integral degree $2$ free
	$\bz$-algebra $R$ of discriminant $d$.
	A primitive quadratic form $q := ax^2 + bxy + cy^2 \in \bz[x,y]$ of discriminant $d$
	corresponds to an element in the subgroup $\cl(R)[n] \subset \cl(R)$ 
	if and only if
	there exists a polynomial $\codirectrix := \sum_{i=0}^n t_i x^i y^{n-i}
	\in \bz[x,y]$
	such that the resultant of $q$ and $\codirectrix$ is either $1$ or $-1$.
\end{theorem}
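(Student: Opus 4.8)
The plan is to run everything through the classical dictionary between primitive forms and invertible ideal classes, reducing the resultant condition to the statement that one explicit element generates the ideal $I^n$. Recall from \cite[Theorem 1.5]{wood:compositionBase} that $q = ax^2 + bxy + cy^2$ corresponds to the class of the invertible fractional $R$-ideal $I = a\bz + \tfrac{-b+\sqrt d}{2}\bz$ (with its orientation), and that $q$ represents an element of $\cl(R)[n]$ exactly when $I^n$ is principal. Since $R$ is an integral domain, $d$ is not a perfect square, so $a \neq 0$; I would then set $\tau = \tfrac{-b+\sqrt d}{2a} \in K := R \otimes_\bz \bq$, a root of $q(t,1)$, noting $a \in I$ and $a\tau = \tfrac{-b+\sqrt d}{2} \in I$. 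A short index computation gives $\norm(I) = |a|$.

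The heart of the argument is an explicit description of $I^n$ together with a resultant--norm identity. First I would observe that $I^n = a^n(\bz + \bz\tau)^n = \spn_\bz(a^n\tau^i : 0 \le i \le n)$, checking via $\tau^2 = -\tfrac ba\tau - \tfrac ca$ that this $\bz$-span is a rank $2$ lattice and is $R$-stable. Hence $I^n$ is precisely the set of elements $\codirectrix(a\tau, a) = a^n\codirectrix(\tau,1)$ as $\codirectrix = \sum_i t_i x^i y^{n-i}$ ranges over integral degree $n$ forms. Second, writing $q = a(x - \tau y)(x - \bar\tau y)$ and applying the product formula for the resultant of two binary forms, I get
\[
\res(q,\codirectrix) = a^n\,\codirectrix(\tau,1)\,\codirectrix(\bar\tau,1) = a^n\,\norm_{K/\bq}\big(\codirectrix(\tau,1)\big),
\]
so that $\norm_{K/\bq}\big(\codirectrix(a\tau,a)\big) = a^{2n}\norm_{K/\bq}(\codirectrix(\tau,1)) = a^n\,\res(q,\codirectrix)$.

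These combine cleanly. For $\gamma = \codirectrix(a\tau,a) \in I^n$, the containment $(\gamma) \subseteq I^n$ of invertible ideals is an equality iff the indices agree, i.e.\ iff $|\norm_{K/\bq}(\gamma)| = \norm(I^n) = |a|^n$, which by the displayed identity is exactly $|\res(q,\codirectrix)| = 1$, i.e.\ $\res(q,\codirectrix) = \pm 1$. Thus any $\codirectrix$ with $\res(q,\codirectrix) = \pm 1$ yields a generator of $I^n$, so $I^n$ is principal and $q$ is $n$-torsion; conversely, if $I^n = (\gamma)$ is principal then $\gamma \in I^n$, so $\gamma = \codirectrix(a\tau,a)$ for some integral degree $n$ form $\codirectrix$, and the norm computation forces $\res(q,\codirectrix) = \pm 1$. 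The hard part will be the bookkeeping in the classical correspondence: pinning down invertibility and orientation so that ``$n$-torsion class'' matches ``$I^n$ principal,'' and handling the real quadratic (indefinite) case, where $\norm_{K/\bq}(\gamma)$ can be negative, so that only $|\res| = 1$ is detected and the sign ambiguity $\pm 1$ is genuinely necessary.
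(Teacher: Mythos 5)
Your proposal is correct and is essentially the paper's own algebraic proof (\autoref{subsection:idea-resultant-algebraic}): both arguments reduce the statement to the identity $\norm_{K/\bq}(\codirectrix(\alpha,\beta)) = \norm(I)^n\res(q,\codirectrix)$ for generators $\alpha,\beta$ of the ideal $I$ attached to $q$, observe that every element of $I^n$ has the form $\codirectrix(\alpha,\beta)$, and conclude that $(\codirectrix(\alpha,\beta)) = I^n$ exactly when $\res(q,\codirectrix) = \pm 1$; you simply work with the explicit generators $a$ and $\tfrac{-b+\sqrt d}{2}$ where the paper keeps $\alpha,\beta$ abstract. (The paper also gives a second, geometric proof via sections of $\sco_{\bp^1_\bz}(n)$ restricted to $V(q)$, which your write-up does not touch.)
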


\subsection{Geometric proof of \autoref{theorem:resultant}}
\label{subsection:idea-resultant}

\begin{figure}
	\centering
	\includegraphics[scale=.6, trim={0 21.5cm 3.5cm 0cm}]{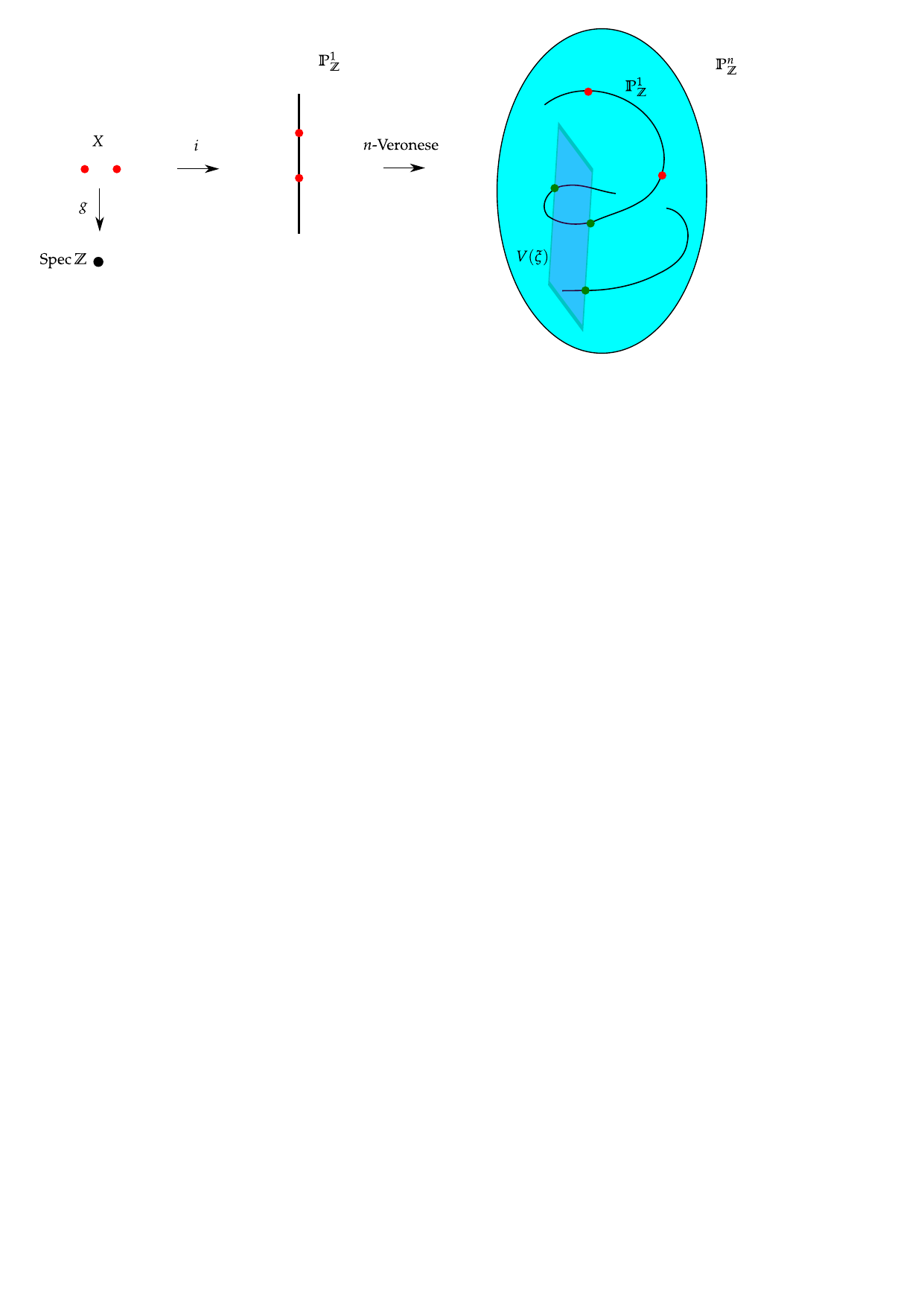}
	\caption{A visualization of why the existence of $\codirectrix$ with $\res(q,\codirectrix)
		= \pm
		1$ forces $q$ to be $n$-torsion.}
	\label{figure:rational-normal-curve}
\end{figure}

Let $R$ and $q$ be as in \autoref{theorem:resultant}.
We wish to show $q$ is $n$-torsion
if and only if there exists $\codirectrix \in \bz[x,y]$ 
with $\res(q,\codirectrix) = \pm 1$.
We accomplish this by the following geometric construction,
which is visualized in
\autoref{figure:rational-normal-curve}.
Let $X := \spec R$, so that $X \simeq \proj \bz[x,y]/(q)$.
Then, $q$ determines an embedding $i: X \to \bp^1_\bz$
corresponding to an invertible
sheaf $\scl_q:= i^* \sco_{\bp^1_\bz}(1)$ on $X$.
See the proof of \cite[Theorem 1.4]{wood:compositionBase} in
\cite[\S3]{wood:compositionBase} for further description of how this bijection works.
An isomorphism $\phi \colon \sco_X \simeq \scl_q^{\otimes n}$
corresponds to a section $s \in H^0(X, \scl_q^{\otimes n})$
vanishing nowhere on $X$.
The restriction map $H^0(\bp^1_\bz, \sco_{\bp^1_\bz}(n)) \to H^0(X,
\scl_q^{\otimes n})$ is surjective because the cokernel injects into
$H^1(\bp^1_\bz, \sco_{\bp^1_\bz}(n) \otimes \sco_{\bp^1_\bz}(X)^\vee) \simeq
H^1(\bp^1_\bz, \sco_{\bp^1_\bz}(n-2)) = 0$.
Therefore, such an $s$ as above exists if and only if there exists
$\codirectrix \in H^0(\bp^1_\bz, \sco_{\bp^1_\bz}(n))$
restricting to $s$.
Therefore, the existence of $\phi$ and $s$ is equivalent to the existence of a section $\codirectrix$
so that $V(\codirectrix)$ does not meet $X = V(q)$. 
The condition that $V(\codirectrix)$ does not meet $V(q)$ can be rephrased as
$\res(q,\codirectrix) = \pm 1$.
\qed

\subsection{Algebraic proof of \autoref{theorem:resultant}}
\label{subsection:idea-resultant-algebraic}

Let $I_q$ denote the ideal class corresponding to $q$.
Using the standard correspondence between equivalence classes of quadratic forms
and ideal classes in quadratic algebras,
we can write $q = \frac{\norm_{R/\bz}(-\beta x + \alpha y)}{\norm_{R/\bz}( \langle
\alpha, \beta \rangle)}$, 
for $\alpha,\beta\in\sco_K$
and $I_q = \langle \alpha,\beta \rangle$.
For any given $\codirectrix \in \bz[x,y]$ homogeneous of degree $n$,
we will show $\res(q,\codirectrix)= \pm 1$ if and only if $I_q^n =
(\codirectrix(\alpha, \beta))$.
This will imply the theorem because $I_q^n = \langle \alpha^n,
\alpha^{n-1}\beta, \ldots, \beta^n \rangle$, and so if
$I_q^n$ is principal, it must be generated by an element of the form
$I_q^n = (\codirectrix(\alpha,\beta))$
for some degree
$n$ homogeneous $\codirectrix \in \bz[x,y]$.

By multiplicativity of the resultant,
\begin{align*}
	\res( \norm_{R/\bz}(-\beta x + \alpha y), \codirectrix) 
	=\res( \norm_{R/\bz}( \langle\alpha, \beta \rangle)q, \codirectrix)
	= \norm_{R/\bz}(
	\langle \alpha, \beta \rangle)^{n}\res(q,\codirectrix).
\end{align*}
Let $\sigma$ denote the unique nontrivial automorphism of $R$ over $\bz$.
Using basic properties of the resultant, such as
\cite[Proposition 8.3]{Lang2002},
\begin{align*}
	\res( \norm_{R/\bz}(-\beta x + \alpha y), \codirectrix) = 
\codirectrix(\alpha, \beta) \cdot \sigma(\codirectrix(\alpha, \beta)) = 
\norm_{R/\bz}\left( \codirectrix(\alpha, \beta) \right).
\end{align*}
Hence,
$\norm_{R/\bz}\left( \codirectrix(\alpha, \beta) \right) =
\norm_{R/\bz}(\langle \alpha, \beta \rangle)^{n}\res(q,\codirectrix)$.
Since we always have $\codirectrix(\alpha,\beta) \in \langle \alpha,\beta \rangle^n$,
the two ideals $\langle \alpha,\beta \rangle^n$ and
$(\codirectrix(\alpha,\beta))$ are equal
if and only if 
$\res(q,\codirectrix)=\pm 1$.
\qed

\section{The geometric bijection}
\label{section:geometric-bijection}

In \autoref{subsection:idea-resultant}, we have already given a direct proof of
\autoref{theorem:resultant}.
However, for the purposes of proving
\autoref{theorem:hypercohomology-to-quotient-stack-bijection},
we need a more precise bijection, established in
\autoref{theorem:hypercohomology-to-quotient-stack-bijection},
which tells us exactly when two pairs
$(q,\codirectrix)$ correspond to the same element of the class group.
In this section, we will describe a geometric construction
to explain when two
pairs $(q,\codirectrix)$ and $(q,\codirectrix')$ 
as in \autoref{theorem:resultant}
correspond to the same element
of the class group.
This construction takes as input 
a degree $2$ finite free cover $g: X \to \spec \bz$, 
a line bundle $\scl$ on $X$,
and an isomorphism
$\iota: \scl^{\otimes n} \simeq \sco_X$.
The pair ($g: X \to \spec \bz, \scl)$ is determined by $q$ while
$\iota$ is determined by the additional data of $\codirectrix$.
The construction outputs a particular divisor on a Hirzebruch surface, up to isomorphism.
Whether $(q,\codirectrix)$ and $(q,\codirectrix')$ correspond to the same element of the class group is closely tied
to whether they are related by the action of a certain group $\affineAut n$.
The group $\affineAut n$ can be understood as a certain linearization of the
automorphism group of the
above mentioned Hirzebruch surface.
%Because this section serves primarily as motivation
%we leave some of the details of the ensuing construction to the reader.

\subsection{The construction on fibers}
\label{subsection:idea-on-fibers}

\begin{figure}
	\centering
	\includegraphics[scale=.6, trim={0 13.5cm 0 0}]{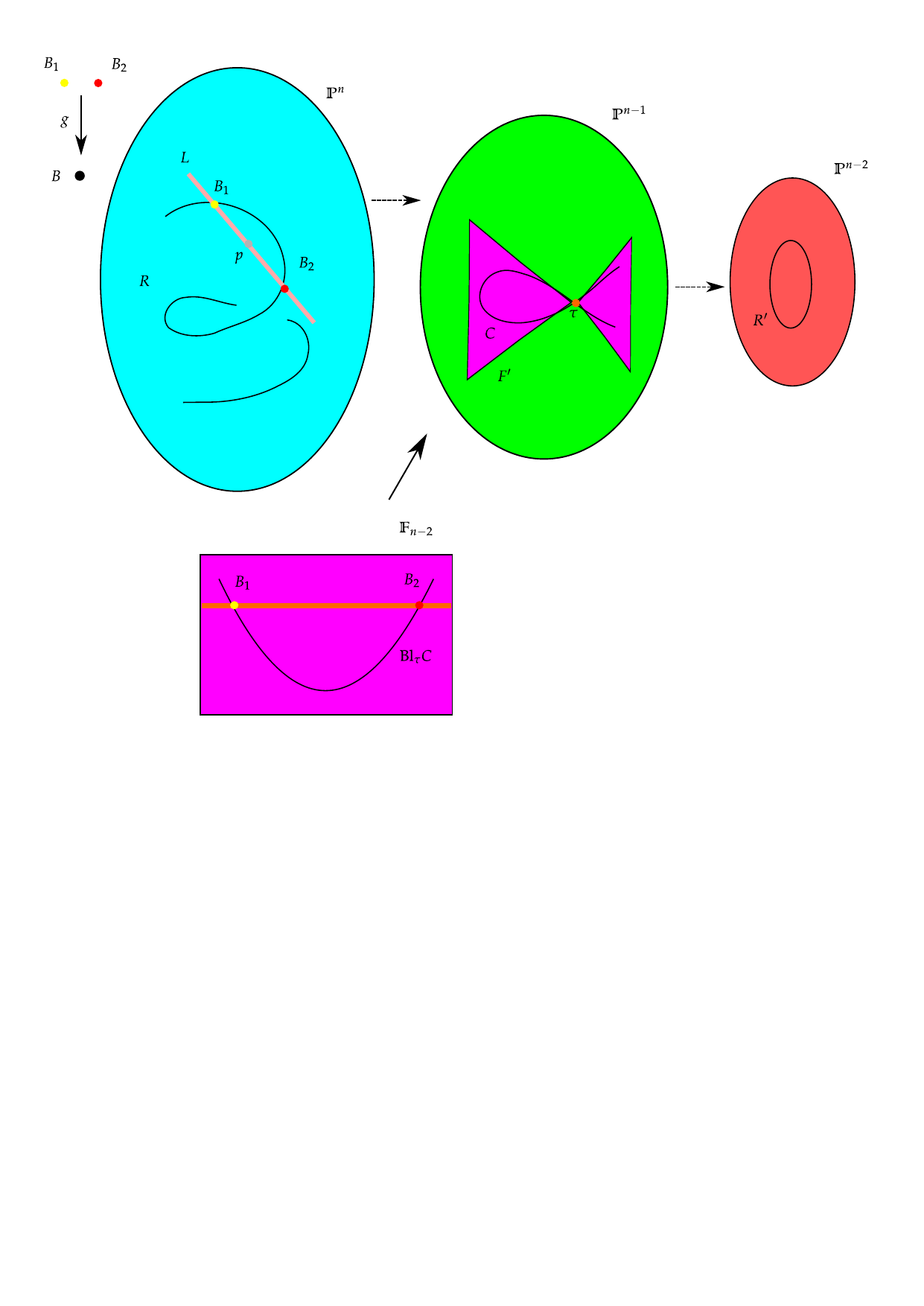}
	\caption{A visualization of the bijection of
	\autoref{subsection:idea-on-fibers}. 
	%The map $\nu_n$ is the $n$-Veronese map.
}
	\label{figure:idea-on-fibers}
\end{figure}

For the rest of this section, we assume $B = \spec k$ is a field and $g: X \to
B$ is a double cover with 
$X = B_1 \coprod B_2$ so that $B_1 \simeq B_2 \simeq B$.
Much of what follows easily generalizes to the case $B$ is arbitrary and $X$ is
any degree $2$ finite locally free cover, but we work in the above case to simplify
the exposition.

We now give the geometric construction relating
$(g: X \to \spec B, \scl, \iota: \scl^{\otimes n} \simeq \sco_X)$
to certain smooth sections on a Hirzebruch surface.
%The group $\affineAut n$ from
%\autoref{theorem:intro-hypercohomology-to-quotient-stack-bijection}
%will naturally appear as a linearization of the
%automorphisms of this Hirzebruch surface.
%To simplify matters, we work in the case
%that $B = \spec k$ for $k$ a field and $X$ is two copies of $B$, although much
%of this construction easily generalizes to the case of arbitrary schemes and
%degree $2$ finite locally free covers.
%The arithmetically minded reader may wish to consider the case $k = \mathbb F_p$
%while the geometrically minded reader may consider $k = \bc$.
\autoref{figure:idea-on-fibers} may be helpful in visualizing the geometric
construction described in this subsection.

To start, we set up notation.
Let $\scl$ be an invertible sheaf on $X$,
(which, in this degenerate case must be isomorphic to $\sco_X$,)
and suppose we are given an isomorphism $\iota: \scl^{\otimes n} \simeq \sco_X$.
Recalling the notation for Hirzebruch surfaces from
\autoref{notation:hirzebruch},
we describe how to obtain a section of class $e + nf$ in the Hirzebruch surface $\mathbb F_{n-2}$ over $B$.

We now construct the desired section of class $e + nf$ in a series of steps.
With reference to \autoref{figure:idea-on-fibers}, 
the line bundle $\scl^{\otimes n}$ gives the map $X \to \bp^n$. 
Saying this more precisely, the sheaf $g_* \scl$ corresponds to a rank $2$ vector space $V$ over $k$.
There is a natural $n$-Veronese embedding $\bp V \to \bp \sym^n V \simeq \bp^{n}$ realizing
$\bp V$ as a rational normal curve $R$ in $\bp^{n}$.

Next, we use the
trivialization $\iota: \scl^{\otimes n} \simeq \sco_X$ to construct the line $L$
containing the image $X \to \bp^n$.
The surjection $\sym^n V  =\sym^n (g_* \scl) \to g_* (\scl^{\otimes n}) \xra{g_* \iota} g_* \sco_X$
gives a line $L$ in $\bp^{n}$. Note that $L \cap R$ consists of two points
corresponding to the two further surjections $g_* \sco_X \to g_* \sco_{B_i} \simeq
\sco_B$ associated to the inclusions $B_i \to X$ for $i \in \{1, 2\}$.

Having constructed the line $L$, we now use the structure map $\sco_B \to g_* \sco_X$ to
obtain a point $p$ on $L$ missing $X$.
Let $Q$ denote the cokernel of $\sco_B \to g_* \sco_X.$
We obtain a composite $\sym^n V \to g_* \sco_X \to Q$ which corresponds to a point $p$ on $L$.
We claim this point $p$ is not one of the two intersection points $L \cap R$.
Indeed, the two intersection points
with $R$ correspond to two idempotent basis vectors $e_1$ and $e_2$ associated
to the inclusions $B_i \to X$, while the
point $p$ corresponds
to the diagonal inclusion $k \to ke_1 \oplus ke_2\simeq W$ sending $1 \mapsto e_1 + e_2$.

We next explain why projecting $R$ from $p$ yields a curve $C$ in $\bp^{n-1}$ lying in a cone
over a rational normal curve $R' \subset \bp^{n-2}$.
Project $R$ from the point $p$ to obtain a singular genus $1$ curve $C$
in $\bp^{n-1}$ from $R$ by gluing
the two points $B_1$ and $B_2$ of $L \cap R$.
Let $\tau$ denote the singular point of $C$.
We claim that in fact $C$ lies in the cone over a rational normal curve in $\bp^{n-2}$.
To see this, note that further projecting $C$ from $\tau$ is equivalent to projecting the original curve $R$ from the
line $L$.
Since $L$ meets $R$ in two points, this projection is a rational normal curve
$R' \subset \bp^{n-2}$.
Therefore, the projection of $C$ from $\tau$ is a rational normal curve, and so $C$ lies in the cone $F'$ over $R'$ passing
through $\tau$.

Finally, we blow up $C$ and $F'$ at $\tau$ to obtain the desired divisor on a Hirzebruch
surface of class $e + nf$.
When we blow $F'$ up at $\tau$, we will obtain a Hirzebruch surface isomorphic to $\mathbb F_{n-2}$.
One can also verify that the blow up of $C$ at $\tau$ is then a smooth curve in the linear system $e + n f$ on $\mathbb F_{n-2}$.
This is the completes our construction.
See \cite[\S1.5.2]{landesman:thesis}
%\subsection{subsection:idea-over-z}
for a detailed generalization of the above construction to the case $B =
\spec \bz$.

\section{The automorphism group scheme of a Hirzebruch surface}
\label{section:hirzebruch-autormorphisms}

In this section, we will describe a certain homogeneous space $\affineSections
n$ for a group action $\affineAut n$ which will enable us to count $n$-torsion
elements of class groups.
In \autoref{lemma:affine-aut-sequences}, we identify the quotient of $\affineAut n$ by a
central copy of $\bg_m$ with the automorphisms of $\mathbb F_{n-2}$.

To describe $\affineAut n$ directly, it is easiest to describe it in terms of its
left action on the rank $n+4$ free $\bz$-module
$H^0(\bp^1_\bz, \sco_{\bp^1_\bz}(2)\oplus
\sco_{\bp^1_\bz}(n))$.
This module will be one of the central objects of this paper, and so we give the corresponding scheme a name.
\begin{definition}
	\label{definition:affine-sections}
	For $n \geq 3$, define $\affineSections n := \spec \left( \sym^\bullet H^0(\bp^1_\bz, \sco_{\bp^1_\bz}(2)\oplus
	\sco_{\bp^1_\bz}(n)) \right).$
\end{definition}

%By abuse of notation, we will often conflate $\affineSections n$ with its underlying vector space.
%For example, we will use $\gl(\affineSections n)$ and $\pgl(\affineSections n)$ to really mean
%$\gl(H^0(\bp^1_\bz, \sco_{\bp^1_\bz}(2)\oplus
%\sco_{\bp^1_\bz}(n)))$ and $\pgl(H^0(\bp^1_\bz, \sco_{\bp^1_\bz}(2)\oplus
%\sco_{\bp^1_\bz}(n))).$
Choosing a basis $\bz x \oplus \bz y$ for 
$H^0(\bp^1_\bz,
\sco_{\bp^1_\bz}(1))$,
we can identify points of $\affineSections n$ with pairs 
$(q,\codirectrix) := (\sum_{i=0}^2 a_i x^i y^{2-j}, \sum_{j=0}^n b_j x^j y^{n-j})$ for $a_i \in \bz, b_j \in \bz$ with $0 \leq i \leq 2, 0 \leq j \leq n$.
We can now realize actions of $\bg_m, \bg_a^{n-1}$ and $\gl_2/\mu_{n-2}$ on
$\affineSections n$ as those induced by
\begin{equation}
\begin{aligned}
	\label{equation:gm-action}
	\bg_m \times \affineSections n & \rightarrow \affineSections n\\
	\left( \chi , (q,\codirectrix) \right) & \mapsto
	(\chi q,
	\chi \codirectrix), \\
\end{aligned}
\vspace{-.3cm}
\end{equation}
\begin{equation}
\begin{aligned}
       \label{equation:ga-action}
       \bg_a^{n-1} \times \affineSections n & \rightarrow \affineSections n\\
       ((\alpha_0, \ldots, \alpha_{n-2}) , (q,\codirectrix)) & \mapsto (q,
       \codirectrix + \sum_{i=0}^{n-2} \alpha_i x^i y^{n-2-i}q), \\
\end{aligned}
\vspace{-.3cm}
\end{equation}
\begin{equation}
	\resizebox{1.0\hsize}{!}{$
\begin{aligned}
	\label{equation:gl2-action}
		\gl_2 \times \affineSections n &\rightarrow \affineSections n\\
		 \left( \begin{pmatrix}
			a & b \\
			c &d
		\end{pmatrix}, \left (\sum_{i=0}^2 a_i x^i y^{2-j}, \sum_{j=0}^n b_j x^j
	y^{n-j} \right) \right) &\mapsto 
\frac{1}{ad-bc}
\left(\sum_{i=0}^2 a_i (ax+by)^i (cx+dy)^{2-j}, \sum_{j=0}^n b_j (ax+by)^j
(cx+dy )^{n-j} \right).
\end{aligned}
$}
\end{equation}

\begin{definition}
	\label{definition:automorphism-groups}
	Define $\affineAut n$ as the subgroup of 
	$\gl\left( \affineSections n \right)$ generated the subgroups
	$\bg_m$, $\bg_a^{n-1}$, and $\gl_2/\mu_{n-2}$ induced by the actions defined in \eqref{equation:gm-action},
	\eqref{equation:ga-action},
	and \eqref{equation:gl2-action},
respectively.
Define $\affineAutSub n \subset \affineAut n$ as the subgroup generated by
$\bg_m$, $\bg_a^{n-1}$, and the central $\bg_m$ sitting inside $\gl_2/\mu_{n-2}$ under the actions
\eqref{equation:gm-action}, \eqref{equation:ga-action}, and
\eqref{equation:gl2-action}.
Define $\unipotentRadical n \subset \affineAut n$ as the subgroup isomorphic to
$\bg_a^{n-1} \subset \affineAut n$ coming from \eqref{equation:ga-action}.

Let $\projAut n$ denote the image of $\affineAut n$ under the map to
$\gl(\affineSections n) \to \pgl(\affineSections n)$
and
$\projAutSub n$ denote the image of 
$\affineAutSub n$ under the map
$\gl(\affineSections n) \to \pgl(\affineSections n)$.
\end{definition}

So far, it is not clear whether the group $\affineAut n$, which is generated by
$\unipotentRadical n, \bg_m$, and $\gl_2/\mu_{n-2}$, contains elements 
which are not products of elements of these three subgroups.
The following lemma establishes that that all element of
$\affineAut n$ are products
of elements from these three subgroups
and also relates $\affineAut n$ to the automorphism group of a Hirzebruch
surface.

\begin{lemma}
	\label{lemma:affine-aut-sequences}
	For any $n \geq 3$, 
	\begin{enumerate}
		\item	$\affineAutSub n \simeq \unipotentRadical n \rtimes
			\bg_m^2$,
\item we have an exact sequence
	\begin{equation}
		\label{equation:affine-aut-exact-sequence}
		\begin{tikzcd}
			0 \ar {r} & \affineAutSub n  \ar {r} & \affineAut n \ar
			{r} & \pgl_2 \ar {r}
			& 0 
	\end{tikzcd}\end{equation}
	where the induced map $\gl_2/\mu_{n-2} \to \affineAut n \to \pgl_2$ is the
	quotient of $\gl_2/\mu_{n-2}$ by its central $\bg_m \subset
	\gl_2/\mu_{n-2}$,
\item	$\affineAut n \simeq \unipotentRadical n \rtimes \left( \bg_m \times
	(\gl_2/\mu_{n-2}) \right)$,
\item $\projAut n \simeq \unipotentRadical n \rtimes \gl_2/\mu_{n-2}$ and is
	identified with the automorphism group scheme $\aut_{\mathbb
	F_{n-2}/\bz}$.
	\end{enumerate}
\end{lemma}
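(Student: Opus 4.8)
The plan is to derive part (4) from part (3) by realizing both groups as $\unipotentRadical n \rtimes (\gl_2/\mu_{n-2})$ and matching them through the natural action on the complete linear system $|e+nf|$ on $\mathbb F_{n-2}$. First I would pin down $\projAut n$ as a quotient of $\affineAut n$. The scalar matrices in $\gl(\affineSections n)$ form the central $\bg_m$ acting by $(q,\codirectrix)\mapsto(\chi q,\chi\codirectrix)$, which is exactly the subgroup \eqref{equation:gm-action}; moreover the image of $\gl_2/\mu_{n-2}$ meets the scalars trivially, since a matrix acts as a scalar on $\affineSections n$ only when its image in $\pgl_2$ is trivial, and a central scalar $\lambda$ then acts projectively trivially precisely when $\lambda^{n-2}=1$. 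Hence $\gl(\affineSections n)\to\pgl(\affineSections n)$ kills exactly the $\bg_m$-factor in the decomposition of part (3), so that $\projAut n \simeq \unipotentRadical n \rtimes (\gl_2/\mu_{n-2})$.

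Next I would produce the geometric side. Since $n-2\geq 1$, the directrix $E$ is the unique irreducible curve of negative self-intersection $e^2=-(n-2)$ on $\mathbb F_{n-2}$, so every automorphism preserves $e$, hence the fibration $g$, hence the fiber class $f$; consequently $e+nf$ is preserved and $\aut_{\mathbb F_{n-2}/\bz}$ acts on $|e+nf|$. A standard computation of the pushforward gives $g_*\sco(e+nf)\simeq\sco_{\bp^1_\bz}(2)\oplus\sco_{\bp^1_\bz}(n)$, identifying $H^0(\mathbb F_{n-2},\sco(e+nf))$ with $\affineSections n$; under this identification $q$ is the coefficient of the $\sco(n-2)$-direction on the fibers and $\codirectrix$ the coefficient of the $\sco$-direction. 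This yields a homomorphism $\rho\colon\aut_{\mathbb F_{n-2}/\bz}\to\pgl(\affineSections n)$.

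I would then compute $\rho$ on the three standard families of automorphisms while reading off the intrinsic structure of $\aut_{\mathbb F_{n-2}/\bz}$. Automorphisms over the identity of $\bp^1_\bz$ are the bundle automorphisms of $\sco\oplus\sco(n-2)$ modulo global scalars, and since $\hom(\sco_{\bp^1_\bz}(n-2),\sco_{\bp^1_\bz})=0$ these form $\bg_m\ltimes H^0(\bp^1_\bz,\sco(n-2))$, whose unipotent part $H^0(\bp^1_\bz,\sco(n-2))\simeq\unipotentRadical n$ acts by the elementary transformation $\codirectrix\mapsto\codirectrix+\gamma q$ of \eqref{equation:ga-action} and whose torus scales $\codirectrix$ relative to $q$. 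Lifting base automorphisms through $\gl_2$ acting on $x,y$ reproduces \eqref{equation:gl2-action} and shows $\gl_2\to\aut_{\mathbb F_{n-2}/\bz}$ has kernel exactly $\mu_{n-2}$. Together these give $\aut_{\mathbb F_{n-2}/\bz}\simeq\unipotentRadical n\rtimes(\gl_2/\mu_{n-2})$ and show $\rho$ carries each factor isomorphically onto the corresponding factor of $\projAut n$, so $\rho$ is surjective onto $\projAut n$. For injectivity, note $(e+nf)\cdot e=2$, $(e+nf)\cdot f=1$ and $(e+nf)^2=n+2$ are all positive, so $e+nf$ is ample, hence very ample on the toric surface $\mathbb F_{n-2}$; an automorphism acting trivially on $|e+nf|$ then fixes the resulting embedding and is the identity. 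Thus $\rho$ is an isomorphism onto $\projAut n$, proving part (4).

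The hard part will be the bookkeeping in the third paragraph: making the identification $\affineSections n\simeq H^0(\mathbb F_{n-2},\sco(e+nf))$ canonical over $\spec\bz$, matching the fiberwise-scaling torus and the translations precisely against \eqref{equation:gm-action}--\eqref{equation:ga-action}, and in particular verifying that the kernel of $\gl_2\to\pgl(\affineSections n)$ is exactly $\mu_{n-2}$ rather than a larger or smaller finite subgroup. Once the two descriptions of the Levi and unipotent radical are aligned, the remaining verification that $\rho$ respects the extension is a routine comparison against the exact sequence \eqref{equation:affine-aut-exact-sequence} of part (2).
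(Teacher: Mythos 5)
Your proposal addresses only part (4) of a four-part statement, taking parts (1)--(3) as inputs (``derive part (4) from part (3)'', ``routine comparison against the exact sequence \eqref{equation:affine-aut-exact-sequence} of part (2)''). This is a genuine gap, because (1)--(3) are where the real work lies and they are not automatic: $\affineAut n$ is defined only as the subgroup of $\gl(\affineSections n)$ \emph{generated} by the images of \eqref{equation:gm-action}, \eqref{equation:ga-action}, and \eqref{equation:gl2-action}, so before one can speak of ``the decomposition of part (3)'' one must show that every element of this generated group can be put in the normal form $u\cdot(\chi,g)$ with $u\in\unipotentRadical n$, $\chi\in\bg_m$, $g\in\gl_2/\mu_{n-2}$. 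The paper does this by explicit conjugation computations (e.g.\ $h^{-1}gh=\zeta^{2-n}\cdot g$ for $h=(\chi,\zeta)\in\bg_m^2$ and $g\in\unipotentRadical n$, checked on a flat cover so that points of $\gl_2/\mu_{n-2}$ lift to $\gl_2$), which is exactly what establishes normality of $\unipotentRadical n$ and of $\affineAutSub n$ and hence the semidirect product structures and the exact sequence. None of this appears in your proposal, so as written it does not prove the lemma.

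For part (4) itself your argument is essentially sound and is a genuinely different route from the paper's. The first half (that $\gl(\affineSections n)\to\pgl(\affineSections n)$ kills exactly the $\bg_m$ of \eqref{equation:gm-action}, since a central $\lambda\in\gl_2$ sends $(q,\codirectrix)\mapsto(q,\lambda^{n-2}\codirectrix)$ and so meets the scalars only in $\mu_{n-2}$) matches how the paper deduces $\projAut n\simeq\unipotentRadical n\rtimes(\gl_2/\mu_{n-2})$ from (3). But for the identification with $\aut_{\mathbb F_{n-2}/\bz}$ the paper simply cites an external reference, whereas you sketch a direct proof: automorphisms preserve the directrix and hence act on the very ample system $|e+nf|$ with $g_*\sco_{\mathbb F_{n-2}}(e+nf)\simeq\sco_{\bp^1}(2)\oplus\sco_{\bp^1}(n)$, injectivity follows from very ampleness, and surjectivity from matching $H^0(\bp^1,\sco_{\bp^1}(n-2))\rtimes\bg_m$ (bundle automorphisms over $\bp^1$, using $\hom(\sco_{\bp^1}(n-2),\sco_{\bp^1})=0$) with $\unipotentRadical n\rtimes\bg_m$ and the lifted $\gl_2$-action with the Levi, the kernel of the lift being $\mu_{n-2}$. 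This would give a self-contained substitute for the citation, modulo the bookkeeping you flag; but it does not repair the missing proofs of (1)--(3).
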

\begin{proof}
	We first prove $(1)$.
	This will follow from the direct calculation that for any ring $R$ and any
	$g, g' \in \unipotentRadical n(R)$ and $h, h' \in \bg_m^2(R)$
	we have $h^{-1}gh \in \unipotentRadical n(R)$.
	To check this identity, we may work on a flat cover of $R$, and hence
	assume the $R$ points $h, h' \in \bg_m^2(R) \subset (\bg_m \times
\gl_2/\mu_{n-2})(R)$ lift to
	$R$ points of $\bg_m \times \gl_2$.
	Let $h$ be given by $(\chi, \zeta) \in \bg_m^2(R) \subset (\bg_m \times
\gl_2)(R)$
	and let $g$ correspond to 
	a tuple $(\alpha_0, \ldots, \alpha_{n-2})$ as in
	\eqref{equation:ga-action}. Define $\alpha := \sum_{i=1}^{n-1} \alpha_i
	y^i x^{n-2-i}$.
	Then, for $(q,\codirectrix) \in \affineSections n$, we have
	\begin{align*}
		h^{-1}gh(q,\codirectrix) = h^{-1} g(\chi q, \chi \zeta^{n-2} \codirectrix)
		= h^{-1}(\chi q, \chi\zeta^{n-2} \codirectrix + \alpha \chi
		q) = (q,
			\codirectrix
		+ \alpha \zeta^{2-n} q).
	\end{align*}
	Therefore, $h^{-1}gh = \zeta^{2-n}\cdot g\in \unipotentRadical n(R)$.

	The above calculation implies that we may find $h'' \in \bg_m^2(R)$ so that
	$(gh)(g'h') = (gg')(h'' h')$.
	Therefore every element in $\affineAutSub n$ is a product of an
	element of $\unipotentRadical n$ and an element of $\bg_m^2$.
	This shows $\affineAutSub n$ is an extension of $\bg_m^2$ by
	$\unipotentRadical n$,
	and it is in fact a semidirect product because $\bg_m^2$
	embeds in $\affineAutSub n$ by construction.

	Next, we check $(2)$. As a first step, we verify $\affineAutSub n \subset \affineAut n$ is a normal
	subgroup and every element of $\affineAut n$ can be written as a product
	of an element of $\affineAutSub n$ and an element of $\gl_2/\mu_{n-2}$.
	Analogously to our computation for $(1)$,
	it is enough to show that for any ring $R$ and any $h \in
	\affineAutSub n (R), g \in \gl_2/\mu_{n-2}(R)$, we have $ghg^{-1} \in \affineAutSub n
	(R)$.
	Again, to check this identity, we may work on a flat cover of $R$ so
	as to assume $g$ lifts to a point of $\gl_2$.
	% the key point is that if we have a product (gh)(g'h') then we can move
	% the g' past the h to get gg'h''h', which shows every element of the
	% group generated by \affineAutSub n and \gl_2 is a product of elements
	% of these two subgroups.
	By construction of $\affineAutSub n$, $h$ acts on the quotient
	$H^0(\bp^1_\bz, \sco_{\bp^1_\bz}(2))$ of
	$\affineSections n =
	H^0(\bp^1_\bz, \sco_{\bp^1_\bz}(2)\oplus
\sco_{\bp^1_\bz}(n))$
	only by scaling via the central copy of $\bg_m \subset \gl\left(
	H^0(\bp^1_\bz, \sco_{\bp^1_\bz}(2))\right)$.
	Via a direct calculation, the subgroup of $\gl_2$
	acting via this central $\bg_m$ on 
	$H^0(\bp^1_\bz, \sco_{\bp^1_\bz}(2))$
	is precisely the central $\bg_m \subset \gl_2$, which already 
	factors through
	$\affineAutSub n \subset \affineAut n$.
	Therefore, $\affineAutSub n$ is characterized as the subgroup of
	$\affineAut n$ whose action on 
	$H^0(\bp^1_\bz, \sco_{\bp^1_\bz}(2))$ factors through the central copy of
	$\bg_m \subset \gl\left(H^0(\bp^1_\bz, \sco_{\bp^1_\bz}(2))\right)$.
		Therefore, $\affineAutSub n$ is a normal subgroup of
		$\affineAut n$.	
	
		We conclude the verification of $(2)$ by showing $\affineAut n/\affineAutSub n \simeq \pgl_2$
		and identifying the composition $\gl_2/\mu_{n-2} \to \affineAut
		n \to \pgl_2$.
	The quotient $\affineAut n/\affineAutSub n$ is generated by $\gl_2/\mu_{n-2}$. 
	As mentioned above, $\affineAutSub n$ is characterized as the subgroup
	of $\affineAut n$ which acts by the central $\bg_m$ on
$\gl(H^0(\bp^1_\bz, \sco_{\bp^1_\bz}(2)))$.
	However, 
	the subgroup of $\gl_2/\mu_{n-2}$ intersecting the central
	$\bg_m \subset \gl\left(H^0(\bp^1_\bz, \sco_{\bp^1_\bz}(2))\right)$
	is the central $\bg_m \subset \gl_2/\mu_{n-2}$.
	Therefore, the quotient $\affineAut n/\affineAutSub n$
is identified with $(\gl_2/\mu_{n-2})/\bg_m \simeq \pgl_2$,
	with the induced map $\gl_2 \to \affineAut n \to \pgl_2$ the
	natural quotient map by the central $\bg_m \subset \gl_2/\mu_{n-2}$.

	Now, we check $(3)$.
	Since we have already shown $\unipotentRadical n$ is the unipotent
	radical of
	$\affineAutSub n$, it is a characteristic subgroup, i.e., it is
	preserved by automorphisms of $\affineAutSub n$.
	(Although there may not
		be a good notion of unipotent radical for general relative group schemes, 
		here we simply mean that 
		$\unipotentRadical n$ is a flat subgroup scheme of $\affineAut
		n$ which base changes to the unipotent radical on every
		geometric fiber
	over $\spec \bz$.)
	Since $\affineAutSub n$ is normal in $\affineAut n$,
	and $\unipotentRadical n \subset \affineAutSub n$ is a characteristic
	subgroup,
	we obtain $\unipotentRadical n$ is normal in $\affineAut n$.
	The quotient of $\affineAut n$ by $\unipotentRadical n$
	is then generated by $\gl_2/\mu_{n-2}$ induced by
	\eqref{equation:gl2-action}
	together with the $\bg_m$ of \eqref{equation:gm-action},
	which is
	central in $\affineAut n$.
	Because $\bg_m \cap \gl_2/\mu_{n-2} = 1$, this quotient $\affineAut
	n/\unipotentRadical n$ is	
	$\bg_m \times \gl_2/\mu_{n-2}$.
	Since the quotient $\affineAut n \to \bg_m \times \gl_2/\mu_{n-2}$ has a
	section, it follows that
	$\affineAut n \simeq \unipotentRadical n \rtimes \left( \bg_m \times
	(\gl_2/\mu_{n-2}) \right)$.

	Finally, the first part of $(4)$ follows from $(3)$ because, by definition,
	$\projAut n$ is the quotient of $\affineAut n$ by its central copy of
	central $\bg_m$.
	The identification with $\aut_{\mathbb F_{n-2}/\bz}$ was shown in
	\cite[Lemma 2.5]{landesman:the-torelli-map-restricted-to-the-hyperellpitic-locus}.
\end{proof}

\section{Defining various stacks}
\label{section:stacks-definitions}

In this section, we construct various moduli stacks related to genus $1$ curves.
The relation between genus $1$ curves and degree $2$ covers is not described
until much later in
the construction of
\autoref{notation:singular-genus-1-construction}.
See
\autoref{table:notation} for pithy descriptions of many of the stacks we will
construct.
We begin by constructing stacks related to Weierstrass curves starting in
\autoref{subsection:weierstrass},
then construct stacks related to $n$-coverings of the smooth locus of
Weierstrass curves starting in
\autoref{subsection:n-covering-stacks},
and finally construct stacks related to divisors on Hirzebruch surfaces starting in
\autoref{subsection:hirzebruch-section-stacks}. 

\subsection{Weierstrass stacks}
\label{subsection:weierstrass}

We begin by defining the stack of Weierstrass curves. 
By this we mean
genus $1$ geometrically integral curves with a section in the smooth locus. 
We also define various substacks such as the nodal and cuspidal substacks.

\begin{definition}
	\label{definition:weierstrass}
	We define {\em the stack of Weierstrass curves} $\weierstrass$ as the fibered category whose points are tuples
	$(B, f: C \to B, e: B \to C)$ where $f: C \to B$ is a proper flat finitely presented genus $1$ curves with geometrically integral fibers and
	$e: B \to C$ is	a section to $f$, (i.e., $f \circ e = \id_B$,) 
lying in the smooth locus of $f$.	
	The morphisms $(B, f: C \to B, e: B \to C) \to (B',f': C' \to B',e' : B'
	\to C')$ in this fibered category are morphisms $\alpha: B \to B',\beta: C \to C'$
	such that
	\begin{equation}
		\label{equation:}
		\begin{tikzcd} 
			C \ar {r}{\beta} \ar {d}{f} & C' \ar {d}{f'} \\
			B \ar {r}{\alpha}\ar[bend left]{u}{e} & B' \ar[bend left]{u}{e'}
	\end{tikzcd}\end{equation}
	satisfies $\beta \circ e = e' \circ \alpha$, $\alpha \circ f = f' \circ	\beta$ 
	and $C \simeq B \times_{B'} C'$.
\end{definition}
%finite presentation is the correct condition for the Hilbert functor in general, see \cite[\href{https://stacks.math.columbia.edu/tag/0CZY}{Tag 0CZY}]{stacks-project}

Next, we introduce the stack of Weierstrass curves with a marked singular point.
Throughout, we will indicate the marking of this singular point by including a
tilde in the notation, and we will omit the tilde when we do not mark the
singular point.

\begin{definition}
	\label{definition:singsw}
	Let $\singsw$ denote the fibered category whose $B$ points are tuples
	$(B, f: C \to B, e: B \to C, \tau: B \to C)$ where
	$(B, f: C \to B, e: B \to C) \in \weierstrass(B)$ and
	$\tau: B \to C$ is 
	a morphism such that
	$f \circ \tau = \id$ and $\tau$ factors through the
	singular locus of $f$.
	Morphisms $(B, f: C \to B, e: B \to C, \tau: B \to C) \to (B', f' : C' \to B',e': B' \to C',\tau' : B' \to C')$ consist of maps
	$\alpha: B \to B', \beta: C \to C'$ such that
	the square
	\begin{equation}
		\label{equation:}
		\begin{tikzcd} 
			C \ar {r}{\beta} \ar {d}{f} & C' \ar[swap]{d}{f'} \\
			B \ar {r}{\alpha}\ar[bend left]{u}{e}\ar[bend left = 100]{u}{\tau} & B' \ar[swap, bend right]{u}{e'} \ar[swap, bend right = 100]{u}{\tau'}
	\end{tikzcd}\end{equation}
	satisfies $C \simeq B \times_{B'} C'$,
	$\beta \circ e = e' \circ \alpha$, and $\beta \circ \tau = \tau' \circ \alpha$.

	Let $\singsw \to \weierstrass$ denote the natural map forgetting the
	section $\tau$, and let $\singw$ denote the image of $\singsw$ in
	$\weierstrass$.
\end{definition}

Next, we wish to show the above defined stacks are algebraic.
To do so, we will construct them as quotients of certain Hilbert schemes, which
we introduce next.

\begin{definition}
	\label{definition:hilbert-schemes-of-cubics}
	Let $\flagcubic$ denote the flag Hilbert scheme parameterizing $p \subset X \subset \bp^2$
	where $p$ is a section and $X$ is a relative plane cubic.
	Let $\hilbw$ denote the locally closed subscheme of $\flagcubic$ parameterizing those $p \subset X \subset \bp^2$ such that 
	$X$ is geometrically integral,
	$p \in X$ lies in the smooth locus of $X$,
	and $p$ is a flex point of $X$ (i.e., the tangent line to $X$ at $p$ meets $X$ in a subscheme of degree $3$).
%	Note that in this case, the openness of the geometrically integral locus is elementary because it precisely corresponds to those polynomials
%	in $H^0(\bp^2, \sco_{\bp^2}(3))$ which are not in the image of the map $H^0(\bp^2, \sco_{\bp^2}(2)) \times H^0(\bp^2, \sco_{\bp^2}(1)) \to H^0(\bp^2, \sco_{\bp^2}(3))$.

	Let $\twoflagcubic$ denote the flag Hilbert scheme parameterizing $(p,q,X)$ with $p,q \in \bp^2$ two points, $X$ a plane cubic, and $p \in X, q \in X$.
	Let $\hilbsw$ denote the locally closed subscheme of $\twoflagcubic$
	such that $p$ lies in the singular locus of $X$,
	$X$ is geometrically integral, and $q$ is a flex point in the smooth
	locus of $X$.
\end{definition}

Both $\hilbw$ and $\hilbsw$ have actions of $\pgl_3$ induced by its action on the ambient $\bp^2$.

\begin{lemma}
	\label{lemma:weierstrass-representable}
	We have equivalences $\weierstrass \simeq [\hilbw/\pgl_3]$ and $\singsw
	\simeq [\hilbsw/\pgl_3]$. In particular, $\weierstrass$ and $\singsw$
	are algebraic stacks.
\end{lemma}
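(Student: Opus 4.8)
The plan is to exhibit $\weierstrass$ and $\singsw$ as quotient stacks of the Hilbert schemes $\hilbw$ and $\hilbsw$ by $\pgl_3$, using the standard rigidification technique: a Weierstrass curve is, Zariski-locally on the base, embeddable in $\bp^2$ as a plane cubic, and the ambiguity in this embedding is precisely a $\pgl_3$-action. First I would recall that for $(B, f\colon C \to B, e\colon B \to C) \in \weierstrass(B)$, the section $e$ lies in the smooth locus, so the line bundle $\scl := \sco_C(3e)$ (more precisely the relative dualizing sheaf twisted appropriately, or $\sco_C(3e)$ using the Cartier divisor $e(B)$) is relatively very ample of degree $3$ with $f_* \scl$ locally free of rank $3$. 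This gives a closed embedding $C \hookrightarrow \bp(f_*\scl)$ realizing $C$ as a relative plane cubic, and one checks that $e$ maps to a flex point since $3e$ is linearly equivalent to the divisor cut out by the tangent line. The key point is that $\bp(f_*\scl)$ is a $\bp^2$-bundle, \'etale-locally (indeed Zariski-locally) trivial, so after trivializing $f_*\scl$ we land in an honest $\bp^2$ and obtain a $B$-point of $\hilbw$.

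The main construction is thus a morphism of stacks $[\hilbw/\pgl_3] \to \weierstrass$ sending $(p \subset X \subset \bp^2)$ to $(X \to B, p)$, forgetting the embedding; I would check this is well defined and functorial. For the inverse direction, the assignment above $(C, e) \mapsto (C \hookrightarrow \bp(f_*\scl))$ depends on the choice of trivialization of $f_*\scl$, and two trivializations differ by an element of $\gl_3$ acting on $\bp^2$ through $\pgl_3$; this is exactly the statement that the map $\weierstrass \to [\hilbw/\pgl_3]$ is well defined into the quotient stack. The heart of the argument is verifying these two functors are mutually quasi-inverse: one must confirm that the flex condition and integrality cut out exactly the image of genuine Weierstrass data, and that the $\pgl_3$-torsor of embeddings matches the descent data defining the quotient stack. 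I would handle the $\singsw$ case in parallel, using the same very ample $\sco_C(3e)$ to embed $C$, now additionally recording the marked singular point $\tau$ as the point $p$ in the flag and the section $e$ as the flex point $q$; the conditions defining $\hilbsw$ ($p$ singular, $q$ a flex in the smooth locus) translate directly.

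The hard part will be the local freeness and rank computations needed to guarantee $\bp(f_*\scl)$ is genuinely a $\bp^2$-bundle in families over an arbitrary base $B$, including singular (nodal and cuspidal) fibers, and the verification that $\sco_C(3e)$ is relatively very ample with $f_*\sco_C(3e)$ of rank $3$ and higher direct images vanishing. Since $C$ has arithmetic genus $1$ with geometrically integral fibers and $e$ avoids the singular locus, Riemann--Roch on fibers gives $h^0(\sco_C(3e)) = 3$ and $h^1 = 0$ on each geometric fiber, so cohomology and base change yields the local freeness of $f_*\sco_C(3e)$ and commutation with base change; very ampleness of a degree $3$ divisor on an integral genus $1$ curve is classical. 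Once these foundational facts are in place, I expect the equivalence of fibered categories to follow formally from the universal property of quotient stacks: objects of $[\hilbw/\pgl_3](B)$ are $\pgl_3$-torsors $T \to B$ with $\pgl_3$-equivariant maps $T \to \hilbw$, and the torsor of trivializations of $f_*\sco_C(3e)$ furnishes exactly this datum. Algebraicity of $\weierstrass$ and $\singsw$ is then immediate, since $\hilbw$ and $\hilbsw$ are schemes (locally closed in flag Hilbert schemes) and $\pgl_3$ is a smooth affine group scheme over $\bz$, making the quotients algebraic stacks.
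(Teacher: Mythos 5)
Your proposal is correct and follows essentially the same route as the paper: embed $C$ via the relatively very ample $\sco_C(3e)$ (using cohomology and base change to see $f_*\sco_C(3e)$ is locally free of rank $3$), observe that $e$ becomes a flex, and identify the ambiguity in trivializing $\bp(f_*\sco_C(3e))$ with a $\pgl_3$-torsor. The paper packages this slightly more tersely by introducing the rigidified functor $\widetilde{\hilbw}$ of curves with a chosen trivialization $f_*\sco_C(3e) \simeq \sco_B^{\oplus 3}$ and comparing the $\bg_m$-, $\gl_3$-, and $\pgl_3$-quotients, but the underlying content is identical.
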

\begin{proof}
	First, let $\widetilde{\hilbw}$ denote the functor assigning to 
	a scheme $B$
	the set of geometrically integral genus $1$ curves $f: C \to B$ with a
	flex point $e$ in the smooth locus  
	together with an isomorphism $f_* \sco_C(3e) \simeq \sco_B^{\oplus 3}$.
	Since $\hilbw \simeq [\widetilde{\hilbw}/\bg_m]$ with $\bg_m$ scaling $\sco_B$,
	and $\weierstrass \simeq [\widetilde{\hilbw}/\gl_3]$,
	it follows that $\weierstrass \simeq [\hilbw/\pgl_3]$.

	The second isomorphism 
	$\singsw \simeq [\hilbsw/\pgl_3]$
	follows similarly because
	$\hilbsw$ represents the functor assigning to a scheme $B$
	the set of geometrically integral genus $1$ curves $f: C \to B$ with a
	flex point $e$ in the smooth locus and $\tau$ in the singular
	locus,
	together with an isomorphism $f_* \sco_C(3e) \simeq \sco_B^{\oplus 3}$,
	modulo the scaling action of $\bg_m$.
\end{proof}

\begin{definition}
	\label{definition:smooth-universal-genus-1}
	Define $\universalW \to \weierstrass$ as the universal relative proper genus 
	$1$ curve, 
	which is the quotient
	of the universal curve over $\hilbw$ by the $\pgl_3$ action as in
	\autoref{lemma:weierstrass-representable}.
	Define
	$\smoothUW \subset \universalW$ as the open substack given as the 
	smooth locus of $\universalW \to \weierstrass$.
\end{definition}

We next define the nodal and cuspidal substacks of $\singsw$.
Loosely speaking, the nodal substack $\nodesw$ is the open substack of $\singsw$
parameterizing nodal curves, while the cuspidal substack $\cuspsw$ is the closed
substack of $\singsw$ parameterizing cuspidal curves.

\begin{definition}
	\label{definition:cuspsw}
	Let $\nodesw$ be the substack of $\singsw$ 
	parameterizing those tuples $(B,f: C \to B,e : B \to C,\tau: B\to C) \in
	\singsw(B)$ such that $\tau$ maps $B$ isomorphically to the singular
	locus of $f$.
	Let $\cuspsw$ denote the substack of $\singsw$ defined as the fibered category whose fiber over $B$ is a tuple
	$(B, f: C \to B,e : B \to C,\tau: B \to C)$ as in \eqref{definition:singsw} with the following property:
	Let $X \subset C$ denote the singular locus of $f: C \to B$. Then $\ker( f_* \sco_X \to f_* \sco_{\tau(B)})$ is not the pushforward
	of a sheaf from any proper closed subscheme of $B$.
\end{definition}

\begin{remark}
	\label{remark:open-and-closed-node-cusp}
	We note that 
$\nodesw \subset \singsw$ is an open substack and the
substack $\cuspsw \subset \singsw$ is a closed substack.
Indeed, $\nodesw$ is open as it can be described as the substack where the
singular locus of $f: C \to B$ has degree $1$.
Also,
$\cuspsw$
is closed as it can be defined as the substack where the singular locus
has degree more than $1$, i.e., where $\ker( f_* \sco_X \to f_* \sco_{\tau(B)})$
of \autoref{definition:cuspsw}
is supported.
\end{remark}

\subsection{Stacks of $n$-coverings}
\label{subsection:n-covering-stacks}
We next define various stacks and schemes associated to $n$-coverings of
Weierstrass curves.
Recall from \autoref{definition:smooth-universal-genus-1} that $\smoothUW$ is
the smooth locus of the universal curve over $\weierstrass$. It turns out this
relative curve is naturally a relative group scheme.
\begin{lemma}
	\label{lemma:smooth-universal-weierstrass-group}
	The natural map $\smoothUW \to \pic^0_{\universalW \to \weierstrass}$
	sending a section $p$ to $\sco_{\universalW}(p-e)$, for $e$ the identity section,
	is an equivalence. 
	This gives $\smoothUW$ the structure of a commutative group, and in particular endows it
with a notion of multiplication by $n$.
\end{lemma}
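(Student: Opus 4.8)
The plan is to prove that the map $\smoothUW \to \pic^0_{\universalW/\weierstrass}$ sending a section $p$ to $\sco_{\universalW}(p - e)$ is an isomorphism of functors over $\weierstrass$, and then to observe that transporting the group structure on $\pic^0$ back along this isomorphism gives $\smoothUW$ the desired group structure. Since both sides are stacks over $\weierstrass$ and the claim is about an identification of relative schemes over the base, I would work fppf-locally on $\weierstrass$, so it suffices to check the statement after base change to an arbitrary test scheme $B$ equipped with a Weierstrass curve $f \colon C \to B$, identity section $e$, and to verify that $p \mapsto \sco_C(p-e)$ defines an isomorphism from the smooth locus $C^{\sm}$ of $f$ to $\pic^0_{C/B}$.

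First I would recall why $\pic^0_{C/B}$ is well-behaved here: $f$ is proper, flat, finitely presented with geometrically integral genus $1$ fibers, so by standard representability results for relative Picard functors the degree-zero component $\pic^0_{C/B}$ is representable by a separated $B$-scheme, and the formation of the line bundle $\sco_C(p-e)$ is compatible with base change since $p$ and $e$ are sections landing in the smooth (hence relative Cartier divisor) locus. The content of the lemma is then fiberwise: over a geometric point $\spec k \to B$, one has a geometrically integral projective genus $1$ curve $C_k$ with a smooth marked point, and I must show $C_k^{\sm} \to \pic^0_{C_k/k}$, $p \mapsto [\sco(p-e)]$, is a bijection of $k$-points (and more, an isomorphism of schemes). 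This is the Abel--Jacobi statement for genus $1$ curves, valid uniformly for smooth, nodal, and cuspidal $C_k$: surjectivity and injectivity follow from Riemann--Roch on the integral curve $C_k$ together with the fact that a degree-one invertible sheaf on such a curve with a one-dimensional space of global sections cuts out a unique smooth point. Concretely, for $[\scl] \in \pic^0$, the sheaf $\scl(e) = \scl \otimes \sco(e)$ has degree $1$, and $\chi(\scl(e)) = 1$ by Riemann--Roch, so $h^0(\scl(e)) = 1$ (using $h^1 = 0$, which holds because a degree-$1$ line bundle on an integral genus $1$ curve has no higher cohomology); the unique section up to scalar vanishes at a single point $p$ in the smooth locus, giving the inverse map $[\scl] \mapsto p$.

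To upgrade the fiberwise bijection to an isomorphism of $B$-schemes I would invoke the theory of the Abel--Jacobi morphism: the assignment $p \mapsto \sco_C(p-e)$ is a morphism of $B$-schemes $C^{\sm} \to \pic^0_{C/B}$ because $\sco_{C \times_B C}(\Delta - e \times_B C)$ (suitably interpreted, with $\Delta$ the diagonal restricted to the smooth locus) furnishes the required family of line bundles parameterized by $C^{\sm}$, and this realizes $C^{\sm}$ as representing the relative $\pic^0$ functor. Having checked it is a bijection on geometric fibers and that both source and target are smooth separated $B$-schemes of the same relative dimension with compatible formation under base change, I would conclude it is an isomorphism; the cleanest route is to note the map is a monomorphism of finite presentation that is bijective on fibers and then apply flatness together with the fact that a fiberwise-isomorphism of finitely presented flat schemes which is a monomorphism is an isomorphism. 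Finally, since $\pic^0_{C/B}$ is a commutative group scheme over $B$, transporting its multiplication along the isomorphism equips $\smoothUW$ with a commutative group structure over $\weierstrass$, and in particular with multiplication by $n$, as claimed.

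The main obstacle I anticipate is the uniform treatment across the singular fibers: one must ensure that $\pic^0_{C/B}$ is the correct object (the identity component of the relative Picard scheme, which over nodal or cuspidal fibers is $\bg_m$ or $\bg_a$ respectively rather than an abelian variety), and that the Abel--Jacobi map genuinely hits every degree-zero class including on the singular locus — the key point being that we restrict to the smooth locus $C^{\sm}$ of the source so that $p - e$ is a genuine Cartier divisor and $\sco_C(p-e)$ has degree zero on each component. Verifying representability of $\pic^0_{C/B}$ in this mildly singular relative setting, and that its formation commutes with base change, is where I would spend the most care; once that is in place, the Riemann--Roch computation and the fiberwise-isomorphism-plus-flatness argument are routine.
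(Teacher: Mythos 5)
Your proposal is correct and follows essentially the same route as the paper: reduce to geometric fibers via the fibral isomorphism criterion of \cite[17.9.5]{EGAIV.4} and verify the Abel--Jacobi statement there, uniformly for smooth, nodal, and cuspidal fibers. Your Riemann--Roch computation (a degree-one invertible sheaf on an integral Gorenstein genus $1$ curve has $h^0=1$ and its unique section cuts out a smooth point) is simply a fleshed-out version of what the paper dismisses as ``straightforward to directly check'' in the three cases.
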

\begin{proof}
	This may be verified on geometric
	fibers by the fibral isomorphism criterion \cite[17.9.5]{EGAIV.4}.
	It is then straightforward to directly check for the three cases of smooth, nodal, and
	cuspidal genus $1$ curves over algebraically closed fields $k$.
	For example, for smooth curves, this the usual isomorphism of an elliptic
	curve $E$ over $k$ with $\pic^0_{E/k}$.
%For more details, see \cite[Proposition 3, Proposition 4, Example 8, and Proposition 9]{BoschLR:Neron}.
\end{proof}

\begin{definition}
	\label{definition:nstack}
	Let $\smoothUW$ act on itself via the multiplication by $n$ map
	$\smoothUW \xra{\times n} \smoothUW$ via
	\autoref{lemma:smooth-universal-weierstrass-group}
	relatively over $\weierstrass$.
	Define the {\em stack of $n$-coverings of Weierstrass curves}
	$\nstack n := [\smoothUW/n \smoothUW]$ as the quotient stack of $\smoothUW$ with respect to the above action of $\smoothUW$ on itself.
\end{definition}

\begin{example}
	\label{example:nstack-1}
	When $n = 1$, we have $\nstack n \simeq \weierstrass$.
\end{example}

It turns out that
$\nstack n$ is actually isomorphic to a certain quotient of a Hilbert scheme by
an action of $\pgl_n$, which we call $\onestack n$ and introduce next.

\begin{definition}
	\label{definition:nhilb}
	Let $n \geq 3$ and let $\nhilb n$ denote the open subscheme of the Hilbert scheme of subschemes of $\bp^{n-1}$ 
	whose geometric points are geometrically integral genus $1$ degree $n$ curves.
	Note this is indeed open by \cite[Th\'eorem\`e 12.2.4(viii)]{EGAIV.3}.
\end{definition}

The embedding of the universal family over $\nhilb n$ into $\bp^{n-1}$ induces
an action of $\pgl_n$ on $\nhilb n$.
We next introduce analogs of the constructions we have just made where we
additionally mark
a section in the singular locus of the genus $1$ curve.

\begin{definition}
	\label{definition:singsstack}
	Let $n \in \bz_{\geq 3}$.
	Define $\singsstack n$ as the fibered category whose $B$-points are
	tuples
	$(B, f: P \to B, \iota: C \to P, \tau: B \to C)$
	such that
	\begin{enumerate}
		\item $f: P \to B$ is a Brauer-Severi scheme of relative
			dimension $n - 1$ over $B$,
		\item $\iota: C \to P$ is a closed immersion,
		\item $f \circ \iota: C \to B$ is a proper flat finitely presented arithmetic genus $1$ curve
	with geometrically integral fibers
	which has degree $n$ fppf locally on $B$, 
		\item $\tau: B \to C$ a morphism so that $f \circ \iota \circ
			\tau = \id$ so that $\tau$ factors through the singular
			locus of $f \circ \iota$.
	\end{enumerate}
	A morphism $(B, f: P \to B, \iota: C \to P, \tau: B \to C) \to (B', f': P' \to B', \iota': C' \to P', \tau': B' \to C')$ is given by maps
	$\alpha: B \to B', \beta: P \to P', \gamma: C \to C'$ so that all squares in
	\begin{equation}
		\label{equation:}
		\begin{tikzcd} 
			C \ar {r}{\gamma} \ar {d}{\iota} & C' \ar {d}{\iota'} \\
			P \ar {r}{\beta} \ar {d}{f} & P' \ar {d}{f'} \\
			B \ar {r}{\alpha} \ar[bend left]{uu}{\tau}& B'\ar[swap, bend right]{uu}{\tau'}
	\end{tikzcd}\end{equation}
	are fiber squares with $\gamma \circ \tau = \tau' \circ \alpha$.
\end{definition}

We next define the nodal and cuspidal loci of $\singsstack n$, $\nodesstack n$ and
$\cuspsstack n$.
The argument for why these are open an closed substacks of $\singsstack n$
is completely analogous to the argument given in
in the case of Weierstrass curves in
\autoref{remark:open-and-closed-node-cusp}.

\begin{definition}
	\label{definition:nodesstack-and-cuspsstack}
	For $n \geq 3$,
	let $\nodesstack n$ be the open substack of $\singsstack n$ 
	parameterizing those tuples $(B, f: P \to B, \iota: C \to P, \tau: B \to
	C) \in \singsstack n (B)$ such that $\tau$ maps $B$ isomorphically to
	the singular locus of $f \circ \iota$.
	Let $\cuspsstack n$ denote the closed substack of $\singsstack n$ defined as the fibered category whose fiber over $B$ is a tuple
	$(B,f: P \to B, \iota: C \to P, \tau: B \to C)$ as in \autoref{definition:singsstack} with the following property:
	Let $X \subset C$ denote the singular locus of $f: C \to B$. Then $\ker( f_* \sco_X \to f_* \sco_{\tau(B)})$ is not the pushforward
	of a sheaf from any proper closed subscheme of $B$.
\end{definition}
In order to show $\singsstack n$ is an algebraic stack, we will construct it as the quotient of a certain
Hilbert scheme by $\pgl_{n}$. We now define this Hilbert scheme.
\begin{definition}
	\label{definition:singshilb-and-singsstack}
	Let $n \in \bz_{\geq 3}$.
	Let $\singshilb n$ denote the functor whose $B$-points are 
	$(B, \iota: C \to \bp^{n-1}_B, \tau: B \to C)$
	defined as follows.
	Let $f:\bp^{n-1}_B \to B$ denote the structure morphism.
	Then $\iota: C \to \bp^{n-1}_B$ is a closed immersion,
	$f \circ \iota: C \to B$ a proper flat finitely presented arithmetic genus $1$ degree $n$ curve
	with geometrically integral fibers,
	and $\tau: B \to C$ a morphism so that $f \circ \iota \circ \tau = \id$
	such that $\tau$ factors through the singular locus of $f \circ \iota$.
	\end{definition}

Note that $\singshilb n$ is represented by a scheme
as it is a locally closed subscheme of a flag Hilbert scheme.

There is a natural map $\singshilb n \to \singsstack n$ sending a $B$-point of
$\singshilb n,$ $(B, \iota: C \to \bp^{n-1}_B, \tau: B \to C),$ to the tuple
$(B, f: \bp^{n-1}_B \to B, \iota: C \to \bp^{n-1}_B, \tau: B \to C)$, considered as a point
over $B$ in the fibered category $\singsstack n$.
Observe that $\pgl_n$ acts naturally on $\singshilb n$ via its action on
$\bp^{n-1}$.
By definition of $\singsstack n$, the map $\singshilb n \to \singsstack n$ is invariant for this action of $\pgl_n$.
Therefore, we obtain a map
$\phi: [\singshilb n/\pgl_n] \to \singsstack n$.

\begin{corollary}
	\label{corollary:singsstack-representable}
	For $n \geq 3$, 
	the map 
$\phi: [\singshilb n/\pgl_n] \to \singsstack n$
constructed above is an equivalence of fibered categories. In particular, $\singsstack n$ is an algebraic stack.
\end{corollary}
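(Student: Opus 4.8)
The plan is to realize $\singshilb n \to \singsstack n$ as a $\pgl_n$-torsor and then invoke the standard dictionary identifying a stack admitting such a torsor with the associated quotient stack; concretely, I will show $\phi$ is both essentially surjective and fully faithful, in the spirit of the argument for \autoref{lemma:weierstrass-representable}. The two geometric inputs are that the relative automorphism group scheme $\aut_{\bp^{n-1}_B/B}$ is represented by $\pgl_n$ over an arbitrary base, and that any Brauer--Severi scheme of relative dimension $n-1$ is fppf-locally (indeed \'etale-locally) isomorphic to $\bp^{n-1}$. Everything else is a matter of checking that the defining conditions of \autoref{definition:singsstack} and \autoref{definition:singshilb-and-singsstack} are preserved by these trivializations.

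For essential surjectivity I would produce an explicit quasi-inverse. Given $(B, f\colon P \to B, \iota\colon C \to P, \tau) \in \singsstack n(B)$, set $Q := \isom_B(\bp^{n-1}_B, P)$, the scheme of trivializations of $P$; precomposition makes $Q$ a $\pgl_n$-torsor over $B$, which is fppf-locally trivial precisely because $P$ is a Brauer--Severi scheme. Over any $B' \to B$, a point of $Q$ is an isomorphism $\bp^{n-1}_{B'} \simeq P_{B'}$, and pulling back $(\iota, \tau)$ along it yields a closed immersion $C_{B'} \to \bp^{n-1}_{B'}$ together with its marked singular section, i.e.\ a point of $\singshilb n(B')$. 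Here the condition that $f\circ\iota$ has degree $n$ fppf-locally on $B$ in $\singsstack n$ matches the degree $n$ condition in $\singshilb n$, since a trivialization of $P$ is exactly what is needed to measure the degree against $\sco_{\bp^{n-1}}(1)$. This construction is $\pgl_n$-equivariant, so $(Q, Q \to \singshilb n)$ is an object of $[\singshilb n/\pgl_n](B)$ whose image under $\phi$ is canonically isomorphic to $(P,\iota,\tau)$.

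For full faithfulness it suffices to identify $\singshilb n \times_{\singsstack n} \singshilb n$ with $\pgl_n \times \singshilb n$. Unwinding the definition of morphisms in $\singsstack n$, an isomorphism over $B$ between two objects $(\bp^{n-1}_B, \iota_i, \tau_i)$, $i = 1,2$, coming from points of $\singshilb n$ consists of an automorphism $\beta$ of $\bp^{n-1}_B$ over $B$ together with an isomorphism $\gamma\colon C_1 \to C_2$ sitting in the cartesian squares and satisfying $\gamma \circ \tau_1 = \tau_2$. Since the $\iota_i$ are closed immersions and the relevant square is cartesian, $\gamma$ is determined by $\beta$, and $\beta$ ranges exactly over $\aut_{\bp^{n-1}_B/B}(B) = \pgl_n(B)$. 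Thus the datum of such an isomorphism is the single element $\beta \in \pgl_n(B)$ with $(\iota_2,\tau_2) = \beta\cdot(\iota_1,\tau_1)$, giving the desired isomorphism $\pgl_n \times \singshilb n \xrightarrow{\sim} \singshilb n \times_{\singsstack n} \singshilb n$. Combining this with essential surjectivity shows $\phi$ is an equivalence; algebraicity of $\singsstack n$ then follows because $\singshilb n$ is a scheme and $\pgl_n$ is smooth and affine over $\spec \bz$.

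The step requiring the most care, and the main obstacle, is the full-faithfulness computation: one must check both that $\aut_{\bp^{n-1}_B/B} = \pgl_n$ holds over the arbitrary base $B$, so that no exotic automorphisms of the ambient space appear, and that the $\pgl_n$-action is free along $\singshilb n$, i.e.\ that an automorphism $\beta$ fixing an embedded curve is trivial. The latter uses that a geometrically integral genus $1$ degree $n$ curve is nondegenerate in $\bp^{n-1}$, hence spans it, so any $\beta$ restricting to the identity on $C$ must be the identity; this is what guarantees that the map to the fiber product is injective and not merely surjective.
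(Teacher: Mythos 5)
Your proposal is correct and takes essentially the same route as the paper: the paper's proof consists precisely of constructing the quasi-inverse by forming the $\pgl_n$-torsor $\isom_B(\bp^{n-1}_B, P)$ and pulling back $(C,\tau)$ along the resulting trivialization, with the full-faithfulness check left implicit. One small correction to your last paragraph: the freeness of the $\pgl_n$-action on $\singshilb n$ that you single out as the main obstacle is not actually needed here, since injectivity of $\pgl_n \times \singshilb n \to \singshilb n \times_{\singsstack n} \singshilb n$ is automatic — a point of the $2$-fiber product records the automorphism $\beta$ of $\bp^{n-1}_B$ as part of its data, so two pairs $(\beta, x)$ and $(\beta', x)$ with the same image already satisfy $\beta = \beta'$; freeness would only be relevant if one wanted the quotient to be an algebraic space rather than a stack.
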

\begin{proof}
	We construct the inverse map.		
	Given a point 
	$(B, f: P \to B, \iota: C \to P, \tau: B \to C)$ of $\singsstack n$ over $B$, we need to construct a $\pgl_n$ torsor over this point with a $\pgl_n$ equivariant map to $\singshilb n$.
	Indeed, because $P$ is a Brauer-Severi scheme, we have a $\pgl_n$ torsor
	$T := \isom_B(\bp^n_B, P)$ over $B$ \cite[8.1]{grothendieck:brauer-i}.
	By the universal property of $\isom$, we obtain an isomorphism $P_T
	\simeq \bp^{n-1}_T$. Pulling back $C$ to $T$ gives a closed subscheme
	$C_T \to \bp^{n-1}_T$
	which has degree $n$ because it had degree $n$ over $B$ fppf locally.
	Altogether, this yields the desired $\pgl_n$ equivariant map $T \to \singshilb n$
	which is inverse to $\phi$.
\end{proof}

We next introduce various nodal and cuspidal substacks of $\onestack n$.
The following lemma will be used to define nodal and cuspidal loci without a
marked section.
\begin{lemma}
	\label{lemma:finite-marking-singularity}
	For $n \geq 3$, the maps $\singsstack n \to \onestack n$ are finite.
	Similarly, the map $\singsw \to \weierstrass$ is finite.
\end{lemma}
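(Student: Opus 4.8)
The plan is to prove finiteness of the forgetful maps $\singsstack n \to \onestack n$ and $\singsw \to \weierstrass$ by reducing to a statement about the singular locus of a single geometrically integral arithmetic genus $1$ curve. The key observation is that both maps forget the marked section $\tau$ factoring through the singular locus, and a section factoring through the singular locus is the same datum as a point of the singular locus together with a compatible section of the structure map. Thus the fiber of $\singsstack n \to \onestack n$ over a curve $C \to B$ is naturally identified with the scheme of sections of the structure map landing in the singular locus of $C \to B$, which is a closed subscheme $X \subset C$.

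First I would reduce to showing that the singular locus of a geometrically integral arithmetic genus $1$ curve over a field is finite, in fact a single point. Since $C \to B$ is proper, flat, finitely presented with geometrically integral fibers of arithmetic genus $1$, each geometric fiber is either smooth (in which case the singular locus is empty, contributing nothing to $\singsstack n$) or has exactly one singular point, which is either a node or a cusp. This is the classical geometry of plane cubics and their higher-degree analogs: a geometrically integral genus $1$ curve has at most one singular point because its normalization has genus $0$ and the arithmetic genus drop from the normalization is exactly $1$, forcing a single node or cusp. Therefore the singular locus $X \subset C$ is, fiber by fiber, either empty or a single reduced point, hence $X \to B$ is finite.

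Next I would assemble this fibral statement into the relative finiteness claim. The section $\tau$ is determined by its image, a point of $X$ mapping isomorphically to $B$ (after base change, in the étale-local structure), so the map $\singsstack n \to \onestack n$ is the composite of the identification of its source with an open and closed piece of $X$ together with the finite map $X \to B$. Concretely, one shows the singular locus defines a closed subscheme of the universal curve, finite over the base, by applying the valuative or fibral criterion: properness is inherited from $C \to B$ (as $X$ is closed in $C$), and quasi-finiteness follows from the fibral computation above, so $X \to B$ is finite by \cite[17.9.5]{EGAIV.4} or the standard fact that proper plus quasi-finite implies finite. The same argument applies verbatim to $\singsw \to \weierstrass$, using that Weierstrass curves are also geometrically integral genus $1$ curves with at most one singular point.

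The main obstacle will be making precise the claim that a section factoring through the singular locus is controlled by a genuinely finite scheme over the base, rather than merely quasi-finite. The subtlety is that the singular locus $X$ must be shown to be a closed subscheme of $C$ that is proper over $B$; this is automatic since $X$ is closed in $C$ and $C$ is proper over $B$, but one must verify that the scheme-theoretic singular locus (defined, say, by the first Fitting ideal of $\Omega_{C/B}$) commutes with base change in this setting, so that its formation is compatible with the fibral description. Once properness and quasi-finiteness of $X \to B$ are established, finiteness follows formally, and since $\singsstack n$ is identified with an open-and-closed subscheme of this $X$ (the locus where $\tau$ is a section, automatically satisfied by the definition), the forgetful map inherits finiteness. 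I expect the bulk of the work to be the careful verification of the scheme-theoretic structure of the singular locus and its base-change compatibility, with the geometric input—that each singular fiber has a unique singular point—being the classical fact that drives everything.
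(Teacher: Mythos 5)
Your proof is correct and rests on the same core fact as the paper's --- quasi-finite plus proper implies finite, applied to the singular locus, with quasi-finiteness coming from the fibral statement that a geometrically integral arithmetic genus $1$ curve over a field has finitely many singular points --- but the reduction is packaged differently. The paper first invokes the presentations $\singsstack n \simeq [\singshilb n/\pgl_n]$ and $\singsw \simeq [\hilbsw/\pgl_3]$ to reduce to the maps of Hilbert schemes $\singshilb n \to \nhilb n$ and $\hilbsw \to \hilbw$, and then establishes properness of those by the valuative criterion (taking the closure of a generic section in the singular locus over a DVR). You instead observe directly that the $2$-fiber product of $\singsstack n \to \onestack n$ against a map $B \to \onestack n$ classifying $C \to B$ is the functor of sections of $C \to B$ landing in the (Fitting-ideal) singular locus $X \subset C$, which is represented by $X$ itself; properness is then free because $X$ is closed in the proper $C$, and no valuative criterion is needed. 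This is a legitimately cleaner route, at the cost of having to check representability of the forgetful map and base-change compatibility of the scheme-theoretic singular locus, both of which you correctly flag and both of which hold (Fitting ideals commute with base change). One small inaccuracy: the singular locus is not ``a single reduced point'' in the cuspidal case --- as the paper itself notes in \autoref{remark:open-and-closed-node-cusp}, it has degree $2$ there --- but this is immaterial since quasi-finiteness only requires set-theoretically finite fibers.
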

\begin{proof}
	The idea will be to use the valuative criterion for properness.
	We will begin with verifying the statement for $\singsstack n \to
	\onestack n$.
%	By \cite[\href{https://stacks.math.columbia.edu/tag/04YY}{Tag 04YY}]{stacks-project},
%	$\singsstack n \to \nstack n$
%	is representable
%	because for any $(B, f: P \to B, \iota: C \to P, \tau: B \to C)$, the automorphisms
%	are those automorphisms of the image in $\nstack n$ which additionally preserve the section $\tau$.
	Using \autoref{corollary:singsstack-representable} 
%	and
%	the equivalence of 
%	\autoref{proposition:equivalent-fiber-of-stack}\autoref{hilb-mod-pgl}
%	and 
%	\autoref{proposition:equivalent-fiber-of-stack}\autoref{nstack-section},
	the map $\singsstack n \to \onestack n$ is in fact the quotient of a map of 
	Hilbert schemes
	$\singshilb n \to \nhilb n$ by the respective $\pgl_n$ actions.
	Therefore, it is enough to check $\singshilb n \to \nhilb n$ is finite.
	We will do so by checking it is proper and quasi-finite.
	Quasi-finiteness follows because the singular locus of any geometrically
	integral genus $1$ curve over a field is quasi-finite.
	The valuative criterion for properness is also readily verified because
	given a 
	genus $1$ curve $E$ over a discrete valuation ring $R$ 
	with a map $\tau_K : \spec K(R) \to E$ in the singular locus of $E \to
	\spec R$, the closure of $\tau_K$ in
	$E$ defines the unique extension of $\tau_K$ to a section in the
	singular locus of $E \to \spec R$.

	The proof that $\singsw \to \weierstrass$ is finite is analogous, where
	one uses \autoref{lemma:weierstrass-representable}
	and the map of covers $\hilbsw \to \hilbw$
	in place of 
	$\singshilb n \to \nhilb n$.
\end{proof}

Since the maps $\singsstack n \to \onestack n$ and $\singsw \to \weierstrass$ are
finite by \autoref{lemma:finite-marking-singularity}, and in particular proper, we can
make sense of the nodal and cuspidal loci inside $\onestack n$ and $\weierstrass$
as the images of those in $\singsstack n$ and $\singsw$.
One can alternatively define these loci in terms of their schematic covers by respective Hilbert schemes.
\begin{definition}
	\label{definition:node-and-cusp-loci}
	For $n \geq 3,$ let $f_n: \singsstack n \to \onestack n$ be the natural map forgetting the singular section
	and define $\cuspstack n$ as the closed substack of $\onestack n$ 
	which is the image 
	$f_n(\cuspsstack n)$.
	Let $\nodestack n \subset \onestack n$ denote the locally closed 
	substack
	given as $f_n(\singsstack n) - \cuspstack n$.

	Similarly, let $f: \singsw \to \weierstrass$ denote the projection forgetting the singular section, let
	$\cuspw$ denote the closed substack of $\weierstrass$ given by $f(\cuspsw)$ and let $\nodew$
	denote the locally closed substack of $\weierstrass$ given by $f(\singsw) - \cuspw$.

	We informally say a point of $\onestack n$ or $\weierstrass$ lies in
	the {\em nodal locus}
	if it factors through $\nodestack n$ or $\nodew$
	and lies in the {\em cuspidal locus} if it factors through $\cuspstack n$ or $\cuspw$.
\end{definition}

To conclude our discussion of $\onestack n$ for the moment, we note the
following lemma,
which will allow us to lift certain points of $\onestack n$ to points of $\singsstack
n$.

\begin{lemma}
	\label{lemma:lift}
	Let $B$ be a normal integral scheme with generic point $\eta$ and let $n \geq 3$.
	If $\phi: B \to \onestack n$ is a map such that $\eta$ factors
	through $\nodestack n$, then $\phi$ factors through $\singsstack n$.
\end{lemma}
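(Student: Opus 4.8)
The plan is to recast a factorization of $\phi$ through $\singsstack n$ as a section of the relative singular locus of the associated curve, construct this section canonically over the generic point $\eta$, and then extend it over all of $B$ using normality. Unwinding \autoref{definition:singsstack}, the map $\phi$ corresponds to a family $C \to B$ of geometrically integral genus $1$ degree $n$ curves, and the fiber product $B \times_{\onestack n} \singsstack n$ is represented by the relative singular locus $S := \sing(C/B) \subseteq C$, viewed as a $B$-scheme: a $T$-valued point is exactly a $B$-morphism $T \to S$, i.e.\ a section of $S \times_B T \to T$. Hence factoring $\phi$ through $\singsstack n$ amounts to producing a section $\tau \colon B \to S$ of the projection $S \to B$. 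Since $\singsstack n \to \onestack n$ is finite by \autoref{lemma:finite-marking-singularity}, the base change $S \to B$ is finite as well.

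Next I would analyze the generic fiber $S_\eta$. Formation of the non-smooth locus commutes with base change, so $S_\eta = \sing(C_\eta)$. Because $\eta$ factors through $\nodestack n$, the curve $C_\eta$ is nodal, and the nodal locus is precisely the locus where the relative singular locus has degree $1$ (cf.\ \autoref{remark:open-and-closed-node-cusp}). Thus $S_\eta$ is a degree-$1$ closed subscheme of $C_\eta$ over $\kappa(\eta)$; being degree $1$, it is reduced and $\kappa(\eta)$-rational, so $S_\eta \to \eta$ is an isomorphism. In particular the singular section exists and is unique over $\eta$, which sidesteps any subtlety about non-rational nodes over imperfect residue fields.

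Finally, let $S_0 \subseteq S$ be the reduced closure of the image of $\eta \xrightarrow{\sim} S_\eta \hookrightarrow S$. Then $S_0$ is integral, and $S_0 \to B$ is finite (a closed subscheme of the finite $S \to B$), dominant, and birational, since the generic point of $S_0$ has residue field $\kappa(\eta) = K(B)$. The crux is now the commutative-algebra fact that a finite, dominant, birational morphism from an integral scheme to a normal integral scheme is an isomorphism: affine-locally it is a finite extension $A \hookrightarrow A'$ of domains with a common fraction field, and integral closedness of $A$ forces $A' = A$. Consequently $S_0 \to B$ is an isomorphism, and the composite $B \xrightarrow{\sim} S_0 \hookrightarrow S \subseteq C$ is the sought singular section $\tau$, exhibiting the factorization of $\phi$ through $\singsstack n$. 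I expect the essential point to be this last extension step, where normality of $B$ is exactly what rules out the generic section acquiring extra (ramified or non-reduced) branches over the non-generic locus; the generic analysis and the moduli bookkeeping are comparatively routine.
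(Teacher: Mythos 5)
Your proof is correct and follows essentially the same route as the paper's: reduce to producing a section of the singular locus $Z \subset C$ over $B$, observe that the nodal generic fiber makes $Z \to B$ generically an isomorphism, and then use that a finite birational morphism from an integral scheme onto the normal integral $B$ is an isomorphism. The only cosmetic difference is that the paper passes to the normalization of $Z$ where you take the reduced closure of the generic point; both yield the same extension step.
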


\begin{proof}
	The map $B \to \onestack n$ corresponds to a proper flat family of genus $1$
	curves $C \to B$ with a closed embedding $\iota: C \to P$ for $f: P \to B$ an
	$(n-1)$-dimensional Brauer-Severi scheme over $B$.
	Further, by assumption, the generic fiber is a nodal curve.

	By definition of $\singsstack n$, we only need produce a section $\tau: B
	\to C$ contained in the singular locus of $f \circ \iota: C \to B$.
	Because the generic fiber of $f \circ \iota$ is nodal by assumption, 
	a local calculation shows
	%\autoref{lemma:singular-degrees}
	that the singular locus over the generic point of $B$
	maps isomorphically to the generic point of $B$.
	
	Let $Z \subset C$ denote the singular locus of the map $f \circ \iota$.
	Let $\widetilde{Z}$ denote the normalization of $Z$. 
	Then, $\widetilde{Z} \to B$ is a finite birational map of normal integral schemes, hence an isomorphism. 
	By inverting this isomorphism and composing with the map $\widetilde{Z} \to Z \to C$, we obtain the desired section $\tau: B \to C$ such that $f \circ \iota \circ \tau = \id$.
	Note that $\tau(B)$ factors through the singular locus of $C$
	because $\tau(B)$ is closed by properness of $\tau$ and the generic
	point of $B$ maps to the singular locus 
	of $f \circ \iota$ by assumption.
\end{proof}

\subsection{The stack of Hirzebruch surface sections}
\label{subsection:hirzebruch-section-stacks}
We now define various stacks and schemes relating to sections on Hirzebruch
surfaces.
One of the main tasks of this paper is to connect these to
the above stack of $n$-coverings, $\singsstack n$.
We accomplish this in
\autoref{theorem:equivalence}.

Recall the notation for Hirzebruch surfaces from \autoref{notation:hirzebruch}.
%Let $(\mathbb F_{n-2})_B \xra{g} \bp^1_B \xra{h} B$ denote the structure morphisms,
%The complete linear system associated to the class $e + (n-2)f$ on $(\mathbb F_{n-2})_B$ defines a morphism $(\mathbb F_{n-2})_B \to [\bp^{n-1}_B/\pgl_n]$.

\begin{notation}
	\label{notation:rank-2-sheaf}
	The Hirzebruch surface $\mathbb F_{n-2}$ has an invertible sheaf $\scn := \sco_{\mathbb F_{n-2}}(1) \otimes g^* \sco_{\bp^1_\bz}(2)$.
	Let $\scf := (h \circ g)_*\scn$. By construction, $\scf \simeq H^0(\bp^1_\bz,
	\sco_{\bp^1_\bz}(2) \oplus \sco_{\bp^1_\bz}(n))$.
\end{notation}

\begin{remark}
	\label{remark:universal-family-smile}
	A map $B \to \bp \scf$ 
corresponds to a flat finitely presented family $X \to B$ with an embedding $X \to (\mathbb F_{n-2})_B$ with each fiber in the linear system
associated to $\scn$ of \autoref{notation:rank-2-sheaf}.
This yields a description of $\bp \scf$ as a subscheme of a component of the Hilbert scheme of subschemes of $\mathbb F_{n-2}$ over $\spec \bz$.
There is a corresponding universal family $\univfn n \subset \bp \scf \times \mathbb F_{n-2}$
with projection map $\pi: \univfn n \to \bp \scf$.
\end{remark}
We next define $\smile n$ as the subscheme of $\bp \scf$ parameterizing smooth
members of the linear system associated to $\scn$.

\begin{definition}
	\label{definition:smile-scheme}
	With $\univfn n$ as defined in
	\autoref{remark:universal-family-smile},
	let $Z \subset \univfn n$ denote the singular locus of $\pi: \univfn n \to \bp \scf$ and
	let $\pi(Z)$ denote the image of $Z$ in $\bp \scf$ .
	Define $\smile n := \bp \scf - \pi(Z)$.
\end{definition}

We will soon define the stack $\smilestack n$ in \autoref{definition:smile-stack}
which we will later see is $[\smile n / \aut_{\mathbb F_n/\bz}]$
in \autoref{lemma:smile-stack-representable}.
This stack $\smilestack n$ will involve twists of Hirzebruch surfaces, which we
now define.

\begin{definition}
	\label{definition:hirzebruch-twist}
	For $n \in \bz_{\geq 1}$ and $B$ a scheme, we define an {\em $n$-Hirzebruch twist} over $B$ as
	a tuple $(B, h: X \to B, g: F \to X)$
	where
	\begin{enumerate}
		\item $h: X \to B$ is a $1$-dimensional Brauer-Severi scheme
			over $B$,
		\item $g: F \to X$ is a relative dimension $1$ projective
			bundle over $X$\footnote{In other words, $F \to X$ is
				isomorphic to $\bp (\sce)$ for some rank $2$
				locally free sheaf
				$\sce$ on $X$. In particular, $F \to X$ is, Zariski-locally
			on $X$, isomorphic to $\bp^1_X$.} 
			such that
	there is an fppf cover $B' \to B$ having the property that
	$B' \times_B F \simeq (\mathbb F_{n})_{B'}$.
	\end{enumerate}
\end{definition}

A basic property of Hirzebruch surfaces that continues to hold for
$n$-Hirzebruch twists is the following.
\begin{lemma}
	\label{lemma:divisors-on-twists}
	For any $n \geq 1$, any $n$-Hirzebruch twist $F \xra{g} X \xra{g} B$ possesses
	a relative effective Cartier divisor $E \subset F$ and an invertible
	sheaf $\scm$  satisfying the following property:
	for $B' \to B$ a cover with $F_{B'} \simeq \mathbb F_n$,
	the pullback of $E$ to $B'$ is the relative directrix on $\mathbb F_n$
	over $B'$
	and the pullback of $\scm$ has class $2f$ over $B'$.
\end{lemma}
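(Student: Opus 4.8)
The plan is to construct $E$ and $\scm$ intrinsically from the data $(B, h\colon X \to B, g\colon F \to X)$ in a way that is manifestly compatible with base change, and then to read off the pullback to $B'$ directly from the construction together with the convention for the directrix in \autoref{notation:hirzebruch}. I would build $\scm$ directly on $F$ with no descent at all, and build $E$ from the relative maximal destabilizing sub-line-bundle of $\sce$ (with fppf descent of the directrix as an alternative check).

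For $\scm$: since $h\colon X \to B$ is a one-dimensional Brauer–Severi scheme, it is smooth and proper over $B$ with geometric fibres $\simeq \bp^1$, so the relative dualizing sheaf $\omega_{X/B}$ is an invertible sheaf restricting to $\sco(-2)$ on each geometric fibre. Hence $\omega_{X/B}^{\vee}$ is a genuine line bundle on $X$ of fibrewise degree $2$, even though $X$ need not carry a degree-one line bundle. I set $\scm := g^* \omega_{X/B}^{\vee}$; this is defined on all of $F$ with no choices. Over a cover $B'$ with $X_{B'} \simeq \bp^1_{B'}$ and $F_{B'} \simeq (\mathbb F_n)_{B'}$, we have $\omega_{\bp^1_{B'}/B'}^{\vee} \simeq \sco_{\bp^1_{B'}}(2)$, so the pullback of $\scm$ is $g^* \sco_{\bp^1_{B'}}(2)$, which is exactly the class $2f$ by the description of $f$ as a fibre class in \autoref{notation:hirzebruch}.

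For $E$: writing $F \simeq \bp(\sce)$ for a rank $2$ locally free sheaf $\sce$ on $X$ as in \autoref{definition:hirzebruch-twist}, the restriction of $\sce$ to each geometric fibre of $X \to B$ has the form $\sco(a) \oplus \sco(a+n)$ with $n \geq 1$, hence is unstable of constant Harder–Narasimhan type with a unique maximal destabilizing sub-line-bundle. Because the type is constant along $X \to B$, these fibrewise sub-line-bundles assemble into a saturated sub-bundle $\scm_0 \subset \sce$ whose formation commutes with base change, and I define $E$ to be the image of the section $X \to \bp(\sce) = F$ determined by the quotient $\sce \twoheadrightarrow \sce/\scm_0$. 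Since $E \simeq X$ is a section of the $\bp^1$-bundle $F \to X$ and $X \to B$ is flat, $E$ is a relative effective Cartier divisor of $F$ over $B$; the construction is independent of the choice of $\sce$ because replacing $\sce$ by $\sce \otimes \scl$ replaces $\scm_0$ by $\scm_0 \otimes \scl$ and leaves the section unchanged. Over $B'$, base-change compatibility identifies $\scm_0$ with the degree-$(a+n)$ summand of $\sco(a) \oplus \sco(a+n)$, so the quotient is the lower-degree summand and $E_{B'}$ is precisely the directrix of \autoref{notation:hirzebruch}.

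The main obstacle is the existence of $\scm_0$ as an honest saturated sub-bundle of $\sce$ on $X$, rather than merely fibrewise, together with its compatibility with base change; this is the relative Harder–Narasimhan filtration in the case of constant type, and I would either invoke the standard relative Harder–Narasimhan formalism or construct $\scm_0$ directly as the saturation of the image of the evaluation of the highest piece of $g_*$. A clean alternative that avoids Harder–Narasimhan theory entirely is fppf descent: take $E_{B'}$ to be the directrix on $(\mathbb F_n)_{B'}$ and descend it along $B' \to B$, the descent datum being automatic because the gluing over $B' \times_B B'$ is given by an element of $\aut_{\mathbb F_n}$, and, by the structure of this automorphism group analogous to \autoref{lemma:affine-aut-sequences} together with uniqueness of the negative section for $n \geq 1$, every automorphism of $\mathbb F_n$ fixes the directrix subscheme. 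I would present the intrinsic construction as primary and use the descent argument to confirm the pullback identification.
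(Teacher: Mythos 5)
Your proposal is correct, and half of it coincides exactly with the paper's proof: the paper also takes $\scm$ to be the pullback to $F$ of $(\Omega^1_{X/B})^\vee$, with the same justification via the fibrewise identification $\omega_{\bp^1}^\vee \simeq \sco(2)$. For the divisor $E$ the paper takes what you list as your ``clean alternative'': it descends the directrix along an fppf cover trivializing $F$, the descent datum being automatic because the directrix is the unique section in the unique divisor class of negative self-intersection and is therefore preserved scheme-theoretically by every automorphism of $\mathbb F_{n}$ (this is where $n \geq 1$ is used). Your primary route via the relative Harder--Narasimhan filtration of $\sce$ is a legitimate intrinsic alternative, but note two points. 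First, the existence and base-change compatibility of the maximal destabilizing sub-bundle over an arbitrary (possibly non-reduced) base $B$ is exactly the delicate step, and the cheapest way to establish it is to construct it fppf-locally and glue by its uniqueness --- which collapses back into the descent argument. Second, your fallback of constructing $\scm_0$ as the image of an evaluation map of ``the highest piece of the pushforward'' implicitly requires twisting $\sce$ down by a line bundle of odd fibrewise degree on $X$, which need not exist when $X$ is a nontrivial Brauer--Severi scheme; only even-degree twists (powers of $\omega_{X/B}^\vee$) are available globally. So the descent argument is not merely an optional confirmation here; it is the load-bearing step, and presenting it as primary (as the paper does) is the cleaner organization.
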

\begin{proof}
	We may explicitly take $\scm$ to be
	$h^*(\Omega^1_{X/B})^\vee$
	on $F$.
	The directrix $E' \subset \mathbb F_{n-2}$, an effective Cartier
	divisor, is
	preserved scheme theoretically by automorphisms of $\mathbb F_{n-2}$
	as it is the unique section in the unique divisor class
	of negative
	self intersection. Therefore, $E'$ descends 
	to the desired subscheme $E \subset F$.
\end{proof}

\begin{notation}
	\label{notation:abuse-hirzebruch}
	In light of \autoref{lemma:divisors-on-twists}, we will continue to use $e$ to denote the class
of a directrix $E$ on an $n$-Hirzebruch twist, and we use $2f$ to denote the
class of $\scm$ as in \autoref{lemma:divisors-on-twists}.
Keep in mind there may be no invertible sheaf $\scn$ with $\scn^{\otimes 2}
\simeq \scm$, hence no sheaf ``of class $f$'' on $F$.
\end{notation}

\begin{definition}
	\label{definition:smile-stack}
	Let $n \in\bz_{\geq 3}$.
	Define $\smilestack n$,
	the {\em stack of volatility smiles},
	as the fibered category over schemes whose objects over a scheme $B$ are tuples
	$(B, h: X \to B, g: F \to X, i: Z \to F)$ where 
	$(B, h: X \to B, g: F \to X)$ is an $(n-2)$-Hirzebruch twist over $B$
	and $i: Z \to F$ is a closed subscheme which fppf locally on $B$ induces a map to $\smile n$;
	in other words there is an fppf cover $B' \to B$ having the property that
	$B' \times_B F \simeq (\mathbb F_{n-2})_{B'}$ and $B' \times_B Z \to (\mathbb F_{n-2})_{B'}$ is a subscheme smooth over $B'$ which lies
	in the linear system associated to $\scn$ on $\mathbb F_{n-2}$, as defined in \autoref{notation:rank-2-sheaf}.
	
	A morphism $(B, h: X \to B, g: F \to X, i: Z \to F) \to (B', h': X' \to B', g': F' \to X', i': Z' \to F')$
	consists of maps $\alpha: B \to B', \beta: X \to X', \gamma: F \to F', \delta: Z \to Z'$
	making all squares in the diagram
	\begin{equation}
		\label{equation:}
		\begin{tikzcd} 
			Z \ar {r}{\delta} \ar {d}{i} & Z' \ar {d}{i'} \\
			F \ar {r}{\gamma} \ar {d}{g} & F' \ar {d}{g'} \\
			X \ar {r}{\beta} \ar {d}{h} & X' \ar {d}{h'} \\
			B \ar {r}{\alpha} & B'
	\end{tikzcd}\end{equation}
	fiber squares.
\end{definition}

We next wish to show $\smilestack n$ is algebraic for $n \geq 3$. 
(In fact $\smilestack n$ can be analogously defined for $n = 1, 2$ and shown to
be algebraic in those cases as well. However, this requires a separate
definition, and we omit it because we will not need it.)
	To show $\smilestack n$ is algebraic,
	we will construct an equivalence 
	$\smilestack n \to [\smile n/\aut_{\mathbb
	F_{n-2}/\bz}]$.
	The first step to doing so is to understand the stack of $(n-2)$-Hirzebruch
	twists. We then bootstrap by equipping these twists with a section of
	class $e + nf$.
	The next lemma verifies that the stack of $(n-2)$-Hirzebruch twists
	is equivalent to the stack quotient $[\spec \bz/ \aut_{\mathbb
	F_n/\bz}]$.

\begin{lemma}
	\label{lemma:descent-for-F}
	Let $B$ be a scheme. There is an equivalence of categories between $\aut_{\mathbb F_{n-2}/B}$ torsors over $B$
and $(n-2)$-Hirzebruch twists $F \xrightarrow{g} X \xrightarrow{h} B$.
\end{lemma}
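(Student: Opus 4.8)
The plan is to recognize this as a standard instance of non-abelian fppf descent (the \emph{twisting} correspondence): objects that are fppf-locally isomorphic to a fixed object $Y_0$ with automorphism group scheme $G := \aut_{\mathbb F_{n-2}/B}$ are classified by $G$-torsors, so that the groupoid of such forms is equivalent to the classifying groupoid of $G$-torsors. Here $Y_0 = (\mathbb F_{n-2})_B$, and by \autoref{definition:hirzebruch-twist} an $(n-2)$-Hirzebruch twist is precisely a form of $(\mathbb F_{n-2})_B$ in this sense. Both categories in the statement are groupoids over the fixed base $B$, so it suffices to produce a pair of mutually quasi-inverse functors.

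First I would construct the functor from twists to torsors. Given an $(n-2)$-Hirzebruch twist $F \xra{g} X \xra{h} B$, I set $T := \underline{\isom}_B((\mathbb F_{n-2})_B, F)$, the fppf sheaf on $B$ whose sections over $B' \to B$ are the isomorphisms $(\mathbb F_{n-2})_{B'} \simeq F_{B'}$ of schemes over $B'$. Since for $n-2 \geq 1$ the ruling on a Hirzebruch surface is unique, every such isomorphism automatically respects the towers $F \to X \to B$ and $(\mathbb F_{n-2})_B \to \bp^1_B \to B$, so no extra compatibility need be imposed by hand. By \autoref{definition:hirzebruch-twist} there is an fppf cover $B' \to B$ trivializing $F$, hence $T$ is fppf-locally nonempty; precomposition by automorphisms of the source makes $T$ a right $G$-torsor. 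A morphism of twists is an isomorphism, since the defining squares are fiber squares over the fixed base $B$, and it induces an isomorphism of $\isom$-sheaves, hence a morphism of torsors.

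Next I would build the inverse functor, sending a torsor $T$ to the contracted product $F := T \times^{G} (\mathbb F_{n-2})_B$, together with its descended fibration to the Brauer-Severi scheme $X := T \times^{G} \bp^1_B$ coming from the induced $G$-action on the base $\bp^1$. The one substantive point is representability of this contracted product, i.e.\ effectivity of the evident fppf descent datum on $(\mathbb F_{n-2})_{B'}$. The key observation is that $G$ acts trivially on $\pic(\mathbb F_{n-2}) = \bz e \oplus \bz f$: the directrix class $e$ is intrinsic as the unique class of negative self-intersection, and the fiber class $f$ is intrinsic as the class of the unique ruling, both using $n - 2 \geq 1$. Consequently the ample class $e + (n-1)f$ is preserved, yielding a $G$-equivariant relatively ample invertible sheaf; descent of projective schemes equipped with a relatively ample sheaf is then effective, so $F$ is a scheme with the required projective-bundle structure over the Brauer-Severi base $X$. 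Alternatively one may descend $X$ first and then the $\bp^1$-bundle $F \to X$.

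Finally I would verify the two functors are mutually quasi-inverse by the usual yoga of torsors: the evaluation map $T \times^{G} (\mathbb F_{n-2})_B \to F$ is an isomorphism recovering a twist from its $\isom$-torsor, and conversely there is a canonical isomorphism $\underline{\isom}_B((\mathbb F_{n-2})_B,\, T \times^{G} (\mathbb F_{n-2})_B) \simeq T$; both identities may be checked fppf-locally on $B$, where torsor and twist are simultaneously trivial. \textbf{The main obstacle} is the effectivity of descent in the inverse functor, which is exactly why the $G$-invariant ample class is needed, and why the hypothesis $n \geq 3$ (equivalently $n-2 \geq 1$, so that the ruling, and hence $f$, is canonical and $G$ acts trivially on the Picard group) is essential.
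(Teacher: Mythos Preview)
Your approach is essentially the paper's: forward via $\isom$, backward via contracted product, with effectivity of descent as the only substantive step. There is, however, a small gap in your main effectivity argument. You pass from ``$G$ acts trivially on $\pic(\mathbb F_{n-2})$'' to ``$e+(n-1)f$ yields a $G$-equivariant relatively ample invertible sheaf,'' but invariance of a divisor \emph{class} does not automatically produce a $G$-\emph{linearization} of the line bundle, which is what effective descent for polarized schemes requires. Concretely, $\sco_{\bp^1}(1)$ has no $\pgl_2$-linearization, so $\sco_{\mathbb F_{n-2}}(f) = g^*\sco_{\bp^1}(1)$ need not carry a $\projAut n$-linearization, and hence neither need $\sco(e+(n-1)f)$.

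The paper sidesteps this exactly by the alternative you mention in passing: descend $X$ first (as a Brauer--Severi scheme, using the surjection $\projAut n \to \pgl_2$ from \autoref{lemma:affine-aut-sequences}), then descend $F$ relative to $X$ using $\sco(e)$. The point is that the directrix $E$ is a $G$-invariant \emph{subscheme} (not merely class), so $\sco_{\mathbb F_{n-2}}(E)$ acquires a canonical $G$-linearization from the $G$-action on its ideal sheaf, and $\sco(e)$ is relatively very ample for $\mathbb F_{n-2}\to\bp^1$. Your argument is easily repaired along the same lines: either use $\sco(e)$ relatively over $X$, or replace $e+(n-1)f$ by a class built from $e$ and $2f$ (since $\sco(2f) = g^*(\Omega^1_{\bp^1})^\vee$ is canonically $G$-linearized by \autoref{lemma:divisors-on-twists}), e.g.\ $e+2kf$ for $k$ large.
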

\begin{proof}
	Given $F \to X \to B$, we obtain an associated $\aut_{\mathbb F_{n-2}/B}$ torsor $\isom_B(\mathbb F_{n-2}, F)$.

	Conversely, given an $\aut_{\mathbb F_{n-2}/B}$ torsor $T$, we describe the inverse construction by producing the associated $F \to X \to B$.
	First, we construct $F$ as the contracted product $T \overset{\aut_{\mathbb F_{n-2}/B}}{\times} \mathbb F_{n-2}$,
	which we recall is the quotient of $T \times \mathbb F_{n-2}$ by the functorial action of $\aut_{\mathbb F_{n-2}/B}$ given by an element $g$ sending $(t,x) \mapsto (tg^{-1}, gx)$.
	A priori, this quotient is only an algebraic space.
	
	Recall that by \autoref{lemma:affine-aut-sequences},
	$\aut_{\mathbb F_{n-2}/B}$ can be written as an extension of
	$\pgl_2$ by a certain normal subgroup $\projAutSub n$, as defined in 	
	\autoref{definition:automorphism-groups}.
	Let $T/\projAutSub n$ denote the quotient algebraic space which we note has an action of $\pgl_2$
	and define $X := T/\projAutSub n \overset{\pgl_2}{\times} \bp^1$.
	There are maps $T \to T/\projAutSub n$ and $\mathbb F_{n-2} \to \bp^1$ which induce a map $F \to X$.
	Since $\aut_{\mathbb F_{n-2}/B}$ is an affine group scheme, $T$ is a
	scheme by effectivity of descent for affine morphisms.
	Further $T/\projAutSub n$ is a $\pgl_2$ torsor over $B$ and hence also a scheme.
	The contracted product $X$ is then a Brauer-Severi scheme, as this
	contracted product is the standard way to obtain a Severi schemes from the associated torsor
	\cite[8.1]{grothendieck:brauer-i}.

	To conclude, we know
	$F \to X$ is fppf locally isomorphic to a $\bp^1$ bundle over $X$, but
	we need to show it is a scheme and even a Zariski $\bp^1$ bundle over $X$.
	Let $E'$ denote the section of $\mathbb F_{n-2} \to \bp^1$
	corresponding to the surjection $\sco \oplus \sco(n-2) \to \sco$. 
	Then
	$E'$ has divisor class $e$.
	Note that $\projAutSub n$ scheme theoretically preserves $E'$ as follows from \autoref{lemma:divisors-on-twists}. 
	Therefore, 
	$\sco_{\mathbb F_{n-2}}(e)$
	descends to an invertible sheaf on $F$, which we also call $\sco_F(e)$.
	This sheaf $\sco_F(e)$ is relatively very ample for the map $F \to X$.
	So, by descent for polarized schemes,
	we obtain that $F$ is a scheme, and contains a closed subscheme 
	$E \subset F$ which fppf locally becomes the directrix in $\mathbb F_{n-2} \to \bp^1$.
	By cohomology and base change, $h_* (\sco_F(E))$ is a locally free
	sheaf of rank $2$ on $B$, and so $\sco_F(E)$ induces the desired
	map $F \to \bp( h_* \sco_F(E))$ over $X$. This map is an isomorphism, as
	may be verified on geometric fibers by the fibral isomorphism criterion \cite[17.9.5]{EGAIV.4}.
\end{proof}

Let $n \geq 3$. We are now ready to construct the equivalence $\smilestack n \to
[\smile n/\aut_{\mathbb F_{n-2}/\bz}]$.
To start, we construct the map.
A map $B \to \smilestack n$ corresponds to the data 
$(B, h: X \to B, g: F \to X, i: Z \to F)$.
Define $I := \isom_B(\mathbb F_{n-2}, F)$.
This yields an isomorphism $(\mathbb F_{n-2})_I \simeq F_I$ 
together with a subscheme $X_I \subset F_I \simeq (\mathbb F_{n-2})_I$
that induces a map $I \to \smile n$.
Since $I$ is a torsor over $B$ using the equivalence from
\autoref{lemma:descent-for-F}, this altogether yields our desired map
$\phi: \smilestack n \to
[\smile n/\aut_{\mathbb F_{n-2}/\bz}]$.

\begin{lemma}
	\label{lemma:smile-stack-representable}
	For $n \geq 3$, the map $\phi$ constructed above is an equivalence of fibered categories. In particular, $\smilestack n$
	is an algebraic stack.
\end{lemma}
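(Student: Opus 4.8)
The plan is to upgrade the equivalence of \autoref{lemma:descent-for-F} between $\aut_{\mathbb F_{n-2}/B}$-torsors and $(n-2)$-Hirzebruch twists to one that also records the divisor. By the definition of a quotient stack, an object of $[\smile n/\aut_{\mathbb F_{n-2}/\bz}](B)$ is a pair $(I \to B, f : I \to \smile n)$ consisting of an $\aut_{\mathbb F_{n-2}/\bz}$-torsor $I$ over $B$ together with an $\aut_{\mathbb F_{n-2}/\bz}$-equivariant map $f$ to $\smile n$. Since $\phi$ is already built, it suffices to construct an inverse $\psi$ and check that the two composites are naturally isomorphic to the identity.

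First I would construct $\psi$. Given $(I,f)$, \autoref{lemma:descent-for-F} produces the underlying $(n-2)$-Hirzebruch twist $F \xra{g} X \xra{h} B$ as the contracted product $F = I \overset{\aut_{\mathbb F_{n-2}/B}}{\times} \mathbb F_{n-2}$. To produce the divisor, recall from \autoref{remark:universal-family-smile} that $\bp\scf$ carries a universal family $\univfn n \subset \bp\scf \times \mathbb F_{n-2}$, and hence so does its open subscheme $\smile n$ (\autoref{definition:smile-scheme}). This universal subscheme is equivariant for the diagonal action of $\projAut n \simeq \aut_{\mathbb F_{n-2}/\bz}$ (\autoref{lemma:affine-aut-sequences}(4)) on $\smile n \times \mathbb F_{n-2}$: the action on $\bp\scf$ is induced from the linear action of $\affineAut n$ on $\affineSections n = \scf$, and an automorphism of $\mathbb F_{n-2}$ carries the divisor classified by $[D]$ to the one classified by $g\cdot[D]$, preserving smoothness and hence $\smile n$. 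Pulling the universal subscheme back along $f$ gives a closed subscheme $Z_I \subset I \times \mathbb F_{n-2}$, and equivariance of both $f$ and the universal family means $Z_I$ descends, along $I \times \mathbb F_{n-2} \to F$, to a closed subscheme $Z \subset F$. I then set $\psi(I,f) := (F \to X \to B, Z \to F)$, noting $Z$ is fppf-locally in $\smile n$ by construction.

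Next I would check the composites. For $\psi \circ \phi$, starting from a volatility smile $(F \to X \to B, Z \to F)$, the map $\phi$ returns $I = \isom_B(\mathbb F_{n-2}, F)$ with the map $I \to \smile n$ classifying $Z_I \subset F_I \simeq (\mathbb F_{n-2})_I$; applying $\psi$ reconstructs $F$ as the contracted product, which is canonically $F$ by the inverse construction of \autoref{lemma:descent-for-F}, and descends $Z_I$ back to $Z$. For $\phi \circ \psi$, starting from $(I,f)$, the twist $F = I \overset{\aut_{\mathbb F_{n-2}/B}}{\times} \mathbb F_{n-2}$ has $\isom_B(\mathbb F_{n-2}, F) \simeq I$ canonically, and the subscheme recovered from $Z$ along this identification is again classified by $f$. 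As both identifications are natural in $B$, the functors $\phi$ and $\psi$ are mutually inverse equivalences of fibered categories. Finally, since $\smile n$ is a scheme and $\aut_{\mathbb F_{n-2}/\bz} \simeq \projAut n \simeq \unipotentRadical n \rtimes (\gl_2/\mu_{n-2})$ is an affine, flat, finitely presented algebraic group over $\bz$ by \autoref{lemma:affine-aut-sequences}, the quotient $[\smile n/\aut_{\mathbb F_{n-2}/\bz}]$ is an algebraic stack, whence so is $\smilestack n$.

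The main obstacle will be the descent of the closed subscheme $Z$: one must verify carefully that the universal family over $\smile n$ is genuinely $\aut_{\mathbb F_{n-2}/\bz}$-equivariant and that the fppf-local condition in \autoref{definition:smile-stack} (that $Z$ locally lands in $\smile n$, i.e.\ every fiber is smooth in the class of $\scn$) matches \emph{exactly} the condition that $f$ factor through $\smile n \subset \bp\scf$ rather than the larger Hilbert-scheme locus. Once this matching is established, everything else is a formal consequence of \autoref{lemma:descent-for-F} together with standard fppf descent for closed subschemes.
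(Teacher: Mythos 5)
Your proposal is correct and follows essentially the same route as the paper: the inverse is built by using \autoref{lemma:descent-for-F} for the underlying $(n-2)$-Hirzebruch twist and then descending the equivariant divisor $\widetilde{Z} \subset (\mathbb F_{n-2})_I$ (your $Z_I$, the pullback of the universal family) along the fppf cover $I \times \mathbb F_{n-2} \to F$, with the composites checked via the bijection of \autoref{lemma:descent-for-F} and uniqueness of descent for closed immersions. Your added remarks on equivariance of the universal family and on algebraicity of the quotient only make explicit points the paper leaves implicit.
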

\begin{proof}
	We will construct the inverse map.	
	Given an
	$\aut_{\mathbb F_{n-2}/\bz} \times_\bz B$ torsor $I$ over $B$ and a map
	$I \to \smile n$ corresponding to a divisor $\widetilde{Z}  \subset (\mathbb
	F_{n-2})_I$ which is equivariant for the $\aut_{\mathbb F_{n-2}/\bz}
	\times_\bz B$ action, we wish to construct
	a map $B \to \smilestack n$.
	Using 
	\autoref{lemma:descent-for-F},
	we obtain an $(n-2)$-Hirzebruch twist $(B, h: X \to B, g: F \to X)$
	which pulls back to $(\mathbb F_{n-2})_I \to \bp^1_I \to I$ over $I$.
	Because the subscheme $\widetilde{Z} \subset (\mathbb F_{n-2})_I$ is
$\aut_{\mathbb F_{n-2}/\bz} \times_\bz B$ equivariant,
it descends to a closed subscheme $i: Z \to F$ which induces the map $B \to
\smilestack n$.
Note that $Z$ is smooth and has class $e + nf$, as may be verified fppf locally on $B$.

	We claim this map is inverse to the map $\phi$.
	This bijection
	follows from the bijection established in 
	\autoref{lemma:descent-for-F}, 
	together with uniqueness of descent for the closed immersion $i: Z \to F$.
\end{proof}

\section{Equivalent descriptions of $n$-coverings}
\label{section:genus-1-curves}

The main goal of this section is to prove
\autoref{proposition:equivalent-fiber-of-stack},
which gives several equivalent descriptions of $n$-coverings of a genus $1$
curves.
As preparation for proving these equivalent descriptions, we now
review some generalities on 
derived functor cohomology
of complexes.

\subsection{Derived functor cohomology of two-term complexes}
\label{subsection:derived-functor-cohomology}

Given a space $X$ and a complex of sheaves $\scc := [\scf \xra{\phi}
\scg]$ with $\scf$ in degree $0$ and $\scg$ in degree $1$, there are two distinguished triangles
\begin{equation}
	\label{equation:first-distinguished-triangle}
	\begin{tikzcd}
		\scg[-1] \ar {r} & \scc \ar {r} & \scf \to \\
\end{tikzcd}\end{equation}
\vspace{-1cm}
\begin{equation}
	\label{equation:second-distinguished-triangle}
	\begin{tikzcd}
		\ker \phi \ar {r} & \scc \ar {r} & \coker \phi[-1] \to
\end{tikzcd}\end{equation}
We will be most interested in the case that $\phi: \scf \to \scg$ is the specific
two-term complex associated to multiplication by $n$ on the smooth locus of a
Weierstrass curve.
\begin{example}
	\label{example:cohomology-from-triangles}
	Suppose $g: X \to
\spec \bz$ is a
finite locally free degree $2$ map 
and take $\scf = \scg = g_* \bg_m/\bg_m$
and $\phi = \times n$.
Then, taking cohomology associated to 
\eqref{equation:first-distinguished-triangle} and
\eqref{equation:second-distinguished-triangle} we obtain the
exact sequences
\begin{equation}
	\label{equation:first-cohomology-from-triangle}
\begin{tikzpicture}[baseline= (a).base]
\node[scale=.8] (a) at (0,0){
	\begin{tikzcd}
		0 \ar {r} &  \frac{H^0(\spec \bz, g_* \bg_m/\bg_m)}{nH^0(\spec
		\bz, g_* \bg_m/\bg_m)}  \ar {r} & H^1(\spec \bz, g_* \bg_m/\bg_m
		\xra{\times n} g_* \bg_m/\bg_m) \ar {r}{\rho} & H^1(\spec \bz, g_* \bg_m/\bg_m)[n] \ar {r} & 0 
\end{tikzcd}};
\end{tikzpicture}
\vspace{-.3cm}
\end{equation}
\begin{equation}
	\label{equation:second-cohomology-from-triangle}
\begin{tikzpicture}[baseline= (a).base]
\node[scale=.7] (a) at (0,0){
	\begin{tikzcd}
		0 \ar {r} &  H^1(\spec \bz, g_* \bg_m/\bg_m[n]) \ar
		{r}{\upsilon} & H^1(\spec \bz, g_* \bg_m/\bg_m \xra{\times n}
		g_* \bg_m/\bg_m)  \ar {r} & H^0\left( \spec \bz, \coker\left( g_* \bg_m/\bg_m \xra{\times n} g_* \bg_m/\bg_m \right) \right).
\end{tikzcd}};
\end{tikzpicture}
\end{equation}
To obtain \eqref{equation:first-cohomology-from-triangle}, we are using that the boundary map in the cohomology sequence associated to 
\eqref{equation:first-distinguished-triangle} is given by $\times n$
because the 
boundary map in
\eqref{equation:first-distinguished-triangle} is identified with $\phi: \scf \to
\scg$.
\end{example}

\subsection{Relative genus $1$ curves}
\label{subsection:relative-genus-1-curves}

Much of the remainder of this section was inspired by 
\cite[\S2]{artinSD:the-shafarevich-tate-conjecture-for-pencils-of-elliptic-curves}.
Our primary goal in this section is to prove
\autoref{proposition:equivalent-fiber-of-stack},
which gives several equivalent characterizations of points of $\nstack n$,
and relates it to $\onestack n$.
Let $E \to B$ be a genus $1$ curve with geometrically integral fibers and smooth locus
$E^{\sm}$.
One of the main issues in characteristic dividing $n$ is that $\bg_a
\xrightarrow{\times n} \bg_a$ is the $0$ map, and hence is not surjective.
In order to deal with this issue, instead of working with $H^1(B, E^{\sm}[n])$, we work with the hypercohomology group
$H^1(B, E^{\sm} \xrightarrow{\times{n}} E^{\sm})$.
The next two lemmas relate hypercohomology to points of $\nstack n$.

\begin{lemma}
	\label{lemma:sections-of-quotient-and-torsors}
	Let $\phi: G \to H$ be a map of smooth commutative group schemes over $B$. 
	Let $[H/\phi(G)]$ denote the quotient stack of $H$ by the action of $G$ via $\phi$.
	Then $H^1(B, H \xrightarrow{\phi}G) = H^0(B, [H/\phi(G)])$.
	These sets are also in bijection with
	pairs $(T,\psi: \phi_* T \to H)$ up to isomorphism of torsors, where
	$T$ is a $G$ torsor and $\psi$ is an isomorphism of $H$
	torsors.
\end{lemma}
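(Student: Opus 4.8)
The plan is to prove both asserted equalities by exhibiting a chain of bijections
\[
H^1\bigl(B, [G \xra{\phi} H]\bigr) \;\cong\; \{(T,\psi)\}/{\cong} \;\cong\; H^0\bigl(B,[H/\phi(G)]\bigr),
\]
where the middle term denotes isomorphism classes of pairs $(T,\psi)$ with $T$ a $G$-torsor and $\psi\colon \phi_* T \xrightarrow{\sim} H$ a trivialization of the associated $H$-torsor. Here I read the two-term complex as having $G$ in degree $0$ and $H$ in degree $1$, so that its differential is $\phi$, matching the convention of \autoref{example:cohomology-from-triangles}. I would establish the right-hand bijection first, since it is essentially formal, and then the left-hand bijection by an explicit cocycle computation.

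First I would unwind the definition of the quotient stack: an object of $[H/\phi(G)](B)$ is a $G$-torsor $T\to B$ equipped with a $G$-equivariant morphism $T\to H$, where $G$ acts on $H$ through $\phi$ by translation. Extension of structure group along $\phi$ sends $T$ to the contracted product $\phi_*T := T\,\overset{G}{\times}\,H$, an $H$-torsor, and carries a $G$-equivariant map $T\to H$ to a section of $\phi_*T$; since $H$ is commutative, every section of an $H$-torsor is a trivialization, so this is an isomorphism $\psi\colon \phi_*T\xrightarrow{\sim}H$. This assignment is functorial, and the only points to check are that it is fully faithful and essentially surjective, which follow from the standard yoga of contracted products together with the commutativity of $G$ and $H$ (so that $\aut(T)=H^0(B,G)$ acts through $\phi$ on $\aut(H)=H^0(B,H)$). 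Passing to isomorphism classes yields the right-hand bijection.

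For the left-hand bijection I would pass to the Čech description of $H^1$ of the total complex of the bicomplex $\check{C}^\bullet(\{U_i\}, [G\xra{\phi}H])$ on an fppf cover $\{U_i\to B\}$ trivializing $T$. A degree-$1$ hypercohomology cocycle is a pair $(g_{ij},h_i)$ with $g_{ij}\in G(U_{ij})$ an ordinary $1$-cocycle representing $[T]\in H^1(B,G)$ and $h_i\in H(U_i)$ subject to $\phi(g_{ij})=h_j-h_i$; this last relation is exactly the statement that the $h_i$ glue to a trivialization $\psi$ of $\phi_*T$, whose transition cocycle is $\phi(g_{ij})$. Coboundaries by $\gamma_i\in G(U_i)$ act by $g_{ij}\mapsto g_{ij}+\gamma_j-\gamma_i$ and $h_i\mapsto h_i+\phi(\gamma_i)$, which is precisely the effect of changing the trivialization of the $G$-torsor $T$ — i.e.\ an isomorphism of pairs $(T,\psi)$. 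Thus $(T,\psi)\mapsto [(g_{ij},h_i)]$ is well defined and bijective onto $H^1(B,[G\xra{\phi}H])$.

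The main technical care is the cohomological bookkeeping. One must work with flat (fppf) cohomology throughout, as the paper stipulates, so that $G$- and $H$-torsors are fppf-locally trivial even when $\phi$ fails to be surjective — for instance when $n$ is divisible by a residue characteristic, which is precisely the difficulty that motivated the hypercohomology formulation in \autoref{subsection:relative-genus-1-curves}. One must also justify that Čech cohomology computes the hypercohomology of the complex in degrees $\le 1$, which is the usual agreement of Čech and derived cohomology in low degrees applied to the total Čech bicomplex. As a consistency check, the short exact sequence relating $\coker\bigl(H^0(B,G)\to H^0(B,H)\bigr)$, the group $H^1(B,[G\xra{\phi}H])$, and $\ker\bigl(H^1(B,G)\to H^1(B,H)\bigr)$ coming from the triangle \eqref{equation:first-distinguished-triangle} matches the evident filtration of $\{(T,\psi)\}/{\cong}$ by the class $[T]$ (the quotient records $[T]$ in the kernel, and for fixed $[T]$ the trivializations $\psi$ form a torsor under $H^0(B,H)/\phi H^0(B,G)$), giving an independent verification of the bijection.
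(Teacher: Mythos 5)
Your proposal is correct and follows essentially the same route as the paper's proof: both identify $H^0(B,[H/\phi(G)])$ with pairs $(T,\psi)$ by trading a $G$-equivariant map $T\to H$ for a trivialization of the pushed-forward torsor $\phi_*T$, and both identify such pairs with \v{C}ech hypercohomology classes $(g_{ij},h_i)$ satisfying $\phi(g_{ij})=h_j-h_i$ modulo the evident coboundaries. The added consistency check against the long exact sequence is a nice touch but not needed.
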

\begin{proof}
	Given a $G$ torsor $T'$, let $\phi_* T'$ denote the $H$ torsor given explicitly as the image of $[T'] \in H^1(B, G)$ under the map $H^1(B, G) \to H^1(B, H)$ induced by $\phi$.

	We will first show elements of $H^0(B, [H/\phi(G)])$, i.e., maps $B \to [H/\phi(G)]$, are in bijection with pairs $(T, \psi)$ where $T \to B$ is a $G$ torsor and $\psi: \phi_* T \to H$ is
	an isomorphism of $H$-torsors.
	From the definition,
	an element of $H^0(B,[H/\phi(G)])$
	corresponds to a $G$ torsor $T \to B$ 
	and a $G$-equivariant map $\alpha: T \to H$.
	Such $G$-equivariant maps are in bijection with $H$-equivariant maps
	$\psi: \phi_* T \to H$ via precomposition with the natural map $\theta:
	T \to \phi_* T$ induced by $\phi: G \to H$.

	It remains to show that pairs $(T, \psi)$ for $T$ a $G$-torsor and $\psi$ a trivialization of $\phi_* T$ correspond to elements of
	$H^1(B, G \xrightarrow{\phi} H)$. We can describe elements of
	$H^1(B, G \xrightarrow{\phi} H)$ in terms of Cech hypercohomology
	by a pair $(\alpha, \beta)$ where $\alpha$ is a $1$-cocycle for $G$ and $\beta$ is a $0$-cochain for $H$ with $\phi(\alpha_{ij}) = \beta_i - \beta_j$, taken
	with respect to an fppf cover $U_i$ of $B$ trivializing $T$.
	The datum of the $\alpha_{ij}$ are equivalent to the specification of the torsor $T$ while the equality 
	$\phi(\alpha_{ij}) = \beta_i - \beta_j$,
	is equivalent to giving a trivialization of the torsor $\phi_* T$.
%	Conversely, given a pair $(T, \phi)$, we can, as above, describe $T$ in terms of a $1$-cocycle $\alpha_{ij}$ and a trivialization $\phi_* T \simeq H$
%	can be described by a $0$-cochain $\beta_i$.
\end{proof}

At this point the reader may wish to recall the definitions of $\weierstrass, \universalW, \smoothUW$
and $\nstack n$ from \autoref{definition:weierstrass},
\autoref{definition:smooth-universal-genus-1}, and \autoref{definition:nstack}.

\begin{lemma}
	\label{lemma:fiber-of-stack-as-torsors}
	Let $g: B \to \weierstrass$ be given by a tuple $(B, f: E \to B, e: B \to
	E)$.
	Let $E^{\sm}$ denote the smooth locus of $f$ and let $\pi_n: \nstack n
	\to \weierstrass$ denote the canonical projection induced by $\smoothUW
	\to \weierstrass$.
	Then, $(B \times_{g,\weierstrass,\pi_n} \nstack n)(B) \simeq H^1(B,
	E^{\sm} \xrightarrow{\times n} E^{\sm})$.
\end{lemma}
\begin{proof}
	Because of the fiber squares
	\begin{equation}
		\label{equation:}
		\begin{tikzcd} 
			E^{\sm} \ar {r} \ar {d}{\times n} & \smoothUW \ar {d}{\times n} \\
			E^{\sm} \ar {r} \ar {d} & \smoothUW \ar {d} \\
			B \ar {r}{g} & \weierstrass
	\end{tikzcd}\end{equation}
	we obtain a fiber square
	\begin{equation}
		\label{equation:}
		\begin{tikzcd} 
			\left[E^{\sm}/nE^{\sm}\right] \ar {r} \ar {d} & \nstack n \ar {d} \\
			B \ar {r}{g} & \weierstrass,
	\end{tikzcd}\end{equation}
	where $[E^{\sm}/nE^{\sm}]$ denotes the quotient stack of $E^{\sm}$ by the action of $E^{\sm}$ on itself via multiplication by $n$.
	Therefore, $(B \times_{g,\weierstrass,\pi_n} \nstack n)(B)$ is identified with $[E^{\sm}/nE^{\sm}](B) = H^0(B, [E^{\sm}/nE^{\sm}])$.
	By \autoref{lemma:sections-of-quotient-and-torsors}, we can then
	identify $H^0(B, [E^{\sm}/nE^{\sm}]) \simeq H^1(B, E^{\sm}\xrightarrow{\times n} E^{\sm})$.
\end{proof}
Recall that for $G \to B$ a group scheme, $T$ a $G$-torsor over $B$, and $n
\in \bz_{\geq 1}$,
we use $n_* T$ to denote the $G$ torsor corresponding to the image of $[T] \in H^1(B, G)$
under the map $H^1(B, G) \to H^1(B, G)$ induced by $\times n: G \to G$.
We next show that torsors for the smooth locus of
a genus $1$ curve always have natural compactifications.

\begin{lemma}
	\label{lemma:compactification-scheme}
	Given $(B, f: E \to B, e: B \to
	E) \in \weierstrass(B)$, and $E^{\sm}$ the smooth locus of $f$,
	let $h: T \to B$ be an $E^{\sm}$ torsor with $n_*T$ the trivial
	$E^{\sm}$ torsor.
	Then, there exists a flat projective scheme
	$\overline{h}: \overline{T} \to B$ which is a relative curve of genus $1$ with geometrically integral fibers, such that $T$ is identified with
	the smooth locus of $\overline{h}$.
	Additionally, there is a Brauer-Severi scheme $P \to B$ of relative dimension $n-1$ and a map $\overline{T} \to P$, which is an embedding if $n \geq 3$.

	Further, $\overline{T}$ is unique in the following sense: given any other such
	flat proper algebraic space relative genus $1$ curve $\overline{h}': \overline{T}' \to B$ with geometrically
	integral fibers so that $T$ is identified with the smooth locus of $\overline{h}'$ there is a unique isomorphism
	$\sigma: \overline{T} \to \overline{T}'$ over $B$ so that the
	composition $T \to \overline{T} \xrightarrow{\sigma} \overline{T}'$ is
	the given inclusion $T \to \overline{T}'$.
\end{lemma}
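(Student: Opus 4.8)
The plan is to build $\overline T$ fppf-locally as a copy of the Weierstrass curve $E$, descend this to a proper flat algebraic space, and then use the hypothesis that $n_* T$ is trivial to produce a relatively ample (possibly twisted) degree-$n$ line bundle that simultaneously exhibits $\overline T$ as a projective scheme and supplies the map to a Brauer--Severi scheme. First I would treat existence as an algebraic space. Since $E^{\sm}$ is smooth over $B$, the torsor $T$ trivializes on an fppf (indeed \'etale) cover $B' \to B$, where $T_{B'} \simeq E^{\sm}_{B'}$, whose natural compactification is $E_{B'}$ itself. The transition isomorphisms of $T$ over $B' \times_B B'$ are translations by sections of $E^{\sm}$; by \autoref{lemma:smooth-universal-weierstrass-group} these are group automorphisms, and each translation extends uniquely to an automorphism of the compactification $E$ (the extension is forced by its values on the dense smooth locus). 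These extended automorphisms again satisfy the cocycle condition, so fppf descent for algebraic spaces yields a proper flat algebraic space $\overline h\colon \overline T \to B$ whose formation commutes with base change; it is a relative genus $1$ curve with geometrically integral fibres and smooth locus $T$, as may all be checked on $B'$ where $\overline T$ becomes $E$.

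Next I would produce the polarization, which is where the hypothesis enters. Degree-$0$ line bundles are translation invariant, so the transition automorphisms act trivially on $\pic^0$, and hence $\pic^0_{\overline T/B}$ descends to $\pic^0_{E/B} \simeq E^{\sm}$ canonically. The degree-$n$ component $\pic^n_{\overline T/B}$ is then an $E^{\sm}$-torsor whose class in $H^1(B, E^{\sm})$ is $n[T]$, since $\overline T \simeq \pic^1_{\overline T/B}$ has class $[T]$ and $\pic^n$ is its $n$-fold sum. As $n_* T$ is trivial we have $n[T] = 0$, so $\pic^n_{\overline T/B}$ admits a section over $B$: there is a degree-$n$ line bundle $\scl$ on $\overline T$, a priori only after an fppf base change, the obstruction to its global existence lying in $\Br(B)$. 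For $n \geq 3$ the restriction of $\scl$ to each geometric fibre is very ample with vanishing $H^1$, so by cohomology and base change $\overline h_* \scl$ is locally free of rank $n$ fppf-locally and $\overline T$ embeds in $\bp(\overline h_* \scl)$. Because $\bp(\overline h_*(\scl \otimes \overline h^* M)) \cong \bp(\overline h_* \scl)$ for any line bundle $M$ on $B$, this projective bundle is insensitive to the gerbe ambiguity in $\scl$ and descends to a Brauer--Severi scheme $P \to B$ of relative dimension $n-1$ together with a closed immersion $\overline T \hookrightarrow P$; in particular $\overline T$ is a scheme and is projective over $B$. For $n \leq 2$, where the map to $P$ is not an immersion, I would instead invoke $\scl^{\otimes 3}$ to get an embedding and hence projectivity.

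For uniqueness, let $\overline T'$ be another such compactification. Uniqueness of $\sigma$ is immediate: $T$ is schematically dense in $\overline T$ (the fibres are integral with $T$ fibrewise dense) and $\overline T'$ is separated, so any two extensions of $\id_T$ coincide. For existence I would first observe fibrewise that a proper integral genus $1$ curve is recovered from its smooth locus together with its embedding: the normalization is the canonical smooth completion of $T_{\bar b}$ (an elliptic curve, or $\bp^1$ in the nodal and cuspidal cases), and the unique singular point is re-glued canonically, so $\overline T_{\bar b}$ and $\overline T'_{\bar b}$ are canonically identified over $T_{\bar b}$. I would then globalize by forming the scheme-theoretic closure $\Gamma$ of the graph of $\id_T$ in $\overline T \times_B \overline T'$: the two projections $\Gamma \to \overline T$ and $\Gamma \to \overline T'$ are proper and restrict to isomorphisms over $T$, and the fibrewise analysis shows each is a fibrewise isomorphism (the two branches approaching a node have a common limit because both targets glue them the same way), hence an isomorphism. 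Setting $\sigma := \pr_{\overline T'} \circ \pr_{\overline T}^{-1}$ gives the desired map.

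I expect the main obstacle to be the polarization step: the hypothesis that $n_* T$ is trivial is used precisely here, and one must carefully track that the degree-$n$ line bundle exists only fppf-locally, so that the natural target of the resulting morphism is a Brauer--Severi scheme rather than an honest projectivized bundle. A secondary subtlety is the uniqueness argument across the non-normal nodal or cuspidal boundary, where $\overline T$ fails to be regular in codimension $1$; there the extension of $\id_T$ cannot be obtained from a naive valuative criterion and must instead be extracted from the fibrewise structure of genus $1$ curves as above.
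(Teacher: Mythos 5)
Your proposal is correct in outline and shares the paper's skeleton (glue copies of $E$ by translations to get a proper flat algebraic space, use triviality of $n_*T$ to produce a polarization, then descend into a Brauer--Severi scheme), but the polarization step is carried out by a genuinely different mechanism. The paper stays entirely at the level of explicit Cech data: from the $0$-chain $t_i$ with $n\cdot s_{ij}=t_i-t_j$ it builds the concrete degree-$n$ line bundles $\scl_{t_i}=\sco_{E_{U_i}}((n-1)\cdot e+t_i)$, checks $(s_{ij})^*\scl_{t_i}\simeq\scl_{t_j}$ by a divisor computation, passes to $\scl_{t_i}^{\otimes n}$ to repair the cocycle condition (via the fact that the associated $\pgl_n$-cocycle lifts to $\gl_n$), and invokes effectivity of descent for \emph{polarized} schemes to get $\overline T$ as a projective scheme directly. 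You instead work with the relative Picard functor: identify $\pic^n_{\overline T/B}$ as an $E^{\sm}$-torsor of class $n[T]=0$, take a section, and descend $\bp(\overline h_*\scl)$ using the insensitivity of projectivization to twists by line bundles from the base. This is cleaner and more conceptual, and it correctly isolates the Brauer obstruction; what it costs you is a reliance on facts about $\pic_{\overline T/B}$ for families of possibly singular genus-$1$ curves --- that $\pic^0_{\overline T/B}\simeq E^{\sm}$, that $\pic^1_{\overline T/B}\simeq T$ as torsors, and that $\pic^n$ is the $n$-fold contracted sum --- which are true but are exactly the Artin--Swinnerton-Dyer-style inputs the paper is at pains to avoid at this stage (it only imports them later, in \autoref{lemma:fiber-of-stack-pre-quotient}(5)). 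One further caveat: in your uniqueness argument the passage from the fibrewise identification to an isomorphism of the graph closure $\Gamma$ with $\overline T$ and $\overline T'$ needs more than a fibral criterion, since $\Gamma$ is not known to be flat over a possibly non-reduced $B$; the paper sidesteps this by proving uniqueness only of the map (via separatedness and schematic density) after localizing to trivialize the torsor, where both compactifications are visibly $E$.
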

\begin{proof}
	The existence of $\overline{T}$ as a proper algebraic space (instead of
	a projective scheme) is
	formal.
	Indeed, using \autoref{lemma:sections-of-quotient-and-torsors},
	our given torsor $T$ with trivialization of $n_* T$ corresponds to
	an element of $H^1(B, E^{\sm}
	\xrightarrow{\times n} E^{\sm})$.
	In terms of Cech cocycles, 
	for $U_i \to B$ a suitable fppf cover,
	this element may be described as a $1$-cocycle $s_{ij} \in H^0(U_{ij}, E^{\sm}|_{U_{ij}})$ 
	and a $0$-chain $t_i$ so that $n \cdot s_{ij} = t_i - t_j$. 
	The $s_{ij}$ specify cocycle data to glue $E_{U_i}|_{U_{ij}}$ to
	$E_{U_j}|_{U_{ij}}$ over $U_{ij}$, and since the $s_{ij}$ are a cocycle, i.e., $s_{ij} + s_{jk} = s_{ik}$ on $U_{ijk}$, 
	they define descent data
	so as to construct an algebraic space $\overline{T}$. Since $T$ is
	constructed using the same cocycle $s_{ij}$, but via gluing
	$E^{\sm}_{U_i}|_{U_{ij}} \to E^{\sm}_{U_j}|_{U_{ij}}$,
	we obtain the desired open embedding $T \to \overline{T}$, as can be
	verified locally.
% see for example \cite[\href{https://stacks.math.columbia.edu/tag/0ADT}{Tag
% 0ADT}]{stacks-project} for the construction of T via this descent data

	The uniqueness claim on $\overline{h}$ may be verified locally on $B$,
	and hence we may assume our torsors are trivial.
	In this case, the uniqueness holds because
	each $E_{ij}$ is separated over $U_{ij}$ so the desired map $\overline{T} \to \overline{T}'$ is unique.

	It remains to show that $\overline{T}$ is a projective scheme.
	For this, we use
	that descent for polarized schemes is effective.
	Namely, choose descent data $s_{ij}$ and $t_i$ for $T$ as above, and let $e : B \to E^{\sm}$ denote the given section in the smooth locus associated with the data of the map $B \to \weierstrass$.
	Consider the line bundle $\scl_{t_i} := \sco_{E_{U_i}}( (n-1) \cdot e +
	t_i )$ on $E$ viewed as a degree $n$ line bundle on $E_{U_i}$.
	The global sections of this invertible sheaf induce an embedding $E_{U_i} \to \proj f_* \scl_{t_i}$.

	We will check next that $(s_{ij})^* \scl_{t_i}|_{U_{ij}} \simeq
	\scl_{t_j}|_{U_{ij}}$ and that the $\scl_{t_i}^{\otimes n}$ descend to an invertible sheaf
	on $\overline{T}$.
	Because translation by $s_{ij}$ corresponds to tensoring with the degree $0$
	line bundle $\sco_{E_{U_{ij}}}(n \cdot s_{ij}- n \cdot e)$,
	we find  $(s_{ij})^* \scl_{t_i} \simeq \sco_{E_{U_{ij}}} (n \cdot s_{ij}
	+ t_i - e)$.
	The condition that $n \cdot s_{ij} = t_i - t_j$
	can be written in terms of degree $0$ line bundles as
$\sco_{E_{U_{ij}}}(n \cdot s_{ij} - n \cdot e) \simeq \sco_{E_{U_{ij}}}(t_i -
t_j)$.
This yields the desired isomorphism 
\begin{align*}
	\scl_{t_i}|_{U_{ij}} \simeq
\sco_{E_{U_{ij}}}((n-1)e+t_i) \simeq \sco_{E_{U_{ij}}}
(n \cdot s_{ij} + t_j - e) \simeq (s_{ij})^* \scl_{t_j}|_{U_{ij}}. 
\end{align*}

	Although the isomorphisms 
	$\scl_{t_i} \simeq (s_{ij})^* \scl_{t_j}$
	may not satisfy the cocycle condition,
	we claim that when we multiply the above isomorphisms by $n$, we obtain isomorphisms
	$\scl_{t_i}^{\otimes n} \simeq (s_{ij})^* \scl_{t_j}^{\otimes n}$, which
	do satisfy the cocycle condition.
	To verify this, we can do so after pushing forward via $f$. We then
	obtain that the induced isomorphisms
	$f_*\scl_{t_i}^{\otimes n} \simeq f_*((s_{ij})^* \scl_{t_j}^{\otimes n})$ do satisfy the cocycle condition 
	because the corresponding $\pgl_n$ torsor
	lifts to a $\gl_n$ torsor
	by \cite[Theorem 6.6.17(ii)]{poonen:rational-points-on-varieties}
	(or more precisely the immediate generalization of its proof to
	arbitrary base schemes in place of fields).
	Therefore, the isomorphisms $\scl_{t_i}^{\otimes n} \simeq (s_{ij})^* \scl_{t_j}^{\otimes n}$ also satisfy the cocycle condition.
	Then, the polarizations $\scl_{t_i}^{\otimes n}$ on
	$E_{U_i}$ induce descent data coming from translation by $s_{ij}$.
	Effectivity of descent for polarized schemes yields a projective scheme $\overline{T} \to B$ whose base change to $U_i$ is $E$.

	Finally, we verify the statement regarding the Brauer-Severi scheme.
	We have descent data for the schemes $P_i := \proj f_* \scl_{t_i}$, with the line bundle $\sco_{P_i}(n)$, again induced by translation by $s_{ij}$.
	Again, by effectivity of descent for polarized schemes, we obtain a scheme $P \to B$ with $P_{U_i} \simeq P_i$, and so $P$ is a Brauer-Severi scheme.
	Effectivity of descent for closed embeddings implies that the natural closed embeddings $E_{U_i} \to P_i$ descend to a closed embedding $\overline{T} \to P$,
	which is the claimed embedding into a Brauer-Severi scheme.
\end{proof}

With notation as in \autoref{lemma:fiber-of-stack-as-torsors}
we use $\aut_{(E,e)/B}$ to denote the automorphism scheme of the genus $1$
curve $E$ which preserve the given section $e$ lying in the smooth locus.

\begin{lemma}
	\label{lemma:fiber-of-stack-pre-quotient}
	With notation as in \autoref{lemma:fiber-of-stack-as-torsors}
	the following sets are isomorphic, functorially in $B$
	and respecting the action of $\aut_{(E,e)/B}(B)$:
	\begin{enumerate}
		\item[\customlabel{nstack-map-pre}{(1)}] $(B
			\times_{g,\weierstrass,\pi_n} \nstack n)(B)$;
		\item[\customlabel{hyper-torsor-pre}{(2)}]$H^1(B, E^{\sm}
			\xrightarrow{\times n} E^{\sm})$;
		\item[\customlabel{nstack-section-pre}{(3)}] $H^0(B,
			[E^{\sm}/nE^{\sm}])$;
		\item[\customlabel{esmooth-torsor-pre}{(4)}] the set of pairs $(T,
			\psi: n_* T \to E^{\sm})$ where $T$ is an $E^{\sm}$
			torsor and $\psi$ is an isomorphism of $E^{\sm}$
			torsors, up to isomorphism;
		\item[\customlabel{picn-point-pre}{(5)}] the set of pairs $(T, M)$
			where $T$ is an $E^{\sm}$ torsor and $M \in
			\pic^n_{\overline{T}/B}(B)$ for $\overline{T}$ the flat
			proper genus $1$ curve associated to $T$ as in
			\autoref{lemma:compactification-scheme},
			up to isomorphism.
			Here $(T,M)$ is isomorphic to $(T,M')$ if they differ
			by translation by a point of
		$E^{\sm}\simeq \pic^0_{{\overline T}/B}$.
	\end{enumerate}
\end{lemma}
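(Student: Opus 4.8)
The plan is to dispose of the equivalences $(1) \Leftrightarrow (2) \Leftrightarrow (3) \Leftrightarrow (4)$ immediately from the two lemmas already in hand, and then to concentrate the real work on $(4) \Leftrightarrow (5)$. The identification of $(1)$ with $(2)$ is exactly the content of \autoref{lemma:fiber-of-stack-as-torsors}. Applying \autoref{lemma:sections-of-quotient-and-torsors} with $G = H = E^{\sm}$ and $\phi = \times n$ then identifies $(2)$, $(3)$, and $(4)$ in one stroke: the quotient stack $[E^{\sm}/nE^{\sm}]$ is the $[H/\phi(G)]$ of that lemma, its $B$-points give $(3)$, these equal $H^1(B, E^{\sm} \xra{\times n} E^{\sm}) = (2)$, and the lemma further matches them with pairs $(T,\psi)$ for $T$ an $E^{\sm}$-torsor and $\psi \colon n_* T \to E^{\sm}$ a trivialization, which is $(4)$. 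Every identification here is canonical, so functoriality in $B$ and equivariance for $\aut_{(E,e)/B}(B)$ (which acts by transport of structure on each object) come for free.

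For the remaining equivalence, observe first that a pair $(T,\psi)\in(4)$ exhibits $n_* T$ as a trivial torsor, which is precisely the hypothesis under which the compactification $\overline{T}$ of \autoref{lemma:compactification-scheme} exists, with $T = \overline{T}^{\sm}$, so that $\pic^n_{\overline{T}/B}$ is defined. The crux is then a canonical isomorphism of $E^{\sm}$-torsors
\[ \pic^n_{\overline{T}/B} \simeq n_* T. \]
To establish it, I would first identify $\pic^0_{\overline{T}/B} \simeq E^{\sm}$: on an fppf cover $U_i \to B$ trivializing $T$ one has $\overline{T}_{U_i} \simeq E_{U_i}$, hence $\pic^0_{\overline{T}_{U_i}} \simeq \pic^0_{E_{U_i}} \simeq E^{\sm}_{U_i}$ by \autoref{lemma:smooth-universal-weierstrass-group}, and the gluing of $\overline{T}$ by the translations $s_{ij}$ acts trivially on degree-$0$ classes, so these descend. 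Then $\pic^n_{\overline{T}/B}$ is an $E^{\sm}$-torsor, and comparing the local sections $\sco_{E_{U_i}}(n \cdot e)$ under the gluing — translation by $s_{ij}$ shifts a degree-$n$ class by $n \cdot s_{ij}$ — shows its class is $n_*[T]$, giving the displayed isomorphism.

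Concretely, I would reuse the computation already performed inside the proof of \autoref{lemma:compactification-scheme}: the \v{C}ech data $(s_{ij}, t_i)$ with $n \cdot s_{ij} = t_i - t_j$ representing $(T,\psi)$ produces the degree-$n$ sheaves $\scl_{t_i} = \sco_{E_{U_i}}((n-1)e + t_i)$, and the compatibility $\scl_{t_i}|_{U_{ij}} \simeq (s_{ij})^*\scl_{t_j}|_{U_{ij}}$ established there says exactly that the classes $[\scl_{t_i}]$ glue to a section $M \in \pic^n_{\overline{T}/B}(B)$. This defines the map $(4)\to(5)$, $(T,\psi)\mapsto(T,M)$; the inverse writes a given $M$ locally as such an $\scl_{t_i}$ to recover $t_i$, hence $\psi$. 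Finally I would match the equivalence relations: shifting the trivialization $\psi$ by $c \in E^{\sm}(B)$ replaces $t_i$ by $t_i + c$ and hence translates $M$, while an automorphism of the torsor $T$ (translation by $a$) shifts $\psi$ by $na$ and so translates $M$ via $t_a^*$, which is precisely the translation identification defining $(5)$. As before, canonicity gives functoriality in $B$ and $\aut_{(E,e)/B}(B)$-equivariance.

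The main obstacle is the torsor identification $\pic^n_{\overline{T}/B} \simeq n_* T$ over a general base, together with the bookkeeping needed to reconcile the a priori rigid trivialization datum of $(4)$ with the translation-equivalence of $(5)$: the point is that geometric translation by a point of $\pic^0_{\overline{T}/B} \simeq E^{\sm}$ moves a degree-$n$ class by $n$ times that point, and getting this factor of $n$, and the direction of all the connecting maps, to line up with the automorphism action on $n_* T$ is where the care lies. By contrast, the existence of $\overline{T}$, the meaning of $\pic^n_{\overline{T}/B}(B)$, and the descent of the $\scl_{t_i}$ are already supplied by \autoref{lemma:compactification-scheme}, so no new hard input is needed there.
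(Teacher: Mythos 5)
Your proposal is correct and takes essentially the same route as the paper: the equivalence of (1) with (2) comes from \autoref{lemma:fiber-of-stack-as-torsors}, the equivalences of (2), (3), and (4) come from \autoref{lemma:sections-of-quotient-and-torsors}, and (4) with (5) rests on identifying $\pic^n_{\overline{T}/B}$ with $n_*T$ as $E^{\sm}$-torsors via the \v{C}ech data $(s_{ij}, t_i)$ already set up in \autoref{lemma:compactification-scheme}. The only difference is that for the last step the paper simply cites the argument of Artin--Swinnerton-Dyer (Proposition 1.7) rather than writing it out, so your version is if anything more self-contained.
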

\begin{proof}
	The equivalence of \autoref{nstack-map-pre}, \autoref{hyper-torsor-pre} were established in \autoref{lemma:fiber-of-stack-as-torsors}.
	The equivalence of the \autoref{hyper-torsor-pre},
	\autoref{nstack-section-pre}, and \autoref{esmooth-torsor-pre} follows from \autoref{lemma:sections-of-quotient-and-torsors}.
	Finally, the identification
	of \autoref{esmooth-torsor-pre} with
	\autoref{picn-point-pre}
	follows from the same argument given in 
\cite[Proposition
1.7]{artinSD:the-shafarevich-tate-conjecture-for-pencils-of-elliptic-curves}.
	\end{proof}

We are now ready to combine the above lemmas to verify the main result of this
section.

\begin{proposition}
	\label{proposition:equivalent-fiber-of-stack}
	With notation as in \autoref{lemma:fiber-of-stack-as-torsors}
	the following sets are isomorphic, functorially in $B$:
	\begin{enumerate}
		\item[\customlabel{nstack-map}{(1)}] $(B
			\times_{g,\weierstrass,\pi_n} \nstack
		n)(B)/\aut_{(E,e)/B}(B)$;
		\item[\customlabel{hyper-torsor}{(2)}]$H^1(B, E^{\sm}
			\xrightarrow{\times n} E^{\sm})/\aut_{(E,e)/B}(B)$;
		\item[\customlabel{nstack-section}{(3)}] $H^0(B,
			[E^{\sm}/nE^{\sm}])/\aut_{(E,e)/B}(B)$;
		\item[\customlabel{esmooth-torsor}{(4)}] the set of pairs $(T,
			\psi: n_* T \to E^{\sm})$ where $T$ is an $E^{\sm}$
			torsor and $\psi$ is an isomorphism of $E^{\sm}$
			torsors, up to isomorphism, modulo the action of
			$\aut_{(E,e)/B}(B)$;
		\item[\customlabel{picn-point}{(5)}] the set of pairs $(T, M)$
			where $T$ is an $E^{\sm}$ torsor and $M \in
			\pic^n_{\overline{T}/B}(B)$ for $\overline{T}$ the flat
			proper genus $1$ curve associated to $T$ as in
			\autoref{lemma:compactification-scheme},
			up to isomorphism, modulo the action of
			$\aut_{(E,e)/B}(B)$.
			\end{enumerate}
	If further, $n \geq 3$, the above are also equivalent to
	\begin{enumerate}
		\item[\customlabel{compactified-torsor}{(6)}] the set of tuples
			$(T, P, \iota)$, taken up to automorphism, where $T$ is an $E^{\sm}$ torsor, $P$ is an $n-1$ dimensional Brauer-Severi scheme over $B$ and,
			for $\overline{T}$ the flat proper genus $1$ curve associated to $T$, 
			$\iota: \overline{T} \to P$ is a closed embedding;
		\item[\customlabel{hilb-mod-pgl}{(7)}] maps $B \to [\nhilb n/\pgl_n]$ corresponding to
			$(n-1)$-dimensional Brauer-Severi schemes $P \to B$ with closed embeddings $\overline{T}
			\to P$ 
			of genus $1$ flat projective curves with geometrically
			integral fibers, such that the smooth locus of
			$\overline{T} \to B$ is an $E^{\sm}$ torsor.
	\end{enumerate}
\end{proposition}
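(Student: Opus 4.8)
The plan is to dispatch \autoref{nstack-map}--\autoref{picn-point} for free and concentrate on the new items. Indeed, \autoref{lemma:fiber-of-stack-pre-quotient} already shows that \autoref{nstack-map-pre}--\autoref{picn-point-pre} are isomorphic functorially in $B$ and compatibly with the action of $\aut_{(E,e)/B}(B)$; quotienting each by this action immediately yields the isomorphisms among \autoref{nstack-map}--\autoref{picn-point}. So for $n \geq 3$ it remains to adjoin \autoref{compactified-torsor} and \autoref{hilb-mod-pgl}, which I would do by proving the chain \autoref{picn-point} $\Leftrightarrow$ \autoref{compactified-torsor} $\Leftrightarrow$ \autoref{hilb-mod-pgl}.

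For \autoref{picn-point} $\Leftrightarrow$ \autoref{compactified-torsor} I would exhibit mutually inverse constructions. From $(T, M)$ with $M \in \pic^n_{\overline{T}/B}(B)$: as $M$ has relative degree $n \geq 3$ on the genus $1$ curve $\overline{h} : \overline{T} \to B$, cohomology and base change gives that $\overline{h}_* M$ is locally free of rank $n$ (each fibre has $h^0 = n$, $h^1 = 0$) and that $M$ is relatively very ample, so $M$ determines a closed embedding $\iota_M : \overline{T} \hookrightarrow \bp(\overline{h}_* M) =: P$ into a Brauer--Severi scheme of relative dimension $n-1$; send $(T,M) \mapsto (T, P, \iota_M)$. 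The existence of some embedding into a Brauer--Severi scheme is already recorded in \autoref{lemma:compactification-scheme}; the point here is to pin it to $M$. Conversely send $(T, P, \iota) \mapsto (T, \iota^* \sco_P(1))$, noting $\iota^* \sco_P(1) \in \pic^n_{\overline{T}/B}(B)$. That these are inverse rests on linear normality of elliptic normal curves: writing $r : P \to B$, the restriction map $r_* \sco_P(1) \to \overline{h}_* M$ of rank-$n$ locally free sheaves is an isomorphism, as one checks on fibres because a degree $n$ geometrically integral genus $1$ curve spanning $\bp^{n-1}$ is embedded by the complete system $|M_b|$ (again $h^0(M_b) = n$); hence $P \cong \bp(\overline{h}_* M)$ compatibly with the embeddings.

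It then remains to match equivalence relations. In \autoref{picn-point} one identifies $(T, M)$ with its translates $(T, t_a^* M)$ under $a \in \pic^0_{\overline{T}/B}(B) \simeq E^{\sm}(B)$; since $t_a$ is an automorphism of $\overline{T}$ over $B$, one has $\overline{h}_*(t_a^* M) \cong \overline{h}_* M$, so $t_a$ identifies the corresponding tuples in \autoref{compactified-torsor}. Conversely, an isomorphism of tuples is, on $\overline{T}$, an $E^{\sm}$-equivariant automorphism of the torsor, hence a translation, up to a line bundle pulled back from $B$ that is invisible in $\pic_{\overline{T}/B}$. Thus the two constructions descend to mutually inverse bijections on equivalence classes, and the residual $\aut_{(E,e)/B}(B)$-action is exactly what is recorded by ``up to automorphism'' in \autoref{compactified-torsor}.

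Finally, \autoref{compactified-torsor} $\Leftrightarrow$ \autoref{hilb-mod-pgl} is the quotient-stack bookkeeping already carried out in \autoref{corollary:singsstack-representable}: a map $B \to [\nhilb n/\pgl_n]$ is a $\pgl_n$-torsor, equivalently an $(n-1)$-dimensional Brauer--Severi scheme $P \to B$ by \cite[8.1]{grothendieck:brauer-i}, together with a $\pgl_n$-equivariant map to $\nhilb n$, which descends to a closed embedding $\overline{T} \hookrightarrow P$ of a geometrically integral genus $1$ degree $n$ curve; recovering $T$ as the smooth locus of $\overline{T} \to B$ and imposing that it be an $E^{\sm}$-torsor matches \autoref{compactified-torsor} on the nose. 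I expect the main obstacle to be the interface in \autoref{picn-point} $\Leftrightarrow$ \autoref{compactified-torsor}: proving fibrewise linear normality so that $P$ is recovered canonically from $M$, and checking that ``translation by $\pic^0$'' corresponds precisely to ``isomorphism of tuples,'' together with the attendant fppf-descent bookkeeping needed to make these constructions global rather than merely fppf-local.
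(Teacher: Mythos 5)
Your handling of \autoref{nstack-map}--\autoref{picn-point} and of \autoref{compactified-torsor} $\Leftrightarrow$ \autoref{hilb-mod-pgl} coincides with the paper's; the genuine difference is in how \autoref{compactified-torsor} is attached to the first five items. The paper does not prove \autoref{picn-point} $\Leftrightarrow$ \autoref{compactified-torsor} by exhibiting mutually inverse constructions: it builds a map \autoref{esmooth-torsor} $\to$ \autoref{compactified-torsor} by invoking \autoref{lemma:compactification-scheme} (which already packages the Brauer--Severi scheme and the embedding, including the descent for polarized schemes), then maps \autoref{compactified-torsor} $\to$ \autoref{picn-point} by pulling back the positive generator of $\pic_{P/B} \simeq \underline{\bz}$, and closes the loop using the previously established identification of \autoref{esmooth-torsor} with \autoref{picn-point}. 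Your route instead goes directly between \autoref{picn-point} and \autoref{compactified-torsor}, sending $M \mapsto \bp(\overline{h}_* M)$ and $\iota \mapsto \iota^* \sco_P(1)$, and pays for this with the linear-normality verification; that verification is correct (a degree-$n$ line bundle on an integral Gorenstein arithmetic-genus-$1$ curve has $h^0 = n$ and $h^1 = 0$, so a degree-$n$ closed embedding into a $\bp^{n-1}$-fiber is by the complete system), and it is what lets you conclude the two constructions are inverse, a point the paper gets for free from the \autoref{esmooth-torsor} $\simeq$ \autoref{picn-point} bijection. The one place your sketch needs genuine additional work rather than a routine check is the one you flag yourself: an element $M \in \pic^n_{\overline{T}/B}(B)$ need not be an honest line bundle when the Brauer obstruction is nonzero, so $\overline{h}_* M$ does not literally exist and $\bp(\overline{h}_* M)$ must be replaced by a descent of the fppf-local projectivizations (this is exactly the content the paper has already isolated in \autoref{lemma:compactification-scheme}, which is why it routes through \autoref{esmooth-torsor} instead). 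With that caveat discharged, your argument is a valid and somewhat more self-contained alternative.
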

\begin{proof}
	The equivalence of the \autoref{nstack-map}-\autoref{picn-point} follows from the analogous
	statements in \autoref{lemma:fiber-of-stack-pre-quotient},
	as the bijections there are compatible with the actions of
	$\aut_{(E,e)/B}(B)$.

We now assume $n \geq 3$. We next show how to construct the data of
\autoref{compactified-torsor} from \autoref{esmooth-torsor}, and then how to
construct the data of \autoref{picn-point} from
\autoref{compactified-torsor}. This will be done in a bijective
fashion under the identification of \autoref{esmooth-torsor} and
\autoref{picn-point} above.
Given the data of \autoref{esmooth-torsor}, we obtain the data of
\autoref{compactified-torsor} from
\autoref{lemma:compactification-scheme}. 
Note that if we have $T$ and $T'$ as in
\autoref{esmooth-torsor} which are related by an automorphism of $(E,\sigma)$, 
then they will still yield
isomorphic tuples $(T,P,\iota)$ and $(T', P',\iota')$.

Conversely, given the data of \autoref{compactified-torsor}, we recover the data of
\autoref{picn-point} as follows.
To begin, note that
that $\pic_{P/B} \simeq \underline{\bz}$, for $\underline{\bz}$ the constant group scheme associated to $\bz$ on $B$. Of course, when $P$ is a nontrivial Brauer-Severi scheme, the unique
point of $\pic_{P/B}(B)$ corresponding to the positive generator of $\bz$ will
not correspond to a line bundle on $P$. Rather, it corresponds to the
collection of line bundles
$\sco_{P_{U_i}}(1)$ for $U_i \to B$ an fppf cover trivializing the Brauer-Severi scheme $P \to B$.
There is a natural map $\pic_{P/B} \to \pic_{\overline{T}/B}$ given by pullback, and the image of $1 \in \bz \simeq \pic_{P/B}(B)$ pulls back to an element
of $\pic^n_{\overline{T}/B}(B)$ since on geometric fibers over $B$,
$\overline{T}$ has degree $n$ in $P$.
This is the desired point of $\pic^n_{\overline{T}/B}(B)$ yielding
the data of 
\autoref{picn-point}.
If we have an automorphism $(T,P, \iota) \to (T, P, \iota)$,
it will be induced by an automorphism of $E$ (which possibly does not fix $\sigma$), as can be checked fppf locally
where $T$ becomes isomorphic to $E$.
Therefore, the above constructed element of 
\autoref{picn-point} is well defined.

	It only remains to explain the equivalence of \autoref{compactified-torsor} and \autoref{hilb-mod-pgl}.
	For $B$ a scheme, a point $[\nhilb n/\pgl_n](B)$ corresponds to a
	$\pgl_n$ torsor $R \to B$ and a $\pgl_n$ equivariant map $R \to \nhilb
	n$.
	The latter corresponds to a subscheme $\widetilde{\overline{T}} \to \bp^n_R$
	flat over $B$ of degree $n$ whose geometric fibers lie in $\nhilb n$.
	By the equivalence between $\pgl_n$ torsors and Brauer-Severi schemes
	and effectivity of descent for closed subschemes,
	such data descends to a Brauer-Severi scheme
	$P \to B$ and a
	subscheme $\overline{T} \to P$ flat over $B$ of degree $n$ whose geometric
	fibers lie in $\nhilb n$.
	This shows that such maps $B \to [\nhilb n/\pgl_n]$ are in bijection
	with data as in \autoref{compactified-torsor}.
\end{proof}

\section{Singular genus $1$ curves and Hirzebruch surfaces}
\label{section:hirzebruch-bijection}

This section is perhaps the most technically involved section of the paper,
and its goal is to construct an equivalence of stacks $\smilestack n \simeq
\singsstack n$.
This equivalence seems to us quite intuitive, with the forward map is given by a
certain linear system while the reverse map is given by blowing up the singular
section, see \autoref{subsection:inverse-map}.
However, to actually define the map of stacks, 
we are forced to carry out these constructions carefully in families, which
unfortunately makes the proof rather long.
The forward map is constructed in \autoref{lemma:forward-equivalence}.
The inverse map is quite a bit more involved, and is constructed in
\autoref{lemma:reverse-equivalence}.
The main result is then \autoref{theorem:equivalence}, that these two maps are
inverse.

For $B$ a scheme, let 
$(B, h: X \to B, g: F \to X, i: Z \to F) \in \smilestack n(B)$.
Recall from \autoref{notation:line-bundles-on-F}.
that we have divisor classes $e$
and $2f$ on $F$. 
\begin{notation}
	\label{notation:line-bundles-on-F}
	Recall from \autoref{notation:abuse-hirzebruch}
	that $e$ denotes the class of the relative directrix on $F \to B$.
Observe $Z \to F$ has class $e + nf$.
Also, $g(Z \cap E)$ has class
$\sco_X(2)$ since $E$ restricts to a Cartier
on the smooth genus $0$ curve $Z$ which has degree $2$ since it has degree $2$
on fibers.
We define the invertible sheaf $\scl := \sco_F(Z) \otimes
\sco_F(g^{-1}(g(Z\cap E)))$.
This has class $(e+nf) - 2f = e+(n-2)f$.
\end{notation}

We next verify the complete linear system associated to the class $e + (n-2)f$ on $F$ defines a morphism from $F$ to a rank $n-1$ projective bundle over $B$.
\begin{lemma}
	\label{lemma:locally-free-e-n-2-f}
	With notation as in \autoref{notation:line-bundles-on-F}
	$(h \circ g)_*\scl$ is a locally free sheaf on $B$ of rank $n$.
\end{lemma}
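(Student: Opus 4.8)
The plan is to read off local freeness and the rank of $(h\circ g)_*\scl$ from the cohomology of $\scl$ on the fibers of $h\circ g\colon F\to B$. Since $\scl$ is a line bundle it is flat over $B$, so if I can show that on each geometric fiber of $h\circ g$ the higher cohomology of $\scl$ vanishes and $h^0$ equals $n$, then cohomology and base change (in the form of Grauert's theorem) will give that $(h\circ g)_*\scl$ is locally free of rank $n$ and that its formation commutes with base change. The geometric fibers of $h\circ g$ are copies of the Hirzebruch surface $\mathbb F_{n-2}$ over a field, so everything reduces to a single computation there.

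On $\mathbb F_{n-2}$ over a field a line bundle is determined by its divisor class, and by \autoref{notation:line-bundles-on-F} the restriction of $\scl$ has class $e+(n-2)f$. Writing $\zeta$ for the class of $\sco_F(1)$ on $\mathbb F_{n-2}=\bp(\sco_{\bp^1}\oplus\sco_{\bp^1}(n-2))$, the Grothendieck relation gives $\zeta^2=n-2$, and together with $\zeta\cdot f=1$ and $\zeta\cdot e=0$ — the last because $\sco_F(1)$ restricts to the trivial quotient $\sco$ along the directrix — this forces $\zeta=e+(n-2)f$. Hence $\scl$ restricts to $\sco_F(1)$ on each fiber.

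It then remains to compute the cohomology of $\sco_F(1)$ on $\mathbb F_{n-2}$. Pushing forward along $g$, I have $g_*\sco_F(1)\simeq\sco_{\bp^1}\oplus\sco_{\bp^1}(n-2)$ and $R^1g_*\sco_F(1)=0$, since $\sco_F(1)$ has degree $1$ on the $\bp^1$-fibers of $g$. The Leray spectral sequence therefore gives $H^i(\mathbb F_{n-2},\sco_F(1))\simeq H^i(\bp^1,\sco_{\bp^1}\oplus\sco_{\bp^1}(n-2))$, which (using $n\geq 3$) vanishes for $i>0$ and has dimension $1+(n-1)=n$ for $i=0$. With the constant value $h^0=n$ and the vanishing of all higher cohomology on every geometric fiber, cohomology and base change yields the conclusion.

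The only subtlety is that $F$ is a priori merely a twist of $\mathbb F_{n-2}$; but since both the vanishing of higher cohomology and the value of $h^0$ are computed on geometric fibers, where $F$ genuinely becomes a split Hirzebruch surface, this causes no difficulty. I therefore expect no real obstacle: the content of the lemma is exactly the routine cohomology computation for $\sco_F(1)$ on a Hirzebruch surface, with the identification $\scl\simeq\sco_F(1)$ on fibers being the one bookkeeping point that needs to be gotten right.
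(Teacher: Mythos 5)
Your proof is correct and follows essentially the same route as the paper: fppf-localize to assume $F\simeq\mathbb F_{n-2}$, reduce to geometric fibers, verify $h^0=n$ and vanishing of higher cohomology there, and conclude by cohomology and base change. The only (harmless) difference is how the fiberwise computation is done — the paper restricts to a curve of class $e+(n-2)f$ and uses Riemann--Roch plus adjunction for $h^0$, while you identify the class $e+(n-2)f$ with $\sco_F(1)$ and push forward to $\bp^1$, which handles $h^0$ and the vanishing in one step.
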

\begin{proof}
	We can verify the statement fppf locally on $B$, and hence we may assume
	$F \simeq \mathbb F_{n-2}$ and $\scl \simeq \sco_F(e + (n-2)f)$.
	Once we verify 
	first that
	$(h \circ g)_* \sco_{\mathbb F_{n-2}}(e + (n-2)f)$ is locally free of rank $n$ and
	second that
	$R^1(h \circ g)_* \sco_{\mathbb F_{n-2}}(e + (n-2)f) =0$
	after base change to any algebraically closed field mapping to $B$, the statement will follow from cohomology and base change over $B$.
	Now that we have reduced to the case $B$ is a point, the claims are
	standard calculations. The first can be deduced by taking cohomology
	of the exact sequence $\sco_{\mathbb F_{n-2}} \to
			\sco_{\mathbb F_{n-2}}(e+(n-2)f) \to
			\sco_C(e+(n-2)f)$
		for $C$ a curve of class $e + (n-2)f$, using Riemann--Roch on $C$ and adjunction on $\mathbb F_{n-2}$.
	The second claim follows from the Leray spectral sequence applied to the
	composition $h \circ g$.
\end{proof}

Define $P := \bp (h \circ g)_* \scl$.
The surjection
$(h\circ g)^* (h \circ g)_* \scl \to \scl$ defines a map 
$\phi: F \to P$ because the linear system is basepoint free, as can be verified on fibers.
We are now ready to construct the desired map
$\smilestack n \to \singsstack n$.

\begin{lemma}
	\label{lemma:forward-equivalence}
	Let $n \in\bz_{\geq 3}$.
The map $\phi: F \to P$ defined above sends $Z$ to a curve $C$ over $B$
	with a section $\tau: B \to C$
	yielding a point $(B, f: P \to B, \iota: C \to P, \tau: B \to C) \in \singsstack n$.
	This induces a map of stacks
	$\smilestack n \to \singsstack n$.
\end{lemma}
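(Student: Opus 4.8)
The plan is to work fppf-locally on $B$ and descend. Since $\smilestack n$ and $\singsstack n$ are both fppf stacks and every construction below commutes with base change, by \autoref{lemma:descent-for-F} it suffices to treat the split case $F \simeq \mathbb F_{n-2}$ over $B$ with $Z$ a smooth member of $|e+nf|$; functoriality in $B$ and compatibility with morphisms of $\smilestack n$ will then be automatic because all objects produced are canonical. First I would record, using \autoref{lemma:locally-free-e-n-2-f}, that $P := \bp (h\circ g)_* \scl$ is a Brauer--Severi scheme of relative dimension $n-1$ whose formation commutes with base change by cohomology and base change. Next I would analyze $\phi\colon F \to P$, given by $|\scl| = |e+(n-2)f|$. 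On each geometric fiber this is the classical contraction of the directrix: since $(e+(n-2)f)\cdot e = 0$ while $e+(n-2)f$ is nef of positive degree on fibers $f$, the map contracts $E$ to a single point and is a closed immersion on $F\setminus E$, realizing the image as the cone over the rational normal curve of degree $n-2$ with vertex $\phi(E)$. Over $B$, cohomology and base change promotes $\phi(E)$ to a section $\tau\colon B\to P$.

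The heart of the argument is to identify $C := \phi(Z)$ and to control it in families. Set $D := Z\cap E$; since $Z$ is integral of class $e+nf$ it cannot contain $E$, so $D$ is a relative Cartier divisor on $Z\to B$, finite flat of degree $2$ over $B$ (intersection number $(e+nf)\cdot e = 2$). As $\phi|_Z$ is a closed immersion on $Z\setminus D$ and crushes $D$ to $\tau$, I claim $C$ is the pinching (pushout) $Z\amalg_D \tau(B)$, with structure sheaf $\sco_C = \sco_Z \times_{\sco_D} \sco_B$. This description manifestly commutes with base change, which is the crucial point since scheme-theoretic image does not. Flatness of $C\to B$ then follows from the exact sequence of sheaves on $C$,
\[
0 \to \sco_C \to (\phi|_Z)_*\sco_Z \oplus \tau_*\sco_B \to (\text{pushforward of } \sco_D) \to 0,
\]
whose last two terms are $B$-flat (as $Z\to B$, $B\to B$, and $D\to B$ are flat and the pushforwards are along finite maps) and whose cokernel is $B$-flat, forcing $\sco_C$ to be $B$-flat.

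On each geometric fiber, $C_b$ is obtained from $\bp^1 = Z_b$ by gluing the degree-$2$ scheme $D_b$ to a point, hence is geometrically integral of arithmetic genus $1$ (the pinch raises $p_a$ from $0$ to $1$) and of degree $n$ in $P_b$ (from $\deg \scl|_Z = (e+(n-2)f)\cdot(e+nf) = n$, together with $\phi|_Z$ being birational onto $C$). Thus $C\to B$ is proper, flat, finitely presented, with geometrically integral arithmetic genus $1$ fibers of degree $n$ fppf-locally, and $\iota\colon C\to P$ is a closed immersion by construction. Finally $\tau(b)$ is exactly the pinch point of $C_b$, which is its unique singularity (a node or cusp according to whether $D_b$ is reduced), so $\tau$ factors through $\sing(C/B)$ and satisfies $f\circ\iota\circ\tau = \id_B$; this exhibits $(B, f\colon P\to B, \iota\colon C\to P, \tau)$ as an object of $\singsstack n(B)$ per \autoref{definition:singsstack}. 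Because $\scl$, $P$, $\phi$, $\tau$, and the pinching $C$ are all formed canonically and commute with base change, the assignment is functorial in $B$ and carries morphisms of $\smilestack n$ to morphisms of $\singsstack n$, yielding the map of stacks.

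I expect the main obstacle to be precisely the flatness-and-fibers step: passing from the fiberwise contraction picture to a single flat family over an arbitrary, possibly nonreduced, base $B$. This is why identifying $C$ with the base-change-compatible pinching $Z\amalg_D \tau(B)$, rather than the naive scheme-theoretic image (which need not commute with base change), is essential; everything else reduces to the fiberwise geometry already sketched in \autoref{subsection:idea-on-fibers} together with standard cohomology and base change.
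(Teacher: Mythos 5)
Your overall strategy is sound and the fiberwise geometry (the intersection numbers, the contraction of the directrix, the pinch raising the arithmetic genus to $1$) matches the paper's, but there is a genuine gap at the step you yourself single out as crucial: the assertion that the scheme-theoretic image $C = \phi(Z)$ coincides with the pushout $Z \coprod_D \tau(B)$ over an arbitrary, possibly non-reduced, base. You correctly observe that the pushout commutes with base change while the scheme-theoretic image need not, but you never prove that the two agree, and that identification is essentially the entire content of the lemma in the hard case. To close it one would have to show, for instance, that the canonical map $c$ from the pushout to $P$ (which exists because $\phi$ crushes $D$ into $\tau(B)$) is a closed immersion with image $\phi(Z)$: $c$ is finite, the pushout is $B$-flat by your exact-sequence argument, pushforward along the affine map $c$ commutes with arbitrary base change, and fiberwise surjectivity of $\sco_{P_b} \to (c_b)_*\sco_{C_b}$ (the classical pinching over a field) together with Nakayama then yields surjectivity of $\sco_P \to c_*\sco_C$. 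None of this appears in your write-up; the claim is simply asserted.

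The paper sidesteps the issue by a different reduction: since $\smilestack n$ is smooth, hence reduced, any map $B \to \smilestack n$ factors through $B_{\mathrm{red}}$, so one may assume $B$ is reduced; then the scheme-theoretic image agrees with the reduced set-theoretic image, its formation commutes with passage to geometric points, and all verifications are carried out over an algebraically closed field. Your route, once the missing identification is supplied along the lines above, is arguably more robust (it does not invoke smoothness of the source stack and gives flatness of $C \to B$ directly), but as written the central claim is unproved.
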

\begin{proof}
	Recall that we begin with a $B$-point of $\smilestack n$ corresponding to $Z \xra{i} F \xra{g} X \xra{h} B$
	and we have produced $P := \bp (h \circ g)_* \scl$
	with a map $F \xra{\phi} P$.
	We wish to produce a genus $1$ curve $C \xra{\iota} P$
	with geometrically integral fibers
	and a section $\tau: B \to C$ lying in the singular locus of $C$.
	We will take $C$ to be the image of $Z \xra{i} F \xra{\phi} P$
	and $\tau$ to be the image of the directrix $E \to F \xra{\phi} P$.
	Note that $E$ is contracted to a section under the linear system $(h
	\circ g)_* \scl$, as can be checked fppf locally where $\scl$ is
	isomorphic to $\sco_{\mathbb F_{n-2}}(e + (n-2)f)$.
	The construction we give will be natural, and so maps between $B$ points
	of $\smilestack n$ will induce maps between the corresponding points of
	$\singsstack n$, inducing the desired map of stacks.

	To complete the proof, we check that the tuple 
$(B, f: P \to B, \iota: C \to P, \tau: B \to C)$ described above lies in
$\singsstack n(B)$.
	By \autoref{lemma:smile-stack-representable},
	$\smilestack n$ is smooth and in particular reduced, so any map $B \to \smilestack n$ factors through the reduction of $B$, and we may assume $B$ is reduced.
	Since $B$ is reduced, we can identify the reduced set theoretic and scheme theoretic images of $Z \xra{i} F \xra{\phi} P$ and $E \to F \xra{\phi} P$.
	In particular, the formation of the scheme theoretic image commutes with base change along geometric points of $B$.
	It is therefore enough to complete the proof in the case $B = \spec k$
	for $k$ an algebraically closed field.

	We now assume $B = \spec k$, and can verify 
	$(B, f: P \to B, \iota: C \to P, \tau: B \to C) \in \singsstack n(B)$ in
	this case.
	The remainder of the proof is a
	standard algebro-geometric calculation.
	One first notes that $Z$ is geometrically connected because $e + nf$ is
	ample, and then a standard intersection theory calculation shows it has
	class $e + (n-2)f$.
	Smoothness of $Z$ implies that $Z$ is also geometrically integral.
	By analyzing
	the global sections
	$H^0(\mathbb F_{n-2}, \sco_{\mathbb F_{n-2}}(e + (n-2)f))$
	we see that any divisor meeting the directrix $E$ contains $E$, and so $E$ is
	contracted under $\phi$.
	Further, there is a codimension $1$ subspace vanishing on 
	$E$, and we can use this to deduce that $\phi$ is an embedding away from $E$.
	Because $Z$ meets $E$ in a degree $2$ subscheme, the image $C$ of $Z$
	under $\phi$ is the pushout of $Z$ obtained by gluing this degree $2$
	subscheme to a point, and hence has genus $1$ with singular locus given
	as the image of $Z \cap E$.
\end{proof}

\subsection{Constructing the inverse map}
\label{subsection:inverse-map}

We next construct the map inverse to that of \autoref{lemma:forward-equivalence}.
The basic idea
behind the construction previously described in
\autoref{section:geometric-bijection},
as we now recall.
If we begin with a singular genus $1$ curve in $\bp^{n-1}$ with a marked singular point, there is a unique surface cone formed by the union of lines joining the singular point and points on the genus $1$ curve. Blowing up the curve inside the cone at the singular point yields a rational curve of class $e+(n-2)f$ on the Hirzebruch surface $\mathbb F_{n-2}$.
The simplest path we found to carry out this construction in families was to 
first blow up the curve and use this to construct the Hirzebruch surface as a family of lines over
the blown up curve. The image of this Hirzebruch surface under a suitable map to $\bp^{n-1}$ will then be the desired surface cone.
We complete this construction in 
\autoref{lemma:reverse-equivalence}.

However, carrying out the above construction out in families is unexpectedly subtle, due
to the following issues:
First, we need to know the blow up construction commutes with arbitrary base
change, as is verified in \autoref{proposition:genus-0-blow-up}.
Second, we need to verify a certain scheme theoretic image of a map is flat in
order to apply cohomology and base
change in \autoref{proposition:w-flat}. 
This verification of flatness rests on a cute
generalization of
the Chinese remainder theorem \autoref{lemma:chinese-remainder-generalization}.
As a first step, we now check that the blow up of $C$ is a smooth genus $0$ curve over the base.

\begin{proposition}
	\label{proposition:genus-0-blow-up}
	Let $n \geq 3$
	and let $(B,f:P \to B,\iota: C \to P,\tau: B \to C) \in \singsstack n(B)$.
	Then, the blow up $\blow_\tau C \to B$ is a smooth proper genus $0$ curve with geometrically connected fibers.
	Further, the formation of this blow up commutes with arbitrary base change on $B$.
	That is, for any $B' \to B$, if we let $C' := C \times_B B'$ and $\tau'$ denote the base change of $\tau$ to $B'$,
	the natural map $\blow_\tau C \times_B B' \to \blow_{\tau'} C'$ induced by the universal property of blow ups is an isomorphism.
\end{proposition}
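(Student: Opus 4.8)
The plan is to reduce the whole statement to an explicit local computation near the singular section $\tau$, driven by the observation that every geometric fibre of $C\to B$ acquires a singularity of multiplicity \emph{exactly} two. First I would pin down the fibre geometry. For a geometric point $b$ the fibre $C_b$ is an integral curve of arithmetic genus $1$, so its normalization $\tilde C_b$ has genus $p_a(C_b)-\delta = 1-\delta$, where $\delta$ is the total delta-invariant; since $C_b$ is singular we get $\delta\ge 1$, and since $\tilde C_b$ has non-negative genus we get $\delta\le 1$. Hence $\delta=1$, the normalization is $\bp^1$, and the unique singular point — necessarily $\tau(b)$ — is a node or a cusp. In particular it is a planar singularity of multiplicity two, and in both cases $\blow_\tau C_b$ is the normalization, so it is $\bp^1$. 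This already identifies the fibres of $\blow_\tau C\to B$ as smooth, geometrically connected, genus $0$ curves, \emph{provided} I know the formation of the blow-up commutes with passage to fibres, which is the content I still have to establish.

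Next I would set up a local model. Working étale-locally on $B$ and on $C$ near $\tau(B)$ — away from $\tau$ the blow-up is an isomorphism and commutes with base change trivially — the two-dimensionality of the fibre tangent spaces at $\tau$ lets me present $C$ as a hypersurface $V(f)\subset\ba^2_B$ with $\tau$ the origin section and $\sci_\tau=(x,y)$. (The one subtlety here is local freeness of the relative conormal $\sci_\tau/\sci_\tau^2\cong\tau^*\Omega_{C/B}$, which I would secure from the constancy of the fibre embedding dimension after reducing to $B$ Noetherian.) The scheme-theoretic hypothesis that $\tau$ factors through the relative singular locus $V(f,\partial_xf,\partial_yf)$ forces the constant and linear coefficients of $f$ to vanish in $\sco_B$, so $f=\sum_{i+j\ge 2}c_{ij}x^iy^j$. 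The multiplicity-two statement from the first paragraph says precisely that the quadratic coefficients $c_{20},c_{11},c_{02}$ generate the unit ideal of $\sco_B$, since they cannot vanish simultaneously at any point of $B$.

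I would then compute the blow-up as a strict transform. In the chart $y=xs$ of the blow-up of $\ba^2_B$ along $(x,y)$ one has the exact factorization $f(x,xs)=x^2\tilde f(x,s)$ with $\tilde f\equiv c_{02}s^2+c_{11}s+c_{20}\pmod{x}$, and $\blow_\tau C$ is cut out by $\tilde f$ together with its analogue in the chart $x=yt$. Because $\tilde f\bmod x$ is a nonzero polynomial on every fibre, $\tilde f$ is fibrewise a nonzerodivisor, so $\blow_\tau C\to B$ is flat; smoothness and genus $0$ of its fibres follow from the node/cusp computation already recorded. Finally, for base change along $B'\to B$ I would note that $C':=C\times_BB'\to B'$ again lies in $\singsstack n(B')$, so the multiplicity-two analysis applies verbatim: $f'=f\otimes\sco_{B'}$ still lies in $(x,y)^2$ with quadratic coefficients generating the unit ideal. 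Thus the exceptional factor is again exactly $x^2$, the factorization $f'(x,xs)=x^2(\tilde f\otimes\sco_{B'})$ is the base change of the previous one, and the strict transform of $C'$ is $V(\tilde f\otimes\sco_{B'})=(\blow_\tau C)\times_BB'$, which is exactly the assertion that $\blow_\tau C\times_BB'\to\blow_{\tau'}C'$ is an isomorphism. Equivalently, one can package this as: the canonical map is a closed immersion of two schemes flat over $B'$ which is an isomorphism on geometric fibres, hence an isomorphism.

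The main obstacle I anticipate is precisely this base-change compatibility, and within it the verification that the exceptional divisor occurs with the \emph{same} multiplicity two before and after base change — the uniform-multiplicity observation (every fibre singularity is a node or a cusp, never worse) is exactly what prevents the strict transform from jumping under specialization and is the feature that makes the whole argument go through. A secondary point of care, flagged above, is making the planar presentation $V(f)\subset\ba^2_B$ uniform in the family over a possibly non-reduced base, which I would handle by reducing to the Noetherian case and using the constancy of the embedding dimension.
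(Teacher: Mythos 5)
Your argument is correct, but it takes a genuinely different route from the paper's. The paper passes to an fppf cover of $B$ on which $C$ acquires a section in its smooth locus, invokes the cover of $\weierstrass$ by the affine space of Weierstrass coefficients to put $C$ in the global normal form $y^2z + a_1xyz = x^3 + a_2x^2z$ with $\tau$ at $x=y=0$ (the singularity conditions killing $a_3,a_4,a_6$), and then writes down the blow-up as an explicit $\proj$, from which base-change compatibility is read off and smoothness checked by the Jacobian criterion on two charts. You instead work with an arbitrary local planar presentation $V(f)\subset\ba^2_B$ and isolate the real mechanism: the $\delta=1$ classification forces every fibre singularity to be a node or cusp, so the leading form of $f$ is quadratic with coefficients generating the unit ideal, the exceptional factor is uniformly $x^2$, and the strict transform therefore commutes with base change. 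The paper's normal form buys complete explicitness (and makes your unit-ideal condition automatic, since $c_{02}=1$ there); your version avoids the fppf reduction and makes visible why the multiplicity cannot jump. Two small repairs to your write-up: constancy of the fibre rank of $\sci_\tau/\sci_\tau^2$ does \emph{not} imply local freeness over a non-reduced Noetherian base, but you do not need local freeness --- lifting a basis of the fibre cotangent space and applying Nakayama gives the closed immersion into $\ba^2_B$, and the fibre ideal being principal (Auslander--Buchsbaum on the plane curve singularity) plus Nakayama again makes the ideal of $C$ principal in the family; and the identification of the strict transform with $V(\tilde f)$ requires knowing $(\tilde f)$ is $x$-saturated, which follows from the fibrewise nonzerodivisor argument you already use for flatness, but should be said.
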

\begin{proof}
	As a first step, we may freely pass to an fppf cover, and hence reduce to the case that $f: C \to B$ has a section
	$\sigma: B \to C$ contained in the smooth locus of $f$.

	We now use the above section to express $C$ as a relative plane curve
	over $B$ defined by a simple equation.
	Since $C$ is a finitely presented genus $1$ curve over $B$ with geometrically integral
	fibers and a section in the smooth locus, from
	the definition of $\weierstrass$
	\autoref{definition:weierstrass},
	we obtain a map $B \to \weierstrass$.
	Because $\weierstrass$ has an fppf cover by $\ba^5_{a_1, a_2, a_3, a_4,
	a_6}$ parameterizing the coefficients of Weierstrass equations,
	(see \cite[Definition 2.2.10 and Proposition 2.2.13]{landesman:thesis} for more details of this standard
	description of Weierstrass equations,)
%	\autoref{proposition:weierstrass-a5-quotient} and \autoref{definition:weierstrass-a5},
	after replacing $B$ by an fppf cover we may assume that $B = \spec R$ is affine 
	and $C$
	is defined by an equation
	of the form
	$y^2z + a_1 xyz + a_3 y = x^3 + a_2 x^2z + a_4xz^2 + a_6z^3$ in $\proj R[x,y,z] = \bp^2_B$,
	with $a_1, a_2, a_3, a_4, a_6 \in R$.
	After again possibly replacing $B$ by a cover, we may assume that the section $\tau$ lying in the singular locus 
	is given by $x = y = 0$.
	The condition that $C$ passes through $\tau$ forces $a_6 = 0$ while the condition that $C$ is singular at $\tau$ forces $a_3 = a_4 = 0$.
	Therefore, $C$ is given by an equation of the form $y^2z + a_1 xyz = x^3 + a_2 x^2z$.
	
	A standard, direct calculation shows the blow up of 
	$\spec R[x,y]/(y^2 + a_1 xy = x^3 + a_2 x^2)$
	at $x = y =0$
	is given by $\proj R[x,y,X,Y]/(xY-yX, Y^2 + a_1YX - xX^2 -a_2 X^2)$,
	where $X,Y$ have degree $1$ and $x,y$ have degree $0$.
	This explicit description implies the blow up commutes with arbitrary base
	change.

	It remains to verify that $\blow_\tau C$ is smooth of genus $0$.
	The genus $0$ statement may be checked on fibers, which holds because it is the blow up of a
	geometrically integral genus $1$
	curve at a singular point.
	Finally, 
	using 
	the Jacobian criterion for
	smoothness, it is straightforward to directly verify smoothness of
$\proj R[x,y,X,Y]/(xY-yX, Y^2 + a_1XY - xX^2 - a_2 X^2)$
	on the two charts $Y \neq 0$
	and $X \neq 0$.
\end{proof}

Having constructed our smooth genus $0$ curve,
the next step is to construct a relative $(n-2)$-Hirzebruch twist containing $\blow_\tau C$.
We begin by introducing some notation used to define this Hirzebruch twist.
\autoref{figure:image-blow-up} may be helpful in visualizing some of the objects
at play.

\begin{notation}
	\label{notation:image-of-blow-up}
	Let $n \geq 3$. Let $(B,f:P \to B,\iota: C \to P,\tau: B \to C) \in \singsstack n(B)$.
Blowing $C$ up at $\tau$, we obtain a map $\nu: \blow_\tau C \to C$.
Let $E_\tau C \subset \blow_\tau C$ denote the exceptional divisor associated to the blow up of $C$ at $\tau$.
From this, we obtain a map
\begin{align*}
	\psi := \left( \id, \iota \circ \nu\right) \coprod \left( \id, \iota
	\circ \tau \circ f \circ \iota \circ \nu \right): \blow_\tau C \coprod \blow_\tau C \to \blow_\tau C \times_B P
\end{align*}
Let $W$ denote the scheme theoretic image of $\psi$.
We let $i_1: \blow_\tau C \to \blow_\tau C \coprod \blow_\tau C$ denote the
first inclusion and 
$i_2: \blow_\tau C \to \blow_\tau C \coprod \blow_\tau C$
denote the second inclusion.
Let $L$ denote the image of $\psi \circ i_1: \blow_\tau C \to W$ and let $M$ denote the
image of $\psi \circ i_2: \blow_\tau C \to W$.
In particular, the composition of $\psi \circ i_1$ with the projection $W \to P$ is $\iota
\circ \nu$ while the composition of $\psi \circ i_2$ with the projection $W \to P$ is
$\iota \circ \tau \circ f \circ \iota \circ \nu$,
the constant map to $\iota \circ \tau(B)$.
\end{notation}

\begin{figure}
	\centering
	\includegraphics[scale=.6, trim={0 15.5cm 0 1cm}]{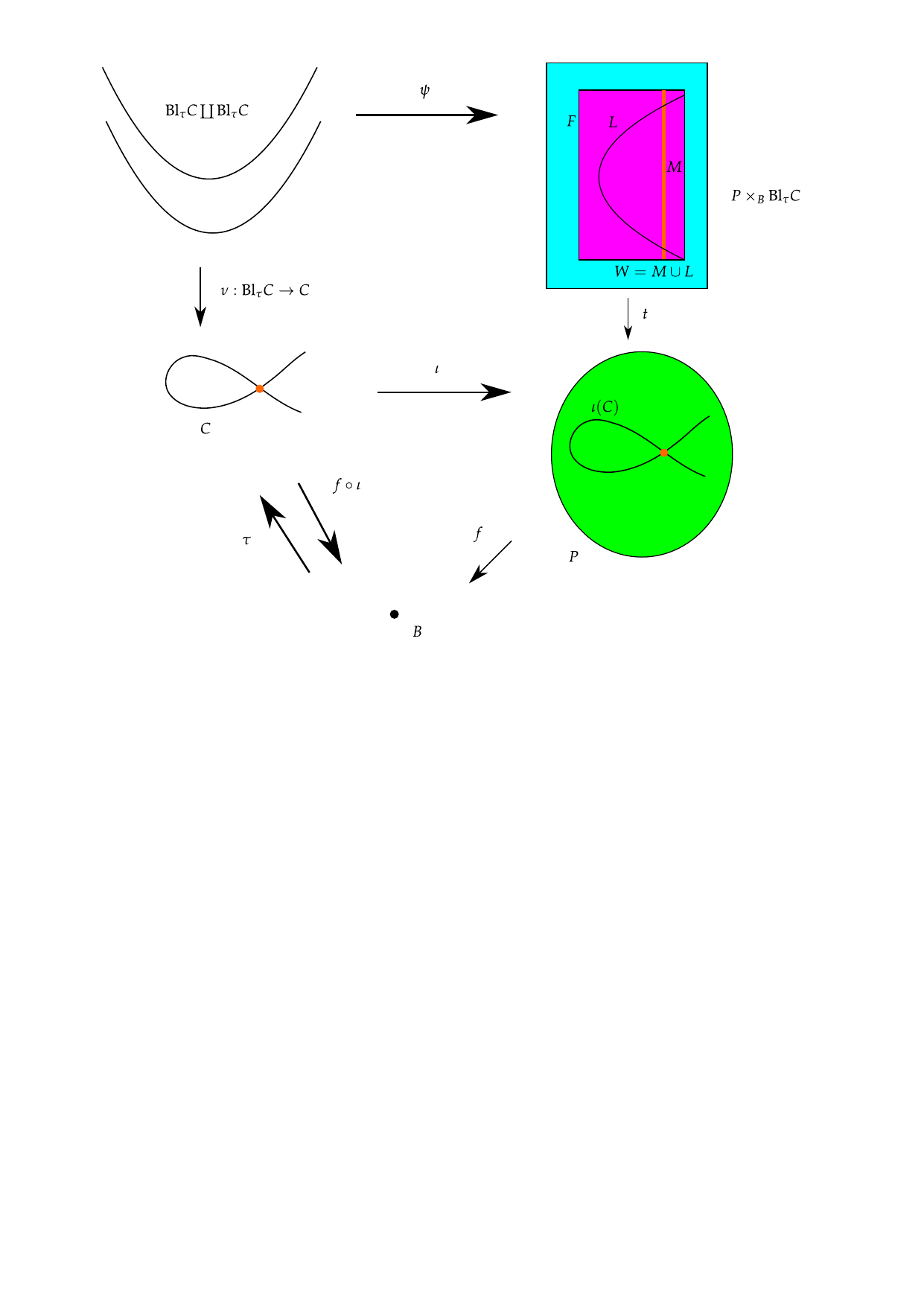}
	\caption{A visualization of some of the objects appearing in
		\autoref{notation:image-of-blow-up},
		\autoref{notation:s-t-maps}, and
		\autoref{notation:f-surface}.}
	\label{figure:image-blow-up}
\end{figure}

\begin{remark}
	\label{remark:}
	On fibers over $B$, we will see that $W$ is given as two copies of
	$\blow_\tau C$ glued along $E_\tau C$. Upon projecting to $P$,
	$\psi \circ i_1(\blow_\tau C)$ maps to $C$ while
	$\psi \circ i_2(\blow_\tau C)$ is contracted to the image of $\tau$.
We also note that $\psi$ is a closed embedding when restricted to either copy of $\blow_\tau C$, since the composition of $\psi$ with the first projection to $\blow_\tau C$ is an
isomorphism.
\end{remark}
\begin{remark}
	\label{remark:map-from-gluing}
	Note that $i_2(\blow_\tau C)$ is mapped to $P$ via the constant map at
	$\tau$, and so its intersection with $\psi \circ i_1(\blow_\tau C)$ is the preimage of the section $\tau$ along $\nu: \blow_\tau C \to C$.
However, by definition of the exceptional divisor of a blow up at $\tau$, this preimage is precisely $E_\tau C$.
Therefore, by the universal property of gluing along closed subschemes, the map $\blow_\tau C \coprod \blow_\tau C \to \blow_\tau C$
induces a map $\rho: \blow_\tau C \coprod_{E_\tau C} \blow_\tau C \to \blow_\tau C$.
Note that the cofiber product
$\blow_\tau C \coprod_{E_\tau C} \blow_\tau C$
is a scheme by 
\cite[\href{https://stacks.math.columbia.edu/tag/0E25}{Tag 0E25}]{stacks-project}.
\end{remark}

In order to construct the relative Hirzebruch surface containing $\blow_\tau C$, we will construct a map from $\blow_\tau C$ to a Grassmannian of lines.
To make this construction, we will need to invoke cohomology and base change.
In turn, to apply cohomology and base change, we will need to know $W \to B$ is
flat, which we check in \autoref{proposition:w-flat}.
We now present the unexpectedly tricky verification of this fact.
In order to verify flatness, we first need an alternate description of $W$ for which the following
generalization of the Chinese remainder theorem will be crucial.
The proof is an elementary exercise in abstract algebra which we omit.

\begin{lemma}
\label{lemma:chinese-remainder-generalization}
Let $R$ be a ring\footnote{By implicit assumption, ring means commutative
ring with unit.} and $I_1, I_2 \subset R$ be two ideals. The natural projection
		\begin{align*}
			\gamma: R/(I_1 \cap I_2) & \rightarrow R/I_1 \times_{R/(I_1 + I_2)} R/I_2\\
r +(I_1 \cap I_2) & \mapsto (r + I_1, r + I_2)
		\end{align*}
		is an isomorphism.
	\end{lemma}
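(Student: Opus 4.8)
The plan is to unwind the definition of the fiber product and verify directly that $\gamma$ is a bijective ring homomorphism; no clever input is needed. Recall that by definition
\[
R/I_1 \times_{R/(I_1+I_2)} R/I_2 = \left\{ (a + I_1, b + I_2) : a - b \in I_1 + I_2 \right\},
\]
the condition arising from the requirement that $a$ and $b$ have the same image under the two natural quotients $R/I_i \to R/(I_1+I_2)$. First I would check that $\gamma$ is well-defined and lands in this fiber product: if $r \in I_1 \cap I_2$ then $(r+I_1, r+I_2) = (I_1, I_2)$, and for arbitrary $r$ the two components $r+I_1$ and $r+I_2$ visibly agree modulo $I_1 + I_2$. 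It is then immediate that $\gamma$ is a ring homomorphism, as it is induced by the pair of quotient maps $R \to R/I_i$.

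For injectivity I would observe that $\gamma(r + (I_1 \cap I_2)) = 0$ forces $r \in I_1$ and $r \in I_2$, hence $r \in I_1 \cap I_2$, so the kernel is trivial. The only step carrying any content is surjectivity, and this is precisely where the defining compatibility condition of the fiber product enters. Given a pair $(a+I_1, b+I_2)$ with $a - b \in I_1 + I_2$, I would write $a - b = x_1 + x_2$ with $x_i \in I_i$ and set $r := a - x_1 = b + x_2$. Then $r \equiv a \pmod{I_1}$ since $x_1 \in I_1$, and $r \equiv b \pmod{I_2}$ since $x_2 \in I_2$, so $\gamma(r + (I_1 \cap I_2)) = (a+I_1, b+I_2)$, proving surjectivity.

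I do not anticipate any genuine obstacle. Unlike the classical Chinese remainder theorem there is no coprimality hypothesis, exactly because the target is a fiber product rather than a direct product; the condition $a - b \in I_1 + I_2$ is what guarantees the existence of the simultaneous lift $r$, and absorbs the role played by the equation $I_1 + I_2 = R$ in the coprime case.
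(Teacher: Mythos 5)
Your proof is correct and complete; the paper explicitly omits the proof as "an elementary exercise in abstract algebra," and your direct verification (with the decomposition $a-b = x_1+x_2$, $x_i \in I_i$, giving the simultaneous lift $r = a - x_1 = b + x_2$) is exactly the standard argument the author had in mind.
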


We now present the formerly mentioned alternate description of $W$.

\begin{lemma}
	\label{lemma:image-description}
	With notation as in \autoref{notation:image-of-blow-up}, $W$ is isomorphic to two copies of $\blow_\tau C$ glued along the closed subscheme $E_\tau C$.
\end{lemma}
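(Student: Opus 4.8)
The plan is to realize $W$ as a scheme-theoretic union inside the ambient product $Y := \blow_\tau C \times_B P$ and then apply the generalized Chinese remainder theorem \autoref{lemma:chinese-remainder-generalization} to identify this union with the pushout $\blow_\tau C \coprod_{E_\tau C} \blow_\tau C$ appearing in \autoref{remark:map-from-gluing}.

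First I would record that each of $\psi \circ i_1$ and $\psi \circ i_2$ is a closed immersion: composing either with the first projection $Y \to \blow_\tau C$ recovers the identity, so each is a section of a separated map and hence a closed immersion onto its image. Thus $L = \im(\psi \circ i_1)$ and $M = \im(\psi \circ i_2)$ are closed subschemes of $Y$, each carried isomorphically onto $\blow_\tau C$ by the first projection. Since $\blow_\tau C \to B$ is proper by \autoref{proposition:genus-0-blow-up} and $P \to B$ is proper, the map $\psi$ is a morphism between proper $B$-schemes and is therefore proper; in particular it is quasi-compact and its scheme-theoretic image is computed by the kernel of $\sco_Y \to \psi_* \sco$. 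As $\psi_* \sco_{\blow_\tau C \coprod \blow_\tau C} = \psi_*\sco_L \oplus \psi_* \sco_M$, this kernel is $\sci_L \cap \sci_M$, so $W = L \cup M$ is the scheme-theoretic union, cut out locally by $I_1 \cap I_2$ where $I_1, I_2$ cut out $L, M$ respectively.

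Second I would compute the scheme-theoretic intersection $L \cap M$, cut out locally by $I_1 + I_2$. Because $L$ and $M$ are the graphs of the maps $\iota \circ \nu$ and $\iota \circ \tau \circ f \circ \iota \circ \nu$ from $\blow_\tau C$ to $P$, and $P \to B$ is separated, $L \cap M$ is the equalizer of these two maps, a closed subscheme of $\blow_\tau C$. Since $\iota$ is a monomorphism, this equalizer coincides with the equalizer of $\nu$ and $\tau \circ f \circ \iota \circ \nu$ taken in $C$; as the second map factors through the section $\tau(B) \hookrightarrow C$, the equalizer is exactly the scheme-theoretic preimage $\nu^{-1}(\tau(B))$, which is the exceptional divisor $E_\tau C$ by definition of the blow up. This upgrades the set-theoretic identification of \autoref{remark:map-from-gluing} to a scheme-theoretic one, identifying $L \cap M$ with $E_\tau C$ compatibly with the two inclusions $E_\tau C \hookrightarrow L \cong \blow_\tau C$ and $E_\tau C \hookrightarrow M \cong \blow_\tau C$.

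Finally I would conclude by the generalized Chinese remainder theorem. Working on an affine open $\spec R \subseteq Y$ and combining the first two steps, \autoref{lemma:chinese-remainder-generalization} yields $R/(I_1 \cap I_2) \xrightarrow{\sim} R/I_1 \times_{R/(I_1+I_2)} R/I_2$, that is $\sco_W \cong \sco_L \times_{\sco_{L \cap M}} \sco_M$. By the first two steps this right-hand side is the fiber product $\sco_{\blow_\tau C} \times_{\sco_{E_\tau C}} \sco_{\blow_\tau C}$, which is precisely the affine-local description of the gluing of two copies of $\blow_\tau C$ along $E_\tau C$ (a scheme by \autoref{remark:map-from-gluing}). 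Since the isomorphism $\gamma$ of \autoref{lemma:chinese-remainder-generalization} is canonical, these local identifications glue to a global isomorphism $W \cong \blow_\tau C \coprod_{E_\tau C} \blow_\tau C$. The main obstacle I anticipate is purely scheme-theoretic bookkeeping: verifying that the intersection $L \cap M$ carries exactly the exceptional-divisor structure $E_\tau C$ with no extra thickening, and that the affine-local fiber-product description furnished by \autoref{lemma:chinese-remainder-generalization} matches the pushout globally. The equalizer description makes the first point clean, and the generalized Chinese remainder theorem is exactly the algebraic bridge between the scheme-theoretic union (ideal $I_1 \cap I_2$) and the gluing (fiber product over $I_1 + I_2$).
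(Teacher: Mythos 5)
Your proposal is correct and follows essentially the same route as the paper: identify $W$ affine-locally as $\spec R/(I_1\cap I_2)$, identify $\spec R/(I_1+I_2)$ with $E_\tau C$, and apply the generalized Chinese remainder theorem to match the scheme-theoretic union with the pushout. Your graph/equalizer argument for the scheme-theoretic identification $L\cap M \simeq E_\tau C$ is a slightly more careful justification of what the paper delegates to \autoref{remark:map-from-gluing}, but it is not a different approach.
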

\begin{proof}
	Quasi-compactness of $\psi$ allows us to compute the scheme theoretic image of $W$ affine locally on $\blow_\tau C \times_B P$.	
	Since $\psi$ is proper and quasi-finite,
	$\psi$ is finite, hence affine.
	The preimage of $\spec R \subset \blow_\tau C \times_B P$ under $\psi$
	is then an affine open in $\blow_\tau
	C \coprod \blow_\tau C$ which we may write as 
$\spec A_1 \times A_2$ for $\spec A_1 \subset i_1(\blow_\tau C)$ and $\spec A_2
\subset i_2(\blow_\tau C)$. 
	The scheme theoretic image is then given locally over $\spec R$ as $\spec (R/\ker \psi^\sharp)$ for $\psi^\sharp : R \to A_1 \times A_2$.
	For $j \in \left\{ 1,2 \right\}$, let $i_j^\sharp : R \to A_j$ denote the maps induced by $i_j$ 
	and let $I_j := \ker i_j^\sharp$.

	We want to show $\spec R/(I_1 \cap I_2)$ is given as the restriction of $\blow_\tau C \coprod_{E_\tau C} \blow_\tau C$ to $\spec A_1 \times \spec A_2$.
	Observe that $\spec R/(I_1 + I_2)$ is the restriction of $E_\tau C$ to $\spec R$. 
	Indeed, since $\spec R/(I_1 + I_2) = \spec R/I_1 \cap \spec R/I_2$, this
	follows from the equality
	$\im \psi \circ i_1 \cap \im \psi \circ i_2 = E_\tau C$,
	which was explained in \autoref{remark:map-from-gluing}.
	Under the identifications, $R/I_1 \simeq A_1$ and $R/I_2 \simeq A_2$,
	the surjections $R/I_1 \to R/(I_1+I_2)$ and
	$R/I_2 \to R/(I_1+I_2)$ correspond to the two closed immersions
	$E_\tau C \to i_1(\blow_\tau C)$ and $E_\tau C \to i_2(\blow_\tau C)$.
	Then, $\blow_\tau C \coprod_{E_\tau C} \blow_\tau C$
	is given locally by the spectrum of $R/I_1 \times_{R/(I_1 + I_2)} R/I_2$.
	It then follows from \autoref{lemma:chinese-remainder-generalization} that the natural projection identifies 	
	$\spec R/(I_1 \cap I_2)$, the scheme theoretic image of $\phi$ restricted to $\spec R$, with the gluing $\spec R/I_1 \times_{R/(I_1 + I_2)} R/I_2$.
	Because the above constructions are compatible with localization, it follows that the natural map $\blow_\tau C \coprod_{E_\tau C} \blow_\tau C \to W$ is an isomorphism.
	\end{proof}

Using the above description of $W$, we are ready to verify it is flat.

\begin{proposition}
	\label{proposition:w-flat}
	With notation as in \autoref{notation:image-of-blow-up}, the natural map $\rho: W \to \blow_\tau C$, as defined in \autoref{remark:map-from-gluing} is locally
	free of degree $2$. Further, $W \to B$ is flat.
\end{proposition}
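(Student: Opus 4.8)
The plan is to leverage the explicit description of $W$ obtained in \autoref{lemma:image-description}, namely $W \simeq \blow_\tau C \coprod_{E_\tau C} \blow_\tau C$, to first prove that $\rho: W \to \blow_\tau C$ is finite locally free of degree $2$, and then to deduce flatness of $W \to B$ simply by composing with the smooth map $\blow_\tau C \to B$ of \autoref{proposition:genus-0-blow-up}. The organizing observation is that once $\rho$ is known to be finite locally free, and hence flat, flatness of $W \to B$ is immediate, since a composite of flat morphisms is flat. So essentially all of the work is in the statement about $\rho$.

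To see $\rho$ is finite locally free of degree $2$, write $Y := \blow_\tau C$ and recall that the exceptional divisor $E_\tau C \subset Y$ is an effective Cartier divisor, so its ideal sheaf $\sci_{E_\tau C}$ is invertible. Since $W = Y \coprod_{E_\tau C} Y$ is the pushout of $Y$ with itself along the closed immersion $E_\tau C \hookrightarrow Y$, the pushforward $\rho_* \sco_W$ is the fiber product of sheaves of rings; concretely, there is a short exact sequence $0 \to \rho_* \sco_W \to \sco_Y \oplus \sco_Y \to j_* \sco_{E_\tau C} \to 0$ on $Y$, where $j: E_\tau C \hookrightarrow Y$ is the inclusion and the right-hand map sends $(s,t)$ to the difference of restrictions $s|_{E_\tau C} - t|_{E_\tau C}$. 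Here I use that $\rho$ is affine — indeed finite, as $\rho_*\sco_W$ is a coherent $\sco_Y$-subalgebra of $\sco_Y^{\oplus 2}$ — so that pushforward preserves exactness.

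The next step is to identify $\rho_* \sco_W$ as an extension of line bundles. The diagonal map $\sco_Y \to \rho_* \sco_W$, $s \mapsto (s,s)$, is a subsheaf, and the assignment $(s,t) \mapsto s - t$ induces an isomorphism from the quotient onto $\sci_{E_\tau C}$. This yields a short exact sequence $0 \to \sco_Y \to \rho_* \sco_W \to \sci_{E_\tau C} \to 0$ exhibiting $\rho_* \sco_W$ as an extension of the invertible sheaf $\sci_{E_\tau C}$ by $\sco_Y$, hence as a locally free $\sco_Y$-module of rank $2$. Combined with finiteness of $\rho$, this shows $\rho$ is finite locally free of degree $2$, which is the first assertion.

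Finally, $\rho: W \to \blow_\tau C$ is flat, being finite locally free, and $\blow_\tau C \to B$ is smooth, hence flat, by \autoref{proposition:genus-0-blow-up}; composing gives that $W \to B$ is flat. I expect the only genuinely delicate input to be the pushout description $W \simeq \blow_\tau C \coprod_{E_\tau C} \blow_\tau C$ itself, which has already been established in \autoref{lemma:image-description} by way of the Chinese-remainder-type \autoref{lemma:chinese-remainder-generalization}; granting that description, the remainder is the essentially formal extension computation above, with the one substantive geometric fact being that the exceptional divisor $E_\tau C$ is Cartier, so that $\sci_{E_\tau C}$ is a line bundle and the extension is locally free of rank $2$.
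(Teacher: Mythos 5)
Your proposal is correct and follows essentially the same route as the paper: both arguments rest on the identification $W \simeq \blow_\tau C \coprod_{E_\tau C} \blow_\tau C$ from \autoref{lemma:image-description} together with the fact that $E_\tau C$ is an effective Cartier divisor, reduce everything to showing $\rho$ is finite locally free of rank $2$, and then deduce flatness of $W \to B$ by composing with the flat map $\blow_\tau C \to B$. The only difference is packaging: you exhibit $\rho_* \sco_W$ globally as an extension of the invertible sheaf $\sci_{E_\tau C}$ by $\sco_Y$, whereas the paper works affine-locally and writes down the explicit basis via the isomorphism $A \times A \to A \times_{A/(f)} A$, $(a,b) \mapsto (a, a+fb)$ — these are the same computation.
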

\begin{proof}
	By \autoref{lemma:image-description}, we have an isomorphism $W \simeq
	\blow_\tau C \coprod_{E_\tau C} \blow_\tau C$.
Since $\blow_\tau C \to B$ is flat by \autoref{proposition:genus-0-blow-up}, it
suffices to show $\rho : W \to \blow_\tau C$ is flat.
	We may work affine locally on $\blow_\tau C$, and hence assume it is of
	the form $\spec A$ and the preimage under $\rho$ is of the form 
$\spec (A \times_{A/f} A)$
	for $f$ a non-zerodivisor, as $E_\tau C$ is a Cartier divisor.

Hence, we have reduced to the elementary algebra exercise of showing that $A
\times_{A/(f)} A$ is a free $A$ module of rank $2$.
Indeed, the map
of $A$-modules $A \times A \to A \times_{A/(f)} A$ given by $(a,b) \mapsto (a,
a+fb)$
is an isomorphism, as may be verified using that $f$ is a non-zerodivisor.
\end{proof}

Having shown $W \to B$ is flat, we next construct the $(n-2)$-Hirzebruch twist.
Before embarking on the construction, we describe the idea.
Recall that $W \to B$ factors through $\blow_\tau C$, and the fibers of $W \to \blow_\tau C$ are degree $2$ subschemes of $\bp^{n-1}$.
There is then a unique line in $\bp^{n-1}$ spanned by this degree $2$ subscheme, and the union of these lines varying over points of $\blow_\tau C$
spans the desired Hirzebruch surface.
Due to the issue that $B$ may be non-reduced, we need to carry out this construction in families, as we now do.

\begin{notation}
	\label{notation:s-t-maps}
	Recall that $P$ is a projective bundle, as it is a Brauer-Severi scheme over $B$ with a section $\iota \circ \tau$,
	and so comes with an invertible sheaf $\sco_P(1)$.
Define the projections
\begin{equation}
	\label{equation:}
	\begin{tikzcd}
		\qquad & \blow_\tau C \times_B P \ar {ld}{s} \ar {rd}{t} & \\
		\blow_\tau C && P.
\end{tikzcd}\end{equation}
Let $\sci_{W/P \times_B \blow_\tau C}$ denote the ideal sheaf of $W$ in $P \times_B \blow_\tau C$.
\end{notation}
Pushing forward the ideal sheaf exact sequence twisted by $t^* \sco_P(1)$ along $s$ we obtain the exact sequence
\begin{equation}
	\label{equation:pushforward-containing-w-long}
	\begin{tikzcd}
		0 \ar {r} & s_* \left( \sci_{W/P \times_B \blow_\tau C} \otimes
		t^* \sco_P(1)\right) \ar {r} & s_* \left( t^* \sco_P(1)\right)
		\ar {r} & s_* \left( \sco_{W} \otimes t^*
		\sco_P(1)\right)\ar{r}& \qquad\\
		\qquad \ar{r} & R^1 s_* \left( \sci_{W/P \times_B \blow_\tau C} \otimes t^* \sco_P(1)\right) \ar {r} & R^1 s_* \left( t^* \sco_P(1)\right).
\end{tikzcd}\end{equation}
In order to define our desired family of lines, we need the following consequence of cohomology and base change.
We note that this will crucially use the flatness of $W$ over $\blow_\tau C$,
established in \autoref{proposition:w-flat}.
\begin{lemma}
	\label{lemma:map-to-grassmannian}
	We have $R^1 s_* \left( \sci_{W/P \times_B \blow_\tau C} \otimes t^* \sco_P(1)\right) = 0$ and the 
	three nonzero terms in the first line of \eqref{equation:pushforward-containing-w-long}
%\begin{equation}
%	\label{equation:pushforward-containing-w-short}
%	\begin{tikzcd}
%		0 \ar {r} & s_* \left( \sci_{W/P \times_B \blow_\tau C} \otimes t^* \sco_P(1)\right) \ar {r} & s_* \left( t^* \sco_P(1)\right) \ar {r} & s_* \left( \sco_{W} \otimes t^* \sco_P(1)\right) \ar{r} & 0\\
%\end{tikzcd}\end{equation}
form a short exact sequence of locally free sheaves.
The first nonzero term has rank $n-2$, the second has rank $n$, and the third has rank $2$.
Further, for $f: P \to B$ the structure map,
we have a natural identification $s_* \left( t^* \sco_P(1)\right) \simeq \left(
f \circ \iota \circ \nu \right)^* \left( f_* \sco_P(1)\right).$
\end{lemma}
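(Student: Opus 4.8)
The plan is to identify $s$ as a base change of $f\colon P \to B$ and then run cohomology and base change three times, once for each term of the sequence. First observe that the square with corners $\blow_\tau C \times_B P$, $P$, $\blow_\tau C$, $B$, with top and bottom maps $t$ and $f \circ \iota \circ \nu$ and left and right maps $s$ and $f$, is Cartesian; thus $s$ is the base change of the proper flat map $f$ along the structure map $f \circ \iota \circ \nu$. Since $P$ is a Brauer--Severi scheme, fppf-locally on $B$ it is $\bp^{n-1}_B$ with $\sco_P(1)$ the relative $\sco(1)$, so $f_* \sco_P(1)$ is locally free of rank $n$, $R^1 f_* \sco_P(1) = 0$, and the formation of $f_* \sco_P(1)$ commutes with arbitrary base change. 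Applying this to the Cartesian square gives the identification $s_*(t^* \sco_P(1)) \simeq (f \circ \iota \circ \nu)^* f_* \sco_P(1)$, locally free of rank $n$, along with $R^1 s_*(t^* \sco_P(1)) = 0$. This settles the middle term and the final identification claimed in the lemma.

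For the third term I would use \autoref{proposition:w-flat}: the restriction $s|_W = \rho\colon W \to \blow_\tau C$ is finite locally free of degree $2$, in particular affine. Hence $R^1 s_*(\sco_W \otimes t^* \sco_P(1)) = 0$, and $s_*(\sco_W \otimes t^* \sco_P(1)) = \rho_*\bigl( (t^*\sco_P(1))|_W \bigr)$ is the pushforward of a line bundle on $W$ along a finite locally free degree $2$ map, so it is locally free of rank $2$ on $\blow_\tau C$.

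The heart of the argument is the vanishing $R^1 s_*\bigl( \sci_{W/P \times_B \blow_\tau C} \otimes t^* \sco_P(1) \bigr) = 0$, for which I would again invoke cohomology and base change. First note that $\sci_{W/P \times_B \blow_\tau C}$ is flat over $\blow_\tau C$, being the kernel of the surjection $\sco_{\blow_\tau C \times_B P} \to \sco_W$ of two $\blow_\tau C$-flat sheaves (the target flat by \autoref{proposition:w-flat}). Consequently it suffices to verify that $H^1$ vanishes on each fiber of $s$. The fiber over a point $p \in \blow_\tau C$ is $\bp^{n-1}_{k(p)}$, and by flatness the restriction of the ideal sheaf to this fiber is the ideal sheaf of the length $2$ subscheme $W_p := \rho^{-1}(p)$; so I must show $H^1(\bp^{n-1}, \sci_{W_p}(1)) = 0$. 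Twisting the defining sequence of $W_p$ by $\sco(1)$ and using $H^1(\bp^{n-1}, \sco(1)) = 0$, this group equals $\coker\bigl( H^0(\sco(1)) \to H^0(\sco_{W_p}(1)) \bigr)$. The key elementary fact is that any length $2$ subscheme of $\bp^{n-1}$ with $n \geq 3$ --- whether two distinct points or a tangent vector --- is contained in and spans a unique line, and hence imposes two independent conditions on linear forms; so the restriction is surjective and the $H^1$ vanishes. I expect this fiberwise span statement to be the only genuine geometric input, and it is immediate once one knows $W_p$ has length exactly $2$, which is precisely \autoref{proposition:w-flat}.

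Finally, combining these inputs, the long exact sequence \eqref{equation:pushforward-containing-w-long} collapses to the short exact sequence $0 \to s_*(\sci_{W/P \times_B \blow_\tau C} \otimes t^* \sco_P(1)) \to s_*(t^*\sco_P(1)) \to s_*(\sco_W \otimes t^*\sco_P(1)) \to 0$ of sheaves on $\blow_\tau C$. Since the middle and right terms are locally free of ranks $n$ and $2$, the sequence is locally split, so the first term is locally free of rank $n - 2$, giving all the assertions of the lemma.
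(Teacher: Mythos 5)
Your proposal is correct and follows essentially the same route as the paper: flatness of $W \to \blow_\tau C$ from \autoref{proposition:w-flat}, cohomology and base change for all three sheaves, fiberwise surjectivity of restriction of linear forms to the length-$2$ subscheme $W_p \subset \bp^{n-1}$, and flat base change for the final identification. The only cosmetic difference is that you deduce $R^1 s_*(\sci_{W/P \times_B \blow_\tau C} \otimes t^* \sco_P(1)) = 0$ by applying cohomology and base change to the ideal sheaf directly, whereas the paper reads it off from the long exact sequence once the map of pushforwards is shown surjective; these are the same computation.
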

\begin{proof}
	Using flatness of $W \to \blow_\tau C$, as established in \autoref{proposition:w-flat}, the sheaf $\sco_W \otimes t^* \sco_P(1)$ is flat over $\blow_\tau C$. 
	Since $t^* \sco_P(1)$ is flat over $\blow_\tau C$,
	$\sci_{W/P \times_B \blow_\tau C} \otimes t^* \sco_P(1)$ is also flat over $\blow_\tau C$,
	so cohomology and base change applies to the above three sheaves.
	From cohomology and base change,
	$R^1 s_* \left( t^* \sco_P(1)\right)= 0$ so $s_* \left( t^*
	\sco_P(1)\right)$ commutes with base change and is locally free of rank
	$n$.

	Next, by \autoref{proposition:w-flat},
	$s_*\left( \sco_{W} \otimes t^* \sco_P(1) \right)$ 
	is locally free of rank $2$ and its formation commutes with arbitrary
	base change.

	The next step is to show 
	$s_* \left( t^* \sco_P(1)\right) \to s_* \left( \sco_{W} \otimes t^* \sco_P(1)\right)$ is surjective
	and that $R^1 s_* \left( \sci_{W/P \times_B \blow_\tau C} \otimes t^* \sco_P(1)\right) = 0$.
	Surjectivity may be checked on geometric points by cohomology and base
	change, and so follows from surjectivity of the restriction maps
	$H^0(\bp^{n-1}_x, \sco_{\bp^{n-1}_x}(1)) \to H^0(W_x, \sco_{\bp^{n-1}_x}(1)|_{W_x})$. 
	Then, $R^1 s_* \left( \sci_{W/P \times_B \blow_\tau C} \otimes t^* \sco_P(1)\right) = 0$
	because $R^1 s_* \left( t^* \sco_P(1)\right)= 0$, as verified above.

	Finally, the natural identification
	$s_* \left( t^* \sco_P(1)\right) \simeq \left( f \circ \iota \circ \nu \right)^* \left( f_* \sco_P(1)\right)$
	follows from flat base change applied to the sheaf $\sco_P(1)$ on $P$.
\end{proof}

We are now prepared to construct the sought $(n-2)$-Hirzebruch twist $F$. Here is the
definition.
\begin{notation}
	\label{notation:f-surface}
	Let $n \geq 3$, let $(B,f:P \to B,\iota: C \to P,\tau: B \to C) \in \singsstack n(B)$
	and retain notation from \autoref{notation:s-t-maps}.
	By \autoref{lemma:map-to-grassmannian}, the exact sequence 
	furnished by the first three nonzero terms of
\eqref{equation:pushforward-containing-w-long} yields a map $\omega: \blow_\tau C \to \mathbb G(1,P)$, the Grassmannian of lines in $P$.
The universal bundle over $\mathbb G(1,P)$ pulls back along $\omega$ to a relative family of lines $F$ with an embedding
$F \hookrightarrow \blow_\tau C \times_B P$,
where $F \simeq \bp \left(s_* \left( \sco_{W} \otimes t^* \sco_P(1)\right) \right)$
over $\blow_\tau C$.
\end{notation}

\begin{remark}
	\label{remark:map-from-blowup-to-F}
	Additionally, we can realize a map $i_W : W \hookrightarrow F$ via the surjection of sheaves
$\rho^* s_* \left( \sco_{W} \otimes t^* \sco_P(1)\right) = \rho^* \rho_* \left( (t^*\sco_P(1))|_W\right) \to (t^*\sco_P(1))|_W$ coming from the
natural adjunction.
\end{remark}

We will next verify that $F$ as defined in \autoref{notation:f-surface} is 
an $(n-2)$-Hirzebruch twist.
It will also be crucial to know that the exceptional divisor $E_\tau C$ is a Cartier divisor in $\blow_\tau C$,
as we now check.
\begin{lemma}
	\label{lemma:exceptional-divisor-has-degree-2}
	Retaining notation from \autoref{notation:image-of-blow-up},
	we have an isomorphism $L \cap M \simeq E_\tau C$, for $E_\tau C$ the
	exceptional divisor of the blow up $\nu$ of $C$ at $\tau$.
	Further, $E_\tau C$ is a degree $2$ relative effective Cartier divisor on $L$ over $B$.
\end{lemma}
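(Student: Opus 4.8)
The plan is to dispatch the two assertions in turn, both by falling back on the explicit local model for $\blow_\tau C$ recorded in \autoref{proposition:genus-0-blow-up}. The identification $L \cap M \simeq E_\tau C$ is essentially already established: by \autoref{lemma:image-description} we have $W \simeq \blow_\tau C \coprod_{E_\tau C} \blow_\tau C$, under which $L$ and $M$ are the two copies of $\blow_\tau C$ and their scheme-theoretic intersection is precisely the closed subscheme $E_\tau C$ along which they are glued. Equivalently, as noted in \autoref{remark:map-from-gluing}, $M$ maps to the constant section $\iota \circ \tau$, so $L \cap M$ is the fiber $\nu^{-1}(\tau)$, which is by definition the exceptional divisor $E_\tau C$. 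Since $\psi \circ i_1$ is a closed embedding whose inverse is the first projection $W \to \blow_\tau C$, I identify $L$ with $\blow_\tau C$ throughout and view $E_\tau C$ as a closed subscheme of $L$.

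For the second assertion, I would verify that $E_\tau C$ is a degree $2$ relative effective Cartier divisor on $L$ over $B$ after passing to an fppf cover of $B$. This reduction is legitimate because both properties, being a relative effective Cartier divisor and being finite locally free of degree $2$ over the base, are fppf-local on $B$, and because the formation of $\blow_\tau C$ together with its exceptional divisor commutes with arbitrary base change by \autoref{proposition:genus-0-blow-up}. After such a base change, \autoref{proposition:genus-0-blow-up} allows me to take $B = \spec R$ and to present $\blow_\tau C \simeq \proj R[x,y,X,Y]/(xY - yX,\ Y^2 + a_1 YX - xX^2 - a_2 X^2)$, with $X, Y$ of degree $1$ and $x, y$ of degree $0$, the exceptional divisor being cut out by the inverse-image ideal of $(x,y)$.

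On the chart $X \neq 0$, set $t := Y/X$; the relation $xY = yX$ gives $y = xt$, so the ideal $(x,y)$ is principal, generated by $x$, and the second relation reads $x = t^2 + a_1 t - a_2$. Thus the exceptional divisor is the principal, hence Cartier, subscheme $V(x)$, identified with $\spec R[t]/(t^2 + a_1 t - a_2)$. Since this defining polynomial is monic of degree $2$, it presents $E_\tau C$ over this chart as finite free of rank $2$ over $R$. The complementary point $[0:1] \in \bp^1_R$ does not lie on $E_\tau C$, as $Y^2 + a_1 XY - a_2 X^2$ specializes to $1$ there, so $E_\tau C$ is contained in this single chart; hence globally $E_\tau C \to \spec R$ is finite locally free of degree $2$ and $E_\tau C$ is an effective Cartier divisor on $L$. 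Descending back down, $E_\tau C$ is a degree $2$ relative effective Cartier divisor on $L$ over $B$.

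The one genuinely delicate point is the uniform degree $2$ and flatness statement across the whole family, since the geometric fibers of $E_\tau C \to B$ are two reduced points over nodal fibers of $C$ but a single length-$2$ point over cuspidal fibers, so a naive fiberwise count is insufficient. The monic polynomial $t^2 + a_1 t - a_2$, whose degree is independent of the values of $a_1, a_2 \in R$, is exactly what bridges the two cases and forces flatness; compatibility of the blow up with base change then makes the fppf descent step above rigorous.
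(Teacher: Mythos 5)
Your proof is correct. The first assertion is handled exactly as in the paper: both arguments read $L \cap M \simeq E_\tau C$ directly off the gluing description of $W$ in \autoref{lemma:image-description} (equivalently, off \autoref{remark:map-from-gluing}). For the second assertion you take a more explicit route than the paper does. The paper first invokes the fibral criterion for relative effective Cartier divisors from \cite[\S8.2, Lemma 6]{BoschLR:Neron}, reducing to the claim that $E_\tau C$ is an effective Cartier divisor and remains one on each fiber, and then verifies the degree-$2$ statement separately on geometric fibers via explicit computations in the nodal and cuspidal cases. You instead pass to the fppf-local Weierstrass model underlying \autoref{proposition:genus-0-blow-up} and exhibit $E_\tau C$ in the family as $V(t^2 + a_1 t - a_2) \subset \spec R[t]$: a single computation with a monic generator that simultaneously yields Cartier-ness (a monic polynomial is a non-zerodivisor in $R[t]$), flatness over the base, and constancy of the degree across the nodal-to-cuspidal transition. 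This is somewhat more self-contained --- it avoids the BLR citation and makes transparent why the degree does not jump even though the fibers of $E_\tau C$ degenerate from two reduced points to one non-reduced point --- at the modest cost of re-running the fppf reduction that is internal to the proof of \autoref{proposition:genus-0-blow-up} (whose statement records only smoothness and base-change compatibility, not the explicit presentation). Since the properties you are checking are fppf-local on $B$ and the formation of the blow up and its exceptional divisor commutes with base change, that reduction is harmless.
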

\begin{proof}
	First, we show $E_\tau C$ is a relative effective Cartier divisor on $\blow_\tau C$.
	Since $E_\tau C$ is identified with the restriction of $L$ to $M$ by
	\autoref{lemma:image-description}, this will prove $L \cap M \simeq E_\tau C$.
	By \cite[\S8.2, Lemma 6]{BoschLR:Neron}, 
	it is enough to know $E_\tau$ is a effective Cartier divisor and remains such when restricted to each fiber over $B$.
	These claims follow from the universal property of blow ups and because
	the formation of the blow up at $\tau$ commutes with base change
	on $B$ by \autoref{proposition:genus-0-blow-up}.

	To conclude the proof, we need to check $E_\tau C$ has degree $2$. This may
	be verified on geometric fibers over points of $B$. 
	For example, one may deduce this from explicit computations in the nodal and cuspidal cases.
\end{proof}

We now show 
$F$ is an $(n-2)$-Hirzebruch twist.

\begin{proposition}
	\label{proposition:f-is-hirzebruch-twist}
	Suppose we are given a map $F \to \blow_\tau C \to B$, as in
	\autoref{notation:f-surface}.
	gives $F$ the structure of an $(n-2)$-Hirzebruch twist over $B$.
	\end{proposition}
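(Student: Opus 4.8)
The plan is to verify the two conditions in \autoref{definition:hirzebruch-twist} for the tuple $(B,\, \blow_\tau C \to B,\, F \to \blow_\tau C)$, and then to promote the fibral picture to an fppf-local trivialization. First I would check that $\blow_\tau C \to B$ is a $1$-dimensional Brauer--Severi scheme. By \autoref{proposition:genus-0-blow-up}, $\blow_\tau C \to B$ is smooth and proper with geometrically connected (hence geometrically integral) genus $0$ fibers; a smooth proper morphism all of whose geometric fibers are $\bp^1$ is, \'etale-locally on the base, isomorphic to $\bp^1$, so it is a Brauer--Severi scheme of relative dimension $1$. Condition (2) then asks that $F \to \blow_\tau C$ be a relative dimension $1$ projective bundle, which is immediate from \autoref{notation:f-surface}, where $F \simeq \bp\left( s_*\left( \sco_W \otimes t^* \sco_P(1)\right)\right)$, together with \autoref{lemma:map-to-grassmannian}, which shows $s_*\left(\sco_W \otimes t^* \sco_P(1)\right)$ is locally free of rank $2$.

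The essential remaining point is that fppf-locally on $B$ one has $F \simeq (\mathbb F_{n-2})_{B'}$. Because the blow up commutes with arbitrary base change (\autoref{proposition:genus-0-blow-up}) and all the pushforwards used to build $F$ commute with base change by cohomology and base change (as in \autoref{lemma:map-to-grassmannian}), the formation of $F$ commutes with base change, so it suffices to identify the geometric fibers of $F$ over $B$. On such a fiber $F$ is a $\bp^1$-bundle over $\blow_\tau C \simeq \bp^1$, hence isomorphic to some $\mathbb F_m$, and I would pin down $m = n-2$ using the two sections $L, M \subset W \hookrightarrow F$ of \autoref{notation:image-of-blow-up}. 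Indeed $M$ is carried to the constant section $\tau$ under $F \to P$ and is therefore the directrix, so $\sco_F(1)\cdot M = 0$, while $L$ maps birationally onto $C$, giving $\sco_F(1)\cdot L = \deg_P C = n$; combined with $L \cdot M = \deg E_\tau C = 2$ from \autoref{lemma:exceptional-divisor-has-degree-2}, the three relations force $m = n-2$. This is exactly the classical cone-and-blow-up picture recalled in \autoref{section:geometric-bijection}.

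Finally I would upgrade this fibral identification to an fppf-local one. The functor $\isom_B(\mathbb F_{n-2}, F)$ of isomorphisms of these $\bp^1$-bundles over their respective genus $0$ base curves is representable, and its relevant automorphism group is $\aut_{\mathbb F_{n-2}/\bz}$, which is smooth by \autoref{lemma:affine-aut-sequences}(4). Since $\mathbb F_{n-2}$ has unobstructed deformations (concretely $H^2(\mathbb F_{n-2}, T_{\mathbb F_{n-2}}) = 0$), the morphism $\isom_B(\mathbb F_{n-2}, F) \to B$ is smooth, and it is surjective by the fibral computation of the previous paragraph, hence an fppf cover over which $F$ trivializes. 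Equivalently, $\isom_B(\mathbb F_{n-2}, F)$ is an $\aut_{\mathbb F_{n-2}/\bz}$-torsor, and \autoref{lemma:descent-for-F} then identifies $(B,\, \blow_\tau C \to B,\, F \to \blow_\tau C)$ with an $(n-2)$-Hirzebruch twist, as desired.

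The main obstacle is this last step: assembling the fibral isomorphisms into a genuine fppf-local trivialization with the correct index $n-2$. The two ingredients that make it go through are the smoothness of $\aut_{\mathbb F_{n-2}/\bz}$ (so that $\isom_B$ is smooth and surjectivity on geometric fibers already yields an fppf cover) and the intersection-theoretic computation forcing the constant fiber type $\mathbb F_{n-2}$; once both are in hand the conclusion follows by descent for the torsor structure.
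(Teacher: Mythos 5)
Your first paragraph and the fibral intersection-theoretic identification of the fiber type are fine and agree with what the paper does (the fiber computation is essentially the later \autoref{lemma:class-L-and-M}). The gap is in the final ``upgrade'' step, and it is a genuine one. The smoothness of $\isom_B(\mathbb F_{n-2}, F) \to B$ is not controlled by $H^2(\mathbb F_{n-2}, T_{\mathbb F_{n-2}})$: that group governs deformations of the abstract surface, whereas the obstruction to extending an isomorphism $\phi_0$ over a square-zero thickening lives in $H^1(\mathbb F_{n-2}, \phi_0^* T_{\mathbb F_{n-2}}) = H^1(\mathbb F_{n-2}, T_{\mathbb F_{n-2}})$, which has dimension $n-3$ and is nonzero for all $n \geq 4$. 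Relatedly, a pseudo-torsor under a smooth group scheme that is merely surjective on geometric points need not be flat, so surjectivity of the Isom scheme does not make it an fppf cover. The failure is not just in the justification: there exist $\bp^1$-bundles over $\bp^1_B$ all of whose geometric fibers are $\mathbb F_{n-2}$ but which are not fppf-locally isomorphic to $\mathbb F_{n-2}$ --- e.g.\ take $B = \spec k[\epsilon]/(\epsilon^2)$ and projectivize a non-split extension of $\sco_{\bp^1_B}(n-2)$ by $\sco_{\bp^1_B}$ with class $\epsilon$ times a generator of $H^1(\bp^1, \sco(-(n-2)))$. So the fibral identification genuinely cannot be promoted to an fppf-local trivialization by softness alone.

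What closes the gap in the paper is the specific global structure of the bundle: after reducing to $\blow_\tau C \simeq \bp^1_B$ with $B$ local, one exhibits $s_*\left( \sco_W \otimes t^* \sco_P(1)\right)$ as an extension
\begin{equation*}
0 \to \sco_{\bp^1_B}(n-2) \to s_*\left( \sco_{W} \otimes t^* \sco_P(1)\right) \to \sco_{\bp^1_B} \to 0,
\end{equation*}
where the quotient is restriction to $M$ (which is contracted to $\tau$, so the twist by $t^*\sco_P(1)$ is trivial there) and the kernel is $s_*(\sco_L(-E_\tau C) \otimes t^*\sco_P(1)) \simeq \sco_{\bp^1_B}(-2+n)$. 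This extension splits because $\Ext^1_{\bp^1_B}(\sco_{\bp^1_B}, \sco_{\bp^1_B}(n-2)) = H^1(\bp^1_B, \sco_{\bp^1_B}(n-2)) = 0$ for $n \geq 3$ --- note the direction of the extension is essential, since the Ext group in the opposite direction is nonzero and is exactly where the counterexample above lives. You already have the sections $L$ and $M$ in hand; using them to produce this exact sequence (rather than only their fiberwise intersection numbers) is the missing step.
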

\begin{proof}
	By \autoref{proposition:genus-0-blow-up}, $\blow_\tau C$ is a $1$-dimensional Brauer-Severi scheme over $B$
	and by definition $F$ is a relative dimension $1$ Zariski-locally trivial projective bundle over $\blow_\tau C$.
	Therefore, it only remains to show that $F$ is isomorphic to $\mathbb F_{n-2}$ fppf locally on $B$.

	Since $\blow_\tau C$ is a smooth 
	genus $0$ curve over $B$ with geometrically connected fibers, we may replace $B$ by a suitable fppf cover so as to
reduce to the case that $\blow_\tau C \simeq \bp^1_{B}$.
	Then, in \autoref{notation:f-surface}, we constructed
	$F$ as the projectivization of the rank $2$ vector bundle $s_* \left(
	\sco_{W} \otimes t^* \sco_P(1)\right)$ on $\bp^1_B$ which we wish to
	show is isomorphic to $\sco_{\bp^1_B} \oplus \sco_{\bp^1_B}(n-2)$ affine
	locally on $B$.
	
	By spreading out, we can reduce to the case $B$ is a local scheme, i.e., $B$ is the spectrum of a local ring.
	Next, recall the group of line bundles on $\bp^1_B$, for $B$ a local
	scheme, is $\bz$. A representative for the element $m \in \bz \simeq
	\pic(\bp^1_B)$ is given by $\sco_{\bp^1_B}(m)$ for
	$m \in \bz$,
	as may be deduced from
	the fibral isomorphism criterion \cite[17.9.5]{EGAIV.4}.

	We next claim there is an exact sequence 
	\begin{equation}
		\label{equation:vector-bundle-filtration}
		\begin{tikzcd}
			0 \ar {r} &  \sco_{\bp^1_B}(n-2) \ar {r} & s_* \left( \sco_{W} \otimes t^* \sco_P(1)\right)  \ar {r} & \sco_{\bp^1_B} \ar {r} & 0.
	\end{tikzcd}\end{equation}
	We will verify exactness of this sequence in
	\autoref{lemma:exactness-of-vector-bundle-sequence}.
	The result then follows once we verify extensions as in
	\eqref{equation:vector-bundle-filtration} split.
	Indeed, such extensions split because
	$\Ext^1_{\bp^1_B}(\sco_{\bp^1_B}, \sco_{\bp^1_B}(m)) = H^1(\bp^1_B,
	\sco_{\bp^1_B}(m)) = 0$; this vanishing may be deduced from an application of the Leray spectral
	sequence, using that $B$ is a local scheme.
\end{proof}

	\begin{lemma}
		\label{lemma:exactness-of-vector-bundle-sequence}
		The sequence \eqref{equation:vector-bundle-filtration} is exact.
		The surjection $s_* \left( \sco_{W} \otimes t^* \sco_P(1)\right) \to \sco_{\bp^1_B}$
		in \eqref{equation:vector-bundle-filtration} is identified with the restriction map 
	$r: s_* \left( \sco_{W} \otimes t^* \sco_P(1)\right) \to s_* \left( \sco_{M} \otimes t^* \sco_P(1)\right)$
	for $M$ as in \autoref{notation:image-of-blow-up}
	\end{lemma}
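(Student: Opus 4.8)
The plan is to realize \eqref{equation:vector-bundle-filtration} as the pushforward along $s$ of the ideal-sheaf sequence of $M$ inside $W$, twisted by $t^* \sco_P(1)$. Concretely, I would start from the short exact sequence
\begin{equation}
	\label{equation:ideal-sequence-M-W}
	0 \to \sci_{M/W} \to \sco_W \to \sco_M \to 0
\end{equation}
on $\blow_\tau C \times_B P$, where $\sci_{M/W}$ is the ideal sheaf of $M$, tensor it with $t^* \sco_P(1)$, and apply $s_*$. Under this operation the quotient map $\sco_W \to \sco_M$ becomes exactly the restriction map $r$ of the statement; this already yields the second assertion of the lemma, so the content reduces to computing the two outer terms and checking that $R^1 s_*$ of the kernel vanishes, which will simultaneously give exactness and the surjectivity of $r$.

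Next I would identify the kernel. Using \autoref{lemma:image-description}, which presents $W$ as two copies of $\blow_\tau C$ glued along $E_\tau C$, together with \autoref{lemma:exceptional-divisor-has-degree-2}, which gives $L \cap M \simeq E_\tau C$ as a degree $2$ relative effective Cartier divisor, the kernel of $\sco_W \to \sco_M$ is canonically the ideal sheaf $\sci_{E_\tau C/L} \simeq \sco_L(-E_\tau C)$ pushed forward from $L$. Since the first component of $\psi \circ i_1$ is the identity, $s|_L : L \to \blow_\tau C$ is an isomorphism carrying $t|_L$ to $\iota \circ \nu$, so $t^* \sco_P(1)|_L$ is identified with $\nu^* \iota^* \sco_P(1)$. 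On each geometric fiber this is the pullback to the normalization $\bp^1$ of the degree $n$ hyperplane bundle on $C$, hence of fiber degree $n$; combined with the fiber degree $-2$ of $\sco_L(-E_\tau C)$, and using that $B$ is local so every line bundle on $\blow_\tau C \simeq \bp^1_B$ is some $\sco_{\bp^1_B}(m)$, this produces $s_*(\sci_{M/W} \otimes t^* \sco_P(1)) \simeq \sco_{\bp^1_B}(n-2)$. Because this sheaf is the pushforward of a line bundle on $L$ along the isomorphism $s|_L$, its $R^1 s_*$ vanishes.

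For the quotient term, the map $t|_M : M \to P$ is the constant map through the section $\iota \circ \tau(B)$, so $t^* \sco_P(1)|_M$ is pulled back from a line bundle on $B$; since $B$ is local this is fiberwise trivial, and since $s|_M$ is again an isomorphism I would conclude $s_*(\sco_M \otimes t^* \sco_P(1)) \simeq \sco_{\bp^1_B}$. Assembling the long exact sequence of $s_*$ applied to the twist of \eqref{equation:ideal-sequence-M-W}, the vanishing of $R^1 s_*$ on the kernel gives exactness of $0 \to \sco_{\bp^1_B}(n-2) \to s_*(\sco_W \otimes t^* \sco_P(1)) \to \sco_{\bp^1_B} \to 0$ with the surjection equal to $r$, which is both assertions.

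The main obstacle is the clean identification $\sci_{M/W} \simeq \sco_L(-E_\tau C)$ together with the bookkeeping of the twist $t^* \sco_P(1)$. The delicate point is that $L$ and $M$ meet precisely along the Cartier divisor $E_\tau C$, and one must verify that after tensoring and pushing forward no higher direct image interferes; this is exactly where the gluing structure of \autoref{lemma:image-description}, the flatness established in \autoref{proposition:w-flat}, and the degree input of \autoref{lemma:exceptional-divisor-has-degree-2} combine to pin down both the rank and the fiber degree of the kernel unambiguously.
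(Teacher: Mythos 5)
Your proposal is correct and follows essentially the same route as the paper's proof: both push forward the ideal-sheaf sequence of $M$ in $W$ twisted by $t^*\sco_P(1)$, identify $\sci_{M/W}$ with $\sco_L(-E_\tau C)$ supported on $L$, compute the fiber degrees $-2$ and $n$ using \autoref{lemma:exceptional-divisor-has-degree-2} and the degree $n$ embedding $\iota$, and conclude via $\pic(\bp^1_B)\simeq\bz$ for $B$ local together with the vanishing of $R^1 s_*$ on the kernel. No substantive differences.
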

	\begin{proof}
		We start by constructing a surjection $s_* \left( \sco_{W} \otimes t^* \sco_P(1)\right)  \to \sco_{\bp^1_B}$.
	Indeed, recall from \autoref{notation:image-of-blow-up} that $W$ is constructed as the image of two copies of $\blow_\tau C \simeq \bp^1_B$ in $\bp^1_B \times_B P$.
	Note that $M$ maps to $P$ via the constant map through $\tau$, and therefore $\sco_M \otimes t^* \sco_P(1) \simeq \sco_M$.
	The restriction map $r: s_* \left( \sco_{W} \otimes t^* \sco_P(1)\right) \to s_* \left( \sco_{M} \otimes t^* \sco_P(1)\right) \simeq s_* (\sco_M) \simeq \sco_{\bp^1_B}$
	will be our desired surjection.
%	Indeed, as argued above, $s_* \left( \sco_{M} \otimes t^* \sco_P(1)\right) \simeq s_* \sco_M$ and the map $s$ restricts to an isomorphism on $M$ by construction.

	It remains to show the restriction map $r$ is surjective with kernel isomorphic to $\sco_{\bp^1_B}(n-2)$.
	Let $\sci_{M/W}$ denote the ideal sheaf of $M$ in $W$. This is supported on $L$ and its restriction to $L$ is isomorphic to the invertible sheaf $\sco_L(-E_\tau C)$ on $L$,
	for $E_\tau C$ the exceptional divisor in $\blow_\tau C \simeq \bp^1_B$.
	Note this sheaf is invertible as $E_\tau C$ is a Cartier divisor by \autoref{lemma:exceptional-divisor-has-degree-2}.
	From this, we obtain
	an exact sequence
\begin{equation}
	\label{equation:}
	\begin{tikzcd}
	0 \ar {r} & s_* (\sco_L(-E_\tau C) \otimes t^* \sco_P(1)) \ar {r} & s_*
	(\sco_W \otimes t^* \sco_P(1)) \ar {r} & s_* (\sco_M \otimes t^*
	\sco_P(1)).
\end{tikzcd}\end{equation}
Note first that 
$R^1 s_* (\sco_L(-E_\tau C) \otimes t^* \sco_P(1)) = 0$ 
follows from cohomology and base change because $s$ maps $L$ isomorphically to
$\bp^1_B$.
Hence, the above sequence is right exact.

To conclude, it only remains to identify $s_* (\sco_L(-E_\tau C) \otimes t^* \sco_P(1)) \simeq \sco_L(n-2) \simeq \sco_{\bp^1_B}(n-2)$.
In fact, since $s$ is an isomorphism on $L$, it is enough to show
$s_* \sco_L(-E_\tau C)  \simeq \sco_{\bp^1_B}(-2)$ and 
$s_* t^* \sco_P(1)  \simeq \sco_{\bp^1_B}(n)$.
Since $B$ is a local scheme, $\pic(\bp^1_B) \simeq \bz$, we can verify the
above claims after restriction to any point of $B$.

We now complete the proof by computing the degrees of
$s_* \sco_L(-E_\tau C)$ and $s_* t^* \sco_P(1)$ in the case $B$
is the spectrum of a field $k$.
The former has degree $2$
by \autoref{lemma:exceptional-divisor-has-degree-2}.
The latter has degree $n$ because the composition
$\blow_\tau C \simeq L \to W \to \blow_\tau C \times_B P \to P$ agrees with
$\blow_\tau C \xra{\nu} C \xra{\iota} P$
and $\iota$ realizes $C$ as a degree $n$ curve in $P$
by definition of $\singshilb n$.
\end{proof}

Fix $n \geq 3$.
Using our above construction of the $(n-2)$-Hirzebruch twist,
there is a map $\singshilb n \to \smilestack n$, which we now describe.

\begin{construction}
	\label{construction:reverse-map}
Given $(B,f:P \to B,\iota: C \to P,\tau:B \to C) \in \singsstack n(B)$,
this data is mapped
to the point of $\smilestack n$ described by 
$(B, h: \blow_\tau C \to B, g: F \to \blow_\tau C, i : L \to F)$
for $g: F \to \blow_\tau C$ as in \autoref{notation:f-surface} and $L$ as in \autoref{notation:image-of-blow-up}.
The map $i$ is given as the composition of the maps
$\psi \circ i_1: \blow_\tau C \to W$ of \autoref{notation:image-of-blow-up} and $i_W : W \to F$ of \autoref{remark:map-from-blowup-to-F}.
\end{construction}
From \autoref{proposition:f-is-hirzebruch-twist}, we find $F \xra{g} \blow_\tau C \xra{h} B$ is an $(n-2)$-Hirzebruch twist,
and hence to construct the map 
$\singshilb n \to \smilestack n$
on objects, it suffices to verify that $L$ has class $e+nf$ on $F$.
This and more is established in the following lemma.

\begin{lemma}
	\label{lemma:class-L-and-M}
	Retain notation as in \autoref{notation:image-of-blow-up} and
	\autoref{notation:abuse-hirzebruch}.
	\begin{enumerate}
		\item The divisor $M \to F$ has class $e$.
		\item The divisor $L \to F$ has class $e+nf$.
		\item The invertible sheaf $t^* \sco_P(1)|_F$ on $F$ has class $e+(n-2)f$.
	\end{enumerate}
\end{lemma}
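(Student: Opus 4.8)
The plan is to reduce to a single surface and pin down all three classes simultaneously by a short intersection-number computation. Since the assertions concern divisor classes on the $(n-2)$-Hirzebruch twist $F$, and membership in all the relevant linear systems is defined fppf-locally, I would first pass to an fppf cover of $B$ over which $F \simeq \mathbb F_{n-2}$ and $\blow_\tau C \simeq \bp^1_B$. Divisor classes may then be checked on geometric fibers, so it suffices to do the bookkeeping over an algebraically closed field, where $\pic(\mathbb F_{n-2}) = \bz e \oplus \bz f$ with $e\cdot e = -(n-2)$, $e\cdot f = 1$, and $f\cdot f = 0$.

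Next I would record that both $L$ and $M$ are sections of $g: F \to \blow_\tau C$: by \autoref{notation:image-of-blow-up} and \autoref{remark:map-from-blowup-to-F} the first component of $\psi\circ i_1$ is the identity and $i_W$ respects the projection to $\blow_\tau C$, so the composite $\blow_\tau C \to L \hookrightarrow F \xra{g} \blow_\tau C$ is the identity, and similarly for $M$. A section $S$ satisfies $S\cdot f = 1$, so writing $S = ae + bf$ forces $a = 1$; hence $L = e + \ell f$ and $M = e + m f$ for integers $\ell, m$. Writing $H := t^*\sco_P(1)|_F = a'e + b'f$, restriction to a fiber of $F \to \blow_\tau C$ pins down $a'$: each such fiber is one of the lines cut out by $\omega: \blow_\tau C \to \mathbb G(1,P)$ and maps isomorphically under $t$ onto a line in $P$, so $H\cdot f = 1$ and $H = e + bf$.

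With the three classes reduced to the unknowns $\ell, m, b$, I would collect three intersection numbers. First, $L\cdot M = 2$: by \autoref{lemma:exceptional-divisor-has-degree-2} the scheme-theoretic intersection $L\cap M$ equals $E_\tau C$, of degree $2$, and on a fiber this is a dimensionally proper intersection of two distinct sections, so the intersection number equals its length. Second, $H\cdot M = 0$, since $t$ sends $M$ to the single point $\iota\circ\tau$, whence $t^*\sco_P(1)|_M$ is trivial. Third, $H\cdot L = n$, because $t|_L$ agrees with $\iota\circ\nu$, $\nu$ is birational, and $\iota$ realizes $C$ as a degree $n$ curve in $P$. Expanding with the intersection pairing gives $\ell + m = n$, $b + m = n-2$, and $b + \ell = 2n-2$, whose unique solution is $\ell = n$, $m = 0$, $b = n-2$. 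This yields $L = e + nf$, $M = e$, and $H = e + (n-2)f$, proving $(2)$, $(1)$, and $(3)$ respectively.

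The computation itself is routine linear algebra; the only genuinely delicate points are justifying that the intersection number $L\cdot M$ equals the length of $E_\tau C$ (which needs the intersection to be proper on each fiber, guaranteed because $L$ and $M$ are distinct sections) and checking that the reduction to $\mathbb F_{n-2}$ over a field is compatible with the definitions of $e$ and $f$ on the twist from \autoref{notation:abuse-hirzebruch}, so that the computed classes legitimately record the asserted statements on $F$. I expect this compatibility bookkeeping, rather than any hard geometry, to be the main obstacle.
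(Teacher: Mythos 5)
Your proposal is correct and follows essentially the same route as the paper: reduce fppf-locally to $\mathbb F_{n-2}$ over a local base, check classes on geometric fibers using $\pic_{(\mathbb F_{n-2})_B/B}\simeq \underline{\bz}^2$, and pin down the three classes via the same intersection numbers ($L\cdot M=2$ from \autoref{lemma:exceptional-divisor-has-degree-2}, $H\cdot M=0$ since $M$ is contracted to $\iota\circ\tau$, and $H\cdot L=n$ since $t|_L$ factors through the degree-$n$ embedding of $C$). The only cosmetic difference is that the paper identifies $M$ with the directrix directly via the surjection of \autoref{lemma:exactness-of-vector-bundle-sequence}, whereas you recover its class from $M\cdot f=1$ together with your linear system; both are fine.
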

\begin{proof}
	We may work fppf locally on $B$ to assume
	$F \simeq \mathbb F_{n-2}$ over $B$ and $B$ is a local scheme.
	Because
	$\pic_{(\mathbb F_{n-2})_B/B} \simeq \ul{\bz}^2,$
	generated by the class of the fiber and exceptional divisor,
	it is enough to verify the above claims on geometric fibers, in which
	case they reduce to standard intersection theory calculations.

	We deduce the first part holds using that
	the directrix class on the Hirzebruch surface $\mathbb
	F_{n-2}$ over $\bp^1_B$ corresponds to the surjection $\sco_{\bp^1_B}(n)
	\oplus \sco_{\bp^1_B} \to \sco_{\bp^1_B}$.
	By
	\autoref{lemma:exactness-of-vector-bundle-sequence}, this is identified with the restriction of
	$s_* \left( \sco_{W} \otimes t^* \sco_P(1)\right)$ (the sheaf whose projectivization is $F$) to $M$.
	It follows that $M$ has class $e$ on $\mathbb F_{n-2}$.

	We may identify the class of $L$ by noting that
	$L \cap M$ has degree $2$ on $L$ by
	\autoref{lemma:exceptional-divisor-has-degree-2},
	and
	the intersection of $L$ with a
	fiber of projection $\rho \circ \psi \circ i_1: L \to W \to \blow_\tau C$ has degree $1$.

	Finally, to identify $t^* \sco_P(1)|_F$, 
	we claim
	this line bundle restricts to a degree $0$ divisor on $M$ and
	a degree $n$ divisor on $L$. 
	These claims can both be verified by choosing a section of
	$\sco_P(1)$ missing $\tau$. 
	The above claims imply
	$t^* \sco_P(1)|_F$ 
	has class $e+(n-2)f$ because this is the unique effective class whose intersection with $e$ is $0$ and whose intersection with $e + nf$ is $n$.
	\end{proof}

We are finally prepared to complete our construction of the map
$\singsstack n \to \smilestack n$.
\begin{lemma}
	\label{lemma:reverse-equivalence}
	There is a map of stacks $\singsstack n \to \smilestack n$
	sending $(B,f:P \to B,\iota: C \to P,\tau:B \to C) \in \singsstack n(B)$
	to the tuple $(B, h: \blow_\tau C \to B, g: F \to \blow_\tau C, i : L
	\to F) \in \smilestack n(B)$
	as defined in \autoref{construction:reverse-map}.
	\end{lemma}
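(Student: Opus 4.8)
The plan is to assemble the construction piece by piece, verifying that every step is sufficiently functorial to glue into a genuine morphism of stacks. The main content has in fact already been established in the preceding lemmas; what remains is to package the construction as a morphism rather than an assignment on $B$-points, and this is exactly where the effort lies.

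First I would verify that the assignment described in \autoref{construction:reverse-map} lands in $\smilestack n(B)$. By \autoref{proposition:f-is-hirzebruch-twist}, the tuple $F \xra{g} \blow_\tau C \xra{h} B$ is indeed an $(n-2)$-Hirzebruch twist, so the only remaining condition to check is that $i: L \to F$ defines a valid point of $\smilestack n$, i.e.\ that $L$ is a smooth closed subscheme lying fppf-locally in the linear system $e + nf$. Smoothness of $L$ follows from \autoref{proposition:genus-0-blow-up}, since $L$ is the image of $\psi \circ i_1$, whose composition with the first projection to $\blow_\tau C$ is an isomorphism (as noted after \autoref{notation:image-of-blow-up}); hence $L \simeq \blow_\tau C$ is smooth over $B$. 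That $L$ has class $e + nf$ is precisely part $(2)$ of \autoref{lemma:class-L-and-M}. This confirms $(B, h: \blow_\tau C \to B, g: F \to \blow_\tau C, i: L \to F) \in \smilestack n(B)$.

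Next I would promote this assignment to a morphism of fibered categories by checking naturality in $B$. The crucial input is that every construction entering \autoref{construction:reverse-map} commutes with arbitrary base change on $B$: the blow up $\blow_\tau C$ does so by \autoref{proposition:genus-0-blow-up}; the scheme-theoretic image $W$ and the maps $\psi \circ i_1$, $i_W$ do so because $W \to \blow_\tau C$ is flat by \autoref{proposition:w-flat}, which makes cohomology and base change applicable in \autoref{lemma:map-to-grassmannian}; and the formation of $F$ via the map $\omega$ to the Grassmannian of lines commutes with base change because the relevant pushforwards along $s$ are locally free and base-change-compatible, again by \autoref{lemma:map-to-grassmannian}. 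Given a morphism $(B, f, \iota, \tau) \to (B', f', \iota', \tau')$ in $\singsstack n$ — that is, maps $\alpha, \beta, \gamma$ with the relevant squares Cartesian — each of the above constructions therefore transforms compatibly, producing maps $\alpha, \beta_{\blow}, \gamma_F, \delta_L$ fitting into the commuting tower of fiber squares required by the definition of a morphism in $\smilestack n$ (\autoref{definition:smile-stack}). This furnishes the functor on morphisms.

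The main obstacle, as I see it, is the bookkeeping needed to confirm that these base-change isomorphisms are \emph{canonical and coherent}, so that the assignment respects composition of morphisms and hence genuinely defines a functor between fibered categories rather than merely a compatible family of maps on points. Concretely, one must pin down the universal property invoked at each stage — the universal property of the blow up, of the scheme-theoretic image (via quasi-compactness of $\psi$, used in \autoref{lemma:image-description}), and of the Grassmannian pullback defining $F$ — and check that the comparison isomorphisms arising from these universal properties are mutually compatible under composition. Once this coherence is in place, the desired map of stacks $\singsstack n \to \smilestack n$ follows formally. I would end the proof by remarking that naturality of every step has been established above, so the assignment of \autoref{construction:reverse-map} is functorial and defines the claimed morphism.
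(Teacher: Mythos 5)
Your proposal is correct and follows essentially the same route as the paper: the object-level verification rests on the same inputs (\autoref{proposition:f-is-hirzebruch-twist} for the Hirzebruch twist and \autoref{lemma:class-L-and-M} for the class $e+nf$ of $L$), and the functoriality is handled by appealing to the base-change compatibility established in \autoref{proposition:genus-0-blow-up}, \autoref{proposition:w-flat}, and \autoref{lemma:map-to-grassmannian}. The only cosmetic difference is that the paper dispatches the morphism check by reducing to automorphisms and using the $\pgl_n$ presentation of \autoref{corollary:singsstack-representable} (an automorphism of the tuple is induced by $\phi \in \pgl_n(B)$ preserving $C$ and $\tau(B)$, hence induces compatible automorphisms of $\blow_\tau C$, $L$, $M$, and $F$), whereas you argue directly via naturality of each construction under pullback; both are valid and rest on the same underlying lemmas.
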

\begin{proof}
	Observe that by \autoref{proposition:f-is-hirzebruch-twist}, $F \xra{g} \blow_\tau C \xra{h} B$ is an $(n-2)$-Hirzebruch twist.
	Further, $L$ has class $e+nf$ on $F$ by
	\autoref{lemma:class-L-and-M}.
	This constructs the desired map on objects.

	To show that morphisms in $\singsstack n$ are taken to morphisms in
	$\smilestack n$, it is enough to verify automorphisms are sent to
	automorphisms.
	The key input here is 
	\autoref{corollary:singsstack-representable},
	showing that
	any automorphism of 
$(B,f:P \to B,\iota: C \to P,\tau:B \to C) \in \singsstack n(B)$
is induced by an element $\phi \in \pgl_n(B) \simeq \aut_{P/B}(B)$ over $B$ 
such that $\phi(C) = C$ and $\phi(\tau(B)) = \tau(B)$.
This automorphism $\phi$ induces automorphisms of $\blow_\tau C, L,M$, and
therefore a compatible automorphism of $F$, which is the data 
of an automorphism of 
	$(B, h: \blow_\tau C \to B, g: F \to \blow_\tau C, i : L \to F) \in
	\smilestack n$.
\end{proof}

We have now defined maps $\Gamma : \smilestack n \to \singsstack n$ in \autoref{lemma:forward-equivalence}
$\Delta : \singsstack n \to \smilestack n$ in \autoref{lemma:reverse-equivalence}
which yield an equivalence of stacks.

\begin{theorem}
	\label{theorem:equivalence}
	For $n \geq 3$, 
	the maps 
$\Gamma : \smilestack n \to \singsstack n$
of \autoref{lemma:forward-equivalence}
and
$\Delta: \singsstack n \to \smilestack n$
of
\autoref{lemma:reverse-equivalence} define an equivalence of algebraic stacks.
\end{theorem}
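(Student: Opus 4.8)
The plan is to exhibit natural $2$-isomorphisms $\Delta \circ \Gamma \simeq \id_{\smilestack n}$ and $\Gamma \circ \Delta \simeq \id_{\singsstack n}$, which together give the equivalence. Both sides are algebraic by \autoref{lemma:smile-stack-representable} and \autoref{corollary:singsstack-representable}, and every construction entering $\Gamma$ and $\Delta$ commutes with arbitrary base change: the blow up by \autoref{proposition:genus-0-blow-up}, and the pushforwards defining the relevant sheaves by the cohomology-and-base-change inputs of \autoref{lemma:locally-free-e-n-2-f} and \autoref{lemma:map-to-grassmannian}. So I would construct the two $2$-isomorphisms over an arbitrary base $B$ and verify naturality in $B$, which reduces all fiberwise verifications to standard intersection theory on $\mathbb F_{n-2}$.

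First I would treat $\Delta \circ \Gamma \simeq \id_{\smilestack n}$. Starting from a smile $(B, h: X\to B, g: F\to X, i: Z\to F)$, the map $\Gamma$ of \autoref{lemma:forward-equivalence} contracts the directrix $E$ (of class $e$) to the singular section $\tau$ and realizes $C$ as the pushout of $Z$ gluing the degree $2$ subscheme $Z\cap E$ to a point. Hence the restriction $\phi|_Z: Z \to C$ is precisely the partial normalization at $\tau$, so the universal property of the blow up supplies a canonical isomorphism $\blow_\tau C \simeq Z$ carrying the exceptional divisor $E_\tau C$ to $Z\cap E$. Under this identification the family of lines spanned by the degree $2$ fibers of $W\to \blow_\tau C$ reconstructed by $\Delta$ is forced to be $F$ itself, since $F\to X$ was already a $\bp^1$-bundle whose fibers are exactly the fibers of $g$; and the divisor $L$ of \autoref{notation:image-of-blow-up} corresponds to $Z$ under $\blow_\tau C \simeq Z$. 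This yields the first $2$-isomorphism.

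Next I would treat $\Gamma\circ\Delta \simeq \id_{\singsstack n}$, where the essential bookkeeping is a line-bundle comparison. Given $(B, f: P\to B, \iota: C\to P, \tau)$, the twist $F$ produced by $\Delta$ carries $t^*\sco_P(1)|_F$, of class $e+(n-2)f$ by \autoref{lemma:class-L-and-M}(3); this is numerically the sheaf $\scl$ used by $\Gamma$ in \autoref{notation:line-bundles-on-F} (here $Z = L$ has class $e+nf$ and the directrix is $M$ of class $e$, with $L\cap M = E_\tau C$ of degree $2$ by \autoref{lemma:exceptional-divisor-has-degree-2}). Since $\scl$ and $t^*\sco_P(1)|_F$ restrict to the same class on every geometric fiber, they differ by a line bundle pulled back from $B$, so the projective bundle $\bp((h\circ g)_*\scl)$ is canonically identified with $\bp(f_*\sco_P(1)) = P$, and under this identification the morphism $\phi: F\to \bp((h\circ g)_*\scl)$ of \autoref{lemma:forward-equivalence} becomes the projection $t|_F: F\to P$. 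Because $t|_F$ contracts $M$ (the directrix, class $e$) to $\tau$ and carries $L$ (class $e+nf$) onto $C$ by construction of $\Delta$, the output of $\Gamma$ is the original $(P, C, \tau)$.

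The main obstacle I anticipate is exactly this line-bundle matching together with its canonicity and descent: $\scl$ is defined intrinsically on the smile side while $t^*\sco_P(1)|_F$ lives on the curve side, and one must check the identification is functorial in $B$ and compatible with the automorphisms tracked by \autoref{corollary:singsstack-representable}. I would verify the comparison first on an fppf cover where $F\simeq \mathbb F_{n-2}$ and $B$ is local, where $t^*\sco_P(1)|_F$ is distinguished in its Picard class by \autoref{lemma:class-L-and-M}, and then descend by uniqueness; the projective-bundle invariance $\bp(\sce\otimes\scm)\simeq\bp(\sce)$ removes any ambiguity coming from a twist pulled back from $B$, so only the contraction behavior of the two morphisms needs to be matched, which is the content already extracted in \autoref{lemma:forward-equivalence} and \autoref{lemma:class-L-and-M}.
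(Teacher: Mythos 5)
Your proposal is correct and follows essentially the same route as the paper, whose own proof is a terse statement that blowing up $C$ at $\tau$ is inverse to taking the image of $F \to P$ under the linear system of class $e+(n-2)f$, with details deferred to the author's thesis. Your two composite $2$-isomorphisms, the identification of $\blow_\tau C$ with $Z$ via the universal property of the blow up, and the matching of $\scl$ with $t^*\sco_P(1)|_F$ are precisely the verifications the paper has in mind.
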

\begin{proof}
	It is fairly straightforward to verify that the maps
	$\Delta$ and $\Gamma$ constructed in 
	\autoref{lemma:forward-equivalence}
	\autoref{lemma:reverse-equivalence}
	define an equivalence of stacks.
	One may do so by showing $\Delta \circ \Gamma$ and
	$\Gamma \circ \Delta$ are equivalent to the identity natural
	transformation on objects and both $\Delta$ and $\Gamma$ induce
	injective maps on isotropy group schemes at any point.
	Briefly, this follows from the fact that blowing up $C$ at
	$\tau$ is inverse to taking the image of the map from $F \to P$.
	For further details, see \cite[Theorem 3.1.31]{landesman:thesis}, and
	also \cite[Lemma 3.1.28 and
	Lemma 3.1.29]{landesman:thesis}.
%\autoref{lemma:blow-down-then-up}
%\autoref{lemma:blow-up-then-down}
%\autoref{theorem:equivalence}
\end{proof}

\section{The genus $1$ curve associated to a degree $2$ cover}
\label{section:degree-two}

The main goal of this section is to prove \autoref{theorem:fiber-bijection},
which associates to a degree $2$ cover a certain relative dimension $1$ group
scheme,
and describes 
$n$ coverings of that group scheme in terms of maps to $\smilestack n$.

Given a finite degree $2$ locally free cover $g: X \to B$, 
we now construct an associated genus $1$ curve $E_g \to B$ with $E_g^{\sm} \simeq g_* \bg_m/\bg_m$.
See \autoref{figure:associated-curve} for a visualization of $E_g$.
\begin{notation}
	\label{notation:singular-genus-1-construction}
Let $g: X \to B$ be a finite locally free degree $2$ cover.
The surjection $g^* g_* \sco_X \to \sco_X$ induces a map
$\iota : X \to \bp \left( g_* \sco_X \right)$ over $B$.
%We have a structure map $\pi: \bp \left( g_* \sco_X \right) \to B$.
Additionally, the injective map of sheaves $\sco_B \to g_* \sco_X$ has
cokernel which is an invertible sheaf, as can be verified affine locally by
a direct computation on coordinate rings.
Therefore, the injection $\sco_B \to g_* \sco_X$  induces a map $\sigma: B \to
\bp (g_* \sco_X)$.

We define the genus $1$ curve $E_g$ associated to $g:X \to B$ as the cofiber
product 
\begin{equation}
	\label{equation:cofiber-product}
	\begin{tikzcd} 
		X \ar {r}{\iota} \ar {d}{g} & \bp \left( g_* \sco_X \right) \ar {d} \\
		B \ar {r}{\tau} & E_g
\end{tikzcd}\end{equation}
Note that $E_g$ exists as a scheme by
\cite[\href{https://stacks.math.columbia.edu/tag/0E25}{Tag
0E25}]{stacks-project}.
\end{notation}

\begin{figure}
	\centering
	\includegraphics[scale=.8, trim={0 23.5cm 0 1cm}]{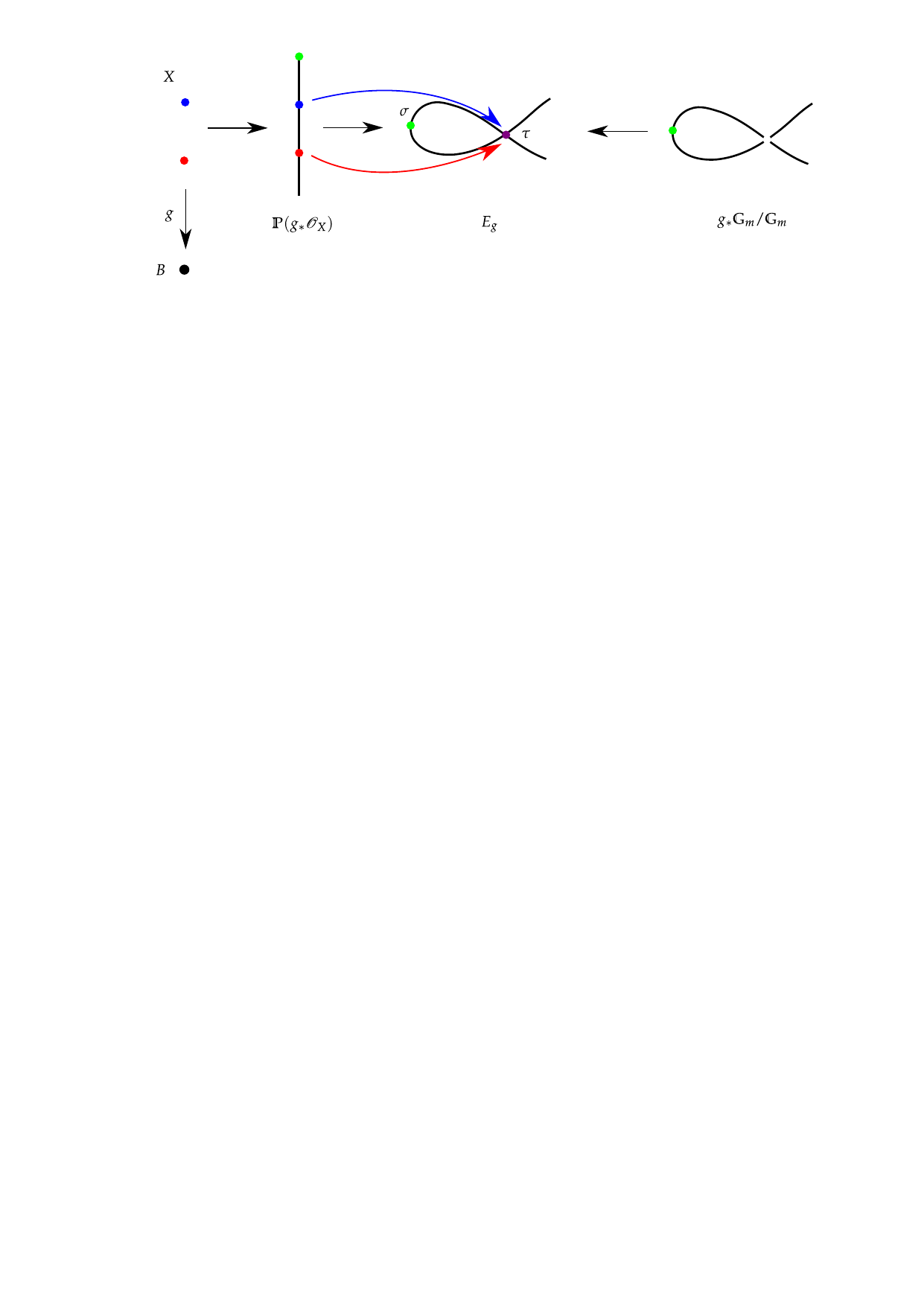}
	\caption{A visualization of the singular genus $1$ curve associated to a
		degree $2$ cover, as defined in
\autoref{notation:singular-genus-1-construction}.}
	\label{figure:associated-curve}
\end{figure}

Our upcoming goal is to show that $E_g$, together with $\sigma$ and $\tau$,
defines a point of $\singsw$, which we accomplish in
\autoref{lemma:generically-etale-to-nodal}.
As a first step, we will need to understand the interaction between the maps
$\iota$ and $\sigma$, so that we will be able to work affine locally away from
each of them.

\begin{lemma}
	\label{lemma:disjoint-directrix}
	With notation as in \autoref{notation:singular-genus-1-construction},
	The images $\iota(X)$ and $\sigma(B)$ are disjoint closed subschemes of
	$\bp \left( g_* \sco_X \right)$.
\end{lemma}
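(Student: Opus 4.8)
The plan is to exhibit a distinguished global section of $\sco_{\bp(g_* \sco_X)}(1)$ that vanishes along $\sigma(B)$, and to show that it pulls back along $\iota$ to a nowhere-vanishing section; the two images then cannot meet. Write $\mathcal E := g_* \sco_X$, let $p \colon \bp(\mathcal E) \to B$ be the projection, let $q_{\mathrm{univ}} \colon p^*\mathcal E \to \sco_{\bp(\mathcal E)}(1)$ be the tautological quotient, and let $Q$ denote the invertible cokernel of the structure map $\sco_B \to \mathcal E$ from \autoref{notation:singular-genus-1-construction}. Pulling the structure map $\sco_B \to \mathcal E$ back to $\bp(\mathcal E)$ and composing with $q_{\mathrm{univ}}$ produces a section $s \in H^0(\bp(\mathcal E), \sco(1))$, which will be the central object of the argument.

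First I would check that $\sigma(B) \subseteq V(s)$. By construction $\sigma$ is the point corresponding to the quotient $\mathcal E \twoheadrightarrow Q$, so $\sigma^* q_{\mathrm{univ}}$ is exactly this quotient and $\sigma^* \sco(1) \simeq Q$. Hence $\sigma^* s$ is the composite $\sco_B \to \mathcal E \to Q$, which is zero because the sequence $0 \to \sco_B \to \mathcal E \to Q \to 0$ is exact. Thus $s$ vanishes on $\sigma(B)$; in fact $V(s) = \sigma(B)$, both being sections of the $\bp^1$-bundle, though only the inclusion $\sigma(B) \subseteq V(s)$ is needed below.

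Next I would compute $\iota^* s$. The map $\iota$ corresponds to the counit surjection $g^* g_* \sco_X \to \sco_X$, so $\iota^* \sco(1) \simeq \sco_X$ and $\iota^* q_{\mathrm{univ}}$ is this counit. Since the structure map $\sco_B \to \mathcal E = g_* g^* \sco_B$ is the unit of the adjunction $g^* \dashv g_*$, the triangle identity shows the composite $\sco_X = g^* \sco_B \to g^* g_* \sco_X \to \sco_X$ is the identity; hence $\iota^* s$ is the unit section $1 \in H^0(X, \sco_X)$, which vanishes nowhere. Therefore $\iota^{-1}(V(s)) = V(\iota^* s) = \emptyset$, so $\iota(X) \cap V(s) = \emptyset$ and a fortiori $\iota(X) \cap \sigma(B) = \emptyset$. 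Finally, $\iota$ is finite (it is a $B$-morphism from the finite $B$-scheme $X$ to the separated $B$-scheme $\bp(\mathcal E)$, hence proper and affine), so $\iota(X)$ is closed, and $\sigma(B)$ is closed since $\sigma$ is a section of a separated morphism; this gives the asserted disjointness of closed subschemes.

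The step most in need of care is the identification $\iota^* s = 1$, where the triangle identity for $g^* \dashv g_*$ is doing the real work. A reader preferring an explicit verification can pass to a cover of $B$ over which $\sco_B \to \mathcal E$ splits and choose a basis $1, \theta$ of $\mathcal E$, giving $\bp(\mathcal E)$ homogeneous coordinates dual to $1,\theta$ with $s$ the coordinate dual to $1$; then $\sigma(B) = V(s)$ directly from the definition of $Q$, while $\iota^* s$ is the image of $1$ under the multiplication map $g^* g_* \sco_X \to \sco_X$, namely the unit, recovering the same conclusion.
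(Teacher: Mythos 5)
Your proof is correct and rests on the same key computation as the paper's: the disjointness comes down to the fact that the composite $\sco_X = g^*\sco_B \to g^*g_*\sco_X \to \sco_X$ (unit followed by counit) is the identity, hence nowhere vanishing. Packaging this as a global section $s$ of $\sco_{\bp(g_*\sco_X)}(1)$ cutting out $\sigma(B)$, rather than pulling $\sigma(B)$ back to $\bp(g^*g_*\sco_X)$ as the paper does, is only a cosmetic difference, and your explicit remark on closedness of the two images is a harmless addition.
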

\begin{proof}
	To check $\iota(X)$ is disjoint from $\sigma(B)$, it is equivalent to check the preimage of $\sigma(B)$ in $X \times_B \bp(g_* \sco_X) = \bp (g^* g_* \sco_X)$
	is disjoint from the section $\iota_X : X \to \bp(g^* g_* \sco_X)$ induced by the surjection $g^* g_* \sco_X \to \sco_X$.
	The preimage of $\sigma(B)$ then corresponds to the exact sequence of vector bundles $g^* \sco_B \to g^* g_* \sco_X \to g^* g_* \sco_X/ g_* \sco_B$.
	The desired disjointness then amounts to showing the composition
	$g^* \sco_B \to g^* g_* \sco_X \to \sco_X$ is nonzero on every fiber.
	This non-vanishing holds because the composition above is adjoint to the composition
	$\sco_B \to g_* \sco_X \xra{\id} g_* \sco_X$ induced by the map $X \to B$. 
\end{proof}

We next verify some basic properties of the curve $E_g$.

\begin{lemma}
	\label{lemma:nodal-curve-is-proper}
	For $g: X \to B$ a finite locally free degree $2$ cover,
	$E_g$ is a proper flat finitely presented genus $1$ curve over $B$.
\end{lemma}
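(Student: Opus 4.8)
The plan is to read off all four properties from the Ferrand (pinching) presentation of $E_g$ supplied by the pushout square \eqref{equation:cofiber-product}. Write $P := \bp(g_* \sco_X)$, let $h : P \to B$ denote the projective bundle structure map, and let $\pi : P \to E_g$ be the right-hand vertical map of \eqref{equation:cofiber-product}. Set $\scl := \coker(\sco_B \to g_* \sco_X)$, which is invertible by the construction in \autoref{notation:singular-genus-1-construction}. Since $\iota : X \to P$ is a closed immersion and $g : X \to B$ is finite, \cite[Tag 0E25]{stacks-project} realizes $E_g$ as the scheme obtained by pinching $\iota(X)$ down to $B$ along $g$; in particular $\pi$ is finite and surjective, is an isomorphism over $E_g \setminus \tau(B)$, and is characterized by the fact that $\sco_{E_g}$ is the subsheaf of $\pi_* \sco_P$ consisting of functions whose restriction to $\iota(X)$ is pulled back from $B$ along $g$. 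Computing this subsheaf and its cokernel, and using that $\sco_B \to g_* \sco_X$ is injective with cokernel $\scl$, yields the fundamental short exact sequence
\begin{equation}
	\label{equation:fundamental-ses}
	0 \to \sco_{E_g} \to \pi_* \sco_P \to \tau_* \scl \to 0
\end{equation}
of $\sco_{E_g}$-modules, which will drive the rest of the argument.

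First I would dispatch finite presentation and properness. Finite presentation is local on $B$ and holds because $P \to B$, $X \to B$, and $B$ are all finitely presented, so the local fibre-product algebras cutting out $\sco_{E_g}$ are finitely presented over $\sco_B$. For properness, I would use that $h : P \to B$ is proper while $\pi : P \to E_g$ is finite and surjective: properness of $f : E_g \to B$, and in particular its separatedness, then descends along the surjective (finite, hence universally closed) morphism $\pi$ from properness of the composite $h = f \circ \pi$, once $f$ is known to be of finite type, which the previous paragraph gives.

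The flatness of $f$ is where \eqref{equation:fundamental-ses} does the real work. The middle term $\pi_* \sco_P$ is flat over $B$ because $P \to B$ is smooth, hence flat, and $\pi$ is finite, so the stalk of $\pi_* \sco_P$ over a point of $B$ is a finite product of $B$-flat stalks of $\sco_P$ and is therefore flat; the right-hand term $\tau_* \scl$ is flat over $B$ because $\tau$ is a section and $\scl$ is invertible. Since in a short exact sequence whose middle and right terms are $B$-flat the left term is automatically $B$-flat, we conclude $\sco_{E_g}$ is flat over $B$. Finally, genus $1$ (and geometric integrality, as needed for the $\singsw$ framework) is checked on geometric fibres, which is legitimate because $B$-flatness of all three terms keeps \eqref{equation:fundamental-ses} exact after base change to any $\spec \kappa(b) \to B$. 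Over such a point $P_b \simeq \bp^1_{\kappa(b)}$, the subscheme $\iota(X)_b$ has length $2$ and $\pi_b$ glues it to the single point $\tau(b)$, so $(\tau_* \scl)_b$ is a length-$1$ skyscraper and $E_b$ is $\bp^1$ with a length-$2$ subscheme pinched to a point, i.e. a nodal or cuspidal cubic, which is geometrically integral. Taking Euler characteristics in \eqref{equation:fundamental-ses} over $\kappa(b)$ gives $\chi(\sco_{E_b}) = \chi(\sco_{\bp^1_{\kappa(b)}}) - 1 = 0$, and the associated long exact cohomology sequence shows $H^0(E_b, \sco_{E_b}) = \kappa(b)$ and $H^1(E_b, \sco_{E_b}) = \kappa(b)$, so $E_b$ has arithmetic genus $1$.

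The main obstacle I anticipate is purely bookkeeping around the pushout: correctly extracting the short exact sequence \eqref{equation:fundamental-ses} from the pinching description and justifying the descent of properness (especially separatedness) along the finite surjection $\pi$. Once \eqref{equation:fundamental-ses} is established, flatness and the genus computation are immediate. I do not expect to need \autoref{lemma:disjoint-directrix} for this lemma; that disjointness is rather used downstream, to verify that the section $\sigma$ lands in the smooth locus and hence that $E_g$ defines a point of $\singsw$.
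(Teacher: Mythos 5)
Your argument is correct in substance but takes a genuinely different route from the paper on the key point, which is flatness. The paper reduces to $B = \spec S$ affine, identifies $E_g - \sigma(B)$ with $\spec(S[x] \times_R S)$, and proves flatness by exhibiting an explicit free $S$-basis ($1$ together with $x^n - c_n x$ for $n \geq 2$) of that fiber-product ring; finite presentation is handled by spreading out to the noetherian case and an Eakin--Nagata-style subring argument. You instead extract from the pinching description the conductor sequence
\begin{equation*}
0 \to \sco_{E_g} \to \pi_* \sco_{\bp(g_* \sco_X)} \to \tau_* \coker\left( \sco_B \to g_* \sco_X \right) \to 0,
\end{equation*}
and deduce flatness from the two-out-of-three property, using that the cokernel of $\sco_B \to g_* \sco_X$ is invertible. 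This is cleaner and more conceptual: it isolates exactly the hypothesis (flatness of that cokernel) that makes the pinching flat, it survives base change, and it hands you the fiberwise genus computation for free --- which the paper's proof of this lemma does not explicitly address. You are also right that \autoref{lemma:disjoint-directrix} is not needed on your route: the paper invokes it only to produce an affine cover establishing quasi-compactness and quasi-separatedness, whereas you get both, together with separatedness, by descent along the finite surjection $\pi$ from $\bp(g_* \sco_X)$.

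The one soft spot is finite presentation (and with it local finiteness of type, which your properness argument quietly relies on). The fiber product of finitely presented $\sco_B$-algebras over a third is not automatically finitely presented, and this is precisely where the paper expends most of its effort. As written, your assertion that ``the local fibre-product algebras are finitely presented'' is a gap. It is a repairable one: either reproduce the paper's spreading-out argument, cite the finiteness lemmas accompanying \cite[Tag 0E25]{stacks-project} for pushouts along finite morphisms, or note that once flatness and finite type are in hand, finite presentation follows from the Raynaud--Gruson theorem that flat finite type ring maps are finitely presented --- but some such argument must be supplied.
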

\begin{proof}
	By the explicit construction of $E_g$ as a cofiber product,
	we find $\bp \left( g_* \sco_X \right) \to E_g$ is surjective. Since $\bp\left( g_* \sco_X \right) \to B$ is proper,
	it follows $E_g \to B$ is proper as well.
	It follows that $E_g \to B$ is of finite type, being proper.

	To conclude, we need only check $E_g \to B$ is flat of finite presentation.
	To verify both of these properties, we may assume $B = \spec S$ and $X =
	\spec R$ are affine.
	By spreading out, 
	\cite[\href{https://stacks.math.columbia.edu/tag/01ZM}{Tag 01ZM}]{stacks-project},
	we may also assume $R$ and $S$ are finite type over
	$\spec \bz$.

	First, note that $E_g \to B$ is quasi-compact and quasi-separated.
	This follows from \autoref{lemma:disjoint-directrix}
	because the complements of $\tau(B)$ and $\sigma(B)$ are both affine and
	have affine intersection.

	We next show $E_g \to B$ has finite presentation.
	It is enough to show $E_g - \sigma(B)$ has finite presentation since, after possibly shrinking
	$B$, we have $E_g - \tau(B) \simeq \ba^1_B$.
	In terms of rings, $E_g- \sigma(B)$ is the spectrum of the fiber product of rings $S[x] \times_R S$.
	By Zariski localizing further, we may assume $R$ is a free $S$ module of
	rank $2$ with an inclusion $S \to R$ so that $R = S[x]/(f)$, for $f \in S[x]$ a degree $2$ polynomial with invertible leading coefficient on $S$.
	We can therefore identify
	$S[x] \times_R S$ as the subring of $S[x]$ consisting of those elements whose reduction mod $f$ has vanishing $x$ coefficient.
	Since $S[x] \times_R S \subset S[x]$ with $S[x]$ a noetherian ring (by
	our above reductions) that is finitely generated over $S[x] \times_R S$,
	we obtain $S[x] \times_R S$ is also noetherian. Hence,
	$S[x] \times_R S$ is of finite
	presentation over the noetherian $S$.

	It remains to show $S[x] \times_R S$ is flat over $S$.
	It is enough to give a free $S$ basis for $S[x] \times_R S$.
	We now construct this desired basis by viewing $S[x] \times_R S$ as a
	subring of $S[x]$.
	For each $n \geq 2$, note that $x^n$ can be written uniquely as $g + c_n x + d_n$ where $g \in (f)$ and $c_n, d_n \in S$,
	essentially via the Euclidean algorithm.
	It is enough to show that a free basis for $S[x] \times_R S$ is then
	given by $1 \in S \subset S[x]$ together with $x^n-c_n x$ for each $n \geq 2$.
	Indeed, this can be verified by starting with an arbitrary element of
	$S[x] \times_R S$ and algorithmically adding multiples of the above
	generators until we obtain the element $0 \in S$.
\end{proof}

Combining the above, it is fairly straightforward to check $E_g$ together with
$\sigma$ and $\tau$ defines a point of $\singsw$.
\begin{lemma}
	\label{lemma:generically-etale-to-nodal}
	The curve $E_g$ together with the sections $\sigma: B \to E_g$ and
	$\tau: B \to E_g$ define a point $(B, E_g \to B, \sigma: B \to E_g,
	\tau: B \to E_g) \in \singsw(B)$.
	Further, if $X \to B$ is \'etale, 
	$(B, E_g \to B, \sigma: B \to E_g, \tau: B \to E_g) \in \nodesw(B)$ and so $(B, E_g \to B, \sigma: B \to E_g) \in \nodew(B)$.
\end{lemma}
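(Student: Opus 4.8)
The plan is to build on \autoref{lemma:nodal-curve-is-proper}, which already establishes that $E_g \to B$ is proper, flat, and finitely presented of genus $1$. To conclude that $(B, E_g \to B, \sigma, \tau)$ defines a point of $\singsw(B)$ in the sense of \autoref{definition:singsw}, three things remain: that the fibers of $E_g \to B$ are geometrically integral, that $\sigma$ factors through the smooth locus, and that $\tau$ factors through the singular locus. I would dispatch $\sigma$ first, as it is purely formal. By \autoref{lemma:disjoint-directrix} the image $\sigma(B)$ is disjoint from $\iota(X)$ inside $\bp(g_* \sco_X)$, and by the pushout construction of \autoref{notation:singular-genus-1-construction} the map $\bp(g_* \sco_X) \to E_g$ collapses $\iota(X)$ to $\tau(B)$ and is an isomorphism elsewhere, so $E_g \setminus \tau(B) \simeq \bp(g_* \sco_X) \setminus \iota(X)$ is an open subscheme of a $\bp^1$-bundle and hence smooth over $B$. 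Thus $\sigma(B)$ avoids $\tau(B)$ and lands in the smooth locus, and moreover the entire singular locus of $E_g \to B$ is contained in $\tau(B)$.

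The remaining two properties I would verify on geometric fibers, which suffices since flatness is already in hand. Over a geometric point $\spec k \to B$, the fiber of $E_g$ is $\bp^1_k$ with the length-$2$ subscheme $\iota(X)_k$ collapsed to the single point $\tau$; this is the image of an irreducible curve under a finite gluing map, reduced by a standard local computation, hence geometrically integral, and it carries a genuine singularity at $\tau$, so $\tau$ factors through the singular locus. This completes membership in $\singsw(B)$.

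For the ``Further'' assertion, I would specialize to $X \to B$ étale. Then on each geometric fiber $X_k$ is a disjoint union of two distinct reduced points, which $\iota$ sends to two distinct points of $\bp^1_k$ (the coordinate points corresponding to the idempotents of $g_* \sco_X$). Collapsing two distinct smooth points of $\bp^1_k$ produces a node, so the singular locus of $E_g \to B$ is reduced of degree $1$ over $B$ and $\tau$ maps $B$ isomorphically onto it; by \autoref{definition:cuspsw} this places the tuple in $\nodesw(B)$. Forgetting $\tau$ then yields a point of $\nodew(B)$: a nodal fiber is not cuspidal, so the forgetful image avoids the closed cuspidal substack $\cuspw$ and therefore lands in $\nodew = f(\singsw) - \cuspw$ of \autoref{definition:node-and-cusp-loci}.

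I expect the only substantive step to be the fiberwise analysis of the pushout: confirming that collapsing the degree-$2$ divisor $\iota(X)_k$ produces an integral curve whose singular locus is exactly $\tau$, and distinguishing the node (two distinct branches, the étale case) from the cusp (a length-$2$ nonreduced subscheme, the non-étale case). Because flatness of $E_g \to B$ is already available from \autoref{lemma:nodal-curve-is-proper}, each of these is a statement that may be checked fiber by fiber, reducing to the local model of gluing $\spec k[x]/(f)$ with $f$ of degree $2$ to a point inside $\ba^1_k$, so no genuinely new work in families is needed.
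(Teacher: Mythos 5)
Your proposal is correct and follows essentially the same route as the paper: invoke \autoref{lemma:nodal-curve-is-proper} for properness/flatness/genus, \autoref{lemma:disjoint-directrix} to place $\sigma$ in the smooth locus, reduce geometric integrality and the position of $\tau$ to geometric fibers, and observe that in the \'etale case the fiber is $\bp^1_k$ with two distinct points glued, hence nodal. The one small point where the paper is more careful: the reduction to fibers is justified not by flatness alone but by the fact that the formation of the cofiber product in \autoref{notation:singular-genus-1-construction} commutes with base change on $B$, which is what identifies the fiber of $E_g$ with the fiberwise pushout you analyze.
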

\begin{proof}
	We first check $(B, E_g\to B,\sigma: B \to E_g, \tau: B \to E_g) \in
	\singsw(B)$.
	By \autoref{lemma:disjoint-directrix}, we find $\sigma(B)$ lies in the
	smooth locus of $E_g \to B$.
	Geometric integrality of fibers holds because it may be checked on
	geometric fibers as the formation of the cofiber product is compatible with base change on $B$.
	Similarly, we may verify $\tau(B)$ lies in the singular locus by doing
	so
	geometric fibers. On geometric fibers, this may be deduced from the fact
	that the degree $2$ scheme $\iota: X \to \bp(g_* \sco_X)$ is the preimage of
	$\tau(B)$ in the normalization $\bp(g_* \sco_X)$ of $E_g$.
	The remaining properties were verified in \autoref{lemma:nodal-curve-is-proper}. 

	To conclude, we check that when $g: X \to B$ is \'etale,
	the point $(B, E_g\to B,\sigma: B \to E_g, \tau: B \to E_g) \in \singsw(B)$
	factors through the open substack $\nodesw \subset \singsw$.
	This may be verified on geometric fibers over $\spec k$ where \'etaleness of $X \to B$
	implies $X$ consists of two copies of $\spec k$. Therefore, the singularity
	at $\tau(\spec k)$ is obtained by gluing two copies of $\spec k$ in $\bp^1_k$,
	hence is a node.
\end{proof}

We next show $E_g^{\sm} \simeq g_* \bg_m/\bg_m$.
Recall that whenever $g: X \to B$ is a finite locally free morphism of schemes,
there is a norm map $g_* \sco_X \to \sco_B$ whose formation commutes with
arbitrary base change.
\cite[\href{https://stacks.math.columbia.edu/tag/0BD2}{Tag
0BD2}]{stacks-project}.
This can equivalently be described as a map
$\norm_{X/B}: g_* \ba^1 \to \ba^1$.
\begin{lemma}
	\label{lemma:smooth-locus-of-curve-from-cover}
	We have $E_g^{\sm} \simeq \bp(g_* \sco_X) - \iota(X)$. Further,
	$g_* \bg_m/\bg_m \simeq E_g^{\sm}$.
\end{lemma}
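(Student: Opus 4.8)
The plan is to treat the two asserted isomorphisms in turn, deducing the first from the pushout description of $E_g$ and the second from a natural action of $g_* \bg_m/\bg_m$ on $\bp(g_* \sco_X)$. First I would analyze the normalization map. By the construction in \autoref{notation:singular-genus-1-construction}, $E_g$ is the pushout gluing $\iota(X) \subset \bp(g_* \sco_X)$ down to $B$ along $g$, so the induced map $\nu: \bp(g_* \sco_X) \to E_g$ is finite and surjective and, as a general feature of the gluing construction of \cite[Tag 0E25]{stacks-project}, restricts to an isomorphism $\bp(g_* \sco_X) - \iota(X) \xrightarrow{\sim} E_g - \tau(B)$. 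It then remains to identify $E_g - \tau(B)$ with $E_g^{\sm}$. The section $\tau$ factors through the singular locus by \autoref{lemma:generically-etale-to-nodal}, and since $\tau$ is a section of the separated (indeed proper, by \autoref{lemma:nodal-curve-is-proper}) morphism $E_g \to B$, its image $\tau(B)$ is closed; conversely $E_g - \tau(B) \simeq \bp(g_* \sco_X) - \iota(X)$ is open in the smooth $B$-scheme $\bp(g_* \sco_X)$, hence smooth over $B$. Therefore the singular locus is exactly $\tau(B)$ and $E_g^{\sm} = E_g - \tau(B) \simeq \bp(g_* \sco_X) - \iota(X)$, which is the first isomorphism.

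For the second isomorphism I would exhibit $U := \bp(g_* \sco_X) - \iota(X)$ as $g_* \bg_m/\bg_m$. Multiplication makes $g_* \bg_m$ act $\sco_B$-linearly on the rank $2$ bundle $g_* \sco_X$, hence on $\bp(g_* \sco_X)$, and the central $\bg_m$ acts by scalars, so the action descends to $g_* \bg_m/\bg_m$. This action preserves $\iota(X)$, as one checks on the \'etale and ramified geometric fibres, and therefore preserves the open complement $U$. By \autoref{lemma:disjoint-directrix} the distinguished section $\sigma$ lands in $U$, so I would form the orbit map $a: g_* \bg_m \to U$, $u \mapsto u \cdot \sigma$, which is invariant under the central $\bg_m$. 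I would then show $a$ is a $\bg_m$-torsor: it is faithfully flat by the fibral flatness criterion \cite[17.9.5]{EGAIV.4} applied on geometric fibres, and a direct check over the \'etale locus ($\bg_m^2 \to \bp^1 - \{0,\infty\}$, $(t_1,t_2)\mapsto t_1/t_2$) and the ramified locus ($\bg_m\times\bg_a \to \bp^1 - \{\infty\}$) shows the $\bg_m$-action on its fibres is simply transitive. Hence $a$ realizes $U$ as the fppf quotient $g_* \bg_m/\bg_m$, and combining with the first part gives $g_* \bg_m/\bg_m \simeq U \simeq E_g^{\sm}$ as $B$-schemes.

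The main obstacle is matching the group structures, which is what the downstream cohomological arguments actually use. The two group laws in play are the multiplicative one on $g_* \bg_m/\bg_m$ and the Picard (additive) one on $E_g^{\sm} \simeq \pic^0_{E_g/B}$ supplied, after pullback along $B \to \weierstrass$, by \autoref{lemma:smooth-universal-weierstrass-group}. To reconcile them I would identify $\pic^0_{E_g/B}$ with $g_* \bg_m/\bg_m$ directly via descent along $\nu$: a relative degree-$0$ line bundle on $E_g$ pulls back to a relatively trivial bundle on the $\bp^1$-bundle $\bp(g_* \sco_X)$, and the gluing datum needed to descend it along $\iota(X) \to \tau(B)$ is precisely a section of $g_* \bg_m$ taken modulo those extending over $\tau(B)$, that is, modulo $\bg_m$. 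This is the generalized-Jacobian computation in the spirit of \cite[\S2]{artinSD:the-shafarevich-tate-conjecture-for-pencils-of-elliptic-curves}, and checking that the resulting isomorphism agrees, as group schemes, with the orbit map $\bar a$ constructed above is the one genuinely nontrivial bookkeeping step; everything else is a fibrewise verification in the node and cusp cases.
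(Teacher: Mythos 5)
Your proof is correct, and the second half takes a genuinely different route from the paper's. For the first isomorphism you argue exactly as the paper does: $\bp(g_* \sco_X) - \iota(X) \to E_g$ is an open immersion onto $E_g - \tau(B)$, which is smooth, while $\tau(B)$ lies in the singular locus by \autoref{lemma:generically-etale-to-nodal}. For the second isomorphism the paper works affine-locally: it writes $R = S[x]/(x^2+ax+b)$, computes the norm form $t^2+ast+bs^2$ explicitly, identifies $g_*\bg_m/\bg_m$ with the complement of $V(t^2+ast+bs^2)$ in $\bp^1_{s,t} \simeq \bp(g_*\sco_X)$, and matches that conic with $\iota(X)$; the local isomorphisms are then glued. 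You instead construct a global orbit map: $g_*\bg_m$ acts $\sco_B$-linearly on $g_*\sco_X$, hence on $\bp(g_*\sco_X)$, preserving $\iota(X)$ (most cleanly because $\iota(X)$ is the vanishing of the norm form and the norm is multiplicative, which is tidier than your fibrewise check), and the orbit map through $\sigma$ exhibits $U = \bp(g_*\sco_X) - \iota(X)$ as the $\bg_m$-torsor quotient $g_*\bg_m/\bg_m$. Your version is more intrinsic and avoids the gluing bookkeeping, at the cost of verifying the torsor property; the paper's version is a short direct computation. Your final paragraph about matching the multiplicative group law with the Picard group law of \autoref{lemma:smooth-universal-weierstrass-group} addresses a real compatibility that the downstream hypercohomology identifications rely on but that the lemma as stated, and the paper's proof, do not claim — the paper only produces an isomorphism of schemes here. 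Flagging and sketching that descent computation is a useful addition rather than a required step of this lemma. (Minor quibble: the reference you want for faithful flatness of the orbit map is the fibral flatness criterion, not the fibral isomorphism criterion cited as \cite[17.9.5]{EGAIV.4}; or simply observe the orbit map is the restriction of the tautological $\bg_m$-torsor over $\bp(g_*\sco_X)$ to an open subscheme.)
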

\begin{proof}
	First, we show
	$E_g^{\sm} \simeq \bp(g_* \sco_X) - \iota(X)$.
	Because $\bp(g_* \sco_X) - \iota(X) \to E_g$ is an open immersion, it is
	certainly contained in the smooth locus.
	To verify equality, it is enough to show the image of $\iota(X)$
	is contained in the singular locus, which was verified in
	\autoref{lemma:generically-etale-to-nodal}
	because $\iota(X)$ factors through $\tau(B) \subset E_g$.
	
	To verify 
	$g_* \bg_m/\bg_m \simeq E_g^{\sm}$,
	we will construct this isomorphism affine locally in a fashion compatible with localization so that it glues to give a global isomorphism.
	By Zariski localizing, we may assume $B = \spec S$ and $X = \spec R$,
	with $R$ of the form
	$R = S[x]/(x^2 + ax + b)$.
	Then, $g_* \bg_m$ can be explicitly identified with the open subscheme
	of $\ba^2_B \simeq g_* \ba^1_X$ given by
	those $sx + t \in R$ so that $\norm_{X/B}(sx+t) \neq 0$, as follows from
	the definition of the norm map.
	Computing this directly yields
	\begin{align*}
		\norm_{X/B}\left( sx +t \right) = \det \begin{pmatrix}
			t & -sb \\
			s & -sa+t
		\end{pmatrix}
		= t^2 -ast +bs^2.
	\end{align*}
	Therefore, $\norm_{X/B}(sx-t) = t^2 + ast + bs^2$ and so
	$g_* \bg_m$ is the complement of $t^2 + ast + bs^2$ in $\ba^2_{s,t}$.
	Hence, when we projectivize $\ba^2_{s,t}$, we find $g_*\bg_m/\bg_m$ is identified with the complement of $V(t^2 + ast + bs^2)$ in $\bp^1_{s,t} \simeq \bp (g_* \sco_X)$.

	We claim that the closed subscheme $V(t^2 + ast + bs^2) \subset \bp(g_*
	\sco_X)$ is precisely identified with the image $\iota(X)$.
	To see why this holds, we use that we have chosen the basis $\{1,x\}$ to trivialize $g_*
	\sco_X$.
	Under this basis,
	the image of the closed embedding $\spec R \to \bp(g_* \sco_X)$ is
	identified with the vanishing of the closed subscheme $V(x^2 + a \cdot 1
	\cdot x + b \cdot 1^2)$.
	Upon renaming $x$ as $t$ and $1$ as $s$, we obtain the claimed
	isomorphism $g_* \bg_m/\bg_m \simeq \bp(g_* \sco_X) - \iota(X)$.
\end{proof}

Combining the above discussion in this section with 
\autoref{proposition:equivalent-fiber-of-stack},
\autoref{lemma:lift},
and
\autoref{theorem:equivalence}
gives the following characterization of $n$-coverings of $g_* \bg_m/\bg_m$.

\begin{theorem}
	\label{theorem:fiber-bijection}
	Let $B$ be an integral normal scheme and let $n \geq 3$.
	Fix a degree $2$ locally free cover $g: X \to B$ which is generically \'etale.
	The composite of the bijection of
	\autoref{proposition:equivalent-fiber-of-stack} and
	\autoref{theorem:equivalence} yields a bijection 
	between elements of $H^1(B, g_* \bg_m/\bg_m \xra{\times n} g_*
	\bg_m/\bg_m)/\aut_{(E_g,\sigma)/B}(B)$
	and maps $B \to \smilestack n$
	which map to points $(B, E_g\to B, \sigma: B \to E_g) \in \singw(B)$
	that generically factor through $\nodew$.
	Further, this bijection respects automorphisms in the sense that it identifies automorphisms of the objects
	appearing in \autoref{proposition:equivalent-fiber-of-stack}
	with the $B$ points of the isotropy group of the corresponding map $B
	\to \smilestack n$.
\end{theorem}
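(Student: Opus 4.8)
The plan is to assemble the bijection by chaining the equivalences already in hand, after identifying the smooth locus of the curve $E_g$ of \autoref{notation:singular-genus-1-construction} with $g_* \bg_m/\bg_m$. First I would set $E := E_g$ and $e := \sigma$. By \autoref{lemma:generically-etale-to-nodal} the tuple $(B, E_g \to B, \sigma)$ lies in $\weierstrass(B)$ with $\sigma$ in the smooth locus, in fact in $\singw(B)$, and since $g$ is generically \'etale the same lemma shows it generically factors through $\nodew$. By \autoref{lemma:smooth-locus-of-curve-from-cover} there is a canonical identification $E_g^{\sm} \simeq g_* \bg_m/\bg_m$ of relative group schemes. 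Substituting this into \autoref{proposition:equivalent-fiber-of-stack}, applied to the Weierstrass curve $(B, E_g \to B, \sigma)$, immediately yields a bijection between
\[
H^1(B, g_* \bg_m/\bg_m \xra{\times n} g_* \bg_m/\bg_m)/\aut_{(E_g,\sigma)/B}(B)
\]
(this is item~\ref{hyper-torsor} of the proposition) and item~\ref{hilb-mod-pgl}: maps $B \to [\nhilb n/\pgl_n] \simeq \onestack n$ whose associated flat projective genus $1$ curve $\overline{T} \to B$ has geometrically integral fibers, is embedded in a Brauer--Severi scheme, and whose smooth locus is an $E_g^{\sm}$-torsor.

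Next I would promote this $\onestack n$-datum to a map $B \to \smilestack n$. The key point is that $E_g^{\sm} \simeq g_* \bg_m/\bg_m$ is generically a torus because $g$ is generically \'etale, so any $E_g^{\sm}$-torsor $\overline{T}$ is generically nodal; hence the map $B \to \onestack n$ generically factors through $\nodestack n$. Since $B$ is integral and normal, \autoref{lemma:lift} produces a lift to $\singsstack n$, equipping $\overline{T}$ with a canonical singular section $\tau$, and composing with the equivalence $\singsstack n \simeq \smilestack n$ of \autoref{theorem:equivalence} gives the desired map $B \to \smilestack n$. Conversely, a map $B \to \smilestack n$ whose image in $\singw$ is $(B, E_g \to B, \sigma)$ and which generically factors through $\nodew$ corresponds by \autoref{theorem:equivalence} to a point of $\singsstack n$; forgetting $\tau$ yields an $\onestack n$-point of the form appearing in item~\ref{hilb-mod-pgl}, whose smooth locus is an $E_g^{\sm}$-torsor precisely because its image in $\singw$ is $[E_g,\sigma]$. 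These constructions are mutually inverse since the lift of \autoref{lemma:lift} is unique (so forgetting $\tau$ and re-lifting is the identity) and the remaining equivalences compose to the identity.

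For the automorphism statement, I would observe that the bijections of \autoref{proposition:equivalent-fiber-of-stack} are by construction equivariant for $\aut_{(E_g,\sigma)/B}(B)$, that \autoref{theorem:equivalence} is an equivalence of algebraic stacks and hence induces isomorphisms on isotropy group schemes, and that the lift of \autoref{lemma:lift} is unique, so any automorphism preserving the $\onestack n$-datum automatically preserves the canonically determined section $\tau$. Composing these identifications matches $\aut_{(E_g,\sigma)/B}(B)$ with the $B$-points of the isotropy group of the associated map $B \to \smilestack n$, which is the asserted compatibility.

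The step I expect to be the main obstacle is the middle one: reconciling item~\ref{hilb-mod-pgl} of \autoref{proposition:equivalent-fiber-of-stack}, which records an embedded genus $1$ curve with \emph{no} marked singular point, against maps to $\smilestack n$, which via \autoref{theorem:equivalence} carry a marked singular section. The bridge is \autoref{lemma:lift}, and the delicate points are (i) checking that the condition ``the smooth locus of $\overline{T}$ is an $E_g^{\sm}$-torsor'' is the same as ``the image in $\singw$ equals $(B, E_g \to B, \sigma)$,'' so that generic nodality on the $\smilestack n$ side corresponds exactly to generic \'etaleness of $g$, and (ii) verifying that passing to $\singsstack n$ and then forgetting $\tau$ is an honest two-sided inverse rather than merely an inverse up to automorphism --- this is exactly where the uniqueness in \autoref{lemma:lift} and the normality of $B$ are indispensable.
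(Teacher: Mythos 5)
Your proposal is correct and follows essentially the same route as the paper: identify $E_g^{\sm}$ with $g_*\bg_m/\bg_m$ via \autoref{lemma:smooth-locus-of-curve-from-cover}, apply \autoref{proposition:equivalent-fiber-of-stack} to pass from hypercohomology classes to $\onestack n$-points, use generic nodality (from \autoref{lemma:generically-etale-to-nodal}) together with normality of $B$ and \autoref{lemma:lift} to lift uniquely to $\singsstack n$, and finish with the equivalence of \autoref{theorem:equivalence}. The automorphism compatibility is also handled the same way, so no further comment is needed.
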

\begin{proof}
	Using \autoref{theorem:equivalence}, 
	we have an equivalence
	$\smilestack n \simeq \singsstack n$.
	Composing this with the projection $\singsstack n \to \singsw \to \weierstrass$
	yields a bijection between maps $B \to \smilestack n$ over a given map
	$B \to \weierstrass$
	corresponding to a tuple $(B, E_g, \sigma: B \to E_g) \in \singw(B)$
	and maps $B \to \singsstack n$ over that same map $B \to \weierstrass$.

	When $B$ is integral,
	by \autoref{lemma:generically-etale-to-nodal}, we obtain that that
	the generic point $\eta$ of $B$ under the map $B \to \weierstrass$
	factors through $\nodesw \to \singsw \to \singw \to \weierstrass$.
	This means the generic fiber of the genus $1$ curve corresponding to
	the map $B \to \singsstack n$ is nodal.
	Normality of $B$ and \autoref{lemma:lift} then implies that all such lifts
	$B \to \singsstack n$
	are lifts of maps
	$B \to \onestack n$,
	compatibly with the projection to
	$\weierstrass$.

	By \autoref{lemma:smooth-locus-of-curve-from-cover}, 
	the smooth locus of $E_g$
	is isomorphic to $g_* \bg_m/\bg_m$, and so it follows from the
	equivalence of
	\autoref{proposition:equivalent-fiber-of-stack}\autoref{nstack-map} and
	\autoref{proposition:equivalent-fiber-of-stack}\autoref{hyper-torsor}
	that maps
	$B \to \onestack n$ mapping to $(B, E_g \to B,
	\sigma:B \to E_g) \in \weierstrass (B)$
	are in bijection with 
	elements of $H^1(B, g_* \bg_m/\bg_m \xra{\times n} g_*
\bg_m/\bg_m)/\aut_{(E_g, \sigma)/B}(B)$.

	The final statement regarding automorphisms follows from observing that each of the steps of the above
	bijection also preserve automorphism data, especially using that
	\autoref{theorem:equivalence} is an equivalence of stacks.
\end{proof}

\section{The unit resultant condition} 
\label{section:unit-resultant}

In this section, we prove
\autoref{theorem:hypercohomology-to-quotient-stack-bijection},
which gives a bijection between pairs $(q,\codirectrix) \in \affineSections n$
of unit resultant and points of $\smilestack n$.

\begin{notation}
	\label{notation:q-codirectrix}
	Keep notation as in \autoref{notation:hirzebruch}, letting $\mathbb F_{n-2}$
denote the Hirzebruch surface over a base scheme $B$.
For $\mathbb F_{n-2}\xra{g} \bp^1_B \xra{h} B$ the structure maps,
since $e$ corresponds to the directrix of the Hirzebruch surface,
$g_*(\sco_{\mathbb F_{n-2}}(e)) \simeq 
\sco_{\bp^1_B}(-n+2) \oplus \sco_{\bp^1_B}.$
Therefore,
\begin{align}
	\label{equation:projection-hirzebruch}
	g_* (\sco_{\mathbb F_{n-2}}(e + nf)) &\simeq g_* (\sco_{\mathbb
	F_{n-2}}(e)) \otimes \sco_{\bp^1_B}(n) \simeq  \sco_{\bp^1_B}(2) \oplus \sco_{\bp^1_B}(n).
\end{align}
Under the above isomorphism, the section $s \in H^0(\mathbb F_{n-2}, \sco_{\mathbb F_{n-2}}(e + nf))$
can be equivalently described as a pair $(q,\codirectrix)$ for $q \in
H^0(\bp^1_B,
\sco_{\bp^1_{B}}(2))$ and $\codirectrix \in H^0(\bp^1_B, \sco_{\bp^1_B}(n))$.
Given $(q,\codirectrix)$
corresponding to $s \in H^0(\mathbb F_{n-2}, \sco_{\mathbb F_{n-2}}(e + nf))$,
its vanishing locus defines a subscheme
$Z \subset \mathbb F_{n-2}$.
The complete linear system $\sco_{\mathbb F_{n-2}}(e + (n-2)f)$ determines
a map $\mathbb F_{n-2} \to \bp^{n-1}_B$ by
\autoref{lemma:locally-free-e-n-2-f}, and hence a map $\iota: Z \to
\bp^{n-1}_B$.
\end{notation}
We now connect smoothness of $Z$ to the condition that the resultant is a unit.
\begin{lemma}
	\label{lemma:resultant-1-smooth}
	Keeping notation as in \autoref{notation:q-codirectrix},
	a subscheme $W \subset \mathbb F_{n-2}$ determined by a section
	$(q,\codirectrix) 
	\in H^0\left(B, \sco_{\bp^1_B}(2) \oplus
	\sco_{\bp^1_B}(n)\right)$
	not vanishing on any fibers of $h \circ g$
	is smooth if and only the resultant $\res(q,\codirectrix)$ lies in
	$\bg_m(B)$.
\end{lemma}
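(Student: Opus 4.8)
The plan is to reduce the statement to geometric fibers over an algebraically closed field and there compare the two conditions directly by writing down a local equation for $W$.

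First I would reduce to the case $B = \spec k$ with $k$ algebraically closed. The hypothesis that $(q,\codirectrix)$ does not vanish on any fiber of $h \circ g$ says precisely that the section $s$ cutting out $W$ restricts to a nonzero section of $\sco(e+nf)$ on every fiber $(\mathbb F_{n-2})_b$; hence $s$ is a nonzerodivisor fiberwise, so $W$ is a relative effective Cartier divisor in $\mathbb F_{n-2}/B$ and in particular $W \to B$ is flat and finitely presented. By the fibral criterion for smoothness, $W \to B$ is smooth if and only if every geometric fiber $W_b$ is smooth over $k(b)$. On the other side, once the coordinates $X,Y$ on $\bp^1_B$ and the trivialization of \autoref{notation:q-codirectrix} are fixed, $\res(q,\codirectrix)$ is a global section of $\sco_B$ whose formation commutes with base change, so it lies in $\bg_m(B)$ if and only if it is nonvanishing at every point, i.e.\ if and only if $\res(q_b,\codirectrix_b) \neq 0$ for every geometric point $b$. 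Since over a field the resultant of two binary forms is nonzero exactly when they have no common zero on $\bp^1$, the lemma reduces to the following: for $k$ algebraically closed, $W$ is smooth if and only if $q$ and $\codirectrix$ have no common root on $\bp^1_k$.

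Next I would record a local equation for $W$. In the Cox coordinates on $\mathbb F_{n-2}$, with $X,Y$ the base coordinates and $x_{\mathrm{dir}}, x_{\mathrm{cod}}$ the fiber coordinates cutting out the directrix and codirectrix, the monomial count matching the decomposition of \autoref{notation:q-codirectrix} forces $s = x_{\mathrm{dir}}\,\codirectrix(X,Y) + x_{\mathrm{cod}}\,q(X,Y)$, which also reproves that the two forms have degrees $n$ and $2$. Dehomogenizing on the chart $Y \neq 0,\ x_{\mathrm{cod}} \neq 0$ yields the affine equation $F(u,v) = \widetilde q(u) + v\,\widetilde\codirectrix(u)$, where $u = X/Y$, $v$ is the resulting fiber coordinate, $\widetilde q(u) = q(u,1)$, and $\widetilde\codirectrix(u) = \codirectrix(u,1)$; the remaining three charts are obtained from the symmetries $X \leftrightarrow Y$ and $x_{\mathrm{dir}} \leftrightarrow x_{\mathrm{cod}}$, giving equations such as $v\,\widetilde q(u) + \widetilde\codirectrix(u)$. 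For the direction ``no common root implies smooth'', I would run the Jacobian criterion chart by chart: here $\partial_v F = \widetilde\codirectrix(u)$, so a singular point forces $\widetilde\codirectrix(u) = 0$, whereupon $F = 0$ forces $\widetilde q(u) = 0$; thus any singular point produces a common root of $q$ and $\codirectrix$, the charts over $X \neq 0$ accounting for a possible common root at $[1:0]$. Conversely, if $q$ and $\codirectrix$ share a root at a base point $p$, then $s$ vanishes identically on the fiber $g^{-1}(p)$, so $W$ contains this fiber (of class $f$) as a component with residual divisor of class $e+(n-1)f$; since $f\cdot(e+(n-1)f)=1>0$ the components must meet, so $W$ is reducible with intersecting components and hence singular. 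Running these two directions over each geometric fiber finishes the argument.

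The main obstacle I anticipate is bookkeeping rather than conceptual content: pinning down the conventions on $\mathbb F_{n-2}$ so that the fiber coordinates pair with $q$ and $\codirectrix$ correctly, and ensuring that all four affine charts are covered so that no singular point lying on the directrix, on the codirectrix, or over $[1:0]$ is overlooked. A secondary point needing care is justifying the reduction to geometric fibers, namely the flatness of $W\to B$ from the fiberwise nonvanishing hypothesis together with the compatibility of $\res(q,\codirectrix)$ with base change; these are what let both sides of the equivalence be tested one geometric fiber at a time.
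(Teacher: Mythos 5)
Your proposal is correct and follows essentially the same route as the paper: reduce to geometric fibers using flatness of $W$ and compatibility of the resultant with base change, then identify vanishing of the resultant with $W$ containing a fiber of $g$, which forces a singularity where that fiber meets the residual component. The only cosmetic difference is in the forward direction, where the paper observes that a divisor of class $e+nf$ containing no fibers projects isomorphically onto $\bp^1$ (hence is smooth), whereas you verify smoothness directly via the Jacobian criterion in local coordinates.
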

\begin{proof}
	Because $(q,\codirectrix)$ comes from a fixed linear system and is
	not zero on any fiber, its vanishing locus is
	flat and locally of finite presentation, so it suffices to show $W$ is
	smooth over every point $p$ of $B$ if and only if $\res(q,\codirectrix) \in
	\bg_m(B)$.
	Let $\ol{q}$ and $\ol{\codirectrix}$ denote the restrictions of $q$
	and $\codirectrix$ in the residue field $\kappa(p)$.

	We will show $W_p$ is smooth over $\spec \kappa(p)$
	if and only if $\res(\ol{q}, \ol{\codirectrix}) \neq 0 \in \kappa(p)$.
	Because $W_p$ has class $e + (n-2)f$ and is nonzero,
	the map $g|_{W_p}: W_p \to \bp^1_p$ is generically an isomorphism.
	Further, $g|_{W_p}$ is an isomorphism if and only if $W_p$ contains no fibers of $g$.
	Therefore, when $W_p$ contains no fibers of $g$, $W_p$ will necessarily
	be smooth, and conversely if $W_p$ does contain a fiber of $g$ it will
	be singular at the point of intersection of that fiber with another
	component of $W_p$.
	Finally, $W_p$ contains a fiber over some point $t \in \bp^1_p$ if
	and only if 
	$\res(\ol{q},\ol{\codirectrix}) = 0 \in \kappa(p)$
	because both conditions are equivalent to the simultaneous vanishing of $\ol{q}$ and
	$\ol{\codirectrix}$ at $t$.
\end{proof}

Motivated by \autoref{lemma:resultant-1-smooth},
we now define the subscheme of $\affineSections n$ corresponding to the locus where
the resultant is a unit.
\begin{definition}
	\label{definition:unit-resultant}
	There is a resultant map $\res: \affineSections n \to \ba^1_{\spec \bz}$ sending
	$(q,\codirectrix) \mapsto \res(q,\codirectrix)$.
	Viewing $\bg_m \subset \ba^1_{\spec \bz}$ as the complement of the origin,
	define the open subscheme $\unitResultant n := \res^{-1}(\bg_m) \subset
	\affineSections n$.
\end{definition}

We are nearly ready to prove our main result, but first we state two
preparatory lemmas, which relate various quotient stacks.

\begin{lemma}
	\label{lemma:hirzebruch-quotient-commutes-with-z-points}
	Keep notation as in \autoref{notation:q-codirectrix}. 	There is an injective map 
	$\unitResultant n(B)/\affineAut n(B) \hookrightarrow
[\unitResultant n/\affineAut n](B)$
which is a bijection if $H^1(B, \pgl_2) = H^1(B, \bg_m) = H^1(B, \bg_a) =0$.
\end{lemma}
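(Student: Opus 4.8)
The plan is to recognize this as an instance of the general comparison between the orbit set $\unitResultant n(B)/\affineAut n(B)$ and the groupoid of $B$-points of the quotient stack $[\unitResultant n/\affineAut n](B)$. Recall that an object of $[\unitResultant n/\affineAut n](B)$ is a pair $(T,\phi)$ consisting of an $\affineAut n$-torsor $T \to B$ together with an $\affineAut n$-equivariant map $\phi: T \to \unitResultant n$, and that a point $y \in \unitResultant n(B)$ determines such a pair by taking $T$ to be the trivial torsor and $\phi$ the equivariant extension of $y$. First I would verify injectivity: two points $y,y' \in \unitResultant n(B)$ yield isomorphic objects of $[\unitResultant n/\affineAut n](B)$ precisely when some isomorphism of trivial $\affineAut n$-torsors carries one equivariant map to the other; since every automorphism of the trivial torsor is right translation by an element of $\affineAut n(B)$, this happens exactly when $y$ and $y'$ lie in the same $\affineAut n(B)$-orbit. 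This gives the desired injection independently of any hypothesis on $B$.

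For the bijectivity statement, I would observe that the image of this injection consists of those objects $(T,\phi)$ whose underlying torsor $T$ is trivial, so it suffices to prove that under the three vanishing hypotheses every $\affineAut n$-torsor over $B$ is trivial, i.e. that the pointed set $H^1(B,\affineAut n)$ is trivial. Here I would invoke \autoref{lemma:affine-aut-sequences}. Its exact sequence $0 \to \affineAutSub n \to \affineAut n \to \pgl_2 \to 0$ induces an exact sequence of pointed sets $H^1(B,\affineAutSub n) \to H^1(B,\affineAut n) \to H^1(B,\pgl_2)$; since $H^1(B,\pgl_2)=0$ by hypothesis, pointed-set exactness shows every class in $H^1(B,\affineAut n)$ lies in the image of $H^1(B,\affineAutSub n)$.

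It then remains to show $H^1(B,\affineAutSub n)$ is trivial. Using the isomorphism $\affineAutSub n \simeq \unipotentRadical n \rtimes \bg_m^2$ with $\unipotentRadical n \simeq \bg_a^{n-1}$ from \autoref{lemma:affine-aut-sequences}, the extension $0 \to \unipotentRadical n \to \affineAutSub n \to \bg_m^2 \to 0$ yields an exact sequence $H^1(B,\bg_a^{n-1}) \to H^1(B,\affineAutSub n) \to H^1(B,\bg_m^2)$. The hypotheses $H^1(B,\bg_a)=0$ and $H^1(B,\bg_m)=0$ give $H^1(B,\bg_a^{n-1})=0$ and $H^1(B,\bg_m^2)=0$, and the same exactness argument forces $H^1(B,\affineAutSub n)=0$, hence $H^1(B,\affineAut n)=0$, which completes the proof.

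The point requiring the most care, which I regard as the main obstacle, is the use of nonabelian cohomology: the displayed sequences are only sequences of pointed sets, so ``exactness'' means that the preimage of the basepoint equals the image of the incoming map, and I must confirm this genuinely yields triviality of the entire set rather than of a single fiber. This works here precisely because at each stage both the sub and the quotient have trivial $H^1$, so the preimage of the basepoint is everything while the image of the incoming map is the basepoint. I would also want to check that the relevant quotients $G/N$ are fppf (equivalently \'etale, by smoothness) surjections, so that the torsor cohomology sequence is available; this follows from the explicit semidirect-product splittings recorded in \autoref{lemma:affine-aut-sequences}.
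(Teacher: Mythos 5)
Your proof is correct and follows essentially the same route as the paper: injectivity via the standard identification of orbits with isomorphism classes of trivial-torsor objects, and bijectivity by reducing to the vanishing of $H^1(B,\affineAut n)$ using the extensions from \autoref{lemma:affine-aut-sequences}. Your treatment is somewhat more careful than the paper's about the pointed-set nature of the nonabelian cohomology sequences, but the substance is identical.
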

\begin{proof}
	We have a sequence of pointed sets
	\begin{equation}
		\label{equation:resultant-orbit-sequence}
		\begin{tikzcd}
			0 \ar {r} &  H^0(B, \affineAut n) \ar {r} & H^0(B,
			\unitResultant n) \ar {r} & H^0(B, [\unitResultant
					n/\affineAut n]) \ar {r}
	& H^1(B, \affineAut n).
	\end{tikzcd}\end{equation}
	This implies the injectivity claim. 

	For the final statement, it is enough to check 
	$H^1(B, \affineAut n) = 0$ when 
	$H^1(B, \pgl_2) = H^1(B, \bg_m) = H^1(B, \bg_a)=0$.
	This holds by
	\autoref{lemma:affine-aut-sequences},
	which shows 
	$\affineAut n$
	is an iterated extension of $\pgl_2$ by copies of $\bg_m$ and $\bg_a$.
\end{proof}

\begin{lemma}
	\label{lemma:unit-resultant-equivalence}
	For $n \geq 3$, we have equivalences of stacks
	\begin{equation}
	[\unitResultant n/\affineAut n] \simeq [\smile n/\aut_{\mathbb F_{n-2}/\bz}]
	\simeq \smilestack n \simeq \singsstack n \simeq [\singshilb n/\pgl_n]
	\end{equation}
\begin{align*}
\end{align*}

\end{lemma}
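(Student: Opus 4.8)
The plan is to recognize that three of the four claimed equivalences are already established, so that the whole lemma reduces to a single new comparison of quotient stacks. Indeed, $[\smile n/\aut_{\mathbb F_{n-2}/\bz}] \simeq \smilestack n$ is exactly \autoref{lemma:smile-stack-representable}, the equivalence $\smilestack n \simeq \singsstack n$ is \autoref{theorem:equivalence}, and $\singsstack n \simeq [\singshilb n/\pgl_n]$ is \autoref{corollary:singsstack-representable}. Concatenating these, it remains only to produce the leftmost equivalence
\[
	[\unitResultant n/\affineAut n] \simeq [\smile n/\aut_{\mathbb F_{n-2}/\bz}].
\]

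To build this, I would first identify the central scaling $\bg_m$ of \eqref{equation:gm-action}, acting by $(q,\codirectrix)\mapsto(\chi q,\chi\codirectrix)$, with the tautological $\bg_m$ exhibiting $\bp\scf$ as the projectivization of $\affineSections n$. Since $\affineSections n = \spec \sym^\bullet \scf$ is the affine space attached to $\scf \simeq H^0(\bp^1_\bz,\sco(2)\oplus\sco(n))$ (\autoref{notation:rank-2-sheaf}), removing the zero section yields a $\bg_m$-torsor over $\bp\scf$, and this $\bg_m$ is precisely the simultaneous scaling above, which acts freely away from the origin. A point of $\unitResultant n$ has resultant a unit, hence is nonzero on every fiber of $h\circ g$, so $\unitResultant n$ lies in the complement of the zero section and projectivization restricts to a $\bg_m$-torsor $\unitResultant n \to \bp\scf$. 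By \autoref{lemma:resultant-1-smooth}, a section that is nowhere fiberwise zero has smooth vanishing locus exactly when its resultant is a unit; since this condition is $\bg_m$-invariant, the image of $\unitResultant n$ is precisely $\smile n = \bp\scf - Z$ of \autoref{definition:smile-scheme} and $\unitResultant n$ is its full preimage. This realizes $\smile n \simeq [\unitResultant n/\bg_m]$ for the central scaling subgroup.

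Next I would quotient in stages. The scaling $\bg_m$ is central, hence normal, in $\affineAut n$, and by \autoref{definition:automorphism-groups} together with \autoref{lemma:affine-aut-sequences}(4) the quotient $\affineAut n/\bg_m$ is $\projAut n \simeq \aut_{\mathbb F_{n-2}/\bz}$. Because $\bg_m$ acts freely on $\unitResultant n$ with torsor quotient $\smile n$, the standard stack-theoretic identity $[\unitResultant n/\affineAut n]\simeq[[\unitResultant n/\bg_m]/(\affineAut n/\bg_m)]$ applies and gives $[\unitResultant n/\affineAut n]\simeq[\smile n/\projAut n]\simeq[\smile n/\aut_{\mathbb F_{n-2}/\bz}]$, finishing the chain.

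The main obstacle is not any deep difficulty but rather the bookkeeping needed to check that the residual $\projAut n$-action on $\smile n$ coming from the $\affineAut n$-action on $\unitResultant n$ matches the action defining $[\smile n/\aut_{\mathbb F_{n-2}/\bz}]$; that is, that the identification $\projAut n \simeq \aut_{\mathbb F_{n-2}/\bz}$ of \autoref{lemma:affine-aut-sequences}(4) is compatible with how $\affineAut n$ moves pairs $(q,\codirectrix)$ within the linear system $|e+nf|$ on $\mathbb F_{n-2}$. This amounts to unwinding the definitions \eqref{equation:gm-action}--\eqref{equation:gl2-action} against the action on $H^0(\mathbb F_{n-2},\sco_{\mathbb F_{n-2}}(e+nf))$ via the isomorphism of \autoref{notation:q-codirectrix}, and once the torsor structure above is in place it presents no real surprises.
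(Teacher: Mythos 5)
Your proposal is correct and follows essentially the same route as the paper: the last three equivalences are quoted from Lemma~\ref{lemma:smile-stack-representable}, Theorem~\ref{theorem:equivalence}, and Corollary~\ref{corollary:singsstack-representable}, and the first is obtained by combining $\smile n \simeq [\unitResultant n/\bg_m]$ (via Lemma~\ref{lemma:resultant-1-smooth}) with $\aut_{\mathbb F_{n-2}/\bz} \simeq \projAut n \simeq [\affineAut n/\bg_m]$ (Lemma~\ref{lemma:affine-aut-sequences}(4)) and quotienting in stages. Your write-up just spells out the torsor and compatibility checks that the paper leaves implicit.
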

\begin{proof}
	We have $\smile n \simeq [\unitResultant n/\bg_m]$ 
	using \autoref{lemma:resultant-1-smooth}.
	We also have an isomorphism
	and $\aut_{\mathbb F_{n-2}/\bz} \simeq \projAut n \simeq [\affineAut
	n/\bg_m],$
	using \autoref{lemma:affine-aut-sequences}(4).
	Together, these yield the equivalence 
	$[\unitResultant n/\affineAut n] \simeq [\smile n/\aut_{\mathbb
	F_{n-2}/\bz}]$.

	The remaining identifications follow from
	\autoref{lemma:smile-stack-representable}, 
	\autoref{theorem:equivalence}, and
	\autoref{corollary:singsstack-representable}.
	which respectively yield the isomorphisms $[\smile n/\aut_{\mathbb F_{n-2}/\bz}]
	\simeq \smilestack n \simeq \singsstack n \simeq [\singshilb n/\pgl_n]$.
\end{proof}

Our above results now combine to give descriptions of elements $(q,\codirectrix)$ with unit resultant
as points of various stacks.
Given a quadratic form $q \in H^0(\bp^1_B, \sco_{\bp^1_B}(2))$, we use $V(q)$ to denote the subscheme
of $\bp^1_B$ defined by $q$.
Recall that given a locally free degree $2$ cover $X \to B$, we use $E_g$ with
section $\sigma$ to denote the corresponding genus $1$ curve as in
\autoref{notation:singular-genus-1-construction}.

Additionally, note there is a map from $\unitResultant n$ to the stack of degree
$2$ finite locally free covers which sends $(q,\codirectrix)$ to $V(q)$.
Since this map is invariant for the action of $\affineAut n$, we obtain a map
$\Pi_n$ from $\left[ \unitResultant n/\affineAut n \right]$ to the stack of
degree $2$ finite locally free covers.

\begin{theorem}
	\label{theorem:hypercohomology-to-quotient-stack-bijection}
	Let $B$ be a normal integral scheme and $n \in \bz_{\geq 3}$ be an integer.
	Fix a degree $2$ locally free generically \'etale cover $g: X \to B$.
	There is an injection from orbits $(q,\codirectrix) \in 
	\unitResultant n(B)/\affineAut n(B)$ such that
	$V(q) \simeq X$
	to
	$\Pi_n^{-1}([X]) \subset [\unitResultant n/\affineAut n](B)$.
	In turn, $\Pi_n^{-1}([X])$
	is identified bijectively with
	elements of $H^1(B, g_* \bg_m/\bg_m \xra{\times n} g_*
	\bg_m/\bg_m)/\aut_{(E_g,\sigma)/B}(B)$.
	The above injection is a bijection if $H^1(B, \pgl_2) = H^1(B, \bg_m) =
	H^1(B, \bg_a) =0$.

	Further, the above injection identifies the following three groups:
	\begin{enumerate}
		\item the $B$ points of the isotropy group scheme of the
			corresponding map $B \to [\nhilb n/\pgl_n]$
	via the bijection of \autoref{proposition:equivalent-fiber-of-stack};
\item the $B$ points of the isotropy group scheme of the
	corresponding map $B \to \smilestack n$;
\item the stabilizer in $\affineAut n(B)$ of $(q,\codirectrix)$.
	\end{enumerate}
	\end{theorem}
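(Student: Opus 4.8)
The plan is to assemble the theorem from the equivalences and cohomological identifications already established, tracing the tautological map $\Pi_n$ through each step. First I would produce the injection. By \autoref{lemma:hirzebruch-quotient-commutes-with-z-points} there is an injection $\unitResultant n(B)/\affineAut n(B) \hookrightarrow [\unitResultant n/\affineAut n](B)$, which is a bijection once $H^1(B,\pgl_2) = H^1(B,\bg_m) = H^1(B,\bg_a) = 0$. The condition $V(q) \simeq X$ is invariant under $\affineAut n$: the $\gl_2/\mu_{n-2}$ factor changes coordinates on $\bp^1_B$, so alters $V(q)$ only by an isomorphism of degree $2$ covers, the central $\bg_m$ merely rescales $q$ and hence fixes $V(q)$, and $\unipotentRadical n$ does not touch $q$ at all. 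Thus $V(q) \simeq X$ descends to orbits and, through $\Pi_n$, to the stack; restricting the injection to this locus yields the stated injection into $\Pi_n^{-1}([X])$, still bijective under the $H^1$ hypotheses.

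Next I would identify $\Pi_n^{-1}([X])$ with the hypercohomology quotient. Using \autoref{lemma:unit-resultant-equivalence} we have $[\unitResultant n/\affineAut n] \simeq \smilestack n \simeq \singsstack n$. The heart of the argument is to recognize $\Pi_n$ under this chain: a pair $(q,\codirectrix)$ determines via \autoref{notation:q-codirectrix} a smooth divisor $Z$ of class $e+nf$ on $\mathbb F_{n-2}$, and $V(q)$ is precisely $g(Z \cap E)$, the degree $2$ subscheme cut out on the directrix (cf. \autoref{notation:line-bundles-on-F}, where $g(Z\cap E)$ has class $\sco_X(2)$). Following the forward map of \autoref{lemma:forward-equivalence}, the two points of $Z \cap E$ are exactly the preimage under normalization of the singular section $\tau$ of the resulting singular genus $1$ curve, so $V(q)$ is its singularity cover. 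Comparing with the cofiber-product construction of \autoref{notation:singular-genus-1-construction} and with \autoref{lemma:smooth-locus-of-curve-from-cover}, this singular curve is $E_g$ with $E_g^{\sm} \simeq g_*\bg_m/\bg_m$, and $V(q)\simeq X$ holds if and only if the composite $B \to \smilestack n \to \singsw \to \singw \to \weierstrass$ lands at $(B, E_g, \sigma)$. Since $g$ is generically \'etale, \autoref{lemma:generically-etale-to-nodal} shows $E_g$ is generically nodal, so these maps are exactly those of \autoref{theorem:fiber-bijection}, which identifies them bijectively with $H^1(B, g_*\bg_m/\bg_m \xra{\times n} g_*\bg_m/\bg_m)/\aut_{(E_g,\sigma)/B}(B)$.

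For the final claim comparing the three groups, I would argue as follows. The isotropy group of a point of $[\unitResultant n/\affineAut n]$ that is represented by a genuine orbit $(q,\codirectrix) \in \unitResultant n(B)$ is, by the definition of a quotient stack, the stabilizer of $(q,\codirectrix)$ in $\affineAut n(B)$; this gives group (3). The equivalence $[\unitResultant n/\affineAut n] \simeq \smilestack n$ of \autoref{lemma:unit-resultant-equivalence} carries isotropy groups to isotropy groups, identifying (3) with the isotropy of $B \to \smilestack n$, which is group (2). Finally, the ``respects automorphisms'' clause of \autoref{theorem:fiber-bijection} matches the isotropy of $B \to \smilestack n$ with the automorphisms of the object in \autoref{proposition:equivalent-fiber-of-stack}\autoref{hilb-mod-pgl}, i.e. the isotropy of the corresponding $B \to [\nhilb n/\pgl_n]$, which is group (1). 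Chaining these identifications proves all three groups agree.

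The main obstacle is the compatibility in the second paragraph: precisely matching the tautological $\Pi_n$, given set-theoretically by $(q,\codirectrix) \mapsto V(q)$, with the structure map on $\smilestack n \simeq \singsstack n$ recording the degree $2$ singularity cover, and checking that the resulting singular genus $1$ curve is canonically $E_g$ and not merely abstractly isomorphic to it. This forces one to track the divisor classes of $Z\cap E$ and the gluing in \autoref{lemma:forward-equivalence} carefully enough to align them with the cofiber product of \autoref{notation:singular-genus-1-construction}, and to do so functorially over the possibly non-reduced base $B$ so that the identification is compatible with the $\aut_{(E_g,\sigma)/B}(B)$-actions on both sides.
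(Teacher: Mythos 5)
Your proposal is correct and follows essentially the same route as the paper: the injection via \autoref{lemma:hirzebruch-quotient-commutes-with-z-points}, the chain of equivalences from \autoref{lemma:unit-resultant-equivalence} combined with \autoref{theorem:fiber-bijection} to identify $\Pi_n^{-1}([X])$ with the hypercohomology quotient, and the matching of isotropy groups through those same equivalences. The only cosmetic difference is in the compatibility check for $\Pi_n$: you track $V(q)$ as $Z \cap E$ on the directrix through the forward map, whereas the paper recovers $V(q)$ as the preimage of the singular locus under the normalization $\bp(g_*\sco_X) \to E_g$ (using normality of $B$) — these are two descriptions of the same identification.
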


\begin{proof}
	To prove the first part, we wish to produce an injection from 
	orbits $(q, \codirectrix) \in \unitResultant n(B)/\affineAut n(B)$
	to points
	$B \to \smilestack n$ over 
	$(B, E_g \to B, \sigma: B \to E_g, \tau: B \to E_g) \in \singw(B)$
	which is a bijection if $H^1(B, \pgl_2) = H^1(B, \bg_m) = H^1(B, \bg_a) = 0$.

	We claim there is a sequence of maps
	\begin{align*}
		\unitResultant n(B)/\affineAut n(B) \to [\unitResultant
		n/\affineAut n](B) \to [\smile n / \aut_{\mathbb F_{n-2}/B}](B)
		\to \smilestack n(B)
	\end{align*}
	where all but the first maps are bijections, and the first map is an
	injection which is a bijection if 
	$H^1(B, \pgl_2) = H^1(B, \bg_m) = H^1(B, \bg_a) = 0$.
	Indeed, the statement for the first map is
	\autoref{lemma:hirzebruch-quotient-commutes-with-z-points}.
	The remaining maps are bijections by
	\autoref{lemma:unit-resultant-equivalence}.

	We next check that the above identifications send the set of $(q,\codirectrix) \in
	\unitResultant n(B)/\affineAut n(B)$ with $V(q) \simeq X$ to the set
	$H^1(B, g_* \bg_m/\bg_m \xra{\times n} g_* \bg_m/\bg_m)/\aut_{(E_g,\sigma)/B}(B)$.
	Keeping notation as in
	\autoref{notation:singular-genus-1-construction},
	by \autoref{theorem:fiber-bijection}, there is a bijection between
	maps
	$B \to \smilestack n$ lying over
	$(B, E_g \to B, \sigma: B \to E_g, \tau: B \to E_g) \in \singw(B)$
	and elements of $H^1(B, g_* \bg_m/\bg_m \xra{\times n} g_* \bg_m/\bg_m)/\aut_{(E_g,\sigma)/B}(B)$.
	%Using the identification $[\smile n/\aut_{\mathbb F_{n-2}/B}](B) = \smile n(B)/\aut_{\mathbb F_{n-2}/B}(B)$.
	Because the above maps of stacks are compatible with the projection
	to $\singw$, to give the desired identification,
	it is enough to show that we can recover $V(q)$ from $E_g$.
	But indeed, 
	using that $B$ is normal, the normalization of $E_g$
	is $\bp(g_* \sco_X)$, as this is normal and maps birationally to $E_g$.
	Then, $V(q)$ can be recovered as the preimage of the singular locus
	of $E_g \to B$ under the map $\bp(g_* \sco_X) \to E_g$.

	Combining the above identifications, we then obtain that pairs
	$(q, \codirectrix) \in \unitResultant n(B)/\affineAut n(B)$ with $V(q)
	\simeq X$ inject into 
	$H^1(B, g_* \bg_m/\bg_m \xra{\times n} g_*
	\bg_m/\bg_m)/\aut_{(E_g,\sigma)/B}(B)$.
	Further, we obtain this injection is an isomorphism if 
	$H^1(B, \pgl_2) = H^1(B, \bg_m) = H^1(B, \bg_a) = 0$.

	To conclude, we wish to identify the $B$-points of the isotropy group scheme of
	$B \to [\nhilb n/\pgl_n]$,
	the $B$ points of the isotropy group scheme of $B \to \smilestack n$
	and the stabilizer of $(q,\codirectrix)$ in $\affineAut n(B)$.
	The identification of the first two follows from
	\autoref{lemma:unit-resultant-equivalence}.
	The final identification also follows from
	\autoref{lemma:unit-resultant-equivalence}
	because the the stabilizer of a point $(q,\codirectrix) \in \unitResultant n$
	in $\affineAut n(B)$
	is given by the $B$ points of the isotropy group associated to the map $B \to
	[\unitResultant n/\affineAut n]$.
\end{proof}

\section{Cohomological Comparison}
\label{section:cohomology}

We next give a concrete description of the groups
$H^1(\spec \bz, g_*\bg_m/\bg_m \xrightarrow{\times n} g_*\bg_m/\bg_m)$
appearing in \autoref{theorem:hypercohomology-to-quotient-stack-bijection} in
the case $B = \spec \bz$,
which relates them to $n$-torsion in class groups.
To state this comparison in \autoref{lemma:comparison-selmer}, we review the
notion of the $n$-Selmer group of a number field.

\begin{remark}
	\label{remark:n-selmer-group-of-number-field}
	Recall that the $n$-Selmer group of a number field $K$ is defined as 
	\begin{align*}
		\sel_n(K) := \left\{ \alpha \in
		K^\times : \text{ there exists a fractional ideal }\mathfrak a
	\text{ of $K$ with }\left( \alpha \right) = \mathfrak a^n
\right\}/\left( K^\times \right)^n.
	\end{align*}
	Let $X := \spec \sco_K$.
	In fact, $H^1(X, \mu_n) \simeq \sel_n(K)$ as we now explain.
	The restriction map $H^1(X, \mu_n) \to H^1(\spec K, \mu_n)$ has image
	landing inside $\sel_n(K)$. This gives a natural map
	$H^1(X, \mu_n) \to \sel_n(K)$ which is injective because $X$ is normal
	and is surjective because both $H^1(X, \mu_n)$ and $\sel_n(K)$ have the
	same order as 
	they are extensions of $\cl(K)[n]$ by $\sco_K^\times / \left( \sco_K^\times
	\right)^n$.
\end{remark}

\begin{lemma}
	\label{lemma:comparison-selmer}
	Let $n \in \bz_{\geq 1}$ and let $g: X \to B$ be a connected finite locally free
	cover of arbitrary degree.
There is a natural map
$\rho: H^1(X, \mu_n) \to H^1(B, g_* \bg_m/\bg_m \xrightarrow{\times n} g_*
\bg_m/\bg_m)$.
When $B = \spec \bz$, $\rho$
is identified with the quotient $H^1(X, \mu_n) \to \coker(H^1(\spec \bz, \mu_n)
\to H^1(X, \mu_n))$.
Maintaining $B = \spec \bz$,
it follows that 
$H^1(B, g_* \bg_m/\bg_m	\xrightarrow{\times n} g_* \bg_m/\bg_m)$
is an extension of $H^1(X, \bg_m)[n] = \cl(X)[n]$ by $(\sco_X^\times(X)/ n
\sco_X^\times(X)) / \pm 1$.
In particular, if $X = \spec \sco_K$ for $K$ a number field,
$H^1(B, g_* \bg_m/\bg_m	\xrightarrow{\times n} g_* \bg_m/\bg_m)
\simeq \coker(\sel_n(\bq) \to \sel_n(K))$.
\end{lemma}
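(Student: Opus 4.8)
The plan is to realize $\rho$ through the Kummer sequence and then feed the structure sequence $1 \to \bg_m \to g_*\bg_m \to g_*\bg_m/\bg_m \to 1$ into a long exact sequence of hypercohomology. First I would construct $\rho$ in general. In the flat topology the Kummer sequence gives a quasi-isomorphism $\mu_n \simeq [\bg_m \xrightarrow{\times n} \bg_m]$ on $X$, so $H^1(X,\mu_n) \simeq H^1(X, [\bg_m \xrightarrow{\times n}\bg_m])$. Because $g$ is finite it is acyclic on the flat site ($R^{i}g_* = 0$ for $i>0$ and $g_*$ exact), so the Leray spectral sequence gives $H^i(X,-)\simeq H^i(B, g_*(-))$; in particular $H^i(X,[\bg_m\xrightarrow{\times n}\bg_m])\simeq H^i(B,[g_*\bg_m\xrightarrow{\times n}g_*\bg_m])$ for all $i$. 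Postcomposing with the map of complexes induced by the quotient $g_*\bg_m \to g_*\bg_m/\bg_m$ defines $\rho$ and proves the first assertion for arbitrary $B$.

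Next I would specialize to $B = \spec\bz$. The inclusion $\bg_m \hookrightarrow g_*\bg_m$ coming from $\sco_B \to g_*\sco_X$ yields a short exact sequence of flat sheaves, hence a short exact sequence of the corresponding $\times n$ two-term complexes. Applying the quasi-isomorphisms above (namely $[\bg_m \xrightarrow{\times n}\bg_m]\simeq\mu_n$ on $B$, and the middle complex computing $H^\ast(X,\mu_n)$), the resulting long exact sequence reads
\[ H^1(\spec\bz,\mu_n) \xrightarrow{g^\ast} H^1(X,\mu_n) \xrightarrow{\rho} H^1\!\left(\spec\bz,\, g_*\bg_m/\bg_m \xrightarrow{\times n} g_*\bg_m/\bg_m\right) \to H^2(\spec\bz,\mu_n), \]
where the first map is the pullback $g^\ast$.

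The crux is the vanishing $H^2(\spec\bz,\mu_n) = 0$, which makes $\rho$ surjective. I would deduce it from the Kummer sequence on $\spec\bz$ together with $\pic(\spec\bz) = \cl(\bq) = 0$ and $\Br(\spec\bz) = H^2(\spec\bz,\bg_m) = 0$. Granting it, $\rho$ is surjective with kernel exactly $\im(g^\ast)$, so it induces an isomorphism from $\coker(g^\ast) = \coker\big(H^1(\spec\bz,\mu_n)\to H^1(X,\mu_n)\big)$; this is the promised identification of $\rho$ with the quotient map. For $X = \spec\sco_K$ I would then invoke \autoref{remark:n-selmer-group-of-number-field} to rewrite $H^1(X,\mu_n)\simeq\sel_n(K)$ and $H^1(\spec\bz,\mu_n)\simeq\sel_n(\bq)$, and check that under these identifications $g^\ast$ is the map induced by $\bq^\times\hookrightarrow K^\times$; this gives $H^1(\spec\bz,\, g_*\bg_m/\bg_m \xrightarrow{\times n} g_*\bg_m/\bg_m) \simeq \coker(\sel_n(\bq)\to\sel_n(K))$, as desired. (The extension description in the statement can be read off the other distinguished triangle of \autoref{subsection:derived-functor-cohomology}, but is not needed for the final isomorphism.)

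I expect the main obstacle to be the two homological inputs: justifying the acyclicity of the finite pushforward $g_*$ on the flat site (so that the middle complex is genuinely computed on $X$ and the long exact sequence takes the displayed form), and the computation $H^2(\spec\bz,\mu_n)=0$ that forces surjectivity of $\rho$. Once these are in hand the remainder is formal bookkeeping in the long exact sequence together with the Selmer dictionary of \autoref{remark:n-selmer-group-of-number-field}.
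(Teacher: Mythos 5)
Your proposal is correct and follows essentially the same route as the paper: both proofs run the hypercohomology long exact sequence of the $\times n$ two-term complexes attached to $1 \to \bg_m \to g_*\bg_m \to g_*\bg_m/\bg_m \to 1$, identify the first two terms with $H^1(\spec \bz,\mu_n)$ and $H^1(X,\mu_n)$ via Kummer and exactness of the finite pushforward, and kill the obstruction term using $\pic(\spec\bz)=H^2(\spec\bz,\bg_m)=0$ (your $H^2(\spec\bz,\mu_n)=0$ is the same computation as the paper's $H^2(\spec\bz,\bg_m\xrightarrow{\times n}\bg_m)=0$). The only cosmetic difference is that the paper reads off the extension statement from the Kummer description of $H^1(X,\mu_n)$ rather than from the other distinguished triangle, but both yield the claim.
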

\begin{proof}
	The distinguished triangle associated to the three complexes
	\begin{equation}
		\label{equation:}
		\begin{tikzcd}
			\bg_m \ar {r} \ar {d}{\times n} & g_* \bg_m \ar {r} \ar
			{d}{\times n} & g_*\bg_m/\bg_m \ar
			{d}{\times n} \\
			\bg_m \ar {r} & g_* \bg_m \ar {r} & g_* \bg_m/\bg_m
	\end{tikzcd}\end{equation}
	on $B$
	gives rise to an exact sequence on hypercohomology
\begin{equation}
	\label{equation:}
	\begin{tikzcd}
		& H^1(B, \bg_m \xrightarrow{\times n} \bg_m)  \ar {r}{\alpha} & H^1(B,
		g_*\bg_m \xrightarrow{\times n} g_*\bg_m) \ar {r} & \qquad \\
		\ar{r} & H^1(B,	g_*\bg_m/\bg_m \xrightarrow{\times n} g_*\bg_m/\bg_m) \ar {r} &
		H^2(B, \bg_m \xrightarrow{\times n} \bg_m).
\end{tikzcd}\end{equation}
Using the distinguished triangle
\eqref{equation:first-distinguished-triangle}
and the vanishing $H^1(B, \bg_m) = H^2(B, \bg_m) = 0$,
we find $H^2(B, \bg_m \xrightarrow{\times n} \bg_m)$.
We also obtain isomorphisms
$H^1(B, \mu_n) \simeq H^1(\bg_m \xrightarrow{\times n} \bg_m)$
and
$H^1(X, \mu_n) \simeq H^1(X, \bg_m \xrightarrow{\times n} \bg_m) \simeq
H^1(B,g_*\bg_m \xrightarrow{\times n} g_*\bg_m)$.
This identifies
$H^1(X, \mu_n) \to H^1(B, g_* \bg_m/\bg_m \xrightarrow{\times n} g_* \bg_m/\bg_m)$
with
the quotient $H^1(X, \mu_n) \to \coker(H^1(\spec \bz, \mu_n)
\to H^1(X, \mu_n))$.
The description of 
$H^1(B, g_* \bg_m/\bg_m	\xrightarrow{\times n} g_* \bg_m/\bg_m)$
as an extension then follows from the analogous description of
$H^1(X, \mu_n)$ as an extension of
$H^1(X, \bg_m)[n]$ by $\sco_X^\times(X) / n \sco_X^\times(X)$.

If $X = \spec \sco_K$, the isomorphism
$H^1(B, g_* \bg_m/\bg_m	\xrightarrow{\times n} g_* \bg_m/\bg_m)
\simeq \coker(\sel_n(\bq) \to \sel_n(K))$
follows from 
\autoref{remark:n-selmer-group-of-number-field}
together with the identification
$H^1(B, g_* \bg_m/\bg_m	\xrightarrow{\times n} g_* \bg_m/\bg_m)
\simeq \coker(H^1(\spec \bz, \mu_n) \to H^1(X, \mu_n))$.
\end{proof}

\subsection{Examples}
\label{subsection:examples}
The utility of
\autoref{lemma:comparison-selmer}
is that it lets us compute
$H^1(B, g_* \bg_m/\bg_m
\xrightarrow{\times n} g_* \bg_m/\bg_m)$.
We next illustrate the usefulness of \autoref{lemma:comparison-selmer}
and
\autoref{theorem:hypercohomology-to-quotient-stack-bijection}
with some concrete examples. 

\begin{remark}
	\label{remark:example-setup}
	Fix $g: X \to \spec \bz$ a normal integral finite degree $2$ locally free cover of discriminant $d$.
For each primitive quadratic form $q$ of discriminant $d$
we can
ask how many $\affineAut n$ orbits of degree $n$ polynomials $\codirectrix$
there are with $\res(q,\codirectrix) = \pm 1$, up to the action of
$\affineAut n$.
By combining \autoref{theorem:hypercohomology-to-quotient-stack-bijection} and
\autoref{theorem:resultant}
with
\autoref{lemma:comparison-selmer},
we can see there are no such values of $\codirectrix$ unless
$q$ lies in $H^1(X, \bg_m)[n]$. In the latter case,
the number of such orbits can be computed by
\autoref{lemma:comparison-selmer}.
\end{remark}
We have verified \autoref{remark:example-setup} using MAGMA in thousands of cases.
Let us now see this carried out in some examples.

\begin{example}
	\label{example:}
	Consider $K = \bq(\sqrt{-23})$ and $X = \spec \sco_K = \spec
\bz[\frac{\sqrt{-23} + 1}{2}]$.
We apply \autoref{remark:example-setup} to this setting.
	We have $\cl(K) \simeq \bz/3\bz$, with representatives for the
	three quadratic forms given by
	$q_1 := x^2 + xy + 6y^2, q_2 := 2x^2 + xy + 3y^2, q_3 := 2x^2 - xy +3y^2$.
	Since these are all $3$-torsion, we expect that for each quadratic form,
	there should exist
	some degree $3$ polynomial $\codirectrix$ with
	$\res(q_i, \codirectrix) = \pm 1$.
	Indeed,
	we see
	$\res(q_1, -y^3) = 1, \res(q_2, -x^3 -xy^2+y^3) = 1$, and $\res(q_3,-x^3-xy^2-y^3) = 1$.
\end{example}
\begin{example}
	\label{example:}
	Consider $K = \bq(\sqrt{-47})$ and $X = \spec \sco_K = \spec
	\bz[\frac{\sqrt{-47}+1}{2}]$.
	Then, $\cl(K) \simeq \bz/5\bz$ with representatives
	given by the quadratic forms
	$q_1 = x^2 + xy + 12y^2, q_2 = 2x^2 + xy + 6y^2, q_3 = 2x^2 - xy + 6y^2,
	q_4 = 3x^2 +xy + 4y^2,$ and $q_5 = 3x^2 -xy + 4y^2$.
	For each $1 \leq i\leq 5$, there is a degree $5$ polynomial $\codirectrix_i$ so
	that
	$\res(q_i,\codirectrix_i) = 1$.
	Namely,
	$\codirectrix_1 = -y^5,
	\codirectrix_2 = -x^5 -3x^3y^2+x^2y^3-xy^4-y^5,
	\codirectrix_3 = -x^5-3x^3y^2-x^2y^3-xy^4+y^5,
	\codirectrix_4 = -x^5-x^4y-x^3y^2+xy^4+y^5,
	\codirectrix_5 = -x^5+x^4y-x^3y^2+xy^4-y^5.$
%computer output:
%1, 1, 12, 0, 0, 0, 0, 0, -1
%2, 1, 6, -1, 0, -3, 1, -1, -1 
%2, -1, 6, -1, 0, -3, -1, -1, 1 
%3, 1, 4, -1, -1, -1, 0, 1, 1 
%3, -1, 4, -1, 1, -1, 0, 1, -1 
\end{example}

\begin{example}
	\label{example:}
	When $K = \bq(\sqrt{-1})$ and $X = \spec \sco_K = \spec
	\bz[\sqrt{-1}]$,
	the unique equivalence class of quadratic forms has representative
	$x^2 + y^2$.
	Because $H^1(X, \mu_n)/ H^1(\spec \bz, \mu_n)$ has size $2$ for any $n$,
\autoref{lemma:comparison-selmer} predicts that for any
	positive integer $n$, there
	should be two orbits of pairs $(q,\codirectrix)$ for $\codirectrix$
	of degree $2n$ with
	resultant $1$.
Indeed, the two orbits correspond to $\codirectrix = x^{2n}$ and
$\codirectrix = (xy)^n$.
\end{example}

\begin{example}
	\label{example:cube-roots-of-unity}
	In the case $K = \bq(\sqrt{-3})$ and $g: X = \spec \sco_K = \spec
	\bz[\frac{1 + \sqrt{-3}}{2}] \to \spec \bz$, $\cl(K)$ is the trivial group, but
\autoref{lemma:comparison-selmer} predicts there
		should be two orbits of pairs $(q,\codirectrix)$ with
		resultant $1$.
		Note that $\# H^1(X, g_*\bg_m/\bg_m \xrightarrow{\times 3} g_*
		\bg_m/\bg_m) = 3$, but the quotient of this group
		by $\pm 1$ has size $2$.
		We can take $q := x^2 + xy + y^2$ as a representative for the
		quadratic form.
		In this case, the two polynomials $y^3$ and $y^2x$ have
		resultant $1$ with $q$, and lie in distinct $\affineAut 3$
		orbits.
		Note that $y^3 + y^2x$ is another such polynomial which
		corresponds to the third element of 
		$H^1(X, g_*\bg_m/\bg_m \xrightarrow{\times 3} g_*\bg_m/\bg_m)$
		that is identified with $y^2x$ under the automorphism of $g$.
\end{example}

\begin{example}
	\label{example:}
	Let $K = \bq(\sqrt{5})$ and $g: X = \spec \sco_K = \spec
	\bz[\frac{\sqrt{5} + 1}{2}] \to \spec \bz,$ which has discriminant $20$.
	Then, $\cl(K)= 1$, and a representative is given by the quadratic form
	is given by $q = x^2 - 5y^2$.
	By \autoref{lemma:comparison-selmer}, there
	are $2 = \# (\sco_K^\times/3\sco_K^\times)/\aut_{X/\spec
	\bz}(\spec\bz)$ 
	unit resultant
	$\affineAut n$ orbits.
%	corresponding to degree $3$ polynomials $\codirectrix_1$ and
%	$\codirectrix_2$, so that $\res(q, \codirectrix_1) = \pm 1$
%	and $\res(q, \codirectrix_2) = \pm 1$.
	Representatives for the two orbits are given by
	$\codirectrix_1 = y^3$ and $\codirectrix_2 = -4xy^2-9y^3$.
	\end{example}

\bibliographystyle{alpha}
\bibliography{/home/aaron/Dropbox/master}
\end{document}